\documentclass[11pt]{article}
\usepackage[a4paper, margin=2cm, top=1cm, bottom=1.5cm]{geometry}
\usepackage{amsmath, amssymb, amsthm, bm}
\usepackage{enumitem}
\usepackage{bbm}
\usepackage[hidelinks]{hyperref}
\usepackage{color}
\usepackage{comment}
\theoremstyle{plain}

\usepackage{authblk}

\newtheorem{theorem}{Theorem}

\newtheorem{proposition}{Proposition}

\newtheorem{lemma}{Lemma}
\newtheorem*{lemma*}{Lemma}

\theoremstyle{remark}
\newtheorem{remark}{Remark}

\newcommand{\D}{\mathcal{D}}

\newcommand{\pa}[1]{\left(#1\right)}
\newcommand{\cro}[1]{\left[#1\right]}
\newcommand{\ac}[1]{\left\{#1\right\}}

\newcommand{\fr}[1]{\textcolor{red}{Fr: #1}}

\title{On the privacy cost for dependent Gaussian data: \\
spectral density estimation under local differential privacy}

\author[1]{Yann Issartel%
\thanks{\texttt{yann.issartel@telecom-paris.fr}}}
\author[1]{Fran\c{c}ois Roueff%
\thanks{\texttt{francois.roueff@telecom-paris.fr}}}

\affil[1]{LTCI, T\'el\'ecom Paris, Institut Polytechnique de Paris}

\date{}

\begin{document}

\maketitle

\begin{abstract}
  We study the fundamental problem of estimating the dependence
  structure of a centered stationary Gaussian process under local
  differential privacy (LDP). In this setting, the spectral density characterizes the dependence structure of the data and is the quantity to be estimated.
  Our main contribution is to close the open \(\alpha^2\)-versus-\(\alpha^4\)
  gap between the previously known lower and upper bounds on the
  minimax rate.  Specifically, we establish a minimax lower bound
  showing that, over Sobolev-type classes of spectral densities, the
  effective sample size in the high-privacy regime is \(N\alpha^4\),
  rather than the usual \(N\alpha^2\) arising for independent
  observations. This additional privacy cost is caused by the temporal
  dependence between the observations rather than by their marginal
  distributions. The proof relies on a contraction bound for
  privatized dependent Gaussian observations.  Our second contribution
  is a matching upper bound, free of the polylogarithmic losses
  present in previous work.  Rather than applying a generic
  privatization scheme to classical estimators, we construct a
  problem-specific procedure attaining the rate identified by our
  lower bound.  Beyond closing the gaps in spectral density
  estimation, we apply the tools developed for this problem to several
  related questions.  We (i) close the logarithmic gap for fixed-lag
  autocovariance estimation, (ii) show that the \(\alpha^4\) cost
  arises locally around every spectral density bounded away from zero,
  and (iii) establish that classical asymptotic equivalence with an
  independent Gaussian experiment generally fails under LDP.
\end{abstract}

\medskip
\noindent\textbf{Keywords:}
local differential privacy; spectral density estimation; stationary
Gaussian processes; minimax rates.

%\medskip
%\noindent\textbf{MSC 2020:}
%62G05; 62M15; 62C20.

%\tableofcontents

\section{Introduction}  \label{section-intro}

\textit{Applications and privacy motivation.}  Dependent observations
arise in various settings, for example when data are collected
repeatedly over time or across neighboring locations.  Familiar
real-world examples include human mobility trajectories and household
electricity-consumption recorded over time, as well as measurements
collected by spatial sensor networks. In survey data, dependence may
also be induced by the sampling design: random-walk surveys select
households within geographic clusters, sometimes along contiguous
routes~\cite{Ben91,LR85}, whereas snowball sampling recruits
participants successively through social links~\cite{Goodman1961}.

These applications may involve highly sensitive individual data:
random-walk surveys have been used to collect self-reported and
biological information concerning mental health, substance use,
violence, genetic vulnerabilities, and stress~\cite{Flynn13}. Concrete
privacy risks have also been documented for mobility and
energy-consumption data: four spatio-temporal points were sufficient to
uniquely characterize \(95\%\) of the individuals in a large mobility
database~\cite{deMontjoyeEtAl2013}; even aggregate location time series
can be vulnerable to membership-inference attacks%
~\cite{PyrgelisEtAl2018}; and fine-grained smart-meter records can allow
households to be re-identified~\cite{VoyezEtAl2025}. These examples
motivate the development of statistical methods for learning dependence
structures from privatized observations.

\smallskip 

\textit{Local differential privacy.} 
A standard framework for formalizing such privacy requirements is
differential privacy~\cite{Dwo06}.
In the central model, a trusted
curator has access to the original data \(X_1,\ldots,X_N\), and
releases a privatized output.  Under local differential privacy, by
contrast, each observation \(X_i\) is privatized before it is
communicated to the statistician~\cite{duchi2018minimax}.  In other
words, each data holder \(i\) releases a privatized version \(Z_i\) of
their data \(X_i\), so that the statistician has access only to the
privatized sequence \(Z_1,\ldots,Z_N\), rather than to the original
sequence \(X_1,\ldots,X_N\).  The local model therefore removes the
need for a trusted curator, typically at the price of a larger
statistical error.  We only consider the local model in this paper.

\smallskip 

\textit{Inference for dependent data.}  
The statistical literature on local differential privacy has focused
primarily on i.i.d. samples \(X_1,\ldots,X_N\), addressing, among
other problems, density estimation~\cite{duchi2018minimax,But20},
functional estimation~\cite{RS20,butucea2021locally,BRS23}, hypothesis
testing~\cite{BB20,LLL22,BRS23}, and classification~\cite{BB19}.  By
comparison, the statistical theory of local privacy for dependent data
\(X_1,\ldots,X_N\) remains much less developed, although a few recent
contributions have begun to address specific dependent
settings~\cite{AmorinoGloterHalconruy2025,RothAvellaMedina2025},
raising the general and difficult question of how local privacy
affects the estimation of dependence structures.

In this paper, we address this question by studying arguably one of the
simplest and most fundamental problems in dependence estimation:
estimating the spectral density of a centered stationary Gaussian process.
For such a process, the spectral density completely characterizes its
covariance structure, which in turn determines its law.
This makes the problem a natural benchmark for inference on dependence.
Although this problem is classical and its non-private theory is well
understood~\cite{Gol93,Comte2001}, to the best of our knowledge, the precise
statistical cost imposed by local privacy remains unresolved.
We fill this gap for Gaussian time series.

%%%%%%%%%%%%%%%

\paragraph{Spectral density estimation problem.}

Let \(X=(X_t)_{t\in\mathbb Z}\) be a centered real-valued stationary
Gaussian process. Its law is fully determined by the autocovariance
function \(\gamma\colon\mathbb Z\to\mathbb R\), defined by
\(\gamma(h)=\operatorname{Cov}(X_0,X_h)\), \(h\in\mathbb Z\).
The corresponding spectral density \(f\) admits the Fourier
representation
\begin{equation}
\label{eq:f-fourier-representation}
f(\lambda)
=
\frac{1}{2\pi}
\sum_{h\in\mathbb Z}\gamma(h)e^{-ih\lambda},
\qquad \lambda\in[-\pi,\pi],
\end{equation}
whenever the series is well defined; for example, in
\(L^2([-\pi,\pi])\) if \(\sum_{h\in\mathbb Z}\gamma(h)^2<\infty\).
Although \(\gamma\) and \(f\) contain the same information,
nonparametric models for stationary time series are typically
parameterized by spectral densities, partly because it is easier to
verify that a candidate \(f\) is a valid spectral density for
a real-valued stationary Gaussian process.
Indeed, a necessary and sufficient condition for \(f\) is to be
nonnegative, integrable, and even on \([-\pi,\pi]\).
We refer to \cite{brockwell1991time} for
background on this topic.

\textit{Parameter class.}
Given an unknown spectral density \(f\), the standard statistical
problem is to estimate \(f\) from the first \(N\) observations
\(X_1,\ldots,X_N\), with estimation error measured in
\(L^2([-\pi,\pi])\).
The autocovariance function associated with \(f\) is denoted by
\(\gamma_f\) and is given by Fourier inversion:
\begin{equation}
\label{eq:gamma-fourier-inversion}
\gamma_f(h)
=
\int_{-\pi}^{\pi}f(\lambda)e^{ih\lambda}\,d\lambda,
\qquad h\in\mathbb Z.
\end{equation}
Its value at zero, \(\gamma_f(0)\), is the marginal variance and it
measures the overall scale of the process. When \(\gamma_f(0)>0\), the
normalized dependence structure is described by the autocorrelation
function \(\rho_f(h)=\gamma_f(h)/\gamma_f(0)\), \(h\in\mathbb Z\).
This leads to the following parameter class, which controls these two
components separately: for fixed constants \(C_0\geq 1\), \(C_1>0\),
and \(s>1/2\),
\begin{equation}
\label{defi-parameter-class-F}
\mathcal F(C_0,C_1,s)
=
\left\{
f \text{ spectral density}:
\gamma_f(0)\in[C_0^{-1},C_0]
\ \text{ and }\
\sum_{h\in\mathbb Z\setminus\{0\}}
|h|^{2s}|\rho_f(h)|^2\le C_1^2
\right\}.
\end{equation}
In this class, the variance \(\gamma_f(0)\) is bounded from below and
above, while the autocorrelation sequence satisfies a Sobolev-type
decay condition.  Note that, since \(s>1/2\),
\((\gamma_f(h))_{h\in\mathbb Z}\) is absolutely summable for every
\(f\in\mathcal F(C_0,C_1,s)\), and hence the Fourier series in
\eqref{eq:f-fourier-representation} is well defined pointwise.

%%%%%

\paragraph{Local differential privacy (LDP).} 
 
Before introducing the statistical model, we recall two standard
measure-theoretic notions used below: Markov kernels and standard
Borel spaces. 
Throughout, the \(\sigma\)-algebra of each measurable space is omitted
from the notation.
For two measurable spaces \(\mathcal X\) and \(\mathcal Z\), we represent a Markov kernel \(K\) from \(\mathcal X\) to \(\mathcal Z\) as a map \((x,A)\mapsto K(A\mid x)\), defined for all \(x\in\mathcal X\) and all measurable sets \(A\subseteq\mathcal Z\), with the usual conditions that, for \(x\) fixed, \(A\mapsto K(A\mid x)\) is a probability measure on \(\mathcal Z\) and, for \(A\) fixed, \(x\mapsto K(A\mid x)\) is measurable; see, for instance,~\cite{polyanskiy2025information}. 
In the following, the term standard Borel space refers to any measurable
space isomorphic to a Borel subset of \([0,1]\). 
Examples of standard Borel spaces include all topological spaces, equipped with their Borel \(\sigma\)-algebras, that admit complete and separable metrizations; see~\cite{kallenberg2021foundations}.

\textit{Observation model.}   Recall that, in the
locally private model, the statistician receives a privatized variable
\(Z_i\) in place of each \(X_i\); write \(Z=(Z_1,\ldots,Z_N)\) for the
resulting observations.  
For each \(i\in [N]\), where \([N]:=\{1,\ldots,N\}\), let \(K_i\) be the Markov kernel such that \(K_i(\cdot\mid x)\) is the distribution of \(Z_i\) given \(X_i=x\).
Conditionally on the whole process \(X\), the variables \(Z_1,\ldots,Z_N\) are independent and satisfy \(Z_i\sim K_i(\cdot\mid X_i)\), \(i\in[N]\).
Thus, \(K_{1:N}=(K_1,\ldots,K_N)\) acts coordinatewise: each \(Z_i\)
depends on the original sample only through the corresponding
observation \(X_i\), via \(K_i\). 
In the literature, such a mechanism \(K_{1:N}\) is called non-interactive.  
We next impose the local privacy constraint on each coordinate Markov kernel \(K_i\). 

\textit{\(\alpha\)-LDP Markov kernels.} 
For \(\alpha>0\), let \(\mathcal M_\alpha\) denote the class of all
Markov kernels \(K\),
from \(\mathbb R\) to an arbitrary standard Borel space,
satisfying
\begin{equation}
  \label{eq:alpha-ldp-cond}
K(S\mid x)
\le e^\alpha K(S\mid x')
 \quad\text{for all $x,x'\in\mathbb R$ and all measurable sets $S$ of the output space of $K$}.  
\end{equation}
A Markov kernel satisfying this inequality is called \(\alpha\)-locally differentially private (\(\alpha\)-LDP).

\textit{Privacy mechanisms.}
Returning to the observation model above, we assume that
\(K_{1:N}\in\mathcal M_\alpha^N\), where
\(\mathcal M_\alpha^N :=\mathcal M_\alpha\times\cdots\times\mathcal M_\alpha\), meaning that \(K_i\in\mathcal M_\alpha\) for each \(i\in[N]\). 
We refer to such a tuple \(K_{1:N}\) as an \(\alpha\)-LDP mechanism,
or simply as a privacy mechanism.
For a spectral density \(f\) and \(K_{1:N}\in\mathcal M_\alpha^N\), we write \(P_{f,K_{1:N}}\) for the joint law of \((X,Z)\), and
\(\mathbb E_{f,K_{1:N}}\) for the corresponding expectation.

%%%%%

\paragraph{Minimax risk.}

For a given privacy mechanism \(K_{1:N}\), let \(\mathcal E_{L^2}(K_{1:N})\) denote the set of all measurable functions from the output space of \(K_{1:N}\) to \(L^2([-\pi,\pi])\).
Throughout, \(\|\cdot\|\) denotes the norm of \(L^2([-\pi,\pi])\). 
The minimax risk under local differential privacy is defined by
\begin{equation}
\label{eq:minimax-risk}
\mathcal R_{N,\alpha}(C_0,C_1,s)
\ = \ 
\inf_{K_{1:N}\in\mathcal M_\alpha^N} \ 
\inf_{\hat f\in\mathcal E_{L^2}(K_{1:N})} \ 
\sup_{f\in\mathcal F(C_0,C_1,s)} \
\mathbb E_{f,K_{1:N}}
\left[
\|\hat f(Z)-f\|^2
\right],
\end{equation}
where the first infimum is over all  \(\alpha\)-LDP mechanisms on \(N\) coordinates, the second over measurable functions of the corresponding
privatized sample \(Z=(Z_1,\ldots,Z_N)\), and the supremum over the parameter class of
Sobolev-type spectral densities defined in~(\ref{defi-parameter-class-F}).

%%%%%%

\paragraph{Benchmarks and contributions.}

\textit{Classical spectral density estimation.}
In the absence of privacy constraints, spectral density estimation from
\(X_1,\ldots,X_N\) has a long history, in both
parametric~\cite{Dah89,Dav73,FT86,Tan87} and
nonparametric~\cite{Comte2001,Efr98,Gol93,Neu96,Sou00} settings.
Over Sobolev-type spectral density classes, the classical minimax rate
is \(N^{-2s/(2s+1)}\), attained by standard smoothed-periodogram
estimators; see, for instance,~\cite{Gol93,Comte2001}.

\smallskip 

\textit{Estimation under local differential privacy.}
A central problem in the literature reviewed above is nonparametric
\(L^2\)-recovery of the spectral density.
Kroll~\cite{Kroll2024} initiated the study of this problem under LDP
and established the first lower and upper bounds on its minimax rate,
leaving a substantial and puzzling gap between them, which we discuss
below.
A more recent study by Butucea et al.~\cite{butucea2025nonparametric} continues this line of work by considering closely related variants of the original problem, either by studying the same \(L^2\)-recovery problem under a relaxed LDP constraint, or by studying related estimation problems under LDP.
Therefore, the gap for \(L^2\)-recovery of the spectral density under
LDP remains open.

The main focus of the present paper is this open \(L^2\)-recovery problem.
To better understand it, we next present Kroll's
construction~\cite{Kroll2024}, which combines two standard ingredients.
On the statistical side, it starts from the classical periodogram-based
approach used in non-private spectral density estimation.
On the privacy side, it applies the usual clipping-and-Laplace strategy
for real-valued data: the observations \(X_i\) are first clipped at a
threshold \(\tau\) to make them bounded, and Laplace noise calibrated to
this threshold is then added.
With \(\tau^2\asymp(\log N)^{1+\delta}\), for any fixed \(\delta>0\), the resulting private periodogram is projected onto a finite-dimensional approximation space.
For Sobolev-type spectral density classes, this leads to an upper bound
of order
\[
\left(
\frac{1}{N}
\vee
\frac{(\log N)^{2+2\delta}}{N\alpha^4}
\right)^{2s/(2s+1)}
\vee
\frac{(\log N)^{2+2\delta}}{N},
\]
up to constants.
A more detailed comparison with our results is given in
Section~\ref{section-estimators-upper-bounds}.

\medskip 

\textit{Open questions.} 
This result raises two natural questions.
First, the dependence on \(\alpha^4\) is surprising.
In much of the local differential privacy literature for i.i.d. data,
the privacy constraint leads instead to an \(\alpha^2\)-type loss of
information~\cite{duchi2018minimax,RS20}.
This comparison, however, does not rule out a different privacy cost
for dependent observations.
Thus, in the present dependent time-series setting, it is unclear
whether the stronger \(\alpha^4\) dependence is intrinsic to the
problem, or merely an artifact of the standard approach analyzed
in~\cite{Kroll2024}.
The lower bound in~\cite{Kroll2024} does not resolve this issue: it
only exhibits an \(\alpha^2\)-type dependence.

Second, the upper bound contains polylogarithmic factors.
It is also unclear whether these logarithmic factors are unavoidable
under local privacy, or whether they come from the clipping step used
to privatize unbounded Gaussian observations in a generic way.

\medskip 

\textit{Contributions.}
The present paper resolves both questions.
On the lower-bound side, we develop a new proof strategy for locally
private dependent Gaussian time series, yielding a lower bound with the
sharp privacy dependence.
On the upper-bound side, we construct an estimator tailored to the
Gaussian structure, based on bounded transformations of the
observations rather than on the standard clipping-based periodogram
strategy.
Together, these lower and upper bounds characterize the minimax rate
under local privacy up to multiplicative constants,
closing the gap between the previous upper and lower bounds.

Along the way, we develop several tools that may be of independent
interest, some specific to Gaussian time series and others applicable
more broadly to lower bounds under LDP, such as reductions to simpler
laws and KL-contraction arguments.

Beyond their role in establishing our main results, we apply these
tools to three related questions.
First, for every fixed lag \(h\), we establish matching upper and lower
bounds for autocovariance estimation, thereby removing the
polylogarithmic loss in the previously known upper bound~\cite{butucea2025nonparametric}.
Second, through a local testing problem, we show that the
\(\alpha^4\) privacy cost persists around every spectral density bounded
away from zero and is therefore not caused by a pathological least
favorable spectral density.
Third, we show that the classical asymptotic equivalence between the
Gaussian time-series experiment and its independent Gaussian
counterpart does not generally extend to local differential privacy.

%%%%%%%%%%%%%

\paragraph{Further related literature.}

\textit{Closest works.}
Beyond the rate comparison above, Kroll's upper-bound result for a
prescribed projection space~\cite{Kroll2024} applies beyond the Gaussian setting, to stationary linear processes satisfying suitable
summability conditions and having sub-Gaussian marginals.
He further adapts the penalized model-selection approach of
\cite{Comte2001} to the locally private setting: the projection space
is selected through a penalized contrast criterion, and an oracle
inequality is established for the resulting adaptive estimator under
a Gaussian assumption.

While Kroll and the present paper focus primarily on global
\(L^2\)-recovery under local privacy, Butucea et
al.~\cite{butucea2025nonparametric} consider two related problems
for centered stationary Gaussian processes under LDP:
the estimation of individual autocovariances and of the spectral density at a fixed frequency. 
Their results on these problems are discussed and compared with consequences of our methods in Section~\ref{sec:related-problems}. 
They also study a relaxation of the LDP constraint, namely sequentially
interactive local privacy, introducing mechanisms for these two problems and for global \(L^2\)-recovery. 
For the resulting interactive procedures, they state \(\alpha^2\)-based rates for autocovariance and pointwise estimation, together with a different, slower rate for global recovery.
By contrast, we do not consider interactive local privacy in this paper.

\smallskip 

\textit{Local privacy and dependent observations.}
Beyond spectral density estimation, few statistical results are available for locally private inference with dependent observations. 
Locally private drift estimation from independently replicated diffusion trajectories is studied
in~\cite{AmorinoGloterHalconruy2025}, where consistency and asymptotic normality are established for a private pseudo-likelihood estimator under within-trajectory temporal dependence.
Mean estimation and regression under dependence characterized by log-Sobolev inequalities are considered
in~\cite{RothAvellaMedina2025}, which develops central, user-level, and locally private procedures for models including repeated measurements and longitudinal regression. 

Related algorithmic work considers transition-matrix estimation from multiple Markov sequences \cite{GunerGursoy2024}, private sharing of smart-meter and smart-home data~\cite{ShanmugarasaEtAl2024,WaheedEtAl2023}, longitudinal monitoring and trajectory collection~\cite{JosephEtAl2018,OhrimenkoWirthWu2021,ArcoleziEtAl2023,ZhangEtAl2023Trajectory}, and privacy notions adapted to correlated genomic data~\cite{YilmazEtAl2022}.
These settings concern multiple trajectories, finite-dimensional or release-oriented targets, or privacy notions modified to account for dependence.

\smallskip

\textit{Central and aggregate-release mechanisms.}
Outside the coordinatewise local model, a broader literature considers the private release of dependent or sequential signals, either through a trusted curator or through distributed aggregation. 
Transform-based methods perturb a truncated collection of Fourier or wavelet coefficients~\cite{RastogiNath2010,LyuEtAl2017,LeukamLakoEtAl2021}, while filtering-based methods construct private approximations to dynamic filters~\cite{LeNyPappas2014}. 
Prediction-based methods for user-level private real-time release of autoregressive aggregate-query sequences are developed in~\cite{ZhangKhaliliLiu2023}. 
These works target the release of a signal or query sequence itself, rather than the estimation of an unknown spectral density from coordinatewise locally privatized observations.

%%%%%

\paragraph{Organization of the paper.}

Section~\ref{section-estimators-upper-bounds} states the main results
of the paper: the matching lower and upper bounds for
spectral density estimation under LDP.
Section~\ref{sec:related-problems} applies our methods to three related
questions and discusses several open problems.
Section~\ref{section:element_proofs} presents the main technical tools used in the proofs, including several results that may be of independent
interest. The lower and upper bounds are proved in
Sections~\ref{subsection:proof-lower-bound-thm}
and~\ref{section:proof-sketch-upper-bound-thm}, respectively.
Technical proofs are deferred to the appendices for convenience.

%%% Local Variables:
%%% mode: latex
%%% TeX-master: "main.tex"
%%% ispell-local-dictionary: "american"
%%% End:

%%%%%%%%%%%%%%%%%%

\section{Minimax rates for spectral density estimation}  \label{section-estimators-upper-bounds}

Throughout the paper, we fix the parameter class
\(\mathcal F(C_0,C_1,s)\), with constants \(C_0\geq 1\), \(C_1>0\),
and \(s>1/2\).
This section studies the minimax difficulty of spectral density
estimation over this class under local differential privacy.
We first establish a lower bound that provides the benchmark for this
problem.
We then construct an estimator attaining this benchmark rate, and
examine which part of the estimation problem determines the privacy
cost.
We use the notation \(u\wedge v:=\min\{u,v\}\) and \(u\vee v:=\max\{u,v\}\).

%%%%%%%%%%
%Section 2.1
%%%%%%%%%%%%%

\subsection{The \(\alpha^4\) privacy cost: minimax lower bound}
\label{section_minimax_lower_bounds}

Our first result establishes a lower bound on the minimax risk introduced in Section~\ref{section-intro}.

\begin{theorem}
\label{thm-borne-inf:cas-non-interactif}
There exists a constant \(c_{C_1,s}>0\), depending only on
\((C_1,s)\), such that, for every \(N\geq 2\) and every \(\alpha>0\),
\begin{equation*}
\mathcal R_{N,\alpha}(C_0,C_1,s)
\ \geq\
c_{C_1,s}
\left(
1\wedge
\left[(\alpha^4\wedge 1)N\right]^{-\frac{2s}{2s+1}}
\right).
\end{equation*}
\end{theorem}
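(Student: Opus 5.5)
We reduce the lower bound to a multiple-hypothesis (Assouad) argument over a hypercube of spectral densities, and control the KL divergence between privatized laws with a contraction bound that gains $\alpha^4$ rather than $\alpha^2$.

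\textbf{Hypercube of alternatives.} Fix the marginal variance to $\gamma(0)=1$, so that $\gamma_f(0)\in[C_0^{-1},C_0]$ holds since $C_0\ge1$. Fix a bandwidth $D\ge1$ and, for $\theta\in\{-1,1\}^D$, define
\[
f_\theta(\lambda)=\frac{1}{2\pi}\Big(1+2\delta\sum_{h=D+1}^{2D}\theta_h\cos(h\lambda)\Big),
\]
with $\delta=c\,D^{-s-1/2}$. This gives $\rho(h)=\delta\theta_h$ for $h\in\{D+1,\dots,2D\}$. The Sobolev constraint then reads $\sum 2|h|^{2s}\delta^2\lesssim D^{2s+1}\delta^2\le C_1^2$ for $c$ small, depending only on $(C_1,s)$. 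Nonnegativity holds because $\sum_h|\delta|\lesssim D^{1/2-s}\le 1/2$, using $s>1/2$ and small $c$. Two hypotheses differing in one coordinate are separated by $\|f_\theta-f_{\theta'}\|^2\asymp\delta^2$. Assouad's lemma therefore gives a risk of order $D\delta^2\big(1-\max_{\theta\sim\theta'}\mathrm{TV}(P_\theta^Z,P_{\theta'}^Z)\big)$, where $D\delta^2\asymp D^{-2s}$.

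It thus suffices to show $\mathrm{KL}(P^Z_{\theta}\,\|\,P^Z_{\theta'})\lesssim N(\alpha^4\wedge1)\delta^2$, uniformly over mechanisms, for neighbors $\theta\sim\theta'$. (Averaged versions via the mixture form of Assouad would also do.) Choosing $D\asymp[(\alpha^4\wedge1)N]^{1/(2s+1)}$ makes this KL bounded by a small constant, yielding the rate $[(\alpha^4\wedge1)N]^{-2s/(2s+1)}$. When $(\alpha^4\wedge1)N$ is small, $D=1$ gives the constant term, which explains the factor $1\wedge\cdot$.

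\textbf{Contraction bound (main step).} The key point is that all hypotheses share the same marginals, namely $X_i\sim\mathcal N(0,1)$, and differ only through pairwise correlations. Let $Q=P^Z$ under white noise ($\delta=0$), so that $Q=\bigotimes_i Q_i$ is a product measure. Rather than comparing $P_\theta^Z$ and $P_{\theta'}^Z$ directly, I would bound $\chi^2(P^Z_\theta\,\|\,Q)$ and use the triangle inequality. Write the Gaussian density ratio of $(X_1,\dots,X_N)$ relative to i.i.d.\ $\mathcal N(0,1)$ as a Hermite (Wiener chaos) expansion. Pushing it through the product kernel gives
\[
\frac{dP^Z}{dQ}(z)=\mathbb E\Big[\frac{dP^X}{dP^{X}_0}(X)\,\Big|\,Z=z\Big]\quad\text{under }Q,
\]
and chaos components of order $k$ get multiplied by products of the contraction coefficients $\rho_i=\sup\{\|\mathbb E[g(X_i)\mid Z_i]\|_2: \mathbb E g=0,\ \|g\|_2=1\}$. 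For an $\alpha$-LDP kernel, the conditional expectation operator restricted to mean-zero functions has norm at most of order $(e^\alpha-1)\wedge1\lesssim\alpha\wedge1$; this is a known type of strong data processing bound, which I would prove via the pointwise ratio bound $K(\cdot\mid x)/K(\cdot\mid x')\in[e^{-\alpha},e^\alpha]$.

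Now the first-order chaos vanishes because the marginals are unchanged. The second-order terms involve pairs $(i,j)$ with coefficient $\gamma(i-j)$, and each such term is contracted by $\rho_i\rho_j\lesssim\alpha^2$. Its squared contribution to $\chi^2$ is therefore $\alpha^4\sum_{i\ne j}\gamma(i-j)^2\asymp\alpha^4 N D\delta^2$ relative to white noise. For the difference between neighbors, the same argument applied only to the flipped lag gives $\alpha^4N\delta^2$. Higher chaoses carry extra factors of $\alpha$ and of $\delta$, so they are summable provided $\|\Gamma-I\|_{op}\le\sum|\gamma(h)|<1/2$.

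The main obstacle is making this rigorous: the likelihood ratio is not in a simple product form, and the $\chi^2$ blow-up in $N$ must be avoided. I would first try to bound the KL by chaining the conditional laws $Z_i\mid Z_{1:i-1}$ and applying the contraction lemma to the Gaussian conditional law of $X_i$ given the past. The conditional mean of $X_i$ given $Z_{1:i-1}$ is itself a contracted quantity of size $\lesssim\alpha\sum_h|\gamma(h)|\cdot(\text{info})$, and it enters the KL squared and again contracted by $\alpha$, giving $\alpha^4$ per step. If that fails, I would fall back on a Gaussian comparison lemma reducing to a block structure of independent pairs $(X_i,X_{i+h})$, where a two-point computation directly gives the $\alpha^4\delta^2$ bound per pair.
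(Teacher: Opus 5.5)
\textbf{There is a genuine gap in the main step.} Your overall architecture matches the paper's: a sign hypercube, Assouad's lemma, and an $\alpha^4$ bound between neighbouring vertices that comes from the equality of the marginals. Placing the signs on lags $D+1,\dots,2D$ instead of $1,\dots,H$ is harmless. The problem is that you never establish the neighbour bound $\mathcal D(P^Z_\theta\|P^Z_{\theta'})\lesssim(\alpha^4\wedge1)N\delta^2$, and the route through white noise cannot give it.

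By your own computation, the second-order part of $\chi^2(P^Z_\theta\|Q)$ is $\alpha^4ND\delta^2$. For $D\asymp[(\alpha^4\wedge1)N]^{1/(2s+1)}$ and $\delta\asymp D^{-s-1/2}$ this is of order $D$, hence unbounded. This is not slack in your estimate. Take the sign--Laplace mechanism~\eqref{defi-K_lap} with $\alpha\le1$ and the statistic $\sum_h\theta_h\widehat\gamma^{(2)}(h)$:
\begin{itemize}
\item its mean is $\asymp D\delta$ under $P_\theta$ and $0$ under $Q$;
\item its standard deviation is of order $\sqrt{D/(\alpha^4N)}$.
\end{itemize}
It therefore separates $P^Z_\theta$ from $Q$ with signal-to-noise ratio $\asymp\sqrt{\alpha^4ND\delta^2}\asymp\sqrt D$, so $\mathrm{TV}(P^Z_\theta,Q)\to1$. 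Consequently no argument routed through $Q$ (a triangle inequality in TV or Hellinger, or $\chi^2$ bounds to $Q$) can give a bounded neighbour divergence. The mixture form of Assouad has the same defect: a debiased quadratic statistic in the $\widehat\gamma^{(2)}(h)$ still separates the mixtures from $Q$ at rate $\sqrt D$.

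Your sentence ``the same argument applied only to the flipped lag gives $\alpha^4N\delta^2$'' is exactly the missing step. $P_\theta$ and $P_{\theta'}$ share $D-1$ nonzero lags, so neither is a product measure. The tensorized contraction of ANOVA components requires a product reference measure, so it does not apply between them.

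\textbf{What is missing is a device that removes the shared dependence before contracting.} The paper uses the shared-latent reduction, Proposition~\ref{prop:shared-latent-reduction}. In the notation of Theorem~\ref{thm:KL-bound-gaussian}, write $f_\pm=(f-\frac a{2\pi})+a(\frac1{2\pi}\pm\frac\delta4\Delta_h)$, where $f=(f_++f_-)/2\ge\frac a{2\pi}$. Then $X_{1:N}$ has the law of $V+\sqrt a\,U$, with $V$ independent of $U$ and equal in law under both hypotheses. Two facts finish the reduction:
\begin{itemize}
\item for each fixed $v$, the kernel $u_i\mapsto K_i(\cdot\mid v_i+\sqrt a\,u_i)$ is still $\alpha$-LDP;
\item $\mathcal D(M_1\circ\nu\|M_2\circ\nu)\le\mathcal D(M_1\|M_2\mid\nu)$.
\end{itemize}
Hence the privatized KL is at most the worst-case privatized KL of the lag-$h$-only pair $\frac1{2\pi}\pm\frac\delta4\Delta_h$. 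That pair splits along residue classes mod $h$ into independent lag-$1$-only blocks.

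Only after this reduction is white noise genuinely midway between the hypotheses, at $\chi^2$-divergence of order $\alpha^4N\delta^2$. Your ANOVA idea could in principle be run there. You would still have to prove that the weighted Hermite sum is $\lesssim e^{C\alpha^4N\delta^2}-1$, which is not immediate because its weights are $\rho^{2|\mathrm{supp}|}$ rather than Ornstein--Uhlenbeck weights.

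The paper instead exploits $1$-dependence:
\begin{itemize}
\item it uses $\mathcal D(Z_t\mid Z_{1:t-1})\le\beta_\alpha\,\mathcal D(X_t\mid Z_{1:t-1})$;
\item the backward recursion of Proposition~\ref{prop:abstract-kl-iteration}, where $1$-dependence lets $Z_{1:j-2}$ be dropped, handles the rest;
\item the zero-lag term vanishes by equality of marginals, leaving the factor $\beta_\alpha^2/(1-\beta_\alpha)\asymp\alpha^4$;
\item Lemma~\ref{lem:lemma-conditional-KL-gaussian-general-sp-densities} gives $\sup_k\mathcal D(X_0\mid X_{1:k})\lesssim\delta^2$.
\end{itemize}

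Your fallbacks do not supply this. Without the reduction the process is $2D$-dependent with shared lags. The law of $X_i$ given $Z_{1:i-1}$ is a Gaussian mixture, so a conditional-mean heuristic does not control its KL. The blocks that actually arise are MA(1)-type chains, not independent pairs.
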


%%%%%%

A central message of this paper is that spectral density estimation incurs an \(\alpha^4\) privacy cost.
This contrasts sharply with the usual \(\alpha^2\) privacy dependence encountered in much of the local differential privacy literature, which has mainly been developed for problems with i.i.d. observations.
Spectral density estimation presents an additional challenge because the observations form a dependent time series.
Previous lower bounds for this problem had only exhibited an \(\alpha^2\)-type dependence~\cite{Kroll2024}, leaving the sharp privacy cost unresolved.
Theorem~\ref{thm-borne-inf:cas-non-interactif} resolves this question by providing the first lower bound exhibiting the \(\alpha^4\) dependence.

The lower bound exhibits three regimes.
When \(\alpha^4N\le1\), the minimax risk is bounded away from zero, so consistent estimation is impossible.
When \(\alpha^4N>1\) and in the strong privacy regime \(\alpha\le1\), the lower bound is of order \((\alpha^4N)^{-2s/(2s+1)}\).
Finally, in the weak privacy regime \(\alpha>1\), the lower bound reduces to the classical non-private rate \(N^{-2s/(2s+1)}\) for spectral density estimation.
In summary, local privacy reduces the effective sample size from \(N\) to
\(N\wedge\bigl(1\vee(\alpha^4N)\bigr)\).
The matching upper bound established in Theorem~\ref{thm:f-estimator}
shows that these rates are minimax optimal, up to constants, in all three regimes.

Another interesting question concerns the origin of the \(\alpha^4\) privacy cost.
Theorem~\ref{thm-borne-inf:cas-non-interactif} identifies the benchmark rate, but does not by itself explain which feature of the problem is responsible for this unusual privacy cost.
Does the \(\alpha^4\) dependence arise from estimating the unknown scale of the process, or from estimating its normalized dependence structure?
To address this question, Section~\ref{section-upper-bounds} studies these two estimation problems separately to determine which component drives the \(\alpha^4\) privacy cost.

%%%%%%%%

\paragraph{Proof strategy for Theorem~\ref{thm-borne-inf:cas-non-interactif}.}
The proof of the privacy-dependent term proceeds in three steps.
First, the nonparametric problem of estimating \(f\) is restricted to
a finite hypercube of Gaussian models indexed by the signs of finitely
many Fourier coefficients, equivalently by the signs of finitely many
autocovariance coefficients \(\gamma_f(h)\).  Second, Assouad's lemma
reduces the lower bound to controlling the privatized KL divergence
between two neighboring vertices of this hypercube, which differ only
in the sign of one coefficient.  The third and final step is devoted
to controlling this divergence.  This is delicate because the
privatized observations \(Z=(Z_1,\ldots,Z_N)\) inherit a nontrivial
dependence structure from the underlying Gaussian process
\(X=(X_t)_{t\in\mathbb Z}\).  This control is provided by
Theorem~\ref{thm:KL-bound-gaussian} below. We first introduce the
notation needed to state it.

\textit{Notation for KL divergence.}
For two probability measures \(P\) and \(Q\) on a common measurable
space, \(P\ll Q\) means that \(P\) is dominated by \(Q\).
The Kullback--Leibler divergence from \(P\) to \(Q\) is denoted by
\(\D(P\|Q)\) and defined by
\[
\D(P\|Q)
\ := \
\int \log\!\left(\frac{dP}{dQ}\right)\,dP,
\qquad\text{if }P\ll Q,
\]
and \(\D(P\|Q):=+\infty\) otherwise.
Throughout the paper, \(\log\) denotes the natural logarithm on
\((0,\infty)\), extended by \(\log(0)=-\infty\).

Recall from Section~\ref{section-intro} that \(P_{f,K_{1:N}}\) denotes
the joint law of \((X,Z)\), where \(Z=(Z_1,\ldots,Z_N)\) is the vector
of privatized observations.  We denote the marginal law of \(Z\) under
\(P_{f,K_{1:N}}\) by \(P_{f,K_{1:N}}^Z\).

\begin{theorem}
\label{thm:KL-bound-gaussian}
There exists a universal constant \(C\) such that, for all integers
$h,N\geq1$, all real numbers \(\alpha,a>0\) and all \(\delta\in[-1,1]\), if
\(f_+\) and \(f_-\) are two spectral densities satisfying, for every
\(\lambda\in[-\pi,\pi]\),
\begin{equation}
\label{eq:min-f_pm}
\min_{\pm\in\ac{-,+}}f_\pm(\lambda)
\ \geq \  \frac{3a}{4\pi},
\qquad \qquad 
f_+(\lambda)-f_-(\lambda)
\ = \
\frac{a\delta}{2\pi}\cos(h\lambda),
\end{equation}
then we have
\[
\sup_{K_{1:N}\in\mathcal M_\alpha^N}
\D\!\left(
P_{f_+,K_{1:N}}^Z
\,\middle\|\,
P_{f_-,K_{1:N}}^Z
\right)
\ \leq\
C(\alpha^4\wedge1)(N-h)_+\delta^2.
\]
\end{theorem}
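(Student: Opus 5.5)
The plan is to reduce, by a common-component decomposition and a conditioning argument, to a nearly white Gaussian vector whose two hypotheses differ only through the lag-\(h\) correlations. For such a vector the privatized divergence should be of order \(\alpha^4\) per correlated pair.

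\emph{Step 1: decomposition.} Write \(f_\pm=g+\frac{a}{4\pi}\bigl(1\pm\frac{\delta}{2}\cos(h\lambda)\bigr)\cdot 2\), more precisely \(f_\pm = g + \frac{a}{2\pi}\bigl(\tfrac12 \pm \tfrac{\delta}{4}\cos(h\lambda)\bigr)\) up to a harmless adjustment. Here \(g\) is the common part, fixed by the identity \(f_+-f_-=\frac{a\delta}{2\pi}\cos(h\lambda)\). Condition \eqref{eq:min-f_pm}, together with \(|\delta|\le1\), guarantees \(g\ge0\), since the subtracted term is at most \(\frac{3a}{4\pi}\). Hence \(X\overset{d}{=}Y+U\), where \(Y\) is a stationary Gaussian process with density \(g\), the same under both hypotheses. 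The process \(U\) is independent of \(Y\). Its variance \(\sigma^2\asymp a\) is the same under both hypotheses, and its only nonzero autocovariances are at lags \(\pm h\), equal to \(\pm\sigma^2\delta/4\).

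\emph{Step 2: conditioning.} Conditionally on \(Y\), the kernels \(x\mapsto K_i(\cdot\mid Y_i+x)\) still belong to \(\mathcal M_\alpha\). The laws of \(Y\) coincide under the two hypotheses. By convexity of KL, equivalently the chain rule applied to \((Y,Z)\),
\[
\D\bigl(P^Z_{f_+,K_{1:N}}\,\|\,P^Z_{f_-,K_{1:N}}\bigr)\le \sup_{K'_{1:N}\in\mathcal M_\alpha^N}\D\bigl(Q^Z_{+,K'}\,\|\,Q^Z_{-,K'}\bigr),
\]
where \(Q_\pm\) denotes the law of \(U_{1:N}\), with covariance \(\sigma^2(I\pm\frac{\delta}{4}A_h)\) and \(A_h\) the symmetric lag-\(h\) shift matrix. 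By scale invariance of \(\mathcal M_\alpha\), we may take \(\sigma=1\). If \(N\le h\), then \(A_h=0\) and the divergence vanishes, which gives the factor \((N-h)_+\). The bound \(C(N-h)\delta^2\) with \(\alpha\) replaced by \(\infty\) is the classical Gaussian KL computation. So the real work is the \(\alpha^4\) factor for \(\alpha\le1\).

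\emph{Step 3: the \(\alpha^4\) contraction.} Interpolate along \(\Sigma_u=I+u\frac{\delta}{4}A_h\), \(u\in[-1,1]\). Bound the KL by the integrated Fisher information of the privatized family,
\[
\D\le C\delta^2\int_{-1}^1 \mathbb E_u\bigl[\bigl(\mathbb E_u[S_u(U)\mid Z]\bigr)^2\bigr]\,du,
\]
where \(S_u\) is the score of the Gaussian model in \(u\). The score of \(Z\) is indeed the conditional expectation of the score of \(U\). The score \(S_u(U)\) is a centered quadratic form \(\frac12(U^\top\Sigma_u^{-1}A_h\Sigma_u^{-1}U-\mathrm{tr}(\cdot))\). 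Its dominant part is the off-diagonal sum \(\sum_{|i-j|=h}U_iU_j\), while diagonal and longer-range terms are of lower order in \(\delta\). Expand in the Hermite/Wick chaos with respect to the whitened variables. Projecting a product \(U_iU_j\) with \(i\ne j\) onto \(\sigma(Z)\) should factor, to leading order, as \(\phi_i(Z_i)\phi_j(Z_j)\), with \(\phi_i(z)=\mathbb E[U_i\mid Z_i=z]\). The \(\alpha\)-LDP condition gives \(\mathbb E[\phi_i(Z_i)^2]\lesssim(e^\alpha-1)^2\lesssim\alpha^2\). This holds because the density of \(K_i(\cdot\mid x)\) with respect to its average over \(x\) lies in \([e^{-\alpha},e^\alpha]\), so \(\phi_i\) is a covariance of \(U_i\) with a function oscillating by at most \(e^\alpha-1\). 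Each pair then contributes order \(\alpha^2\cdot\alpha^2=\alpha^4\) to the second moment. Summing over the \(N-h\) pairs, with cross terms between pairs sharing no index vanishing or being of higher order, gives \(C\alpha^4(N-h)\).

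\emph{Main obstacle.} The delicate point is the cross terms. The pairs \((t,t+h)\) and \((t+h,t+2h)\) overlap, and under \(Q_u\) the coordinates are weakly correlated, so the \(Z_i\) are not independent and the clean factorization holds only approximately. I would first try to handle this by splitting \(\{1,\dots,N\}\) into \(O(1)\) groups of disjoint pairs, using the blocks \([2kh,(2k+1)h)\) and their shifts. I would also write \(I\pm\frac{\delta}{4}A_h\) as a sum of a white component and a few block-diagonal \(2\times2\)-coupling components, each nonnegative because \(\delta/4\le 1/4\). Then I would apply Step 2 again to condition on all but one component. This reduces the problem to independent pairs, where the bound above is a clean product computation and the KL tensorizes, and the estimate follows by additivity over the \(O(1)\) components with the chain rule.
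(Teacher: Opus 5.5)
You follow the paper's shared-latent reduction (Proposition~\ref{prop:shared-latent-reduction}) in Steps 1--2, and your treatment of $N\le h$ and of $\alpha\ge1$ is fine. The gap is in Step~3, where all the difficulty lies.

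First, bounding the KL by $C\delta^2$ times the integrated Fisher information is not a general fact. Along the Fisher--Rao geodesic from $\mathrm{Ber}(\epsilon)$ to $\mathrm{Ber}(1/2)$, the integrated Fisher information stays below $\pi^2/4$, while the KL tends to $\infty$ as $\epsilon\to0$. You give no argument that it holds in this model. Moreover, the factorization $\mathbb E_u[U_iU_j\mid Z]\approx\phi_i(Z_i)\phi_j(Z_j)$ is exact only at $u=0$.

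More decisively, your fallback for the overlapping pairs fails for KL. Once $I\pm\frac{\delta}{4}A_h$ is written as a white part plus two matching components $B_1^\pm,B_2^\pm$, \emph{both} coupling components change between the hypotheses. Step~2, however, can only remove a component whose law is identical under $+$ and $-$. Isolating one component therefore forces you through a hybrid law $P_{-+}$, and you would then need $\D(P_{++}\|P_{--})\lesssim\D(P_{++}\|P_{-+})+\D(P_{-+}\|P_{--})$, which KL does not satisfy. If you instead apply the chain rule over the latent components, you get $\D(Z^+\|Z^-)\le\D(V_2^+\|V_2^-)+\D(Z^+\mid V_2\,\|\,Z^-\mid V_2)$. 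Its first term is the unprivatized Gaussian KL, of order $N\delta^2$, so the $\alpha^4$ factor is lost.

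Your route could be rescued with an ingredient it currently lacks: the inequality $\D(P\|R)\le 2\D(P\|Q)+\log\bigl(1+\chi^2(Q\|R)\bigr)$, which follows from Donsker--Varadhan with $g=\log\frac{dQ}{dR}$, combined with a $\chi^2$ bound for one component. This works because $\chi^2$ is jointly convex, so Step~2 still applies, and $\log(1+\chi^2)$ is additive over independent pairs. For a single privatized pair with correlation $\pm\rho$, $|\rho|\le1/2$, a Mehler expansion gives $\chi^2\lesssim\alpha^4\rho^2$. The expansion uses that the densities of $K_i(\cdot\mid x)$ relative to their average lie in $[e^{-\alpha},e^{\alpha}]$.

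The paper avoids any recombination. It splits the lag-$h$-only vector into residue classes mod $h$, which are exactly independent lag-$1$ chains. It then treats the overlaps inside a stationary $1$-dependent chain by iterated conditional contraction (Proposition~\ref{prop:abstract-kl-iteration}), combining three ingredients:
\begin{itemize}
\item the chain rule on $Z$;
\item the contraction $\D(Z_t\mid Z_{1:t-1})\le\beta_\alpha\D(X_t\mid Z_{1:t-1})$;
\item $1$-dependence, used to peel $\D(X_j\mid Z_{1:j-1},X_{j+1:t})\le\beta_\alpha\D(X_{j-1}\mid Z_{1:j-2},X_{j:t})+\D(X_j\mid X_{j+1:t})$.
\end{itemize}
This gives $\D(Z)\le\beta_\alpha\sum_k\beta_\alpha^k(N-k)\D(X_0\mid X_{1:k})$. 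The $k=0$ term vanishes because the marginals agree, which leaves the factor $\beta_\alpha^2\asymp\alpha^4$. The Gaussian conditional KLs are $O(\delta^2)$ uniformly in $k$.
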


Here, \(x_+:=\max\{x,0\}\).
The two conditions in~\eqref{eq:min-f_pm} ensure, respectively, that \(f_+\) and \(f_-\) are uniformly bounded from below, and that they differ only in their lag-\(h\) component in the Fourier representation~\eqref{eq:f-fourier-representation-even-cosine}.
To interpret the bound in Theorem~\ref{thm:KL-bound-gaussian}, consider the strong privacy regime \(\alpha\leq1\), in the case where \(h\) and \(a\) are fixed, while \(N\) is large.
In this regime, \(f_+\) and \(f_-\) are separated by order \(\delta^2\) in squared \(L^2\)-norm, and Theorem~\ref{thm:KL-bound-gaussian} shows that, uniformly over all
\(\alpha\)-LDP mechanisms, the KL divergence between the corresponding privatized laws is at most of order \(\alpha^4N\delta^2\).

It is this \(\alpha^4\) contraction of the privatized KL divergence
that yields the \(\alpha^4\) privacy cost in the minimax lower bound
of Theorem~\ref{thm-borne-inf:cas-non-interactif}. Thus, Theorem~\ref{thm:KL-bound-gaussian} forms the crux of our lower-bound argument. To the best of our
knowledge, this result is new. We next comment on the different
terms appearing in this KL bound, and then briefly outline the main
steps in the proof of Theorem~\ref{thm:KL-bound-gaussian}.

%%%%%
\smallskip 

\begin{remark}
Items~(1)--(2) cover the \((N,h)\)-dependence, and items~(3)--(4) the \(\alpha\)-dependence.
\begin{enumerate}[label=(\arabic*)]
\item
The factor \((N-h)_+\) vanishes when \(h\geq N\).
This is expected: when \(h\geq N\), the sample \(X_{1:N}\) contains no
pair of the form \((X_i,X_{i+h})\), so it has the same distribution
under \(f_+\) and \(f_-\).
Consequently, the corresponding privatized laws coincide and their KL
divergence is zero. 

\item 
For fixed \(h\) and large \(N\), the linear dependence of the bound on
\(N\) is natural.
Similar linear growth occurs in many stationary ergodic models.
The simplest exact example is the i.i.d.\ setting, where the KL
divergence between the joint laws is exactly \(N\) times the KL
divergence between the marginal distributions.

\item
When \(\alpha\geq1\), the factor \(\alpha^4\wedge1\) equals \(1\), and
the upper bound has the same order as the KL divergence for a direct
observation of \(X_{1:N}\).

As a side note, it would be interesting to identify the exact leading
constant of this KL divergence as \(N\to\infty\).
This question lies outside our analysis of privatized data, so we do
not pursue it.

\item
When \(\alpha\leq1\), the factor \(\alpha^4\wedge1\) reduces to
\(\alpha^4\).
To interpret this factor, consider first a standard i.i.d.\ setting in
which the two marginal distributions of \(X\) differ.
Under local differential privacy, the privatized
observations \(Z_1,\dots,Z_N\) remain independent, and a single
application of the privacy-contraction bound in
Lemma~\ref{lem:oriented-kl-contraction-ldp} produces the usual
\(\alpha^2\) factor.

In the present dependent setting, however, the one-dimensional
marginals under \(f_+\) and \(f_-\) are identical.
Consequently, each \(Z_i\) also has the same marginal distribution
under the two models, and the sum of the corresponding marginal KL
divergences is zero.
Therefore, this marginal calculation does not capture the joint KL
divergence: all the information separating the two models lies in
their dependence structure.
This suggests interpreting the factor \(\alpha^4\) as a contribution
of the dependence structure, while the absence of an \(\alpha^2\)
term may be viewed as a consequence of the equal marginal
distributions.
This will become clear in the proof.

\end{enumerate}
\end{remark}

\smallskip 

The proof of Theorem~\ref{thm:KL-bound-gaussian} combines three main
ingredients.
First, the privatized KL divergence is reduced to that of a privatized
\(1\)-dependent Gaussian process.
Although this simplifies the dependence structure, controlling the
effect of privatization on the KL divergence remains challenging.
Second, we reduce this divergence to a sum of conditional KL
divergences involving only the underlying process~\(X\).
This reduction is obtained by iterating a conditional KL-contraction
argument, with the \(\alpha^4\) factor arising from two successive
privacy contractions.
Finally, these conditional KL divergences are controlled using the
stationary Gaussian structure of~\(X\).

Section~\ref{section:element_proofs} presents the main tools developed
in this paper for these steps, some of which may be of independent
interest beyond the present problem.
The proofs of Theorems~\ref{thm-borne-inf:cas-non-interactif}
and~\ref{thm:KL-bound-gaussian} are given in
Section~\ref{subsection:proof-lower-bound-thm}.

%%%%%%%%%%%
%%%%%%%%%%%%%%%

%%%%%%%%%%
%Section 2.2
%%%%%%%%%%%%%

\subsection{Matching upper bound: construction of the estimator}
\label{section-upper-bounds}

Recall that \(X=(X_t)_{t\in\mathbb Z}\) is a centered real-valued stationary Gaussian process with unknown spectral density
\(f\in\mathcal F(C_0,C_1,s)\), where \(\mathcal F(C_0,C_1,s)\) is defined in~\eqref{defi-parameter-class-F}.
We now construct an estimator of \(f\) attaining the rate in the lower bound of Theorem~\ref{thm-borne-inf:cas-non-interactif}.
To identify the origin of the \(\alpha^4\) privacy cost, the construction mirrors the structure of the parameter class \(\mathcal F(C_0,C_1,s)\): we estimate separately the scale \(\gamma_f(0)\) and the normalized dependence structure.

%%%%%%%

\paragraph{Scale and reduced spectral density.}
Since \(X\) is real-valued, the autocovariance sequence \(\gamma_f\) is even, and the Fourier representation~\eqref{eq:f-fourier-representation} of \(f\) can be written as
\begin{equation}
\label{eq:f-fourier-representation-even-cosine}
f(\lambda)
=
\frac{1}{2\pi}
\left(
\gamma_f(0)+2\sum_{h=1}^{\infty}\gamma_f(h)\cos(h\lambda)
\right),
\qquad \lambda\in[-\pi,\pi].
\end{equation}
Factoring out the variance \(\gamma_f(0)=\operatorname{Var}(X_i)\) yields the decomposition
\begin{equation*}
%\label{eq:fred-factorization}
f
=
\gamma_f(0)f_{\mathrm{red}},
\end{equation*}
where the \textit{reduced spectral density} \(f_{\mathrm{red}}\) is
\begin{equation}
\label{eq:fred-fourier-representation}
f_{\mathrm{red}}(\lambda)
=
\frac{1}{2\pi}
\left(
1+2\sum_{h=1}^{\infty}\rho_f(h)\cos(h\lambda)
\right),
\qquad
\lambda\in[-\pi,\pi].
\end{equation}
Here \(\rho_f(h)=\gamma_f(h)/\gamma_f(0)\) is the \textit{lag-\(h\) autocorrelation coefficient} introduced in Section~\ref{section-intro}.

Thus, estimating \(f\) reduces to estimating the variance
\(\gamma_f(0)\) and the reduced spectral density \(f_{\mathrm{red}}\).
We analyze these two tasks separately and then combine the resulting
estimators to estimate \(f\). The \(\alpha^4\) privacy cost will be
inherited from the reduced spectral density estimation error term.

%%%%%%%%%%%%%%%

\paragraph{Privatization mechanism.}
For any \(\alpha>0\), define the privatized observations
\(Z_i=(Z_{i,1},Z_{i,2})\), \(i\in[N]\), by
\begin{equation}
\label{defi-K_lap}
Z_{i,1}
=
\mathbf 1_{\{|X_i|> C_0\}}+\frac{3}{\alpha}W_{i,1},
\qquad \qquad
Z_{i,2}
=
\operatorname{sgn}(X_i)+\frac{3}{\alpha}W_{i,2},
\end{equation}
where the \(W_{i,j}\)'s are i.i.d.\ centered standard Laplace random
variables, independent of \(X\).
We denote by \(K^{\mathrm{Lap},\alpha}\) the corresponding one-observation
privatization kernel, mapping \(X_i\) to \(Z_i=(Z_{i,1},Z_{i,2})\).
We write
\[
K_{1:N}^{\mathrm{Lap},\alpha}
=
\bigl(K^{\mathrm{Lap},\alpha},\ldots,K^{\mathrm{Lap},\alpha}\bigr)
\]
for the corresponding coordinatewise mechanism, obtained by applying
\(K^{\mathrm{Lap},\alpha}\) independently to \(X_1,\ldots,X_N\).
It is straightforward to check that
\(K^{\mathrm{Lap},\alpha}\in\mathcal M_\alpha\), and hence
\(K_{1:N}^{\mathrm{Lap},\alpha}\in\mathcal M_\alpha^N\).

%%%%
%%%

\subsubsection{The scale component: variance estimation}

We estimate the variance \(\gamma_f(0)\) from the privatized data \((Z_{i,1})_{i\in[N]}\) defined in~\eqref{defi-K_lap}.
We first introduce the probability \(p_f:=\mathbb P(|X_i|>C_0)\).
Since \(X_i\sim\mathcal N(0,\gamma_f(0))\), this probability can be written as a function of the variance \(\gamma_f(0)\) as follows:
\begin{equation*}
p_f
=2\pa{1-
\Phi\!\left(\frac{C_0}{\sqrt{\gamma_f(0)}}\right)},
\end{equation*}
where \(\Phi\) denotes the standard Gaussian distribution function. Hence, \(p_f\) determines the variance \(\gamma_f(0)\) through the inverse relation
\begin{equation}
\label{eq:inverse-formula-motivatin-estimator}
\gamma_f(0)
=
\left(\frac{C_0}{\Phi^{-1}(1-p_f/2)}\right)^2.
\end{equation}

Since the variables \(Z_{i,1}\) are unbiased for \(p_f\), namely \(\mathbb E[Z_{i,1}]=\mathbb P(|X_i|>C_0)=p_f\), a natural estimator of \(p_f\) is the empirical mean \(\frac1N\sum_{i=1}^N Z_{i,1}\).
However, since \(\gamma_f(0)\in[C_0^{-1},C_0]\), the probability parameter \(p_f\) belongs to
\(I_{C_0}:=\cro{2\pa{1-\Phi(C_0^{3/2})},2\pa{1-\Phi(C_0^{1/2})}}\); we therefore set
\begin{equation}
\label{clipping}
\hat p_f=\Pi_{I_{C_0}}\pa{\frac1N\sum_{i=1}^N Z_{i,1}},
\end{equation}
where \(\Pi_{[a,b]}(x)=(x\wedge b)\vee a\) denotes the projection onto \([a,b]\).
Replacing \(p_f\) by \(\hat p_f\) in the inverse formula~\eqref{eq:inverse-formula-motivatin-estimator} yields the \textit{variance estimator}
\begin{equation}
\label{defi-variance-estimator}
\hat\gamma(0)
=
\left(
\frac{C_0}{\Phi^{-1}(1-\hat p_f/2)}
\right)^2.
\end{equation}

\begin{remark}
The projection step~\eqref{clipping} ensures that \(\hat p_f \in I_{C_0}\).
On this interval, the map \(u\mapsto \bigl(C_0/\Phi^{-1}(1-u/2)\bigr)^2\) is Lipschitz.
Consequently, the estimation error of \(\hat\gamma(0)\) is controlled by that of \(\hat p_f\).
\end{remark}

%%%%%%%%%%
\medskip 
%%%%%%%%%%

The next proposition provides an upper bound on the quadratic risk of the estimator \(\hat\gamma(0)\).

\begin{proposition}
\label{lem:variance-estimator}
For every \(N\ge1\) and every \(\alpha>0\), under the privatization mechanism \(K_{1:N}^{\mathrm{Lap},\alpha}\) in~\eqref{defi-K_lap}, the estimator \(\hat\gamma(0)\) satisfies
\begin{equation*}
\sup_{f\in\mathcal F(C_0,C_1,s)}
\mathbb E_{f,K_{1:N}^{\mathrm{Lap},\alpha}}
\left[
|\hat\gamma(0)-\gamma_f(0)|^2
\right]
\ \le\
C_{C_0,C_1,s}\,\pa{
1\wedge\cro{\pa{\alpha^2\wedge1}N}^{-1}},
\end{equation*}
where \(C_{C_0,C_1,s}\) depends only on \((C_0,C_1,s)\).
\end{proposition}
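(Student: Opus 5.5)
Since $\hat\gamma(0)$ is a Lipschitz function of $\hat p_f$ on $I_{C_0}$, the proof reduces to bounding the mean squared error of $\hat p_f$. Both $\hat\gamma(0)$ and $\gamma_f(0)$ lie in $[C_0^{-1},C_0]$, so the risk is trivially at most $C_0^2$. This handles the ``$1\wedge$'' part of the bound, and it remains to show a bound of order $[(\alpha^2\wedge1)N]^{-1}$.

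\textbf{Lipschitz reduction.} On $I_{C_0}$, the argument $1-u/2$ stays in $[\Phi(C_0^{1/2}),\Phi(C_0^{3/2})]$. This interval lies strictly inside $(1/2,1)$, so $\Phi^{-1}(1-u/2)\in[C_0^{1/2},C_0^{3/2}]$ is bounded away from $0$ and $\infty$. The derivative of $\Phi^{-1}$ is $1/\varphi(\Phi^{-1})$, which is bounded on this range. Hence $g(u)=(C_0/\Phi^{-1}(1-u/2))^2$ is $L_{C_0}$-Lipschitz on $I_{C_0}$. Moreover, $p_f\in I_{C_0}$ and the projection $\Pi_{I_{C_0}}$ is $1$-Lipschitz with $\Pi_{I_{C_0}}(p_f)=p_f$. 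Writing $\bar Z=\frac1N\sum_i Z_{i,1}$, we therefore get
\[
|\hat\gamma(0)-\gamma_f(0)|^2\le L_{C_0}^2|\bar Z-p_f|^2 .
\]

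\textbf{Variance decomposition.} Since $\mathbb E\bar Z=p_f$, we have $\mathbb E|\bar Z-p_f|^2=\operatorname{Var}(\bar Z)$. Because the Laplace noise is independent of $X$,
\[
\operatorname{Var}(\bar Z)=\frac{9\cdot 2}{\alpha^2N}+\frac{1}{N^2}\sum_{i,j}\operatorname{Cov}(B_i,B_j),
\qquad B_i=\mathbf 1_{\{|X_i|>C_0\}}.
\]
The first term is of order $1/(\alpha^2N)$. It remains to show that the dependent term is $O(1/N)$ uniformly over the class; adding the two then gives the claimed $(\alpha^2\wedge1)N$ rate.

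\textbf{Main obstacle: covariances of the indicators.} The covariances of $B_i=\psi(X_i)$, with $\psi$ even, must be controlled by the autocorrelations. I would use the Hermite expansion (Mehler's formula) for the standardized Gaussian pair $(X_i,X_j)/\sqrt{\gamma_f(0)}$ with correlation $\rho=\rho_f(i-j)$. This gives
\[
\operatorname{Cov}(\psi(X_i),\psi(X_j))=\sum_{k\ge1}c_k^2\rho^k/k!,
\qquad \sum_k c_k^2/k!=\operatorname{Var}(\psi)\le1.
\]
Hence $|\operatorname{Cov}(B_i,B_j)|\le|\rho_f(i-j)|$, and in fact it is $\le\rho^2$, since $c_1=0$ because $\psi$ is even. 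Therefore
\[
\frac1{N^2}\sum_{i,j}|\operatorname{Cov}(B_i,B_j)|\le\frac1N\sum_{h\in\mathbb Z}|\rho_f(h)|.
\]
By Cauchy--Schwarz and the Sobolev condition with $s>1/2$,
\[
\sum_{h\neq0}|\rho_f(h)|\le C_1\Big(\sum_{h\ne0}|h|^{-2s}\Big)^{1/2},
\]
which is finite and depends only on $(C_1,s)$. This yields the $O(1/N)$ bound with a constant depending on $(C_0,C_1,s)$ and completes the proof.
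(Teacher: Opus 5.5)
Your proof is correct and follows the paper's argument step by step: the Lipschitz inversion on $I_{C_0}$, the projection reduction, the Laplace-noise term $18/(\alpha^2N)$, the covariance bound $|\operatorname{Cov}(B_i,B_j)|\le|\rho_f(i-j)|$, Cauchy--Schwarz summability, and the trivial bound $C_0^2$. The only difference is that you obtain the covariance bound directly from Mehler's Hermite expansion, whereas the paper cites Gebelein's inequality, which is itself proved by that expansion; your observation that evenness of $\psi$ improves this to $\rho_f(i-j)^2$ is correct but not needed.
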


The bound above shows that local privacy reduces the effective sample size
for estimating the variance \(\gamma_f(0)\) from \(N\) to
\(1\vee\cro{(\alpha^2\wedge1)N}\).
Thus, the scale component can be estimated with an \(\alpha^2\) rather than
an \(\alpha^4\) privacy cost.
Therefore, it does not by itself account for the \(\alpha^4\) privacy cost
in Theorem~\ref{thm-borne-inf:cas-non-interactif}.

\begin{remark}
When \(C_0>1\), the rate in Proposition~\ref{lem:variance-estimator} is also minimax optimal up to constants. 
Unlike the \(\alpha^4\) lower bound of Theorem~\ref{thm-borne-inf:cas-non-interactif}, the corresponding lower bound follows from standard arguments for i.i.d.\ locally private models.
\end{remark}

%%%%%%%%%%%%%%%%%%

%%%%%%%%%%%%%%%%%

\subsubsection{The dependence component: reduced spectral density estimation}

We estimate the dependence component in two standard steps.
First, we estimate the autocorrelation coefficients \(\rho_f(h)\) up
to a suitably chosen cutoff lag \(H\).
Then, we plug these estimates into the Fourier
representation~\eqref{eq:fred-fourier-representation} of
\(f_{\mathrm{red}}\), setting the coefficients at lags larger than
\(H\) to zero.
The less standard ingredient is the use of the privatized signs
\((Z_{i,2})_{i\in[N]}\) to estimate the autocorrelation coefficients.
This construction is based on the following elementary result, whose
proof is omitted.

\begin{lemma}
\label{lem:sign-gaussian}
Let \(X=(X_t)_{t\in\mathbb Z}\) be a centered stationary Gaussian
process with positive variance and autocorrelation function \(\rho\).
Then \(\left(\operatorname{sgn}(X_t)\right)_{t\in\mathbb Z}\) is a centered
stationary process with unit variance and autocorrelation function
\(h\mapsto \frac{2}{\pi}\arcsin(\rho(h))\).
\end{lemma}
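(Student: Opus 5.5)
The plan is to check the three claims of Lemma~\ref{lem:sign-gaussian} in turn: that the sign process is centered, has unit variance, and is stationary with the stated autocorrelation. The only nontrivial input is Sheppard's orthant formula for a bivariate Gaussian.

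\textbf{Centering and unit variance.} Write \(\sigma^2=\gamma(0)>0\). Each \(X_t\sim\mathcal N(0,\sigma^2)\) is symmetric and satisfies \(\mathbb P(X_t=0)=0\). Hence, almost surely, \(\operatorname{sgn}(X_t)\in\{-1,1\}\), so \(\operatorname{sgn}(X_t)^2=1\). By symmetry, \(\mathbb E[\operatorname{sgn}(X_t)]=0\), and therefore the variance equals \(1\).

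\textbf{Stationarity.} Applying the coordinatewise measurable map \(\operatorname{sgn}\) to a strictly stationary process gives a strictly stationary process. A stationary Gaussian process is strictly stationary, since its finite-dimensional laws are determined by \(\gamma\). It follows that
\[
\mathbb E[\operatorname{sgn}(X_t)\operatorname{sgn}(X_{t+h})]
\]
depends only on \(h\).

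\textbf{Autocorrelation.} Fix \(h\) and let \(r=\rho(h)\in[-1,1]\). After dividing by \(\sigma\), the pair \((U,V)=(X_t,X_{t+h})/\sigma\) is a standard bivariate Gaussian with correlation \(r\). Since the signs take values \(\pm1\),
\[
\mathbb E[\operatorname{sgn}U\operatorname{sgn}V]=2\,\mathbb P(UV>0)-1 .
\]
The key step, and the only real obstacle, is Sheppard's formula
\[
\mathbb P(UV>0)=\frac12+\frac1\pi\arcsin r .
\]
I would prove it geometrically. Write \(V=rU+\sqrt{1-r^2}\,W\) with \(W\) independent of \(U\), and set \(\theta=\arccos r\). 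Then \((U,V)=\langle G,e_1\rangle,\langle G,e_\theta\rangle\) for a standard Gaussian vector \(G\) in \(\mathbb R^2\), where \(e_1\) and \(e_\theta\) are unit vectors at angle \(\theta\). Because \(G\) is rotation invariant, its direction is uniform on the circle. The two inner products have opposite signs exactly when that direction falls in two arcs of total length \(2\theta\). Hence
\[
\mathbb P(UV<0)=\frac{\theta}{\pi}=\frac{\arccos r}{\pi},
\]
and so
\[
\mathbb E[\operatorname{sgn}U\operatorname{sgn}V]=1-\frac{2\arccos r}{\pi}=\frac2\pi\arcsin r,
\]
using \(\arccos r=\pi/2-\arcsin r\).

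The degenerate cases \(r=\pm1\), where \(V=\pm U\) almost surely, are covered by the same formula, or can be checked directly. Since the variance is \(1\), this covariance is the autocorrelation, which proves the lemma.
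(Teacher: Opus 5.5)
Your proof is correct and complete. It covers centering and unit variance (using $\mathbb P(X_t=0)=0$), strict stationarity of the sign process, and Sheppard's orthant formula, which you derive correctly from the rotation invariance of a planar standard Gaussian, including the degenerate cases $r=\pm1$. The paper omits the proof of this lemma as elementary, and your argument is the standard one it implicitly relies on.
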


Since the Laplace noises are centered i.i.d.\ random variables independent of
\(X\), the covariance function of
\(Z^{(2)}:=(Z_{i,2})_{i\in[N]}\) coincides, at nonzero lags, with that of
\((\operatorname{sgn}(X_t))_{t\in\mathbb Z}\).
Lemma~\ref{lem:sign-gaussian} therefore suggests applying the inverse
map \(u\mapsto\sin(\pi u/2)\) to the empirical covariance of
\(Z^{(2)}\).
Accordingly, we define
\begin{equation}
\label{eq:def-rho-hat}
\hat\rho(h)
\ := \
\sin\!\left(
\frac{\pi}{2N}
\sum_{j=1}^{N-h} Z_{j,2}Z_{j+h,2}
\right),
\qquad h\in[N-1].
\end{equation}
Then, for a cutoff lag \(H\in\{0,\ldots,N-1\}\), the
\textit{reduced spectral density estimator} is defined by
\begin{equation}
\label{eq:def-fred-hat}
\hat f_{\mathrm{red},H}(\lambda)
\ := \
\frac{1}{2\pi}
\left(
1+2\sum_{h=1}^{H}\hat\rho(h)\cos(h\lambda)
\right),
\qquad \lambda\in[-\pi,\pi],
\end{equation}
with the convention \(\sum_{h=1}^{H}\hat\rho(h)\cos(h\lambda)=0\) if \(H=0\).
For \(N\geq2\), we further set
\[
H^*
\ := \
\left\lfloor
\bigl((1\wedge\alpha^4)N\bigr)^{1/(2s+1)}
\right\rfloor.
\]
This choice balances two errors: the estimation error for the first \(H^*\) autocorrelation coefficients, and the truncation bias due to the remaining unestimated coefficients, namely those at lags \(h>H^*\).

%%%%%%%%%%
\medskip 
%%%%%%%%%%

The next proposition gives the corresponding \(L^2\)-risk bound for \(\hat f_{\mathrm{red},H^*}\).

\begin{proposition}
\label{prop:fred-estimator}
For every \(N\ge2\) and every \(\alpha>0\), under the privatization mechanism
\(K_{1:N}^{\mathrm{Lap},\alpha}\) in~\eqref{defi-K_lap}, the estimator
\(\hat f_{\mathrm{red},H^*}\) satisfies
\begin{equation*}
\sup_{f\in\mathcal F(C_0,C_1,s)}
\mathbb E_{f,K_{1:N}^{\mathrm{Lap},\alpha}}
\left[
\|\hat f_{\mathrm{red},H^*}-f_{\mathrm{red}}\|^2
\right]
\ \le\
C_{C_1,s}\,
\pa{
1\wedge\cro{\pa{\alpha^4\wedge1}N}^{-\frac{2s}{2s+1}}},
\end{equation*}
where \(C_{C_1,s}\) depends only on \(C_1\) and \(s\).
\end{proposition}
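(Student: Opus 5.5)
By Parseval, since \(f_{\mathrm{red}}\) and \(\hat f_{\mathrm{red},H^*}\) are cosine series with constant term \(1/(2\pi)\),
\[
\|\hat f_{\mathrm{red},H^*}-f_{\mathrm{red}}\|^2
=\frac{1}{\pi}\sum_{h=1}^{H^*}\bigl(\hat\rho(h)-\rho_f(h)\bigr)^2+\frac1\pi\sum_{h>H^*}\rho_f(h)^2 .
\]
The bias term is at most \(C_1^2(H^*+1)^{-2s}\) by the Sobolev condition in \(\mathcal F(C_0,C_1,s)\). With \(H^*\asymp((1\wedge\alpha^4)N)^{1/(2s+1)}\) whenever \((1\wedge\alpha^4)N\ge1\), this is of the target order. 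When \((1\wedge\alpha^4)N<1\) we have \(H^*=0\), and the risk is bounded by \(\frac1\pi\sum_h\rho_f(h)^2\le C_1^2/\pi\), which gives the ``\(1\wedge\)'' part. It therefore suffices to prove a uniform per-lag bound
\[
\mathbb E\bigl[(\hat\rho(h)-\rho_f(h))^2\bigr]\ \le\ C_{C_1,s}\Bigl(\frac{1}{(1\wedge\alpha^4)N}+\frac{h^2}{N^2}\Bigr),\qquad 1\le h\le H^* .
\]
Summing over \(h\le H^*\) then gives \(H^*/((1\wedge\alpha^4)N)+H^{*3}/N^2\). The first term is of the target order. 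For the second, \(H^{*}\le N^{1/(2s+1)}\) and \(s>1/2\) give \(H^{*3}/N^2\le N^{3/(2s+1)-2}\le N^{-2s/(2s+1)}\), since \(3\le 2(2s+1)-2s=2s+2\).

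For the per-lag bound, \(\sin\) is \(1\)-Lipschitz and \(\rho_f(h)=\sin(\frac\pi2 r(h))\) with \(r(h)=\frac2\pi\arcsin\rho_f(h)\) by Lemma~\ref{lem:sign-gaussian}. So it suffices to control \(\mathbb E[(\hat r(h)-r(h))^2]\), where \(\hat r(h)=\frac1N\sum_{j=1}^{N-h}Z_{j,2}Z_{j+h,2}\). Since the Laplace noises are independent, centered, and independent of \(X\), we have \(\mathbb E\hat r(h)=\frac{N-h}{N}r(h)\), and the squared bias is \(h^2/N^2\). For the variance, write \(Z_{j,2}=S_j+cW_j\) with \(S_j=\operatorname{sgn}(X_j)\) and \(c=3/\alpha\), and expand
\[
Z_{j,2}Z_{j+h,2}=S_jS_{j+h}+c\,(S_jW_{j+h}+W_jS_{j+h})+c^2W_jW_{j+h}.
\]
The terms \(c^2W_jW_{j+h}\) are uncorrelated across \(j\) (for \(h\ge1\)), with variance \(c^4\cdot 4\); this contributes \(O(\alpha^{-4}/N)\), which is the \(\alpha^4\) term. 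The cross terms are uncorrelated with each other except across at most \(O(1)\) index pairs, and contribute \(O(\alpha^{-2}/N)\le O(\alpha^{-4}\vee1)/N\). The remaining term is the empirical covariance of the sign process \(S\), whose variance must be \(O(1/N)\) uniformly over the class.

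This last variance bound is the main obstacle, because \(S\) is a non-Gaussian functional of \(X\). My approach is a Hermite/Mehler expansion of the bivariate sign function. It gives
\[
\operatorname{Cov}\bigl(S_jS_{j+h},S_kS_{k+h}\bigr)
\]
as a series in the correlations among \(X_j,X_{j+h},X_k,X_{k+h}\), and each term carries at least one factor \(\rho_f(k-j)\), \(\rho_f(k-j\pm h)\), or a product of two of them. Alternatively I would use the Gebelein-type inequality \(|\operatorname{Cov}(F(X_A),G(X_B))|\le \sup|\text{cross-correlation}|\,\|F\|_2\|G\|_2\) for functions of jointly Gaussian blocks. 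Either way,
\[
\bigl|\operatorname{Cov}\bigl(S_jS_{j+h},S_kS_{k+h}\bigr)\bigr|\ \le\ C\sum_{\pm,\epsilon\in\{0,\pm h\}}|\rho_f(k-j+\epsilon)|\ \wedge\ 1 .
\]
Summing over \(j,k\le N\) gives \(O(N\sum_{m}|\rho_f(m)|)\). Finally, \(\sum_m|\rho_f(m)|\le 1+C_1(\sum_{m\ne0}|m|^{-2s})^{1/2}=C_{C_1,s}\) by Cauchy–Schwarz and \(s>1/2\). Dividing by \(N^2\) yields \(O(1/N)\), which completes the per-lag bound and the proposition.
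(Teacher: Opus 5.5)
Your overall architecture matches the paper's: the Parseval split, the truncation bias $C_1^2(1+H)^{-2s}$, the $\pi/2$-Lipschitz transfer through $\sin$, the bias term $h^2/N^2$, the orthogonal splitting of the Laplace contributions, and the $\ell^1$-summability of $\rho_f$ are all fine. The gap is in the step you flag yourself as the main obstacle. You need $|\operatorname{Cov}(S_jS_{j+h},S_kS_{k+h})|\le C\sum_{\epsilon\in\{0,\pm h\}}|\rho_f(k-j+\epsilon)|$ with $C$ depending only on $(C_1,s)$, and neither of your two routes delivers this.

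Consider first the Gebelein-type inequality for functions of Gaussian \emph{pairs}. If ``sup of cross-correlation'' is read entrywise, the inequality is false. Take unit-variance $(U_1,U_2,V_1,V_2)$ with $\operatorname{corr}(U_1,U_2)=\operatorname{corr}(V_1,V_2)=r$, $\operatorname{corr}(U_i,V_i)=\delta$, $\operatorname{corr}(U_1,V_2)=\operatorname{corr}(U_2,V_1)=-\delta$, and $\delta=(1-r)/4$. Then $\operatorname{Cov}(\operatorname{sgn}(U_1-U_2),\operatorname{sgn}(V_1-V_2))=\frac2\pi\arcsin(1/2)=1/3$, while every cross-correlation tends to $0$ as $r\to1$. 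If instead it is read as the maximal linear correlation between $\operatorname{span}(X_j,X_{j+h})$ and $\operatorname{span}(X_k,X_{k+h})$, you get the Kolmogorov--Rozanov bound. Converting it to entrywise correlations then requires writing a unit-variance combination $aX_j+bX_{j+h}$ and using $a^2+b^2\le 1/(1-|\rho_f(h)|)$. So this route only yields a constant of order $1/(1-|\rho_f(h)|)$. The Hermite/Mehler route has the same defect: $\rho_f(h)$ enters the four-variable expansion as a within-block correlation. When $|\rho_f(h)|$ may be close to $1$, neither convergence nor a uniform bound on the terms carrying a cross factor is automatic.

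The missing ingredient is a uniform non-degeneracy bound, $\sup_{f\in\mathcal F(C_0,C_1,s)}\sup_{h\neq0}|\rho_f(h)|\le 1-\varepsilon_{C_1,s}<1$. It does not follow from $|\rho_f(h)|\le C_1|h|^{-s}$ at small lags when $C_1\ge1$. The paper proves it in Lemma~\ref{lem:rho-away-from-one} by a compactness argument:
\begin{itemize}
\item $\mathcal F(1,C_1,s)$ is compact in $L^2([-\pi,\pi])$;
\item $f\mapsto\gamma_f(h)$ is continuous;
\item $1\pm\gamma_f(h)=\int f(\lambda)(1\pm\cos(h\lambda))\,d\lambda>0$ on this class.
\end{itemize}
The paper then applies Proposition~\ref{thm:cov-bound-signs} with $\epsilon=\varepsilon_{C_1,s}$, which gives exactly your covariance bound with $C=1/\varepsilon_{C_1,s}$. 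Once this lemma is added, or replaced by a sign-specific argument that is genuinely uniform in $\rho_f(h)$, the rest of your proof goes through as written.
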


Proposition~\ref{prop:fred-estimator} shows that estimation of the reduced spectral density incurs an \(\alpha^4\) privacy cost.
By contrast, Proposition~\ref{lem:variance-estimator} shows that estimation of the variance incurs only an \(\alpha^2\) privacy cost.
Therefore, the normalized dependence structure \(f_{\mathrm{red}}\), rather than the unknown scale \(\gamma_f(0)\), is the bottleneck in our estimation strategy for \(f\) and the source of
the \(\alpha^4\) privacy cost.

This bottleneck is not merely an artifact of our estimation strategy.
A closer look at the proof of Theorem~\ref{thm-borne-inf:cas-non-interactif} shows that the lower bound is established on the unit-variance subclass \(\mathcal F(1,C_1,s)\), where \(\gamma_f(0)=1\) and hence \(f=f_{\mathrm{red}}\).
Therefore, the same lower bound applies to the estimation of \(f_{\mathrm{red}}\).
Together with Proposition~\ref{prop:fred-estimator}, this establishes the minimax rate for estimating the normalized dependence structure \(f_{\mathrm{red}}\).
In particular, it shows that the \(\alpha^4\) privacy cost is intrinsic to this component of the problem.

%%%%%%%%%%%%%%

%%%%%%%%%%%%%

\subsubsection{Combining the scale and dependence components: spectral density estimation}

We now combine the variance estimator \(\hat\gamma(0)\) with the reduced
spectral density estimator \(\hat f_{\mathrm{red},H^*}\).
We define the \textit{spectral density estimator} by
\begin{equation}
\label{eq:def-f-hat}
\hat f_{H^*}(\lambda)
\ := \
\hat\gamma(0)\,\hat f_{\mathrm{red},H^*}(\lambda),
\qquad \quad  \lambda\in[-\pi,\pi],
\end{equation}
where \(\hat\gamma(0)\) is defined in~\eqref{defi-variance-estimator} and
\(\hat f_{\mathrm{red},H^*}\) is defined in~\eqref{eq:def-fred-hat}.

Combining the preceding risk bounds for the scale and dependence components yields the following upper bound for spectral density estimation.

\begin{theorem}
\label{thm:f-estimator}
For every \(N\ge2\) and every \(\alpha>0\), under the privatization mechanism
\(K_{1:N}^{\mathrm{Lap},\alpha}\) in~\eqref{defi-K_lap}, the estimator
\(\hat f_{H^*}\) satisfies
\begin{equation*}
\sup_{f\in\mathcal F(C_0,C_1,s)}
\mathbb E_{f,K_{1:N}^{\mathrm{Lap},\alpha}}
\left[
\|\hat f_{H^*}-f\|^2
\right]
\ \le\
C_{C_0,C_1,s}\,
\pa{
1\wedge\cro{\pa{\alpha^4\wedge1}N}^{-\frac{2s}{2s+1}}},
\end{equation*}
where \(C_{C_0,C_1,s}\) depends only on \((C_0,C_1,s)\).
\end{theorem}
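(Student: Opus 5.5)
We combine Proposition~\ref{lem:variance-estimator} and Proposition~\ref{prop:fred-estimator} through the product structure \(f=\gamma_f(0)f_{\mathrm{red}}\) and \(\hat f_{H^*}=\hat\gamma(0)\hat f_{\mathrm{red},H^*}\). The starting point is the algebraic decomposition
\[
\hat f_{H^*}-f
=
\hat\gamma(0)\bigl(\hat f_{\mathrm{red},H^*}-f_{\mathrm{red}}\bigr)
+\bigl(\hat\gamma(0)-\gamma_f(0)\bigr)f_{\mathrm{red}} .
\]
Using \((u+v)^2\le 2u^2+2v^2\), we get
\[
\|\hat f_{H^*}-f\|^2
\le
2\hat\gamma(0)^2\|\hat f_{\mathrm{red},H^*}-f_{\mathrm{red}}\|^2
+2|\hat\gamma(0)-\gamma_f(0)|^2\|f_{\mathrm{red}}\|^2 .
\]

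The first term is controlled deterministically in \(\hat\gamma(0)\). The projection step~\eqref{clipping} forces \(\hat p_f\in I_{C_0}\), and the inverse map in~\eqref{defi-variance-estimator} is monotone. Hence \(\hat\gamma(0)\in[C_0^{-1},C_0]\) almost surely, because the endpoints of \(I_{C_0}\) correspond exactly to these variances. Checking this endpoint correspondence is the only slightly delicate point. It gives \(\hat\gamma(0)^2\le C_0^2\). Taking expectations and applying Proposition~\ref{prop:fred-estimator}, the first term is at most \(2C_0^2C_{C_1,s}\bigl(1\wedge[(\alpha^4\wedge1)N]^{-2s/(2s+1)}\bigr)\). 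No independence between \(\hat\gamma(0)\) and \(\hat f_{\mathrm{red},H^*}\) is needed, since we use an almost-sure bound rather than a moment bound.

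For the second term, we bound \(\|f_{\mathrm{red}}\|^2\) uniformly over the class. By Parseval applied to~\eqref{eq:fred-fourier-representation},
\[
\|f_{\mathrm{red}}\|^2=\frac{1}{2\pi}\Bigl(1+2\sum_{h\ge1}\rho_f(h)^2\Bigr)\le \frac{1}{2\pi}(1+C_1^2),
\]
since \(|h|^{2s}\ge1\) for \(h\neq0\). Proposition~\ref{lem:variance-estimator} then bounds this term by \(C'_{C_0,C_1,s}\bigl(1\wedge[(\alpha^2\wedge1)N]^{-1}\bigr)\).

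It remains to show that this variance rate is dominated by the target rate. Set \(x=(\alpha^4\wedge1)N\) and \(y=(\alpha^2\wedge1)N\). Since \(\alpha^4\wedge1\le\alpha^2\wedge1\), we have \(y\ge x\), and therefore \(1\wedge y^{-1}\le 1\wedge x^{-1}\). In addition, \(1\wedge x^{-1}\le 1\wedge x^{-2s/(2s+1)}\), because \(2s/(2s+1)<1\): when \(x\ge1\), \(x^{-1}\le x^{-2s/(2s+1)}\), and when \(x<1\), both sides equal \(1\). Summing the two contributions and taking the supremum over \(f\in\mathcal F(C_0,C_1,s)\) gives the claimed bound with a constant depending only on \((C_0,C_1,s)\).
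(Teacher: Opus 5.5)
Your proposal is correct and follows essentially the same argument as the paper: the decomposition $\hat f_{H^*}-f=\hat\gamma(0)(\hat f_{\mathrm{red},H^*}-f_{\mathrm{red}})+(\hat\gamma(0)-\gamma_f(0))f_{\mathrm{red}}$, the almost-sure bound $\hat\gamma(0)\in[C_0^{-1},C_0]$, the Parseval bound $\|f_{\mathrm{red}}\|^2\le(1+C_1^2)/(2\pi)$, and the observation that the $\alpha^2$ variance rate is dominated by the $\alpha^4$ target rate. Your explicit check that the endpoints of $I_{C_0}$ map to $C_0^{-1}$ and $C_0$ spells out a detail the paper only states as following by construction.
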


Together with Theorem~\ref{thm-borne-inf:cas-non-interactif}, Theorem~\ref{thm:f-estimator} establishes that \(\hat f_{H^*}\) is minimax optimal over \(\mathcal F(C_0,C_1,s)\), up to multiplicative constants.
The detailed proofs of Propositions~\ref{lem:variance-estimator} and~\ref{prop:fred-estimator}, and of Theorem~\ref{thm:f-estimator}, are provided in Section~\ref{section:proof-sketch-upper-bound-thm}.

%%%%%%%
%%%%%%%%

\paragraph{Comparison with previous upper bounds.}

To the best of our knowledge, the only previous upper bound for
\(L^2\)-recovery of the spectral density under LDP is due to
Kroll~\cite{Kroll2024}.
A more recent work by Butucea et
al.~\cite{butucea2025nonparametric} also studies centered stationary
Gaussian processes, but its results concern either related estimation problems under the same
LDP model, or the same \(L^2\)-recovery problem under a
relaxation of the LDP model; see
Section~\ref{sec:related-problems}.

Kroll's upper bound applies to Sobolev classes.
His estimator achieves the minimax rate up to extra polylogarithmic
factors, including \(\log^{2+2\delta}(N)\), for some \(\delta>0\).
Theorem~\ref{thm:f-estimator} removes these extra factors and therefore
attains the minimax risk up to constants.

Beyond the rate improvement, the construction is substantially different.
The approach of \cite{Kroll2024} relies on a privatized version of the classical periodogram: the raw observations \(X_i\) are first clipped at a deterministic threshold \(\tau\), and the resulting bounded variables are then privatized with Laplace noise calibrated to this threshold.
This generic clipping reduction is common in local privacy, as it allows one to work, with high probability, in a setting close to the classical non-private one, so that much of the standard spectral analysis can be adapted.
However, the threshold \(\tau\) has to grow with \(N\) in order to control the clipping bias for Gaussian observations, which introduces the additional polylogarithmic factor in the final rate.

By contrast, our estimator avoids this clipping-calibration tradeoff.
It uses bounded transformations of the Gaussian observations, namely threshold and sign transformations, that retain the information needed to estimate the scale and dependence components, while remaining directly compatible with local privacy.
In particular, the sign transformation preserves the autocorrelation signal through the Gaussian arcsine identity, which allows us to reconstruct the reduced spectral density without introducing a growing clipping threshold.

This advantage comes with new technical difficulties. Our estimators
are based on non-linear functionals of dependent Gaussian sequences,
so the usual analysis for linear processes no longer applies.
We handle this by bringing into the privacy analysis covariance
inequalities for Gaussian functionals, in particular the results of
Gebelein~\cite{Gebelein1941} and
Kolmogorov--Rozanov~\cite{KolmogorovRozanov1960}, which are key to
proving Propositions~\ref{lem:variance-estimator}
and~\ref{prop:fred-estimator}. For the latter, the main argument is
summarized by a general covariance bound for bounded pairwise
transformations of a Gaussian process; see
Proposition~\ref{thm:cov-bound-signs}.

In conclusion, our results show that, under LDP, minimax analysis can
go beyond the generic strategy of clipping the observations and then
adapting the classical non-private analysis.
Problem-specific transformations, applied before privatization, can
yield sharper rates, at the cost of a problem-specific theoretical
analysis.
%In our setting, this approach avoids the polylogarithmic losses caused by generic clipping-based methods.

%Such tools are not standard in the local privacy literature, and their use here is essential to control the dependence between non-linear transformations of the Gaussian process.  
%More broadly, the comparison illustrates the benefit of using transformations tailored to the statistical structure of the problem before privatization.  Combined with covariance tools adapted to Gaussian dependence,

%%% Local Variables:
%%% mode: latex
%%% TeX-master: "main.tex"
%%% ispell-local-dictionary: "american"
%%% End:

%%%%%%%%%%%%%

\section{Related problems and open questions}  \label{sec:related-problems}

In this section, we examine several problems related to our main results.
We first consider lag-\(h\) autocovariance estimation, for which the
tools developed above yield sharp bounds. 
We then investigate the local nature of the \(\alpha^4\) privacy cost through a testing problem. 
We next examine the interplay between asymptotic equivalence and local
differential privacy. 
Finally, we discuss several questions left open by the present work.

%%%%%%%%%%%

\subsection{Lag-\(h\) autocovariance estimation}

Let \(f\in\mathcal F(C_0,C_1,s)\) be the unknown spectral density of
the process \(X\), and fix a lag \(h\in[N-1]\).
We consider the estimation of the autocovariance coefficient \(\gamma_f(h)\),
a problem that was recently studied in~\cite{butucea2025nonparametric}. 
On the Sobolev class \(\mathcal F(C_0,C_1,s)\), for
\(h\leq\sqrt N\) and \(N\) sufficiently large, Proposition~1
of~\cite{butucea2025nonparametric} gives an upper bound on the
quadratic risk of order
\begin{equation}\label{literature-bound}
\log^{2+2\delta}(N)
\left(
\frac{1}{N} + \frac{1}{N\alpha^4}
\right),
\end{equation}
for some \(\delta>0\).
We show that the estimators developed in Section~\ref{section-upper-bounds} yield the corresponding bound without a polylogarithmic loss. 
Moreover, the bound holds for every \(N\geq2\) and
every \(h\in[N-1]\), and naturally remains bounded as
\(N\alpha^4\to0\).

Recall the estimator \(\hat\rho(h)\) of the lag-\(h\) autocorrelation
coefficient \(\rho_f(h)\), defined in~\eqref{eq:def-rho-hat}.
Since \(\gamma_f(h)=\gamma_f(0)\rho_f(h)\), a natural estimator of the
autocovariance coefficient \(\gamma_f(h)\) is
\(\hat\gamma(0)\hat\rho(h)\), where the variance estimator
\(\hat\gamma(0)\) is defined in~\eqref{defi-variance-estimator}.
However, here we use a debiased version: we first debias the empirical
covariance before applying the map \(u\mapsto\sin(\pi u/2)\), and then
multiply by \(\hat\gamma(0)\). 
This yields
\begin{equation}
\label{eq:def-gamma-tilde}
\widetilde\gamma(h)
\ := \
\hat\gamma(0)\,
\sin\!\left(
\frac{\pi}{2(N-h)}
\sum_{j=1}^{N-h} Z_{j,2}Z_{j+h,2}
\right).
\end{equation}
The following result provides a risk bound for \(\widetilde\gamma(h)\).

\begin{proposition}
\label{prop:lag-h-autocovariance-upper-bound}
For every \(N\geq2\), every \(\alpha>0\), and every \(h\in[N-1]\),
under the privatization mechanism
\(K_{1:N}^{\mathrm{Lap},\alpha}\) in~\eqref{defi-K_lap}, the estimator
\(\widetilde\gamma(h)\) satisfies
\begin{equation*}
\sup_{f\in\mathcal F(C_0,C_1,s)}
\mathbb E_{f,K_{1:N}^{\mathrm{Lap},\alpha}}
\left[
|\widetilde\gamma(h)-\gamma_f(h)|^2
\right]
\ \leq \
C_{C_0,C_1,s}\,
\pa{
1\wedge
\cro{\pa{\alpha^4\wedge1}(N-h)}^{-1}
},
\end{equation*}
where \(C_{C_0,C_1,s}\) depends only on \((C_0,C_1,s)\).
\end{proposition}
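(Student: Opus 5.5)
We decompose the error as
\[
\widetilde\gamma(h)-\gamma_f(h)=\bigl(\hat\gamma(0)-\gamma_f(0)\bigr)\tilde\rho(h)+\gamma_f(0)\bigl(\tilde\rho(h)-\rho_f(h)\bigr),
\]
where \(\tilde\rho(h):=\sin\bigl(\frac{\pi}{2(N-h)}\sum_{j=1}^{N-h}Z_{j,2}Z_{j+h,2}\bigr)\). Since \(|\tilde\rho(h)|\le1\) and \(\gamma_f(0)\le C_0\), the quadratic risk is at most
\[
2\,\mathbb E|\hat\gamma(0)-\gamma_f(0)|^2+2C_0^2\,\mathbb E|\tilde\rho(h)-\rho_f(h)|^2 .
\]
The first term is bounded by Proposition~\ref{lem:variance-estimator} by \(C\bigl(1\wedge[(\alpha^2\wedge1)N]^{-1}\bigr)\). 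This is dominated by \(1\wedge[(\alpha^4\wedge1)(N-h)]^{-1}\), because \(N\ge N-h\) and \(\alpha^2\wedge1\ge\alpha^4\wedge1\). The trivial bound by a constant is also available, since \(\hat\gamma(0)\) and \(\gamma_f(0)\) both lie in a bounded set. It therefore remains to bound the second term.

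For the second term we use Lemma~\ref{lem:sign-gaussian}. Write \(S:=\frac{1}{N-h}\sum_{j=1}^{N-h}Z_{j,2}Z_{j+h,2}\). Since \(h\ge1\), the Laplace noises at indices \(j\) and \(j+h\) are distinct, independent, centered, and independent of \(X\). Hence
\[
\mathbb E[Z_{j,2}Z_{j+h,2}]=\mathbb E[\operatorname{sgn}(X_j)\operatorname{sgn}(X_{j+h})]=\tfrac{2}{\pi}\arcsin\rho_f(h),
\]
so that \(\mathbb E S=\frac2\pi\arcsin\rho_f(h)\). This is exactly why the normalization \(N-h\) is used: \(S\) is unbiased. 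Since \(u\mapsto\sin(\pi u/2)\) is \(\pi/2\)-Lipschitz on \(\mathbb R\), we get \(\mathbb E|\tilde\rho(h)-\rho_f(h)|^2\le\frac{\pi^2}{4}\operatorname{Var}(S)\). Moreover \(|\tilde\rho|,|\rho_f|\le1\) gives the trivial bound \(4\). It therefore suffices to show
\[
\operatorname{Var}(S)\le C_{C_1,s}\,\bigl((\alpha^4\wedge1)(N-h)\bigr)^{-1}.
\]

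To bound the variance, write \(Z_{j,2}=\sigma_j+bW_j\) with \(\sigma_j=\operatorname{sgn}(X_j)\) and \(b=3/\alpha\). Then
\[
Z_{j,2}Z_{j+h,2}=\sigma_j\sigma_{j+h}+b\bigl(\sigma_jW_{j+h}+W_j\sigma_{j+h}\bigr)+b^2W_jW_{j+h}.
\]
The three groups are mutually uncorrelated after centering, because the noise is centered and independent of \(X\). We treat them in turn.
\begin{itemize}
\item The pure-noise sum \(\sum_j W_jW_{j+h}\) has uncorrelated summands, since products with distinct index pairs have zero covariance. Its variance is therefore \((N-h)\mathbb E[W^2]^2\), which contributes \(O(b^4/(N-h))=O(\alpha^{-4}/(N-h))\).
\item The cross terms contribute \(O(b^2/(N-h))\). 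Indeed, \(\mathbb E[\sigma_jW_{j+h}\sigma_kW_{k+h}]\) vanishes unless \(j=k\), and pairings such as \(\sigma_jW_{j+h}\) with \(W_k\sigma_{k+h}\) require \(j+h=k\). Each of these gives only \(O(N-h)\) nonzero terms bounded by constants.
\item The sign part \(\frac1{N-h}\sum_j\sigma_j\sigma_{j+h}\) needs a variance bound \(C_{C_1,s}/(N-h)\). This is the main obstacle, since it involves covariances of nonlinear functionals of a dependent Gaussian sequence.
\end{itemize}

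For the sign part, the plan is to invoke the paper's covariance bound for bounded pairwise transformations, Proposition~\ref{thm:cov-bound-signs}, which is used for Proposition~\ref{prop:fred-estimator}. It should give
\[
|\operatorname{Cov}(\sigma_j\sigma_{j+h},\sigma_k\sigma_{k+h})|\lesssim \max_{u\in\{k-j,\,k-j\pm h\}}|\rho_f(u)|,
\]
for instance via the Gebelein / Kolmogorov--Rozanov maximal-correlation bounds. Summing over \(k\) then gives \(\operatorname{Var}\le\frac{C}{N-h}\sum_{u\in\mathbb Z}|\rho_f(u)|\). The sum \(\sum_u|\rho_f(u)|\le1+C_{C_1,s}\) holds by Cauchy--Schwarz with \(s>1/2\), uniformly over the class.

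Combining the three contributions, and using \(1+\alpha^{-2}+\alpha^{-4}\lesssim(\alpha^4\wedge1)^{-1}\), we get \(\operatorname{Var}(S)\lesssim((\alpha^4\wedge1)(N-h))^{-1}\). Taking the minimum with the trivial constant bound then yields the claim. If the cited covariance proposition is not stated in exactly this form, we would instead derive the covariance bound through the Hermite (arcsine) expansion of \(\operatorname{sgn}\), which controls fourth-order sign covariances by sums of absolute correlations among the four indices.
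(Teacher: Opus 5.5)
Your proposal is correct and follows the paper's proof. The paper uses the same decomposition \(\widetilde\gamma(h)-\gamma_f(h)=\tilde\rho(h)\bigl(\hat\gamma(0)-\gamma_f(0)\bigr)+\gamma_f(0)\bigl(\tilde\rho(h)-\rho_f(h)\bigr)\), bounds the first term with Proposition~\ref{lem:variance-estimator}, and handles the second by unbiasedness plus the \(\pi/2\)-Lipschitz map, using the variance bound from the proof of Proposition~\ref{prop:fred-estimator} (Lemma~\ref{lem:variance-check-gamma} and Proposition~\ref{thm:cov-bound-signs}, rescaled by \(N/(N-h)\)). The one step you leave implicit is the hypothesis \(|\rho_f(k)|\le 1-\varepsilon_{C_1,s}\) required by Proposition~\ref{thm:cov-bound-signs}; it is supplied uniformly over the class by Lemma~\ref{lem:rho-away-from-one}, and that is what makes your constant depend only on \((C_1,s)\).
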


For \(\alpha\leq1\) and \(h\leq cN\), where \(c\in(0,1)\) is fixed,
Proposition~\ref{prop:lag-h-autocovariance-upper-bound} gives the rate
\(1\wedge(N\alpha^4)^{-1}\) for lag-\(h\) autocovariance estimation,
thus improving on the previously known bound~\eqref{literature-bound}.
Proposition~\ref{prop:lag-h-autocovariance-upper-bound} follows by
combining the variance bound in
Proposition~\ref{lem:variance-estimator} with an adaptation of the
lag-\(h\) correlation argument used in the proof of
Proposition~\ref{prop:fred-estimator}.
A proof is given in
Appendix~\ref{appendix:lag-h-autocovariance}.

%%%%%%
\medskip 
%%%%%%%

%\textit{Matching lower bound.}
The next result shows that the privacy cost \(1/(N\alpha^4)\) is unavoidable for lag-\(h\) autocovariance estimation.
For a privacy mechanism \(K_{1:N}\), let
\(\mathcal E_{\mathbb R}(K_{1:N})\) denote the set of all measurable
functions from the output space of \(K_{1:N}\) to \(\mathbb R\).

\begin{proposition}
\label{prop:lag-h-autocovariance-lower-bound}
For every \(h\geq1\), there exists a constant
\(c_{C_1,s,h}>0\), depending only on \((C_1,s,h)\), such that, for
every \(N\geq h+1\) and every \(\alpha>0\),
\begin{equation*}
\inf_{K_{1:N}\in\mathcal M_\alpha^N} \
\inf_{\hat g\in\mathcal E_{\mathbb R}(K_{1:N})} \ 
\sup_{f\in\mathcal F(C_0,C_1,s)}
\mathbb E_{f,K_{1:N}}
\left[
|\hat g(Z)-\gamma_f(h)|^2
\right]
\ \geq \
c_{C_1,s,h}\,
\pa{
1\wedge
\cro{\pa{\alpha^4\wedge1}(N-h)}^{-1}
},
\end{equation*}
where the first infimum is over all \(\alpha\)-LDP mechanisms on \(N\) coordinates, and
the second over all real-valued measurable functions of the corresponding privatized sample \(Z=(Z_1,\ldots,Z_N)\).
\end{proposition}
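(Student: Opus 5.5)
The plan is a two-point (Le Cam) argument built on Theorem~\ref{thm:KL-bound-gaussian}. Fix \(h\geq1\). The base spectral density is \(f_0\equiv 1/(2\pi)\), so that \(\gamma_{f_0}(0)=1\) and \(\rho_{f_0}(h')=0\) for every \(h'\neq0\). For \(\delta\in[0,1/2]\) to be chosen, set
\[
f_\pm(\lambda)=\frac{1}{2\pi}\Bigl(1\pm\tfrac{\delta}{2}\cos(h\lambda)\Bigr).
\]
These are nonnegative, integrable and even, hence valid spectral densities. They satisfy \(\gamma_{f_\pm}(0)=1\in[C_0^{-1},C_0]\) and \(\rho_{f_\pm}(\pm h)=\pm\delta/2\), with all other correlations zero. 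The Sobolev constraint becomes \(2h^{2s}\delta^2/4\le C_1^2\), which holds as soon as \(\delta\le \delta_0:=\min\{1/2,\sqrt2\,C_1h^{-s}\}\). Thus \(f_\pm\in\mathcal F(1,C_1,s)\subseteq\mathcal F(C_0,C_1,s)\).

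Next I check the hypotheses of Theorem~\ref{thm:KL-bound-gaussian}. Take \(a=1/2\) and write \(f_+-f_-=\frac{a\,\delta'}{2\pi}\cos(h\lambda)\) with \(\delta'=2\delta\in[-1,1]\). The lower bound condition requires \(f_\pm\ge \frac{1}{2\pi}(1-\delta/2)\ge \frac{3}{8\pi}=\frac{3a}{4\pi}\), which holds since \(\delta\le1/2\). The theorem then gives, for every \(K_{1:N}\in\mathcal M_\alpha^N\),
\[
\D\bigl(P^Z_{f_+,K_{1:N}}\|P^Z_{f_-,K_{1:N}}\bigr)\le 4C(\alpha^4\wedge1)(N-h)\delta^2.
\]
The estimand separation is \(|\gamma_{f_+}(h)-\gamma_{f_-}(h)|=\delta\); with the convention \(\gamma_f(h)=\int f e^{ih\lambda}\), one gets \(\gamma_{f_\pm}(h)=\pm\delta/2\).

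The conclusion is the standard reduction. For any mechanism and any estimator \(\hat g\), the supremum risk is at least
\[
\frac{\delta^2}{4}\cdot\frac12\bigl(1-\mathrm{TV}(P^Z_{f_+},P^Z_{f_-})\bigr).
\]
By Pinsker, \(\mathrm{TV}\le\sqrt{\D/2}\). Choose
\[
\delta^2=\min\Bigl\{\delta_0^2,\;\bigl[8C(\alpha^4\wedge1)(N-h)\bigr]^{-1}\Bigr\},
\]
so that \(\D\le 1/2\) and \(\mathrm{TV}\le1/2\). This yields a risk of at least \(\delta^2/16\ge c_{C_1,s,h}\bigl(1\wedge[(\alpha^4\wedge1)(N-h)]^{-1}\bigr)\) with \(c_{C_1,s,h}=\min\{\delta_0^2,(8C)^{-1}\}/16\), which depends only on \((C_1,s,h)\). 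Since the bound is uniform over \(K_{1:N}\), taking infima over mechanisms and estimators gives the claim.

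There is no real obstacle here, because the heavy lifting, the \(\alpha^4\) contraction, is already supplied by Theorem~\ref{thm:KL-bound-gaussian}. What remains is only careful bookkeeping: the normalization of \(\delta\) so that it stays in \([-1,1]\) with the chosen \(a\), the Sobolev membership for both hypotheses (where the \(h\)-dependence of the constant enters through \(h^{-s}\)), and handling the regime \((\alpha^4\wedge1)(N-h)\le1\) via the cap \(\delta_0\).
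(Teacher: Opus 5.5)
Your argument is correct and is essentially the paper's: a two-point construction around the white-noise density \(1/(2\pi)\) that perturbs only the lag-\(h\) coefficient, then Theorem~\ref{thm:KL-bound-gaussian} with Pinsker, then the reduction from estimation to testing via the sign of \(\hat g\). The paper only differs in routing this through the local testing bound of Proposition~\ref{prop:local-lag-h-autocovariance-lower-bound-test}.

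There is one harmless arithmetic slip. Since \(2\pi f=\gamma_f(0)+2\sum_{k\ge1}\gamma_f(k)\cos(k\lambda)\), your \(f_\pm\) have \(\gamma_{f_\pm}(h)=\rho_{f_\pm}(h)=\pm\delta/4\), not \(\pm\delta/2\). The separation is therefore \(\delta/2\) and the final bound becomes \(\delta^2/64\), while the Sobolev condition only gets looser; only the constant changes.
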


For every fixed \(h\geq1\),
Propositions~\ref{prop:lag-h-autocovariance-upper-bound}
and~\ref{prop:lag-h-autocovariance-lower-bound} identify the minimax
rate for lag-\(h\) autocovariance estimation on
\(\mathcal F(C_0,C_1,s)\), up to constants.
The proof of the lower bound, given in
Appendix~\ref{appendix:lag-h-autocovariance}, follows from the local
lower bound stated in
Proposition~\ref{prop:local-lag-h-autocovariance-lower-bound-test}
below.

A lower bound exhibiting a \(1/(N\alpha^4)\) privacy term for lag-\(h\) autocovariance estimation was also obtained in~\cite{butucea2025nonparametric}.
Their approach is based on Fisher-information calculations, whereas our proof relies on the KL-contraction method developed in Sections~\ref{section_minimax_lower_bounds} and~\ref{section:element_proofs}.
Our approach provides a dependent-data counterpart to the KL-contraction methodology that, in the i.i.d.\ setting, has proved highly successful~\cite{duchi2018minimax}.
The flexibility of this method is illustrated by the following stronger result, which gives a local version of the above lower bound.

%%%%%%%

%%%%%%%%%%

\subsection{Local nature of the \(\alpha^4\) privacy cost}
\label{section:local-nature-alpha4-testing}

Given \(N\geq2\), \(\alpha>0\), a lag \(h\in[N-1]\), and a spectral
density \(f\), we ask for the smallest perturbation size
\(\varepsilon\) for which an \(\alpha\)-LDP test can distinguish
between the following two alternatives 
\begin{equation}
\label{eq:local-lag-h-epsilon-sp-density-choice}
f_\pm(\lambda)
\ := \
f(\lambda)
\pm
\frac{\varepsilon}{\pi}\cos(h\lambda),
\qquad
\lambda\in[-\pi,\pi].
\end{equation}
The two functions \(f_\pm\) are thus obtained by perturbing only the lag-\(h\)
autocovariance coefficient of \(f\), that is,
\(\gamma_{f_\pm}(h)=\gamma_f(h)\pm\varepsilon\), while the coefficients
at all other nonnegative lags remain unchanged.

Formally, for a privacy mechanism \(K_{1:N}\), let
\(\mathcal E_{[0,1]}(K_{1:N})\) denote the set of all measurable
functions from the output space of \(K_{1:N}\) to \([0,1]\).
A randomized test with test function \(\phi\) rejects the null
hypothesis \(f=f_-\) in favor of the alternative \(f=f_+\),
conditionally on \(Z\), with probability \(\phi(Z)\).
That is, its decision is obtained by
drawing a Bernoulli random variable with parameter \(\phi(Z)\), where
the outcomes \(1\) and \(0\) correspond to choosing \(f_+\) and \(f_-\),
respectively.
The maximal risk associated with \(\phi\in\mathcal E_{[0,1]}(K_{1:N})\) is then
\begin{equation}
\label{eq:statistical_max_risk}
\mathcal T_{N,\alpha}
\bigl(\phi,f,\varepsilon,K_{1:N}\bigr)
\ := \
\max\pa{
\mathbb E_{f_-,K_{1:N}}\cro{\phi(Z)},
\mathbb E_{f_+,K_{1:N}}\cro{1-\phi(Z)}
}\;.
\end{equation}
The maximal risk \(1/2\) is trivial: it is attained by guessing completely at random, corresponding to the constant test function \(\phi\equiv1/2\). 
For a nontrivial risk level \(r\in(0,1/2)\), the problem is to determine the smallest order of \(\varepsilon\) for which
there exist \(K_{1:N}\in\mathcal M_\alpha^N\) and
\(\phi\in\mathcal E_{[0,1]}(K_{1:N})\) satisfying
\(\mathcal T_{N,\alpha}(\phi,f,\varepsilon,K_{1:N})\leq r\).

A natural test, based on the estimator
\(\widetilde\gamma(h)\) and the
privatization mechanism \(K_{1:N}^{\mathrm{Lap},\alpha}\) from~\eqref{eq:def-gamma-tilde} and~\eqref{defi-K_lap}, respectively, is
\begin{equation}
\label{eq:estimator_to_test}
\phi_f(Z)
:=
\begin{cases}
1 & \text{if \(\widetilde\gamma(h)\geq\gamma_f(h)\),}\\
0 & \text{otherwise.}
\end{cases}
\end{equation}
Here, \(\widetilde\gamma(h)\) is computed from the privatized sample
\(Z\). 
For any \(r\in(0,1/2)\), set
\begin{equation}
\label{eq:local-testing-separation-rate}
\varepsilon_{\mathrm{up}}
\ := \
C_{C_0,C_1,s,r}
\left(
1\wedge
\left[(\alpha^4\wedge1)(N-h)\right]^{-1/2}
\right),
\end{equation}
where \(C_{C_0,C_1,s,r}>0\) depends only on \((C_0,C_1,s,r)\).
Choosing \(C_{C_0,C_1,s,r}\) sufficiently large and using
Proposition~\ref{prop:lag-h-autocovariance-upper-bound}, we prove that
\begin{equation}
\label{eq:local-testing-upper-bound}
\mathcal T_{N,\alpha}
\bigl(
\phi_f,f,\varepsilon_{\mathrm{up}},
K_{1:N}^{\mathrm{Lap},\alpha}
\bigr)
\ \leq \
r,
\end{equation}
provided that \(f\in\mathcal F(C_0,C_1,s)\) is bounded away from zero
and satisfies the Sobolev constraint strictly, and that the value of
\(\varepsilon_{\mathrm{up}}\)
in~\eqref{eq:local-testing-separation-rate} is sufficiently small.
A detailed proof of~\eqref{eq:local-testing-upper-bound} is
given in Appendix~\ref{appendix:local-testing}.

%%%%%%
\medskip
%%%%%%

The following proposition shows that the separation rate
in~\eqref{eq:local-testing-separation-rate} cannot be improved.

\begin{proposition}
\label{prop:local-lag-h-autocovariance-lower-bound-test}
Let \(r\in(0,1/2)\), \(h\geq1\), \(f_{\min}>0\), and
\(f\) be a spectral density satisfying \(f\geq f_{\min}\).
Then \(f_+\) and \(f_-\) defined
by~\eqref{eq:local-lag-h-epsilon-sp-density-choice} are spectral
densities for every \(\varepsilon\in(0,\pi f_{\min}]\),
and there exists a constant
\(c_{r,f_{\min}}\in(0,\pi f_{\min}]\), depending only on
\((r,f_{\min})\), such that, for every \(N\geq h+1\) and every
\(\alpha>0\), 
we have
\[
\inf_{K_{1:N}\in\mathcal M_\alpha^N} \ 
\inf_{\phi\in\mathcal E_{[0,1]}(K_{1:N})} \ 
\mathcal T_{N,\alpha}
\bigl(\phi,f,\varepsilon_{\mathrm{low}},K_{1:N}\bigr)
\ \geq \
r,
\]
where 
\(\varepsilon_{\mathrm{low}}:=c_{r,f_{\min}}
\bigl(1\wedge\left[(\alpha^4\wedge1)(N-h)\right]^{-1/2}\bigr)\).
\end{proposition}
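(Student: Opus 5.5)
The plan is to reduce the testing lower bound to a bound on the privatized KL divergence, which Theorem~\ref{thm:KL-bound-gaussian} supplies directly. There are two parts: first check that \(f_\pm\) are valid spectral densities, then bound the testing risk from below via Pinsker's inequality.

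\emph{Step 1 (validity).} The functions \(f_\pm=f\pm\frac{\varepsilon}{\pi}\cos(h\cdot)\) are even and integrable. For \(\varepsilon\le \pi f_{\min}\) they satisfy
\[
f_\pm\ \ge\ f_{\min}-\varepsilon/\pi\ \ge\ 0,
\]
so both are spectral densities.

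\emph{Step 2 (matching the hypotheses of Theorem~\ref{thm:KL-bound-gaussian}).} Set \(a:=\pi f_{\min}\). The difference is
\[
f_+-f_-=\frac{2\varepsilon}{\pi}\cos(h\lambda)=\frac{a\delta}{2\pi}\cos(h\lambda)
\quad\text{with}\quad \delta:=\frac{4\varepsilon}{a}=\frac{4\varepsilon}{\pi f_{\min}}.
\]
We need \(\delta\in[-1,1]\), and for the lower-bound condition
\[
\min_\pm f_\pm\ \ge\ f_{\min}-\frac{\varepsilon}{\pi}\ \ge\ \frac{3a}{4\pi}=\frac{3f_{\min}}{4}
\]
we need \(\varepsilon\le \pi f_{\min}/4\). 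Both conditions hold once we impose \(c_{r,f_{\min}}\le \pi f_{\min}/4\). Since \(\varepsilon_{\mathrm{low}}\le c_{r,f_{\min}}\), this also keeps \(c_{r,f_{\min}}\in(0,\pi f_{\min}]\). Theorem~\ref{thm:KL-bound-gaussian} then gives, uniformly over \(K_{1:N}\in\mathcal M_\alpha^N\),
\[
\D\bigl(P^Z_{f_+,K_{1:N}}\,\big\|\,P^Z_{f_-,K_{1:N}}\bigr)\ \le\ C(\alpha^4\wedge1)(N-h)\,\frac{16\varepsilon^2}{\pi^2f_{\min}^2}.
\]

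\emph{Step 3 (from KL to testing risk).} For any \(\phi\) with values in \([0,1]\), the maximum is at least the average:
\[
\mathcal T_{N,\alpha}\ \ge\ \tfrac12\bigl(\mathbb E_{f_-}\phi+1-\mathbb E_{f_+}\phi\bigr)\ \ge\ \tfrac12\bigl(1-\mathrm{TV}(P^Z_{f_+},P^Z_{f_-})\bigr).
\]
Pinsker's inequality gives \(\mathrm{TV}\le\sqrt{\D/2}\). It therefore suffices to have \(\D\le 2(1-2r)^2\). With \(\varepsilon=\varepsilon_{\mathrm{low}}\) we have
\[
(\alpha^4\wedge1)(N-h)\,\varepsilon_{\mathrm{low}}^2\ \le\ c_{r,f_{\min}}^2.
\]
This holds in both cases of the minimum: if \((\alpha^4\wedge1)(N-h)\le1\), the factor is bounded by \(1\cdot c^2\); otherwise it equals exactly \(c^2\). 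Hence
\[
\D\ \le\ \frac{16C}{\pi^2f_{\min}^2}\,c_{r,f_{\min}}^2,
\]
and choosing
\[
c_{r,f_{\min}}:=\min\Bigl\{\tfrac{\pi f_{\min}}{4},\ \tfrac{\pi f_{\min}(1-2r)}{\sqrt{8C}}\Bigr\}
\]
yields \(\mathcal T_{N,\alpha}\ge r\) for every \(\phi\) and every \(K_{1:N}\). This constant depends only on \((r,f_{\min})\), since \(C\) is universal.

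\emph{Main obstacle.} All the substance lies in Theorem~\ref{thm:KL-bound-gaussian}, which we take as given. What remains to check is only the bookkeeping of the constant: the restriction \(\varepsilon\le\pi f_{\min}/4\) must hold simultaneously with the requirements of the theorem, and the bound must be uniform over the two regimes of the minimum \(1\wedge[\cdot]^{-1/2}\).
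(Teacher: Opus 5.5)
Your proposal is correct and follows essentially the same route as the paper. You check that \(f_\pm\) are spectral densities, apply Theorem~\ref{thm:KL-bound-gaussian} with \(a=\pi f_{\min}\) and \(\delta=4\varepsilon/(\pi f_{\min})\) under \(\varepsilon\le\pi f_{\min}/4\), and conclude via the Bayes-risk, total-variation and Pinsker bound. Your constant \(\pi f_{\min}(1-2r)/\sqrt{8C}\) coincides exactly with the paper's \(\sqrt2\,\pi f_{\min}(1-2r)/(4\sqrt C)\).
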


Proposition~\ref{prop:local-lag-h-autocovariance-lower-bound-test}
establishes a local two-point lower bound around a reference spectral
density \(f\), rather than a global worst-case lower bound over a
large nonparametric class. It therefore shows that the \(\alpha^4\)
privacy cost is not caused by a pathological least favorable spectral
density, but persists locally around every spectral density bounded
away from zero. 
Its proof relies on Theorem~\ref{thm:KL-bound-gaussian} and is given
in Appendix~\ref{appendix:local-testing}.

%%%%%%%%%%%%%%%%%%%%%

%%%%%%%%%%%%%%%%%%%

\subsection{Asymptotic equivalence and local differential privacy}
\label{sect3.3:asympto-equiv}

In the non-private setting, \cite[Theorem~1.1]{golubev10} shows that
spectral density estimation for a stationary Gaussian time series is
asymptotically equivalent to an experiment with independent Gaussian
observations. Under local differential privacy, however, this
equivalence appears to fail. Indeed, recall that the \(\alpha^4\)
privacy cost in our problem originates from the dependence between the
observations rather than from their marginal distributions. This
suggests that an \(\alpha\)-LDP procedure for the independent model
cannot, in general, be transferred to the time series model with
asymptotically equivalent risks. The purpose of this subsection is to
formalize and establish this incompatibility.

We begin by describing the two experiments more precisely and recalling
the classical notion of asymptotic equivalence.
In the first experiment, one observes
\(X_{1:N}=(X_1,\ldots,X_N)\), consisting of \(N\) consecutive values
of a centered stationary Gaussian process with spectral density \(f\); its distribution is denoted by \(P_f^N\). In the second, one observes
an \(N\)-dimensional centered Gaussian vector
\(Y_{1:N}=(Y_1,\ldots,Y_N)\) with independent components, whose
distribution \(Q_f^N\) is characterized by
\[
Y_j\sim\mathcal N\bigl(0,J_{j,N}(f)\bigr),
\qquad \quad
J_{j,N}(f)
=
\frac{N}{2\pi}
\int_{2(j-1)\pi/N}^{2j\pi/N}f(\lambda)\,d\lambda,
\qquad j\in[N],
\]
where \(f\) is extended to \(\mathbb R\) by \(2\pi\)-periodicity.
For any class \(\mathcal G\) of spectral densities, we refer to the
experiments
\(\mathcal P_N:=(P_f^N)_{f\in\mathcal G}\) and
\(\mathcal Q_N:=(Q_f^N)_{f\in\mathcal G}\) as the
\emph{time series model} and the \emph{independent model},
respectively.
When \(\mathcal G=\widetilde{\mathcal F}(M,s)\), where
\(\widetilde{\mathcal F}(M,s)\) is the Sobolev-type class defined
below, \cite[Theorem~1.1]{golubev10} shows that the sequences
\(\mathcal P=(\mathcal P_N)_{N\geq1}\) and
\(\mathcal Q=(\mathcal Q_N)_{N\geq1}\) are asymptotically equivalent in the following sense.

The Le Cam deficiency of \(\mathcal P_N\) with respect to
\(\mathcal Q_N\) is
\begin{equation*}
\mathcal D_{\mathrm{LC}}
\bigl(\mathcal P_N\,\big\|\,\mathcal Q_N\bigr)
\ := \ 
\inf_L\sup_{f\in\mathcal G}
\left\|L\circ P_f^N-Q_f^N\right\|_{\mathrm{TV}},
\end{equation*}
where the infimum is over all Markov kernels \(L\) from
\(\mathbb R^N\) to \(\mathbb R^N\).
The sequence \(\mathcal P\) is asymptotically more informative than
\(\mathcal Q\) if this deficiency tends to zero as \(N\to\infty\), and
the two sequences \(\mathcal P\) and \(\mathcal Q\) are asymptotically equivalent if the deficiencies
in both directions tend to zero, that is, if
\(\mathcal D_{\mathrm{LC}}(\mathcal P_N\|\mathcal Q_N)\to0\) and
\(\mathcal D_{\mathrm{LC}}(\mathcal Q_N\|\mathcal P_N)\to0\).
For bounded losses, asymptotic equivalence means that statistical
procedures can be transferred from one model to the other, with the
difference between the corresponding risks tending to zero uniformly
over \(f\in\mathcal G\).

To determine whether this transfer property remains valid under local
differential privacy, we adapt Le Cam's deficiency to account for the
constraint that observations can only be accessed through privacy
mechanisms. Let \((\alpha_N)_{N\geq1}\) be a sequence of positive
privacy parameters and suppose that, for each \(N\), observations from
\(\mathcal P_N\) and \(\mathcal Q_N\) can only be accessed through
\(\alpha_N\)-LDP mechanisms. We define 
\begin{equation}
  \label{eq:le-cam-deficiency-ldp}
\mathcal D_{\mathrm{LC},\alpha_N}
\bigl(\mathcal P_N\,\big\|\,\mathcal Q_N\bigr)
\ := \ 
\sup_{K_{1:N}\in\mathcal M_{\alpha_N}^N}\
\inf_{\substack{K'_{1:N}\in\mathcal M_{\alpha_N}^N\\ L'}}\
\sup_{f\in\mathcal G}
\left\|
L'\circ K'_{1:N}\circ P_f^N
-
K_{1:N}\circ Q_f^N
\right\|_{\mathrm{TV}},
\end{equation}
where \(L'\) ranges over all Markov kernels from the output space of
\(K'_{1:N}\) to that of \(K_{1:N}\).
Thus, for each \(\alpha_N\)-LDP mechanism \(K_{1:N}\) applied to
\(\mathcal Q_N\), the infimum seeks an \(\alpha_N\)-LDP mechanism
\(K'_{1:N}\) applied to \(\mathcal P_N\) and a post-processing \(L'\)
such that \(L'\circ K'_{1:N}\circ P_f^N\) approximates
\(K_{1:N}\circ Q_f^N\), uniformly over \(f\in\mathcal G\).
Consequently, if this deficiency is smaller than \(\varepsilon\),
every statistical procedure based on \(\alpha_N\)-LDP data from
\(\mathcal Q_N\) has a counterpart based on \(\alpha_N\)-LDP data from
\(\mathcal P_N\) whose risk differs by less than \(\varepsilon\),
uniformly over \(f\in\mathcal G\), for any loss bounded by one.
As in the classical setting, we say that \(\mathcal P\) is
asymptotically more informative than \(\mathcal Q\) under LDP if
\(\mathcal D_{\mathrm{LC},\alpha_N}
(\mathcal P_N\|\mathcal Q_N)\to0\), and that \(\mathcal P\) and \(\mathcal Q\)  are asymptotically equivalent under LDP if
\(\mathcal D_{\mathrm{LC},\alpha_N}
(\mathcal P_N\|\mathcal Q_N)\to0\) and
\(\mathcal D_{\mathrm{LC},\alpha_N}
(\mathcal Q_N\|\mathcal P_N)\to0\).

To state our result precisely, we first introduce the
Sobolev-type class of spectral densities considered in
\cite[Theorem~1.1]{golubev10}. For \(M>0\) and \(s>1/2\), define
\[
\widetilde{\mathcal F}(M,s)
:=
\left\{
f\text{ spectral density}:
f\geq\frac1M,\quad
\gamma_f^2(0)
\left(
1+\sum_{h\in\mathbb Z\setminus\{0\}}
|h|^{2s}|\rho_f(h)|^2
\right)
\leq M
\right\}.
\]
Up to changes in the constants, this class is comparable to uniformly
lower-bounded subclasses of \(\mathcal F(C_0,C_1,s)\); precise
inclusions are given in Appendix \ref{appendix:asymptotic-equivalence-ldp}.
To exclude empty or degenerate parameter classes, we assume
\(M>(2\pi)^{2/3}\).

\begin{proposition}
\label{prop:golubev_etal-upm}
Let \(M>(2\pi)^{2/3}\) and \(s>1/2\). 
Set
\(\mathcal P_N:=(P_f^N)_{f\in\widetilde{\mathcal F}(M,s)}\) and
\(\mathcal Q_N:=(Q_f^N)_{f\in\widetilde{\mathcal F}(M,s)}\).
Let \((\alpha_N)_{N\geq1}\) be a sequence of positive privacy
parameters such that
\(\alpha_N\longrightarrow0\),
and
\(\alpha_N^2N\longrightarrow\infty\).
Then
\[
\liminf_{N\to\infty}
\mathcal D_{\mathrm{LC},\alpha_N}
\bigl(\mathcal P_N\,\big\|\,\mathcal Q_N\bigr)
\ \geq\ 
\frac12.
\]
\end{proposition}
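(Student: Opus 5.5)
The plan is to exhibit, for each large \(N\), one \(\alpha_N\)-LDP mechanism \(K_{1:N}\) on the independent model and two parameters \(f_\pm^{(N)}\in\widetilde{\mathcal F}(M,s)\) with the following two properties. First, \(K_{1:N}\circ Q^N_{f_+}\) and \(K_{1:N}\circ Q^N_{f_-}\) become asymptotically singular, i.e.\ their total variation distance tends to \(1\). Second, for every \(K'_{1:N}\in\mathcal M_{\alpha_N}^N\), the privatized time-series laws \(K'_{1:N}\circ P^N_{f_\pm}\) merge. The gap between the \(\alpha^2\) cost on the independent side and the \(\alpha^4\) cost on the dependent side (Theorem~\ref{thm:KL-bound-gaussian}) leaves room for a perturbation size in between.

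\textbf{Step 1: choice of the pair.} Fix a constant \(c\) with \(c\ge 2/M\) and \((2\pi c)^2<M\). Such a \(c\) exists since \(M>(2\pi)^{2/3}\), with some slack in the lower bound. Take \(a:=2\pi c/3\), so that \(3a/(4\pi)=c/2\), and set
\[
f_\pm:=c\pm\frac{a\delta_N}{4\pi}\cos(\cdot),
\qquad\text{so that}\qquad
f_+-f_-=\frac{a\delta_N}{2\pi}\cos(\cdot).
\]
Then \(\gamma_{f_\pm}(0)=2\pi c\), and only \(\rho(\pm1)\) is nonzero, of order \(\delta_N\). Hence \(f_\pm\in\widetilde{\mathcal F}(M,s)\) and \(\min f_\pm\ge 3a/(4\pi)\) as soon as \(\delta_N\) is small. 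For the size, choose
\[
\omega_N:=\min\bigl(\alpha_N^{-1},\alpha_N^2N\bigr)^{1/2}\to\infty,
\qquad
\delta_N^2:=\frac{\omega_N}{\alpha_N^2N}.
\]
This gives \(\delta_N\to0\), \(\alpha_N^2N\delta_N^2=\omega_N\to\infty\), and \(\alpha_N^4N\delta_N^2=\alpha_N^2\omega_N\le\alpha_N^{3/2}\to0\).

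\textbf{Step 2: the time-series side merges.} By Theorem~\ref{thm:KL-bound-gaussian} with \(h=1\), every \(K'_{1:N}\in\mathcal M_{\alpha_N}^N\) satisfies
\[
\D\bigl(K'P_{f_+}\|K'P_{f_-}\bigr)\le C\alpha_N^4N\delta_N^2\to0.
\]
Pinsker's inequality and data processing then give \(\|L'K'P_{f_+}-L'K'P_{f_-}\|_{\mathrm{TV}}\to0\), uniformly in \((K',L')\).

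\textbf{Step 3: the independent side separates.} Here \(J_{j,N}(f_+)-J_{j,N}(f_-)=\frac{a\delta_N}{2\pi}\,\kappa_{j,N}\), where \(\kappa_{j,N}=\frac{N}{2\pi}\int_{\mathrm{bin}_j}\cos\) is within \(O(1/N)\) of \(\cos(2\pi j/N)\). Take the coordinatewise mechanism \(Z_j=\mathbf 1_{\{|Y_j|>1\}}+\frac{1}{\alpha_N}W_j\) with Laplace noise \(W_j\), which is \(\alpha_N\)-LDP. Use the statistic
\[
T:=\sum_j\kappa_{j,N}Z_j .
\]
Since \(v\mapsto\mathbb P(|\mathcal N(0,v)|>1)\) has derivative bounded below near \(v=2\pi c\), the means of \(T\) under \(f_\pm\) differ by at least \(c'N\delta_N\) (using \(\sum_j\kappa_{j,N}^2\asymp N\)). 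Under either law \(\operatorname{Var}(T)\lesssim N/\alpha_N^2\), because the \(Z_j\) are independent. By Chebyshev, the threshold test at the midpoint has both error probabilities \(O\bigl((\alpha_N^2N\delta_N^2)^{-1}\bigr)\to0\). Hence \(\|KQ_{f_+}-KQ_{f_-}\|_{\mathrm{TV}}\to1\).

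\textbf{Step 4: conclusion.} For any \((K',L')\), the triangle inequality gives
\[
\|KQ_{f_+}-KQ_{f_-}\|_{\mathrm{TV}}\le\|L'K'P_{f_+}-L'K'P_{f_-}\|_{\mathrm{TV}}+2\sup_f\|L'K'P_f-KQ_f\|_{\mathrm{TV}}.
\]
Taking the infimum over \((K',L')\) shows that the deficiency is at least \(\tfrac12(1-o(1)-o(1))\), which gives the \(\liminf\ge\tfrac12\).

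I expect the main obstacle to be the quantitative part of Step 3: tracking the Riemann-sum error in \(\kappa_{j,N}\) and verifying the derivative lower bound uniformly for variances near \(2\pi c\). Both are routine but need care. Step 2 is immediate from Theorem~\ref{thm:KL-bound-gaussian}.
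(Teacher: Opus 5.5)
Your argument is correct after one small fix, and it follows the paper's two-point scheme. The paper perturbs the lag-$1$ coefficient around a constant spectral density. On the time-series side it controls the privatized laws via Theorem~\ref{thm:KL-bound-gaussian}, through Proposition~\ref{prop:local-lag-h-autocovariance-lower-bound-test}, which is that theorem plus Pinsker. On the independent side it separates the privatized laws with Laplace-noised threshold indicators, bin-averaged cosine weights and Chebyshev.

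The only structural difference is the calibration of the perturbation size. The paper takes $\varepsilon_N$ at the time-series critical scale $(\alpha_N^4N)^{-1/2}$. There the time-series testing risk stays $\geq r$, while the independent-model errors are $O(\alpha_N^2\vee(\alpha_N^2N)^{-1})$, so the deficiency is $\geq r-o(1)$, and then $r\uparrow 1/2$. You place $\delta_N$ strictly between the two critical scales, so the time-series total variation tends to $0$ and the independent one to $1$. The triangle inequality then gives $1/2$ directly, which is slightly cleaner.

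The slip is in Step~1. The conditions $c\geq 2/M$ and $(2\pi c)^2<M$ are compatible only if $M^{3/2}>4\pi$, i.e.\ $M>(4\pi)^{2/3}$. The hypothesis $M>(2\pi)^{2/3}$ does not give this; for example, $M=4$ admits no such $c$. The lower bound $2/M$ is unnecessary: take $c\in(1/M,\sqrt M/(2\pi))$ as in the paper. Since $f_\pm\geq c(1-|\delta_N|/6)$ and $\delta_N\to0$, you still get $f_\pm\geq 1/M$ for large $N$. The condition $\min f_\pm\geq 3a/(4\pi)=c/2$ of Theorem~\ref{thm:KL-bound-gaussian} holds for any $|\delta_N|\leq 1$.

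A minor point in Step~3: the variances $J_{j,N}(f_\pm)$ are bin averages of $f_\pm$, hence close to $c$, not $2\pi c$. This is harmless, because the derivative of $v\mapsto\mathbb P(|\mathcal N(0,v)|>1)$ is bounded below on every compact subinterval of $(0,\infty)$.
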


Proposition~\ref{prop:golubev_etal-upm} shows that the classical
asymptotic equivalence between the two models does not extend to LDP.
More precisely, throughout the privacy regime considered above, the
time series model is not asymptotically more informative than the
independent model under LDP.
This result follows by comparing the difficulty of the local testing
problem between \(f_-\) and \(f_+\), introduced in
Section~\ref{section:local-nature-alpha4-testing}, in the two models
\(\mathcal P_N\) and \(\mathcal Q_N\).
The proof is given in Appendix~\ref{appendix:asymptotic-equivalence-ldp}.

\medskip 

The preceding negative result raises a natural complementary question:
under what conditions can a classical asymptotic equivalence remain valid under LDP? 
A simple positive case arises when the Markov kernel mapping
\(\mathcal P_N\) to \(\mathcal Q_N\) acts coordinatewise, that is,
\(L_{1:N}=(L_1,\ldots,L_N)\), where each \(L_i\) is a Markov kernel from \(\mathbb R\) to \(\mathbb R\).
Indeed, the following lemma bounds the LDP deficiency by the classical Le Cam
deficiency restricted to coordinatewise Markov kernels.
It is stated for an arbitrary parameter space \(\Theta\) (for spectral density models, simply take \(\theta=f\)).

\begin{lemma}
\label{lem:TV-compatible}
Let \(N\geq1\) and \(\alpha>0\), let \(\Theta\) be a parameter space,
and consider two statistical models
\(\mathcal P_N=(P_\theta^N)_{\theta\in\Theta}\) and
\(\mathcal Q_N=(Q_\theta^N)_{\theta\in\Theta}\) on
\(\mathbb R^N\). Then
\[
\mathcal D_{\mathrm{LC},\alpha}
\bigl(\mathcal P_N\,\big\|\,\mathcal Q_N\bigr)
\ \leq \
\inf_{L_{1:N}}\
\sup_{\theta\in\Theta}\
\left\|
L_{1:N}\circ P_\theta^N-Q_\theta^N
\right\|_{\mathrm{TV}},
\]
where the infimum is over all Markov kernels
\(L_{1:N}=(L_1,\ldots,L_N)\) acting coordinatewise, with each \(L_i\)
a Markov kernel from \(\mathbb R\) to \(\mathbb R\).
\end{lemma}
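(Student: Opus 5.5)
Given a coordinatewise kernel \(L_{1:N}=(L_1,\ldots,L_N)\), we will build, for every \(\alpha\)-LDP mechanism \(K_{1:N}\) applied to \(\mathcal Q_N\), an \(\alpha\)-LDP mechanism \(K'_{1:N}\) applied to \(\mathcal P_N\), together with the trivial post-processing \(L'=\mathrm{Id}\). The candidate is the composition \(K'_i:=K_i\circ L_i\), that is,
\[
K'_i(S\mid x)=\int K_i(S\mid y)\,L_i(dy\mid x).
\]
The plan is then to show three things in turn: \(K'_i\in\mathcal M_\alpha\), \(K'_{1:N}\circ P_\theta^N=K_{1:N}\circ(L_{1:N}\circ P_\theta^N)\), and that total variation contracts under the Markov kernel \(K_{1:N}\).

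For privacy, fix \(x,x'\) and a measurable set \(S\). Since \(K_i(S\mid y)\le e^\alpha K_i(S\mid y')\) for all \(y,y'\), we get \(\sup_y K_i(S\mid y)\le e^\alpha\inf_{y'}K_i(S\mid y')\). Averaging over \(L_i(\cdot\mid x)\) and \(L_i(\cdot\mid x')\) then gives
\[
K'_i(S\mid x)\le \sup_y K_i(S\mid y)\le e^\alpha \inf_{y'}K_i(S\mid y')\le e^\alpha K'_i(S\mid x').
\]
The output space of \(K'_i\) is the same standard Borel space as that of \(K_i\). Measurability of \(x\mapsto K'_i(S\mid x)\) follows from the standard composition of Markov kernels, so \(K'_i\in\mathcal M_\alpha\).

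For the identity of laws, both mechanisms act coordinatewise with conditionally independent outputs. The product kernel \(\bigotimes_i K'_i\) is therefore the composition \(\bigl(\bigotimes_i K_i\bigr)\circ\bigl(\bigotimes_i L_i\bigr)\), by Fubini applied to rectangles followed by a monotone class argument. Hence \(K'_{1:N}\circ P_\theta^N=K_{1:N}\circ(L_{1:N}\circ P_\theta^N)\).

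Finally, the data-processing inequality for total variation under the Markov kernel \(K_{1:N}\) gives, for every \(\theta\),
\[
\|K_{1:N}\circ L_{1:N}\circ P^N_\theta-K_{1:N}\circ Q^N_\theta\|_{\mathrm{TV}}\le\|L_{1:N}\circ P^N_\theta-Q^N_\theta\|_{\mathrm{TV}}.
\]
Taking the supremum over \(\theta\), then bounding the infimum over \((K',L')\) by this particular choice, then the supremum over \(K_{1:N}\) (the right side no longer depends on it), and lastly the infimum over \(L_{1:N}\), yields the claim. No step is a serious obstacle. The only points needing care are the privacy inequality under mixing, which is handled by the sup/inf argument above, and the measure-theoretic identification of the composed product kernel.
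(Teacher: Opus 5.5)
Your proposal is correct and follows essentially the same route as the paper: both take \(K'_i:=K_i\circ L_i\) with \(L'\) the identity, verify \(\alpha\)-LDP of the composed kernel by averaging the privacy inequality over \(L_i(\cdot\mid x)\) and \(L_i(\cdot\mid x')\), and conclude by contraction of total variation under \(K_{1:N}\). Your sup/inf formulation of the privacy step and your explicit identification of the composed product kernel are slightly more detailed than the paper's, but they do not change the argument.
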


The proof is given in Appendix~\ref{appendix:asymptotic-equivalence-ldp}.

Lemma~\ref{lem:TV-compatible} shows that whenever a classical
asymptotic equivalence is achieved by coordinatewise Markov kernels,
it remains valid under LDP, for any sequence of privacy parameters.
The classical asymptotic equivalence of \cite{golubev10}, however,
cannot be achieved by coordinatewise Markov kernels, a fact that can
be proved directly by comparing the models \(\mathcal P_N\) and
\(\mathcal Q_N\), but that also follows immediately from
Proposition~\ref{prop:golubev_etal-upm} and
Lemma~\ref{lem:TV-compatible}.  Thus, coordinatewise Markov kernels
form a class within which classical asymptotic equivalence is
preserved under LDP, but this class is too restrictive to include the
transformations underlying the classical equivalence of
\cite{golubev10}.  This raises the question of whether classical
asymptotic equivalence can remain compatible with LDP beyond the
coordinatewise setting.  We return to related questions in the next
subsection.

\subsection{Open questions}
\label{subsection:open-pbms}

We conclude this section by discussing several questions left open by the present work.

\paragraph{Pointwise spectral density estimation.}

Consider pointwise estimation of the spectral density at a fixed frequency \(\omega_0\in[-\pi,\pi]\), where the goal is to estimate the scalar quantity \(f(\omega_0)\) from the privatized observations. 
This problem was considered in~\cite{butucea2025nonparametric}. 
On the Sobolev class \(\mathcal F(C_0,C_1,s)\), the result of~\cite{butucea2025nonparametric} gives an upper bound of order
\begin{equation*}
\left(
\frac1N
+
\frac{\log^{2+2\delta}(N)}{N\alpha^4}
\right)^{\frac{2s-1}{2s}}
+
\frac{\log^{2+2\delta}(N)}{N}.
\end{equation*}

The construction developed in Section~\ref{section-upper-bounds}
suggests that this polylogarithmic loss should not be intrinsic. 
Indeed, one can consider the pointwise plug-in estimator
\(\hat f_H(\omega_0)
=
\hat\gamma(0)\,\,\hat f_{\mathrm{red},H}(\omega_0)\),
where \(\hat\gamma(0)\) is defined in~\eqref{defi-variance-estimator} and \(\hat f_{\mathrm{red},H}\) is defined in~\eqref{eq:def-fred-hat}. 
The pointwise bias-variance tradeoff suggests the truncation level
\(H_p
:=
\left\lfloor ((1\wedge \alpha^4)N)^{1/(2s)} \right\rfloor\),
which would lead to the conjectural pointwise rate
\begin{equation*}
\left[
\frac1N
+
\left(
1\wedge\frac1{\alpha^4N}
\right)
\right]^{\frac{2s-1}{2s}},
\end{equation*}
up to constants.
Proving this upper bound should amount to adapting the risk analysis of Section~\ref{section-upper-bounds} to the pointwise loss. 

Establishing a matching lower bound appears more delicate.
The lower-bound techniques developed in Sections~\ref{section_minimax_lower_bounds} and~\ref{section:element_proofs} ultimately reduce the KL analysis to perturbations of a single lag coefficient, through a reduction to a lag-\(h\) model and then to the lag-\(1\) case.
By contrast, natural two-point constructions for pointwise spectral density estimation would involve simultaneous perturbations of many autocovariance coefficients. 
A sharp lower bound for this pointwise problem therefore remains open.

%%%%%%%%%%

\paragraph{Asymptotic equivalence under LDP.}

In Section~\ref{sect3.3:asympto-equiv}, we adapted Le Cam's notion of
deficiency to the LDP constraint in order to formalize the gap between
the privacy costs in two sequences of models that are asymptotically
equivalent in the classical sense.
However, beyond the simple case covered by Lemma~\ref{lem:TV-compatible}, it remains unclear under what conditions this LDP deficiency tends to zero.

Moreover, classical asymptotic equivalence often involves models with
different observation spaces;
for instance, \cite[Theorems~1.1 and~1.2]{golubev10} show that the time-series models considered in Section~\ref{sect3.3:asympto-equiv} are
asymptotically equivalent to a sequence of Gaussian white-noise
experiments with mean  \(\log f\) and noise level proportional to
\(N^{-1/2}\) (under additional restrictions on the parameter space).
To allow for different observation spaces, we extend
the deficiency~\eqref{eq:le-cam-deficiency-ldp} to two potentially
different classes of privacy mechanisms.
For two statistical models
\(\mathcal P=(P_\theta)_{\theta\in\Theta}\) and
\(\mathcal Q=(Q_\theta)_{\theta\in\Theta}\) with the same parameter
space \(\Theta\), and two classes \(\mathcal M\) and \(\mathcal N\) of
privacy mechanisms for \(\mathcal P\) and \(\mathcal Q\), respectively,
we define
\[
\mathcal D_{\mathrm{LC},\mathcal M,\mathcal N}
\bigl(\mathcal P\,\big\|\,\mathcal Q\bigr)
\ := \ 
\sup_{K\in\mathcal N}\ 
\inf_{\substack{K'\in\mathcal M\\ L'}}\ 
\sup_{\theta\in\Theta}
\left\|
L'\circ K'\circ P_\theta
-
K\circ Q_\theta
\right\|_{\mathrm{TV}},
\]
where \(L'\) ranges over all Markov kernels from the output space of
\(K'\) to that of \(K\).
Defining suitable classes \(\mathcal M\) and \(\mathcal N\) of privacy
mechanisms for abstract observation spaces, such as the observation spaces of
Gaussian white-noise experiments, and studying the associated deficiency
\(\mathcal D_{\mathrm{LC},\mathcal M,\mathcal N}\),
could help to better understand the impact of privacy constraints on
other classical asymptotic equivalence results such as in~\cite{Reiss2008}.

We now return to the time-series and independent models
\(\mathcal P_N\) and \(\mathcal Q_N\) of Proposition~\ref{prop:golubev_etal-upm}.
To account for their different privacy costs, it is natural to study
the behavior of
\[
\mathcal D_{\mathrm{LC},
\mathcal M_{\alpha_N}^N,\mathcal M_{\alpha'_N}^N}
\bigl(\mathcal P_N\,\big\|\,\mathcal Q_N\bigr)
\]
for two possibly different sequences
\((\alpha_N)_{N\geq1}\) and \((\alpha'_N)_{N\geq1}\).
Proposition~\ref{prop:golubev_etal-upm} shows that, if
\(\alpha_N=\alpha'_N\to0\) and \(\alpha_N^2N\to\infty\), this
deficiency does not vanish as \(N\to\infty\).
The proof exploits the different privacy costs \(\alpha_N^4\) and
\(\alpha_N^2\) in the two models.
This raises the question of whether the deficiency vanishes when
\(\alpha'_N=\alpha_N^2\), that is, when the privacy levels in the two
models are chosen so that their respective privacy costs are on the
same scale.

%%%%%

\paragraph{Beyond Gaussian time series.}
Our upper and lower bounds are established for stationary Gaussian
processes, and Gaussianity plays an essential role in both parts of the
analysis. 
For the lower bounds, we use Gaussian representations that
create useful independence structures. 
For the upper bounds, the construction of the estimators uses explicit
identities for Gaussian distributions, and
the risk analysis relies on covariance inequalities for bounded functionals of Gaussian processes.
By contrast, the estimators developed in~\cite{Kroll2024} incur logarithmic losses but also apply in a non-Gaussian setting, namely to linear processes with sub-Gaussian noise whose distribution need not be precisely known. 
The Gaussian-specific relations used in our upper-bound analysis are
generally unavailable in this broader setting, so our method for
removing the logarithmic losses does not extend directly to it.

It is natural to ask whether the additional logarithmic factors are
intrinsic to settings with an unknown noise distribution, or whether
one can construct estimators that are agnostic to this distribution
and still attain the Gaussian rates without additional logarithmic
factors. It is also natural to ask what rates can be achieved under
heavier tails or when the tail behavior is unknown, or under more
general dependence assumptions such as mixing conditions.

\paragraph{Interactive privacy.}
Finally, the present paper focuses on non-interactive mechanisms, whereas several related estimation problems have been studied under interactive local privacy~\cite{butucea2025nonparametric}.
This raises the question of characterizing the sharp interactive rates for the time-series estimation problems considered here. 
For further questions concerning interactive local privacy for dependent data, we refer to~\cite{butucea2025nonparametric}.

%%%%%
%%%%%%%
%%% Local Variables:
%%% mode: latex
%%% TeX-master: "main.tex"
%%% ispell-local-dictionary: "american"
%%% End:

%%%%%%%

\section{Main technical tools}  \label{section:element_proofs}

This section gathers the main tools used in the proofs of
Theorems~\ref{thm:KL-bound-gaussian} and~\ref{thm:f-estimator}.  
It is organized around three types of results.
Section~\ref{subsection:reduction-results} establishes reduction
principles for privatized KL divergences.
Section~\ref{subsection:KL-contraction-results} studies how KL
divergences are contracted when probability measures are transformed
by Markov kernels.  Section~\ref{subsection:gaussian-bounds} collects
bounds specific to stationary Gaussian processes.  Since some of these
results may be of independent interest, they are presented in general
settings whenever possible.  Each result is stated in its own setting,
with assumptions local to that setting.

\textit{Notation: kernel composition.}  
Recall from Section~\ref{section-intro} that a Markov kernel \(K\)
from a measurable space \(\mathcal X\) to a measurable space
\(\mathcal Z\) is a map \(K:(x,A)\mapsto K(A\mid x)\).
For a probability measure \(P\) on \(\mathcal X\), we write
\(K\circ P\) for the probability measure on \(\mathcal Z\) defined
by
\[
(K\circ P)(A)
:=
\int_{\mathcal X} K(A\mid x)\,P(dx),
\qquad A\subseteq\mathcal Z
\text{ measurable};
\]
see, for instance,~\cite{polyanskiy2025information}.
This operation naturally extends to coordinatewise kernels:
For \(K_{1:N}=(K_1,\ldots,K_N)\), where \(K_i\) is a Markov kernel from
\(\mathcal X_i\) to \(\mathcal Z_i\), and \(P\) is a probability measure
on \(\mathcal X_1\times\cdots\times\mathcal X_N\), we define
\(K_{1:N}\circ P\) by
\((K_{1:N}\circ P)(A_1\times\cdots\times A_N) 
:=\int \prod_{i=1}^N K_i(A_i\mid x_i)\,P(dx)\),
for measurable \(A_i\subseteq\mathcal Z_i\).

\textit{\(\alpha\)-LDP Markov kernels.} The notation \(\mathcal M_\alpha\), introduced in
Section~\ref{section-intro} for \(\alpha\)-LDP Markov kernels from
\(\mathbb R\), extends to any measurable input space \(\mathcal X\). 
That is, \(\mathcal M_\alpha(\mathcal X)\) denotes the class of Markov kernels from
\(\mathcal X\) to an arbitrary standard Borel space satisfying the
\(\alpha\)-LDP condition~(\ref{eq:alpha-ldp-cond}), with
\(x,x'\in\mathcal X\).
Similarly, if \(\mathcal X=\mathcal X_1\times\cdots\times\mathcal X_N\), we write
\(\mathcal M_\alpha^N(\mathcal X)\) for the class of tuples
\(K_{1:N}=(K_1,\ldots,K_N)\) such that
\(K_i\in\mathcal M_\alpha(\mathcal X_i)\) for every \(i\in[N]\).

%%%%%%%%%%

\subsection{Reduction principles for privatized KL divergence}
\label{subsection:reduction-results}

This subsection presents a reduction principle for experiments with a
shared latent component.  Let \(P\) and \(Q\) be two probability
measures defined on a common measurable space, and let \(U\) and \(V\)
be random variables defined on this common space, taking values in
\(\mathcal U\) and \(\mathcal V\), respectively.  We write \(P^Y\) and
\(Q^Y\) for the law of any variable \(Y\) under \(P\) and \(Q\).
Assume that, under both \(P\) and \(Q\), the variables \(U\) and \(V\)
are independent, and that \(V\) has the same distribution; that is:
\begin{equation}
  \label{eq:independence_U-V_under_P-Q}
P^{U,V}=P^U\otimes P^V,
\qquad
Q^{U,V}=Q^U\otimes P^V.  
\end{equation}
Thus, the two laws \(P^{U,V}\) and \(Q^{U,V}\) differ only through the
distribution of \(U\), while the distribution of \(V\) is shared.

Let \(g:\mathcal U\times\mathcal V\to\mathcal X\) be measurable.
The variable \(g(U,V)\) represents a deterministic transformation of the
informative component \(U\) and the shared component \(V\). 
The next proposition compares the supremum, over private mechanisms, of the privatized KL divergence for \(g(U,V)\) with the corresponding quantity for \(U\).

\begin{proposition}
\label{prop:shared-latent-reduction}
Under the above assumptions, for every \(\alpha>0\),
\[
\sup_{K\in\mathcal M_\alpha(\mathcal X)}
\D\!\left(
K\circ P^{g(U,V)}
\,\middle\|\,
K\circ Q^{g(U,V)}
\right)
\ \le \
\sup_{\tilde K\in\mathcal M_\alpha(\mathcal U)}
\D\!\left(
\tilde K\circ P^U
\,\middle\|\,
\tilde K\circ Q^U
\right).
\]

Moreover, the same principle holds for mechanisms acting on \(N\)
coordinates. Let \(N\ge1\), and assume that
\(\mathcal U=\mathcal U_1\times\cdots\times\mathcal U_N\),
and
\(\mathcal X=\mathcal X_1\times\cdots\times\mathcal X_N\),
and that \(g\) acts coordinatewise in the first variable:
\[
g(u,v)=\bigl(g_1(u_1,v),\dots,g_N(u_N,v)\bigr),
\qquad
u=(u_1,\dots,u_N)\in\mathcal U,\quad v\in\mathcal V,
\]
where each \(g_i:\mathcal U_i\times\mathcal V\to\mathcal X_i\) is
measurable. Then, for every \(\alpha>0\),
\[
\sup_{K_{1:N}\in\mathcal M_\alpha^N(\mathcal X)}
\D\!\left(
K_{1:N}\circ P^{g(U,V)}
\,\middle\|\,
K_{1:N}\circ Q^{g(U,V)}
\right)
\ \le \
\sup_{\tilde K_{1:N}\in\mathcal M_\alpha^N(\mathcal U)}
\D\!\left(
\tilde K_{1:N}\circ P^U
\,\middle\|\,
\tilde K_{1:N}\circ Q^U
\right).
\]
\end{proposition}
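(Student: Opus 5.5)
The plan is to absorb the transformation \(g\) and the shared component \(V\) into the privacy mechanism itself, so that privatizing \(g(U,V)\) becomes a special case of privatizing \(U\) directly. Fix \(K\in\mathcal M_\alpha(\mathcal X)\), with standard Borel output space \(\mathcal Z\). Since \(V\) has law \(P^V\) under both \(P\) and \(Q\) and is independent of \(U\), define the kernel \(\tilde K\) from \(\mathcal U\) to \(\mathcal Z\) by
\[
\tilde K(A\mid u):=\int_{\mathcal V} K\bigl(A\mid g(u,v)\bigr)\,P^V(dv),\qquad A\subseteq\mathcal Z \text{ measurable}.
\]

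First, \(\tilde K\) is a Markov kernel. Measurability in \(u\) follows from Fubini/Tonelli, because \((u,v)\mapsto K(A\mid g(u,v))\) is jointly measurable as the composition of a measurable map with a measurable function. For fixed \(u\), \(A\mapsto\tilde K(A\mid u)\) is a probability measure by monotone convergence. Its output space is still \(\mathcal Z\), which is standard Borel.

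Second, \(\tilde K\) is \(\alpha\)-LDP. For \(u,u'\in\mathcal U\), the pointwise inequality \(K(A\mid g(u,v))\le e^\alpha K(A\mid g(u',v))\) holds for every \(v\), because \(K\) satisfies~\eqref{eq:alpha-ldp-cond} for all pairs of inputs in \(\mathcal X\). Integrating this inequality in \(v\) against \(P^V\) gives \(\tilde K\in\mathcal M_\alpha(\mathcal U)\).

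Third, \(\tilde K\) reproduces the privatized laws. Under \(P\), the product structure~\eqref{eq:independence_U-V_under_P-Q} and Fubini give
\[
(K\circ P^{g(U,V)})(A)=\iint K(A\mid g(u,v))\,P^U(du)\,P^V(dv)=(\tilde K\circ P^U)(A),
\]
and the same computation under \(Q\) uses \(Q^{U,V}=Q^U\otimes P^V\). The KL divergence for \(K\) therefore equals the one for \(\tilde K\). Bounding the latter by the supremum over \(\mathcal M_\alpha(\mathcal U)\), and then taking the supremum over \(K\), proves the first claim.

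For the \(N\)-coordinate version the main obstacle appears. Averaging over \(v\) separately in each coordinate would destroy the joint law, because \(V\) is common to all coordinates, while the mechanism must remain coordinatewise. The approach I would take is to make \(V\) part of each output rather than integrating it out. To do this I would realize \(V\) through public randomness: since the output spaces are standard Borel, I can assume without loss of generality that \(V=\psi(\xi)\) with \(\xi\) uniform on \([0,1]\). If \(\mathcal V\) is not standard Borel, I would instead condition on \(V=v\) directly.

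The direct conditioning route is cleaner. Because of the independence, conditionally on \(V=v\) the law of \(K_{1:N}\circ P^{g(U,V)}\) is \(K^{v}_{1:N}\circ P^U\), where \(K_i^v(\cdot\mid u_i):=K_i(\cdot\mid g_i(u_i,v))\), and each \(K^v_i\in\mathcal M_\alpha(\mathcal U_i)\) by the pointwise argument above. The joint laws of \((V,Z)\) under \(P\) and \(Q\) share the \(V\)-marginal \(P^V\). By the chain rule for KL and the data-processing inequality (dropping \(V\)),
\[
\D\bigl(K_{1:N}\circ P^{g(U,V)}\,\big\|\,K_{1:N}\circ Q^{g(U,V)}\bigr)\le\int \D\bigl(K_{1:N}^v\circ P^U\,\big\|\,K^v_{1:N}\circ Q^U\bigr)P^V(dv).
\]
The integrand is bounded by the supremum over \(\tilde K_{1:N}\in\mathcal M_\alpha^N(\mathcal U)\), which gives the second claim.

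This same argument also re-proves the first claim. The remaining routine point is that the map \(v\mapsto\) (conditional law) is a legitimate regular conditional distribution, so that the chain rule applies. This holds because the outputs live in standard Borel spaces.
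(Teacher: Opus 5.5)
Your argument is correct. For the $N$-coordinate claim it is the paper's proof. There too one fixes $K_{1:N}$ and notes that $u_i\mapsto K_i(\cdot\mid g_i(u_i,v))$ is $\alpha$-LDP for every fixed $v$ (Lemma~\ref{newlem-induced-private-mechanism}). The two privatized laws are then written as mixtures $M_P\circ P^V$ and $M_Q\circ P^V$, with $M_P(\cdot\mid v)=K_{g,1:N}^{(v)}\circ P^U$ and $M_Q(\cdot\mid v)=K_{g,1:N}^{(v)}\circ Q^U$. Their divergence is bounded by $\int\D\bigl(M_P(\cdot\mid v)\,\|\,M_Q(\cdot\mid v)\bigr)\,P^V(dv)$, using the chain rule on $(V,Z)$ followed by data processing (Lemma~\ref{newlem-conditioning-shared-latent}).

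The genuine difference is in the single-kernel claim. The paper runs the same mixture argument there, whereas you integrate $v$ out inside the kernel. Your $\tilde K(A\mid u)=\int K(A\mid g(u,v))\,P^V(dv)$ is $\alpha$-LDP and satisfies $K\circ P^{g(U,V)}=\tilde K\circ P^U$ and $K\circ Q^{g(U,V)}=\tilde K\circ Q^U$ exactly. So the divergence is preserved with equality, and no conditional KL divergence or chain rule is needed; this is more elementary and slightly stronger. As you correctly observe, it does not extend to $N\geq 2$: coordinatewise averaging would feed independent copies of $V$ to different coordinates. That is why the paper uses the conditioning argument uniformly for both claims.

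Two minor points. You should drop the public-randomness detour entirely. Appending a shared $V$ to every output would make the $Z_i$ conditionally dependent given $U$, so it would not produce an element of $\mathcal M_\alpha^N(\mathcal U)$; moreover, the representation $V=\psi(\xi)$ would require $\mathcal V$, not the output spaces, to be standard Borel. Also, the conditioning step needs no existence theorem for regular conditional distributions. The kernel $v\mapsto K^v_{1:N}\circ P^U$ is explicit and is measurable in $v$ by Tonelli, exactly as in your first part. The product structure then identifies the joint law of $(V,Z)$ as $P^V\otimes M_P$. Standard Borelness of the output space is only what the kernel form of the chain rule requires.
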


The proof idea is that any privatization of \(g(U,V)\) can be represented as a privatization of \(U\) alone, thus removing the shared component \(V\).
The proof is deferred to Appendix~\ref{sec:proof-reduction-general}.

%%%%%%%%%
%%%%%%%%

\subsection{KL contraction for Markov kernels}
\label{subsection:KL-contraction-results}

This subsection presents several results on how KL divergences behave when probability measures are transformed by Markov kernels.
We first introduce the notion of a \(\beta\)-contracting kernel.

\paragraph{Definition.}
For \(\beta\in(0,1]\), we say that a Markov kernel \(K\) is
\textit{\(\beta\)-contracting} if, for every pair of probability measures
\(P\) and \(Q\) on the input space of \(K\),
\begin{equation}
\label{eq:def:contracting-kernel}
\D\!\left(K\circ P\,\middle\|\,K\circ Q\right)
\ \le \ 
\beta\,\D\!\left(P\,\middle\|\,Q\right).
\end{equation}

By the data processing inequality (see
Assertion~\ref{item:lem:contraction-key3} in Lemma~\ref{lem:contraction-key}), every Markov kernel is \(1\)-contracting.
Thus, the notion of a \(\beta\)-contracting kernel refines the usual data processing bound when \(\beta<1\).

%%%%%%%%%%%

\subsubsection{Local privacy implies KL contraction}

In the \(\alpha\)-locally private setting, the following lemma gives a
contraction factor that is smaller than~\(1\) when \(\alpha\) is
small.  Such KL contraction inequalities were popularized in the
influential work of Duchi et al.~\cite{duchi2018minimax} and are now
standard tools in the local privacy literature.  The contraction
phenomenon itself is not new; our contribution here is to provide a
rigorous and self-contained measure-theoretic proof of this
contraction result, formulated in terms of Markov kernels, without
relying on conditional densities.  The proof is given in
Appendix~\ref{appendix:proof-kl-contraction-ldp} and remains valid
when the output space of \(K\) is an arbitrary measurable space (the
following lemma thus continues to hold if the output space of $K$ is
not standard Borel).

\begin{lemma}[KL contraction under \(\alpha\)-LDP]
\label{lem:oriented-kl-contraction-ldp}
Let \(\alpha>0\), and set
\[
\beta_\alpha := \frac12\left(e^\alpha-e^{-\alpha}\right)^2\wedge1.
\]
Then, for any measurable space \(\mathcal X\), every kernel \(K\in\mathcal M_\alpha(\mathcal X)\) is \(\beta_\alpha\)-contracting. 
\end{lemma}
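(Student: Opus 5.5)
The plan is to prove the bound by a measure-theoretic version of the Duchi et al. argument, writing everything through Radon--Nikodym derivatives with respect to a common dominating measure that we build ourselves. Assume \(\D(P\|Q)<\infty\), since otherwise there is nothing to prove. Then \(P\ll Q\). Write \(M:=K\circ Q\) and \(M_P:=K\circ P\). Since \(P\ll Q\), every \(M\)-null set is \(M_P\)-null, so \(M_P\ll M\).

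The LDP condition~\eqref{eq:alpha-ldp-cond} gives, for every measurable \(S\) and every \(x\), the two-sided bound \(e^{-\alpha}M(S)\le K(S\mid x)\le e^\alpha M(S)\); this follows by integrating \(K(S\mid x)\le e^\alpha K(S\mid x')\) in \(x'\) against \(Q\), and symmetrically. Hence \(K(\cdot\mid x)\ll M\) for every \(x\). We then need a jointly measurable version \(k(x,z)\) of \(dK(\cdot\mid x)/dM\) with \(e^{-\alpha}\le k\le e^{\alpha}\). I would obtain it by a martingale/differentiation argument along a countable generating algebra of the output \(\sigma\)-field. For a general measurable output space I would instead avoid densities entirely: approximate the KL by finite partitions, using the Dobrushin/Gelfand--Yaglom formula \(\D(M_P\|M)=\sup_{\text{finite partitions}}\sum_j M_P(S_j)\log\frac{M_P(S_j)}{M(S_j)}\). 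This reduces the problem to kernels with finite outputs, where \(K(S_j\mid x)\) are just measurable functions with the bounds above. The finite-partition route is the one I would commit to, since it matches the remark that the output space may be arbitrary.

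For a finite partition \(\{S_j\}\), set \(m_j=M(S_j)\), \(p_j=M_P(S_j)\), and \(q_j(x)=K(S_j\mid x)\). Then \(p_j-m_j=\int q_j\,d(P-Q)=\int (q_j(x)-c_j)\,(P-Q)(dx)\) for any constant \(c_j\). Choose \(c_j=\tfrac12(\sup_x q_j+\inf_x q_j)\), so that \(|q_j-c_j|\le \tfrac12 m_j(e^\alpha-e^{-\alpha})\). This gives \(|p_j-m_j|\le \tfrac12 m_j(e^\alpha-e^{-\alpha})\cdot 2\|P-Q\|_{\mathrm{TV}}=m_j(e^\alpha-e^{-\alpha})\|P-Q\|_{\mathrm{TV}}\). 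We then use \(\sum_j p_j\log(p_j/m_j)\le \chi^2=\sum_j (p_j-m_j)^2/m_j\le (e^\alpha-e^{-\alpha})^2\|P-Q\|_{\mathrm{TV}}^2\sum_j m_j\). Pinsker's inequality \(\|P-Q\|_{\mathrm{TV}}^2\le \tfrac12\D(P\|Q)\) then yields the bound \(\tfrac12(e^\alpha-e^{-\alpha})^2\D(P\|Q)\). Taking the supremum over partitions, and combining with the data processing inequality (Lemma~\ref{lem:contraction-key}) for the \(\wedge 1\) part, gives \(\beta_\alpha\)-contraction.

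The main obstacle is rigor rather than computation. One point is justifying the partition formula for KL on an arbitrary measurable space, a classical result that I would cite or prove from the Donsker--Varadhan variational formula. The other is checking that \(\sup_x q_j-\inf_x q_j\le (e^\alpha-e^{-\alpha})m_j\), which needs care because \(m_j\) is an average of \(q_j\) and may be \(0\). When \(m_j=0\), we have \(q_j\equiv0\) and also \(p_j=0\), so the term is dropped.
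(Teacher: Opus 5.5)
Your argument is correct, but it handles the general-measurable-space issue differently from the paper.

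\textbf{The paper's route.} The paper never discretizes. It works on the product space with \(\mathbb Q=Q\otimes K\) and identifies the output likelihood ratio as a conditional expectation, \(\frac{d(K\circ P)}{d(K\circ Q)}(Z)=\mathbb E_{\mathbb Q}[\frac{dP}{dQ}(X)\mid Z]\). It then applies the LDP sandwich \(e^{-\alpha}\mathbb E_{\mathbb Q}[g(X)]\le\mathbb E_{\mathbb Q}[g(X)\mid Z]\le e^{\alpha}\mathbb E_{\mathbb Q}[g(X)]\) to the positive and negative parts of \(dP/dQ-1\), each of \(Q\)-mass \(\|P-Q\|_{\mathrm{TV}}\). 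This gives the almost-sure bound \(|L-1|\le(e^\alpha-e^{-\alpha})\|P-Q\|_{\mathrm{TV}}\). The remaining steps (\(\D\le\chi^2\), Pinsker, and data processing for the \(\wedge1\)) are the same as yours.

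\textbf{How your route relates.} Your per-cell estimate \(|p_j/m_j-1|\le(e^\alpha-e^{-\alpha})\|P-Q\|_{\mathrm{TV}}\) is the discretized version of that pointwise bound. Your midrange centering \(c_j\) plays the role of the paper's positive/negative-part split.

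\textbf{Trade-offs.} Your route reduces everything to finite sums, with no conditional expectations or product-space constructions. The cost is that the measure theory moves into the nontrivial direction \(\D(M_P\|M)\le\sup_{\text{partitions}}\sum_j p_j\log(p_j/m_j)\) of the Gelfand--Yaglom--Perez theorem. Your argument is therefore only as self-contained as your proof of that theorem. Your Donsker--Varadhan sketch, approximating bounded test functions uniformly by simple ones, does work. The paper's route is self-contained, and it yields the stronger almost-sure likelihood-ratio bound and the data processing inequality as by-products, which the paper reuses elsewhere.

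Your opening discussion of a jointly measurable density \(k(x,z)\) is unnecessary once you commit to partitions, so you can drop it.
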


Lemma~\ref{lem:oriented-kl-contraction-ldp} shows that, under
\(\alpha\)-local differential privacy, the KL divergence is contracted
by a factor \(\beta_\alpha\), which is equivalent to \(2\alpha^2\) as
\(\alpha\downarrow0\).
Although \(\beta_\alpha\) is only an upper bound on the optimal
contraction factor, it is sufficient for our purpose, as it still provides a useful
measure of how much \(\alpha\)-LDP reduces the information for
distinguishing two input laws \(P\) and \(Q\).

%%%%

\subsubsection{KL contraction implies conditional KL contraction}
\label{subsection:condtional-KL-contraction}

We now state a conditional form of the KL contraction property~\eqref{eq:def:contracting-kernel}. 
To make the connection with~\eqref{eq:def:contracting-kernel} explicit, observe that if a \(\beta\)-contracting kernel \(K\) maps a variable  \(X\) to a variable \(Z\) under both probability measures \(P\) and \(Q\), in the sense that \(P^{Z\mid X}=Q^{Z\mid X}=K\), then~\eqref{eq:def:contracting-kernel} gives \(\D(P^Z\|Q^Z)\le \beta\,\D(P^X\|Q^X)\).
The next lemma can be viewed as a conditional counterpart of this statement: the same contraction remains valid after conditioning on an auxiliary variable \(Y\).
To formulate it, we first introduce some notation.

\textit{Notation: conditional laws and conditional KL divergences.}
The notation below follows~\cite{polyanskiy2025information}.  Given
any joint law \(P^{X,Z}\), we write \(P^X\) for its first marginal and
denote by \(P^{Z\mid X}\) a Markov kernel representing the conditional
distribution of \(Z\) given \(X\), which is called a regular version
of the conditional distribution. For every \(x\) in the state space of
\(X\), we write \(P^{Z\mid X=x}:=P^{Z\mid X}(\cdot\mid x)\) for the
probability measure obtained by evaluating this kernel at \(x\).  When
the state space of \(Z\) is standard Borel, such a regular version
exists and is unique \(P^X\)-almost surely; see, for instance,
\cite{kallenberg2021foundations}. For two joint laws \(P^{X,Z}\) and
\(Q^{X,Z}\) satisfying \(P^X\ll Q^X\), we define the conditional
Kullback--Leibler divergence between \(P^{Z\mid X}\) and
\(Q^{Z\mid X}\), with respect to \(P^X\), by
\begin{equation*}
\D\!\left(P^{Z\mid X}\,\middle\|\,Q^{Z\mid X}\mid P^X\right)
\ := \
\int
\D\!\left(
P^{Z\mid X=x}
\,\middle\|\,
Q^{Z\mid X=x}
\right)
\,P^X(dx).
\end{equation*}
If the state space \(\mathcal Z\) of \(Z\) is not standard Borel or if
\(P^X\not\ll Q^X\), this quantity need not be uniquely defined.
Hence, in the following, we define the above conditional KL divergence
only when \(\mathcal Z\) is standard Borel and \(P^X\ll Q^X\), and
regard it as not well defined otherwise.

\begin{lemma}
\label{lem:general-contraction-with-contiionning}
Let \(P_+\) and \(P_-\) be two probability measures defined on a common measurable
space. 
Let \(X,Y,Z\) be variables defined on this common space, taking
values in \(\mathcal X,\mathcal Y,\mathcal Z\), respectively, with
\(\mathcal X\) and \(\mathcal Z\) standard Borel spaces. Let
\(\beta\in(0,1]\), and let \(K\) be a \(\beta\)-contracting Markov kernel
from \(\mathcal X\) to \(\mathcal Z\). 
If \(P_+^Y\ll P_-^Y\) and
\[
P_\pm^{Z\mid (X,Y)=(x,y)}
=
K(\cdot\mid x),
\qquad \text{ for }
P_\pm^{(X,Y)}\text{-almost every }(x,y),
\]
then
\[
\D\!\left(
P_+^{Z\mid Y}
\,\middle\|\,
P_-^{Z\mid Y}
\,\middle|\,
P_+^Y
\right)
\ \le \
\beta\,
\D\!\left(
P_+^{X\mid Y}
\,\middle\|\,
P_-^{X\mid Y}
\,\middle|\,
P_+^Y
\right).
\]
\end{lemma}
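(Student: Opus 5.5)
The plan is to disintegrate both sides with respect to \(Y\) and apply the \(\beta\)-contraction of \(K\) fiberwise. Since \(\mathcal X\) and \(\mathcal Z\) are standard Borel, regular versions of \(P_\pm^{X\mid Y}\), \(P_\pm^{Z\mid Y}\) and \(P_\pm^{(X,Z)\mid Y}\) exist. Moreover, \(P_+^Y\ll P_-^Y\) ensures that both conditional KL divergences in the statement are well defined. The key structural claim is the following: for \(P_+^Y\)-almost every \(y\),
\[
P_\pm^{Z\mid Y=y}=K\circ P_\pm^{X\mid Y=y}.
\]
Once this identity holds, the \(\beta\)-contraction property~\eqref{eq:def:contracting-kernel} gives, for almost every \(y\),
\[
\D\bigl(P_+^{Z\mid Y=y}\,\|\,P_-^{Z\mid Y=y}\bigr)\le \beta\,\D\bigl(P_+^{X\mid Y=y}\,\|\,P_-^{X\mid Y=y}\bigr).
\]
Integrating this inequality against \(P_+^Y(dy)\) yields the result; both integrands are nonnegative and possibly infinite, so no integrability issue arises.

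To establish the identity, fix measurable \(A\subseteq\mathcal Z\) and \(B\subseteq\mathcal Y\). By the tower property and the hypothesis \(P_\pm^{Z\mid(X,Y)}=K\),
\[
P_\pm(Z\in A,\,Y\in B)=\mathbb E_\pm\bigl[K(A\mid X)\mathbf 1_B(Y)\bigr]=\int_B\int K(A\mid x)\,P_\pm^{X\mid Y=y}(dx)\,P_\pm^Y(dy).
\]
The second equality uses the disintegration of \(P_\pm^{X,Y}\) and the measurability of \(x\mapsto K(A\mid x)\). Hence \(y\mapsto (K\circ P_\pm^{X\mid Y=y})(A)\) is a version of \(P_\pm(Z\in A\mid Y=y)\). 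Since \(\mathcal Z\) is standard Borel, its \(\sigma\)-algebra is countably generated. Therefore agreement on a countable \(\pi\)-system, combined with almost-sure uniqueness of regular conditional distributions, gives
\[
P_\pm^{Z\mid Y=y}=K\circ P_\pm^{X\mid Y=y}\qquad\text{for }P_\pm^Y\text{-a.e. }y.
\]
Measurability of \(y\mapsto K\circ P_\pm^{X\mid Y=y}\) as a kernel follows from a monotone class argument.

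The main obstacle is the null-set bookkeeping. The identity for the minus version holds only \(P_-^Y\)-almost surely, while the integration is against \(P_+^Y\). The hypothesis \(P_+^Y\ll P_-^Y\) is exactly what turns \(P_-^Y\)-null exceptional sets into \(P_+^Y\)-null ones. This step also makes the right-hand side independent of the chosen versions, so both conditional KL divergences are unambiguous.

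With that settled, the fiberwise contraction and the integration are immediate. Note that applying~\eqref{eq:def:contracting-kernel} on each fiber requires no absolute continuity between \(P_+^{X\mid Y=y}\) and \(P_-^{X\mid Y=y}\), since \(\D=+\infty\) is allowed on the right-hand side.
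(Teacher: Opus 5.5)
Your proposal is correct and follows essentially the same route as the paper: identify $P_\pm^{Z\mid Y=y}=K\circ P_\pm^{X\mid Y=y}$ via the disintegration identity and a.s.\ uniqueness of regular conditional distributions, apply the $\beta$-contraction fiberwise, and use $P_+^Y\ll P_-^Y$ to make the $P_-^Y$-null exceptional set $P_+^Y$-null before integrating against $P_+^Y$. Your countable $\pi$-system and measurability remarks only make explicit details that the paper leaves implicit in the phrase ``by uniqueness of this representation.''
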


The proof is deferred to Appendix~\ref{appendix:proof-kl-iteration-1-dependent-process}.

%%%%%

\subsubsection{Iterating conditional KL contraction for \(1\)-dependent processes}

\noindent\textit{Definition of 1-dependent processes.}
We say that a process \(X=(X_t)_{t\in\mathbb Z}\) is 1-dependent if
for every \(k\in\mathbb Z\), the two collections \((X_t)_{t\leq k}\)
and \((X_t)_{t\ge k+2}\) are independent.

We now use the conditional KL contraction (Lemma~\ref{lem:general-contraction-with-contiionning}) to obtain a KL bound for a general class of dependent processes, namely those obtained by applying \(\beta\)-contracting Markov kernels to a stationary \(1\)-dependent process.

Let \(P_+\) and \(P_-\) be two probability measures on a common measurable space,
and let \(X=(X_t)_{t\in\mathbb Z}\) be a process defined on this space
and valued in a standard Borel space \(\mathcal X\).
Let \(N\ge1\). As in the observation model of
Section~\ref{section-intro}, let \(Z_{1:N}=(Z_1,\ldots,Z_N)\) be
generated from \(X\) by \(K_{1:N}=(K_1,\ldots,K_N)\), where each \(K_i\)
is a Markov kernel from \(\mathcal X\) to a standard Borel output space.
More precisely, \(K_{1:N}(\cdot\mid X_{1:N})\) is a regular version of
\(P_\pm^{Z_{1:N}\mid X}\).
Under the assumption that \(X\) is stationary and
\(1\)-dependent, the following result bounds the KL divergence between the laws of \(Z_{1:N}\) in terms of conditional KL quantities involving only the
process \(X\).

\begin{proposition}
\label{prop:abstract-kl-iteration}
In the setting above, assume that, under both \(P_+\) and
\(P_-\), the process \(X\) is stationary and \(1\)-dependent. Suppose
moreover that there exists \(\beta\in(0,1]\) such
that, for each \(i\in[N]\), the kernel \(K_i\) is \(\beta\)-contracting.
Then 
\begin{equation*}
\D\!\left(
P_+^{Z_{1:N}}
\,\middle\|\,
P_-^{Z_{1:N}}
\right)
\ \le \
\beta
\sum_{k=0}^{N-1}
\beta^k (N-k)\,
\D\!\left(
P_+^{X_0\mid X_{1:k}}
\,\middle\|\,
P_-^{X_0\mid X_{1:k}}
\,\middle|\,
P_+^{X_{1:k}}
\right),
\end{equation*}
where we assume that all the conditional KL divergences appearing in the right-hand side are well defined (i.e. \(P_+^{X_{1:k}}\ll P_-^{X_{1:k}}\) for all \(k\in[N-1]\)).
Here, for \(i\le j\), we write \(X_{i:j}:=(X_i,\ldots,X_j)\), with the
convention that \(X_{i:j}\) is empty when \(i>j\).
\end{proposition}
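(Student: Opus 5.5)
Write \(P_\pm\) for the two laws and \(Z_{1:N}\) for the privatized vector. I would start from the chain rule for the KL divergence,
\[
\D\!\left(P_+^{Z_{1:N}}\middle\|P_-^{Z_{1:N}}\right)=\sum_{i=1}^N \D\!\left(P_+^{Z_i\mid Z_{1:i-1}}\middle\|P_-^{Z_i\mid Z_{1:i-1}}\middle|P_+^{Z_{1:i-1}}\right),
\]
which is valid because all output spaces are standard Borel. The idea is to bound each summand by iterating Lemma~\ref{lem:general-contraction-with-contiionning}. Each application replaces one privatized coordinate \(Z_j\) in the conditioning or target by the underlying \(X_j\), at the cost of a factor \(\beta\).

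For a fixed \(i\), the first step handles the target. Conditionally on \((X_i,Z_{1:i-1})\), the law of \(Z_i\) is \(K_i(\cdot\mid X_i)\) under both \(P_\pm\), since the \(Z_j\) are conditionally independent given \(X\). Lemma~\ref{lem:general-contraction-with-contiionning}, with \(Y=Z_{1:i-1}\) and target \(X_i\), then gives a factor \(\beta\) times \(\D(P_+^{X_i\mid Z_{1:i-1}}\|P_-^{X_i\mid Z_{1:i-1}}\mid P_+^{Z_{1:i-1}})\).

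Next I would peel off the conditioning variables, starting with the nearest, \(Z_{i-1}\). Apply the chain rule to the pair \((X_i,X_{i-1})\) given \(Z_{1:i-2}\) and \(Z_{i-1}\). The key observation uses 1-dependence: given \(X_{i-1}\) and \(Z_{1:i-2}\), the target \(X_i\) is linked to the distant past only through \(X_{i-1}\). Note \(Z_{1:i-2}\) depends on \(X_{1:i-2}\), which is not independent of \(X_i\) given \(X_{i-1}\) in general, so this Markov-type simplification must be checked carefully. The approach I would try is to write
\[
\D(X_i\mid Z_{1:i-1})\le \D(X_i\mid X_{i-1},Z_{1:i-2})
\]
via convexity/data processing, together with a decomposition of the form
\[
\D(X_i\mid X_{i-1},Z_{1:i-2}) = \D(X_i \mid X_{i-1}) \text{-type term} + \text{remainder}.
\]
In the remainder the dependence on \(Z_{1:i-2}\) enters only through a term that can again be contracted by \(\beta\).

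The target structure is a recursion
\[
A_i(m)\le \D\!\left(X_0\mid X_{1:m}\right)+\beta A_i(m+1),
\]
where \(A_i(m)\) denotes the divergence of \(X_i\) given \(X_{i-m:i-1}\) and the remaining privatized \(Z_{1:i-m-1}\). By stationarity and time reversal (the process is Gaussian-like stationary, so the forward and backward conditional laws coincide in distribution), this is rewritten as the divergence of \(X_0\) given \(X_{1:m}\). Unrolling the recursion down to \(m=i-1\) gives, for summand \(i\), the bound \(\beta\sum_{k=0}^{i-1}\beta^k \D(P_+^{X_0\mid X_{1:k}}\|P_-^{X_0\mid X_{1:k}}\mid P_+^{X_{1:k}})\). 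Summing over \(i\in[N]\), the coefficient of index \(k\) counts the number of \(i\) with \(i-1\ge k\), namely \(N-k\). This yields exactly the stated bound.

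The main obstacle is justifying the recursion step. One must show that replacing \(Z_{i-m-1}\) by \(X_{i-m-1}\) in the conditioning costs at most the conditional divergence with \(X_{i-m-1}\) included, plus \(\beta\) times a divergence in which \(X_{i-m-1}\) becomes the target. This is where 1-dependence enters: it makes \((X_{i-m:i})\) and \(Z_{1:i-m-2}\) interact only through \(X_{i-m-1}\). The step also requires applying Lemma~\ref{lem:general-contraction-with-contiionning} with \(X=X_{i-m-1}\) as the contracted variable while \(X_{i-m:i}\) sits in the target. I would set this up as a chain-rule identity for the joint \((X_{i-m:i},Z_{i-m-1})\) given \(Z_{1:i-m-2}\), and verify the absolute-continuity conditions from the stated assumption.
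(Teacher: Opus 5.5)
Your outer frame is right: the chain rule over $Z_{1:N}$, the contraction $\D(Z_i\mid Z_{1:i-1})\le\beta\,\D(X_i\mid Z_{1:i-1})$, a geometric recursion, and counting the $N-k$ indices $i$ with $i-1\ge k$. The recursion, however, carries the whole proof, and it is set up incorrectly in three ways.

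First, the inequality $\D(X_i\mid Z_{1:i-1})\le\D(X_i\mid X_{i-1},Z_{1:i-2})$ does not follow from data processing and fails in general: replacing a garbled conditioning variable by the original can decrease a conditional divergence. Take $X_t=(\xi_t,\xi_{t+1})$ with $\xi_t$ i.i.d.\ $\nu_\pm$ and $0<\D(\nu_+\|\nu_-)<\infty$; this process is stationary and $1$-dependent. Take $K_1$ constant, which is trivially $\beta$-contracting. Then $\D(X_2\mid Z_1)=\D(X_2)=2\D(\nu_+\|\nu_-)$, while $\D(X_2\mid X_1)=\D(\nu_+\|\nu_-)$.

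Second, carry out the chain rule you propose, for $(X_i,Z_{i-m-1})$ given $(X_{i-m:i-1},Z_{1:i-m-2})$ in both orders, together with Lemma~\ref{lem:general-contraction-with-contiionning} and $1$-dependence. It gives
\[
A_i(m)\le \D(X_i\mid X_{i-m:i-1})+\beta\,\D(X_{i-m-1}\mid X_{i-m:i},Z_{1:i-m-2}).
\]
The last term has target $X_{i-m-1}$, not $X_i$. It is not $\beta A_i(m+1)$, so your recursion does not close.

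Third, your orientation produces forward conditionals $\D(X_i\mid X_{i-m:i-1})=\D(X_m\mid X_{0:m-1})$. These coincide with $\D(X_0\mid X_{1:m})$ only under time reversibility, which is a Gaussian property and not a hypothesis of the proposition.

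The missing idea is to drop the nearest privatized variable rather than replace it, and to let the target move backward so that the already-visited $X$'s sit in its future. Set $B_t(j):=\D(X_j\mid Z_{1:j-1},X_{j+1:t})$, so that $B_t(t)=\D(X_t\mid Z_{1:t-1})$. Expand $\D(X_j,Z_{j-1}\mid Z_{1:j-2},X_{j+1:t})$ in both orders and discard the nonnegative term $\D(Z_{j-1}\mid Z_{1:j-2},X_{j+1:t})$. This gives
\[
B_t(j)\le \D(Z_{j-1}\mid Z_{1:j-2},X_{j:t})+\D(X_j\mid Z_{1:j-2},X_{j+1:t}).
\]
The first term is at most $\beta B_t(j-1)$ by Lemma~\ref{lem:general-contraction-with-contiionning} with $Y=(Z_{1:j-2},X_{j:t})$. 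The second equals $\D(X_j\mid X_{j+1:t})$, because the block $X_{j:t}$ is independent of $Z_{1:j-2}$. That unconditional independence is exactly what $1$-dependence provides; no Markov-type property is needed.

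Unroll from $j=t$ down to $j=2$, using $B_t(1)=\D(X_1\mid X_{2:t})$. This gives $B_t(t)\le\sum_{k=0}^{t-1}\beta^k\D(X_{t-k}\mid X_{(t-k+1):t})$, and a plain time shift turns each term into $\D(X_0\mid X_{1:k})$, with no reversal needed.

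Finally, the stated assumption only covers $X_{1:k}$ for $k\le N-1$, so it does not by itself make the conditional divergences involving $Z$'s well defined. Derive this instead from $P_+^{X_{1:N}}\ll P_-^{X_{1:N}}$ via data processing. That domination holds, by the chain rule, once the right-hand side is assumed finite.
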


In the proposition above, conditioning on an empty set of variables is
understood as no conditioning. 
Thus the conditional KL term with index \(k=0\) in the sum is simply \(\D(P_+^{X_0}\|P_-^{X_0})\).

The proof of this proposition uses Lemma~\ref{lem:general-contraction-with-contiionning} iteratively along a chain-rule decomposition of the KL divergence, together with the \(1\)-dependence of the latent process \(X\). 
The proof of Proposition~\ref{prop:abstract-kl-iteration} is deferred to Appendix~\ref{appendix:proof-kl-iteration-1-dependent-process}.

%%%%
%%%%
%%%%

\subsection{Bounds for Gaussian processes}
\label{subsection:gaussian-bounds}

This subsection collects two types of bounds for Gaussian processes. The first concerns conditional KL divergences, while the second gives covariance bounds for transformations of Gaussian variables.

%%%%%

\subsubsection{Conditional KL bound for Gaussian processes}

We state a Gaussian bound for the conditional KL quantities appearing in Proposition~\ref{prop:abstract-kl-iteration}.

Let \(P_+\) and \(P_-\) be two probability measures on a common measurable space, and let \(X=(X_t)_{t\in\mathbb Z}\) be a real-valued process defined on this space. 
Assume that, under \(P_\pm\), the process \(X\) is a centered stationary Gaussian process with spectral density \(f_{\pm}\) and unit variance.
Assume that there exist \(0<a\le 1/(2\pi)\le b\) such that
\[
a \ \le \ f_{\pm}(\lambda) \ \le \ b,
\qquad \lambda\in[-\pi,\pi].
\]
Recall that \(\|\cdot\|\) denotes the \(L^2([-\pi,\pi])\)-norm.

\begin{lemma}
\label{lem:lemma-conditional-KL-gaussian-general-sp-densities}
Under the above assumptions, there exists a constant \(C_{a,b}\), depending only on \((a,b)\), such that, for all \(k\geq1\),
\[
\D\!\left(
P_+^{X_0\mid X_{1:k}}
\,\middle\|\,
P_-^{X_0\mid X_{1:k}}
\,\middle|\,
P_+^{X_{1:k}}
\right)
\ \le \
C_{a,b}\,
\max_{\pm\in\{+,-\}}
\left\|f_{\pm}-\frac1{2\pi}\right\| \ \ 
\sup_{\lambda\in[-\pi,\pi]} 
\Big{|}f_{+}(\lambda)-f_{-}(\lambda)\Big{|}.
\]
\end{lemma}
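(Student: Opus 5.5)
The plan is to compute everything explicitly through Gaussian regression. Under \(P_\pm\), the vector \((X_0,X_{1:k})\) is centered Gaussian. Let \(\Gamma_\pm\) denote the \(k\times k\) Toeplitz covariance matrix of \(X_{1:k}\), with entries \(\gamma_\pm(i-j)\), and let \(r_\pm=(\gamma_\pm(1),\ldots,\gamma_\pm(k))^\top\). Then
\[
P_\pm^{X_0\mid X_{1:k}=x}=\mathcal N\bigl(c_\pm^\top x,\;v_\pm\bigr),
\qquad
c_\pm=\Gamma_\pm^{-1}r_\pm,
\qquad
v_\pm=1-r_\pm^\top\Gamma_\pm^{-1}r_\pm .
\]
The closed form of the KL divergence between two univariate Gaussians, integrated against \(P_+^{X_{1:k}}\), gives
\[
\D\!\left(P_+^{X_0\mid X_{1:k}}\,\middle\|\,P_-^{X_0\mid X_{1:k}}\,\middle|\,P_+^{X_{1:k}}\right)
=\tfrac12\Bigl[\tfrac{v_+}{v_-}-1-\log\tfrac{v_+}{v_-}\Bigr]
+\frac{(c_+-c_-)^\top\Gamma_+(c_+-c_-)}{2v_-}.
\]
Absolute continuity is automatic here, since both laws are nondegenerate Gaussians.

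The uniform-in-\(k\) inputs are standard spectral facts.

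\emph{Toeplitz eigenvalues.} Since \(\gamma_\pm(h)=\int f_\pm e^{ih\lambda}\,d\lambda\), for \(u\in\mathbb R^k\) we have \(u^\top\Gamma_\pm u=\int f_\pm(\lambda)\bigl|\sum_j u_je^{ij\lambda}\bigr|^2d\lambda\). Hence all eigenvalues of \(\Gamma_\pm\) lie in \([2\pi a,2\pi b]\). Similarly, \(\|\Gamma_+-\Gamma_-\|_{\mathrm{op}}\le 2\pi\sup|f_+-f_-|\).

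\emph{Prediction variance.} The variance \(v_\pm\) is at least the infinite-past one-step prediction error. By Szeg\H{o}--Kolmogorov this error equals \(2\pi\exp\bigl(\frac1{2\pi}\int\log f_\pm\bigr)\ge 2\pi a\). Also \(v_\pm\le 1\). Since the ratio \(v_+/v_-\) lies in a compact subset of \((0,\infty)\), the first bracket is at most \(C_{a}(v_+-v_-)^2\). The second term is at most \(C_{a,b}|c_+-c_-|^2\).

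\emph{Parseval.} We have \(\|f_\pm-\tfrac1{2\pi}\|^2=\frac1{2\pi}\sum_{h\ne0}\gamma_\pm(h)^2\), so \(|r_\pm|\le\sqrt{\pi}\,\|f_\pm-\tfrac1{2\pi}\|\). Likewise \(|r_+-r_-|\le\sqrt{\pi}\,\|f_+-f_-\|\).

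The core step is to bound \(|c_+-c_-|\) and \(|v_+-v_-|\) so that their squares produce the required product. Write
\[
c_+-c_-=\Gamma_+^{-1}(r_+-r_-)+\Gamma_+^{-1}(\Gamma_--\Gamma_+)\Gamma_-^{-1}r_- .
\]
This gives
\[
|c_+-c_-|\le C_{a}\Bigl(\|f_+-f_-\|+\sup|f_+-f_-|\,\|f_--\tfrac1{2\pi}\|\Bigr).
\]
For the variances, set \(\Delta:=v_+-v_-=r_-^\top\Gamma_-^{-1}r_- - r_+^\top\Gamma_+^{-1}r_+\). Using \(r_+^\top c_+-r_-^\top c_-=(r_+-r_-)^\top c_++r_-^\top(c_+-c_-)\), we get
\[
|\Delta|\le C_{a}\Bigl(\|f_+-f_-\|+\sup|f_+-f_-|\Bigr)\max_\pm\|f_\pm-\tfrac1{2\pi}\|.
\]
Squaring and converting is elementary. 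Write \(M:=\max_\pm\|f_\pm-\tfrac1{2\pi}\|\), which satisfies \(M\le\sqrt{2\pi}\,b\). We have \(\|f_+-f_-\|^2\le\sqrt{2\pi}\,\sup|f_+-f_-|\,\|f_+-f_-\|\le 2\sqrt{2\pi}\,\sup|f_+-f_-|\,M\). Also \(\sup|f_+-f_-|\le b\), and \(M^2\le\sqrt{2\pi}\,b\,M\). Together these make every squared term \(\lesssim_{a,b}M\sup|f_+-f_-|\), which is the claimed bound.

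I expect the main obstacle to be making all constants independent of \(k\). This rests on two facts: the lower bound \(v_\pm\ge 2\pi a\), via the Szeg\H{o}--Kolmogorov formula or the monotonicity of finite-past prediction errors, and the Toeplitz eigenvalue sandwich. Both are classical, and I would state them as preliminary facts before the computation.
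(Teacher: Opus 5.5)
Your proposal is correct and follows essentially the same route as the paper. Both arguments use explicit Gaussian regression, the Toeplitz eigenvalue sandwich for $\Gamma_\pm$ and $\Gamma_+-\Gamma_-$, Parseval bounds on $r_\pm$, a resolvent-type identity for $c_+-c_-$, and a final conversion via $\|f_+-f_-\|\le\sqrt{2\pi}\sup|f_+-f_-|$, $\|f_+-f_-\|\le 2M$ and boundedness of $M$. The remaining differences are minor. The paper gets $v_\pm\ge 2\pi a$ more elementarily, by evaluating the quadratic form of the covariance matrix of $(X_0,X_{1:k})$ at $(1,-c_\pm^\top)^\top$, where you invoke Szeg\H{o}--Kolmogorov. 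The paper also bounds the log-ratio term linearly rather than quadratically in $|v_+-v_-|$. Finally, your bound on $|\Delta|$ really needs a constant depending on $b$ as well as $a$, which is harmless since the final constant is $C_{a,b}$ anyway.
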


The bound is uniform in the conditioning length \(k\). It is controlled by the \(L^2\)-distance of \(f_\pm\) from the white-noise density \(1/(2\pi)\), multiplied by the uniform distance between \(f_+\) and \(f_-\).
The proof is deferred to Appendix~\ref{appendix:gaussian-process-bounds}.

%%%%%%%%%%%%%%

\subsubsection{Covariance of bounded pairwise transformations}
\label{subsection:covariance-results}

While all preceding results in Section~\ref{section:element_proofs} are used in the proof of Theorem~\ref{thm:KL-bound-gaussian}, the covariance bound below is used in the proof of Theorem~\ref{thm:f-estimator}.

\begin{proposition}
\label{thm:cov-bound-signs}
Let \(X=(X_t)_{t\in\mathbb Z}\) be a real-valued stationary Gaussian process with positive variance and autocorrelation function \(\rho\).
Let \(g_1,g_2\) be two measurable functions from \(\mathbb R^2\) to \([-1,1]\).
Suppose that there exists \(\epsilon>0\) such that
\[
|\rho(k)| \ \le \ 1-\epsilon\quad\text{for all}\quad k\in\mathbb Z\setminus\{0\}.
\]
Then, for all \(s,t,u,v\in\mathbb Z\),
\begin{equation*}
\left|
\operatorname{Cov}\left(
g_1(X_s,X_t),
g_2(X_u,X_v)
\right)
\right|
\ \le \
\frac{
|\rho(s-u)|\vee|\rho(t-v)|
+
|\rho(s-v)|\vee|\rho(t-u)|
}{\epsilon}.
\end{equation*}
\end{proposition}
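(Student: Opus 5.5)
We need to prove the covariance bound for bounded functions of Gaussian pairs. The plan is to use the Kolmogorov--Rozanov / Gebelein inequality: for jointly Gaussian vectors \(U\) and \(V\), the maximal correlation between \(\sigma(U)\) and \(\sigma(V)\) equals the largest canonical correlation \(r(U,V)\) between the linear spans. Hence
\[
|\operatorname{Cov}(g_1(U),g_2(V))| \ \le\ r(U,V)\sqrt{\operatorname{Var} g_1(U)\operatorname{Var} g_2(V)} \ \le\ r(U,V),
\]
since the \(g_i\) take values in \([-1,1]\), so their variances are at most \(1\). Apply this with \(U=(X_s,X_t)\) and \(V=(X_u,X_v)\). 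After normalizing so that \(\gamma(0)=1\), which changes neither the functions' range nor \(\rho\), it remains to bound the canonical correlation
\[
r(U,V)=\sup\bigl\{\operatorname{Corr}(aX_s+bX_t,\;cX_u+dX_v)\bigr\}.
\]

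Next we bound the canonical correlation. Write \(\Sigma_U\) for the \(2\times2\) covariance matrix of \(U\) and \(\Sigma_{UV}\) for the cross-covariance, so that
\[
\Sigma_{UV}=\begin{pmatrix}\rho(s-u)&\rho(s-v)\\ \rho(t-u)&\rho(t-v)\end{pmatrix}.
\]
Then
\[
\operatorname{Cov}(aX_s+bX_t,\;cX_u+dX_v)=(a,b)\,\Sigma_{UV}\,(c,d)^\top,
\]
whose absolute value is at most \(\|\Sigma_{UV}\|_{\mathrm{op}}\,\|(a,b)\|\,\|(c,d)\|\). The variance satisfies
\[
\operatorname{Var}(aX_s+bX_t)=a^2+b^2+2ab\rho(s-t)\ \ge\ (1-|\rho(s-t)|)(a^2+b^2)\ \ge\ \epsilon(a^2+b^2),
\]
and similarly for \(V\). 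Degenerate cases are handled directly: if \(s=t\) then \(U\) is effectively scalar, and the same bound holds with variance \(a^2\) (even better). Therefore
\[
r(U,V)\ \le\ \frac{\|\Sigma_{UV}\|_{\mathrm{op}}}{\epsilon}.
\]

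Finally, bound the operator norm of \(\Sigma_{UV}\) by splitting it into its diagonal part \(\mathrm{diag}(\rho(s-u),\rho(t-v))\) and its anti-diagonal part with entries \(\rho(s-v)\) and \(\rho(t-u)\). The operator norm of a diagonal matrix is the maximum absolute entry, and likewise for an anti-diagonal matrix. The triangle inequality then gives
\[
\|\Sigma_{UV}\|_{\mathrm{op}}\ \le\ |\rho(s-u)|\vee|\rho(t-v)| \;+\; |\rho(s-v)|\vee|\rho(t-u)|,
\]
which is exactly the claimed bound.

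The only nontrivial ingredient is the Gaussian maximal-correlation identity for vectors, due to Kolmogorov--Rozanov and Gebelein, which we cite. The main care point is the lower bound on the variances: it requires \(s\neq t\) whenever both coefficients are used, which is exactly where the hypothesis \(|\rho(k)|\le 1-\epsilon\) for \(k\ne0\) enters. The case \(s=t\) must be treated separately by reducing \(U\) to the scalar \(X_s\).
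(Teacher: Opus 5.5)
Your proof is correct and follows the paper's argument. You reduce to unit variance, apply the Kolmogorov--Rozanov bound to get $|\operatorname{Cov}|\le\varrho\bigl((X_s,X_t),(X_u,X_v)\bigr)$, and control the linear canonical correlation via $\operatorname{Var}(aX_s+bX_t)\ge\epsilon(a^2+b^2)$, treating $s=t$ separately with $b=0$ and $\epsilon\le 1$. The only difference is cosmetic: in the last step you split $\Sigma_{UV}$ into diagonal and anti-diagonal parts and use operator norms, whereas the paper bounds $|ac|+|bd|$ and $|ad|+|bc|$ by $\frac12(a^2+b^2+c^2+d^2)$, and both yield the same constant.
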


The proof is deferred to Appendix~\ref{proof-kolmogorov}.

%%% Local Variables:
%%% mode: latex
%%% TeX-master: "main.tex"
%%% ispell-local-dictionary: "american"
%%% End:

%%%%%%%%%%

\section{Proof of the lower bound} \label{subsection:proof-lower-bound-thm}

In this section, we first prove Theorem~\ref{thm:KL-bound-gaussian},
which controls the privatized KL divergence, and then use this result
to prove the minimax lower bound of
Theorem~\ref{thm-borne-inf:cas-non-interactif}.

%%%%%%%
%Proof Thm 2
%%%%

\subsection{Proof of Theorem~\ref{thm:KL-bound-gaussian}}
%\label{sec:proof-theor-KL}

As outlined at the end of Section~\ref{section_minimax_lower_bounds}, the proof combines three main ingredients: a reduction to a \(1\)-dependent model, a KL contraction, and a Gaussian KL bound.
These are based on the tools developed in Section~\ref{section:element_proofs}.

Concretely, the proof is organized into five steps.
Step~1 uses Proposition~\ref{prop:shared-latent-reduction} to reduce the model to a simpler one, which is further reduced to a \(1\)-dependent process in Step~2.
Steps~3 and~4 then implement the KL-contraction part of the argument, providing two bounds in terms of conditional KL divergences of the original process \(X\): 
Step~3 gives a bound for large values of \(\alpha\), 
while Step~4 uses Proposition~\ref{prop:abstract-kl-iteration} and Lemma~\ref{lem:oriented-kl-contraction-ldp} to obtain a sharper bound for small values of \(\alpha\).
Step~5 then controls these conditional KL divergences using Lemma~\ref{lem:lemma-conditional-KL-gaussian-general-sp-densities}.

%%%%%%
\medskip 
%%%%%%

\noindent\textit{Notation.} Recall that, in the statistical model of Section~\ref{section-intro},
for any spectral density \(f\), any \(N\geq1\), \(\alpha>0\), and
\(K_{1:N}\in\mathcal M_\alpha^N\), we denote by \(P_{f,K_{1:N}}\) the
joint law of \((X,Z)\), where \(X\) has spectral density \(f\).
In particular, by the definition of kernel composition given at the
beginning of Section~\ref{section:element_proofs}, the marginal law of \(Z\) under \(P_{f,K_{1:N}}\) satisfies
\begin{equation}
\label{eq:privatized-law-composition}
P_{f,K_{1:N}}^Z
\ = \
K_{1:N}\circ P_{f,K_{1:N}}^{X_{1:N}}.
\end{equation}
For \(j\leq k\), we write \(X_{j:k}:=(X_j,\ldots,X_k)\), with the
convention that \(X_{j:k}\) is empty when \(j>k\).

%%%%%%
\bigskip 
%%%%%%

Fix \(N,h\geq1\), \(a>0\), \(\delta\in[-1,1]\), and two spectral
densities \(f_+\) and \(f_-\) satisfying~\eqref{eq:min-f_pm}. 
Set
\[\Delta_h(\lambda)
\ := \
\frac{1}{\pi}\cos(h\lambda),
\qquad \lambda\in[-\pi,\pi].\]

\noindent\textit{Step 1: Reduction to a lag-\(h\)-only model.}
By~\eqref{eq:min-f_pm}, we have
\[f 
\ := \
f_+-\frac{a\delta}{4}\Delta_h
\ = \ 
f_-+\frac{a\delta}{4}\Delta_h \;.\]
We now aim to simplify the problem by replacing the common part \(f\) of
\(f_\pm\) by \(1/(2\pi)\), the spectral density of a unit-variance
Gaussian white noise.

Using~\eqref{eq:min-f_pm}, the fact that \(\Delta_h\) takes values in
\([-1/\pi,1/\pi]\), and \(|\delta|\leq1\), we have
\(f\geq 3a/(4\pi)-a/(4\pi)=a/(2\pi)\),
and can therefore decompose \(f_\pm\) into a common nonnegative
component and a normalized component:
\begin{equation}
\label{eq:decomp-lagh-reduction-new}
f_\pm
\ = \ 
f\pm\frac{a\delta}{4}\Delta_h
\ = \
\underbrace{\left(f-\frac{a}{2\pi}\right)}_{\geq0}
\ +\
a\left(\frac1{2\pi}\pm\frac{\delta}{4}\Delta_h\right).
\end{equation}

We can thus apply Proposition~\ref{prop:shared-latent-reduction} with
\[\mathcal U=\mathcal V=\mathcal X=\mathbb R^N
\qquad\text{and}\qquad
g(u,v)=v+\sqrt a\,u,\]
where \(P^U\), \(Q^U\), and \(P^V=Q^V\) are the laws of the first \(N\)
observations of centered stationary Gaussian processes with spectral
densities
\[
\frac1{2\pi}+\frac{\delta}{4}\Delta_h,
\qquad \qquad 
\frac1{2\pi}-\frac{\delta}{4}\Delta_h,
\qquad \qquad 
f-\frac{a}{2\pi},
\]
respectively.
Under \(P^{U,V}=P^U\otimes P^V\) and
\(Q^{U,V}=Q^U\otimes P^V\), the vectors \(U\) and \(V\) are independent.
By~\eqref{eq:decomp-lagh-reduction-new}, \(g(U,V)\) has the law of the
first \(N\) observations of the process with spectral density \(f_+\)
under \(P\), and \(f_-\) under \(Q\).

Hence, using~\eqref{eq:privatized-law-composition}, the \(N\)-coordinate
version of Proposition~\ref{prop:shared-latent-reduction} gives, for any \(\alpha>0\),
\begin{equation}
\label{eq:kl-reduction-lagh-only}
\sup_{K_{1:N}\in\mathcal M_\alpha^N}
\D\!\left(
P_{f_+,K_{1:N}}^Z
\,\middle\|\,
P_{f_-,K_{1:N}}^Z
\right)
\ \leq \
\sup_{\widetilde K_{1:N}\in\mathcal M_\alpha^N}
\D\!\left(
P_{\frac1{2\pi}+\frac{\delta}{4}\Delta_h,\widetilde K_{1:N}}^Z
\,\middle\|\,
P_{\frac1{2\pi}-\frac{\delta}{4}\Delta_h,\widetilde K_{1:N}}^Z
\right).
\end{equation}
We call a lag-\(h\)-only model a model of centered stationary Gaussian
processes whose covariances vanish at all lags except \(0\) and \(\pm h\).
Thus, the two spectral densities \(\frac1{2\pi}\pm\frac{\delta}{4}\Delta_h\) define such a model.

%%%%%%
\bigskip
%%%%%%

\noindent\textit{Step 2: Reduction to a lag-\(1\)-only model.}
For each \(r\in[h\wedge N]\), let
\[I_r:=\{r,r+h,r+2h,\ldots\}\cap[N],\]
and denote by \(m_r\) the cardinality of \(I_r\).
The sets \((I_r)_{r\in[h\wedge N]}\) form a partition of \([N]\), and hence
\[\sum_{r\in[h\wedge N]}m_r=N.\]
For either of the spectral densities
\(\frac1{2\pi}\pm\frac{\delta}{4}\Delta_h\), the Gaussian blocks
\((X_i)_{i\in I_r}\), \(r\in[h\wedge N]\), 
are mutually independent: the cross-covariances between distinct blocks
vanish, and joint Gaussianity therefore implies independence.
Moreover, for each \(r\in[h\wedge N]\), the block
\((X_i)_{i\in I_r}\) has the same law as the first \(m_r\) observations
of a centered stationary Gaussian process with spectral density
\(\frac1{2\pi}\pm\frac{\delta}{4}\Delta_1\), that is, of a
lag-\(1\)-only model.

In addition, any mechanism \(K_{1:N}\in\mathcal M_\alpha^N\) can be
regrouped according to the partition \((I_r)_{r\in[h\wedge N]}\), with
the mechanism on each block belonging to \(\mathcal M_\alpha^{m_r}\).
Since \(K_{1:N}\) acts coordinatewise, the corresponding privatized
blocks are independent.

Then, for any \(\alpha>0\) and
\(K_{1:N}\in\mathcal M_\alpha^N\), the tensorization property~\ref{item:tensor-kl-va} in
Lemma~\ref{lem:standard-properties-kl}, applied to the independent
privatized blocks, gives
\begin{align*}
\D\!\left(
P_{\frac1{2\pi}+\frac{\delta}{4}\Delta_h,K_{1:N}}^Z
\,\middle\|\,
P_{\frac1{2\pi}-\frac{\delta}{4}\Delta_h,K_{1:N}}^Z
\right)
\ \leq \
\sum_{r\in[h\wedge N]}
\sup_{\widetilde K_{1:m_r}\in\mathcal M_\alpha^{m_r}}
\D\!\left(
P_{\frac1{2\pi}+\frac{\delta}{4}\Delta_1,\widetilde K_{1:m_r}}^Z
\,\middle\|\,
P_{\frac1{2\pi}-\frac{\delta}{4}\Delta_1,\widetilde K_{1:m_r}}^Z
\right).
\end{align*}

%%%%%
\bigskip
%%%%%%%%

It therefore remains to prove that, for some universal constant \(C>0\),
for all \(M\geq1\) and \(\alpha>0\),
\begin{equation}
\label{eq:rmains-to-do-theorem-kl-private-gaussian}
\sup_{K_{1:M}\in\mathcal M_\alpha^M}
\D\!\left(
P_{\frac1{2\pi}+\frac{\delta}{4}\Delta_1,K_{1:M}}^Z
\,\middle\|\,
P_{\frac1{2\pi}-\frac{\delta}{4}\Delta_1,K_{1:M}}^Z
\right)
\ \leq \
C(\alpha^4\wedge1)(M-1)\delta^2.
\end{equation}
Indeed, applying~\eqref{eq:rmains-to-do-theorem-kl-private-gaussian}
with \(M=m_r\) in the preceding bound, and using
\[
\sum_{r\in[h\wedge N]}(m_r-1)
=
N-(h\wedge N)
=
(N-h)_+ \;,
\]
together with~\eqref{eq:kl-reduction-lagh-only}, yields the claimed inequality of the theorem.

To prove~\eqref{eq:rmains-to-do-theorem-kl-private-gaussian}, we fix \(M\geq1\), \(\alpha>0\), and
\(K_{1:M}\in\mathcal M_\alpha^M\), and use the shorthand notation
\[P_\pm
\ := \
P_{\frac1{2\pi}\pm\frac{\delta}{4}\Delta_1,K_{1:M}}.\]

%%%%%%
\bigskip
%%%%%%

\noindent\textit{Step 3: A bound for the lag-\(1\) model via the data processing inequality.}
By~\eqref{eq:privatized-law-composition} and the data processing
inequality (see Lemma~\ref{lem:contraction-key}~\ref{item:lem:contraction-key3}),
we have
\[
\begin{aligned}
\D\!\left(
P_+^Z
\,\middle\|\,
P_-^Z
\right)
\ = \
\D\!\left(
K_{1:M}\circ P_+^{X_{1:M}}
\,\middle\|\,
K_{1:M}\circ P_-^{X_{1:M}}
\right)
\ \leq \
\D\!\left(
P_+^{X_{1:M}}
\,\middle\|\,
P_-^{X_{1:M}}
\right).
\end{aligned}
\]
Moreover, by repeated application of the chain rule in
Lemma~\ref{lem:standard-properties-kl}\ref{item:chain-rule-va}, 
\begin{align*}
\D\!\left(
P_+^{X_{1:M}}
\,\middle\|\,
P_-^{X_{1:M}}
\right)
&\ = \
\sum_{j=1}^M
\D\!\left(
P_+^{X_j\mid X_{(j+1):M}}
\,\middle\|\,
P_-^{X_j\mid X_{(j+1):M}}
\,\middle|\,
P_+^{X_{(j+1):M}}
\right)
\\
&\ = \
\sum_{k=0}^{M-1}
\D\!\left(
P_+^{X_0\mid X_{1:k}}
\,\middle\|\,
P_-^{X_0\mid X_{1:k}}
\,\middle|\,
P_+^{X_{1:k}}
\right),
\end{align*}
where the last equality follows from stationarity and the change of
variable \(k=M-j\).
Note that all the conditional KL divergences above are well defined, since
\(P_+^{X_{1:k}}\ll P_-^{X_{1:k}}\) for every \(k\geq1\), as both are nondegenerate Gaussian laws.

Combining the preceding two displays gives
\[
\D\!\left(
P_+^Z
\,\middle\|\,
P_-^Z
\right)
\ \leq \
\sum_{k=0}^{M-1}
\D\!\left(
P_+^{X_0\mid X_{1:k}}
\,\middle\|\,
P_-^{X_0\mid X_{1:k}}
\,\middle|\,
P_+^{X_{1:k}}
\right).
\]
Since \(X_0\) has the same distribution under \(P_+\) and \(P_-\), the term \(k=0\) vanishes.
We therefore obtain
\begin{equation}
\label{eq:dpi-X-Z_1:N-with-chain-rule}
\D\!\left(
P_+^Z
\,\middle\|\,
P_-^Z
\right)
\ \leq \
(M-1)\;
\sup_{k\geq1}\;
\D\!\left(
P_+^{X_0\mid X_{1:k}}
\,\middle\|\,
P_-^{X_0\mid X_{1:k}}
\,\middle|\,
P_+^{X_{1:k}}
\right).
\end{equation}

%%%%%%
\bigskip
%%%%%%

\noindent\textit{Step 4: A bound for the lag-\(1\) model for small
\(\alpha\).}
By Lemma~\ref{lem:oriented-kl-contraction-ldp}, each kernel \(K_i\) is
\(\beta_\alpha\)-contracting. Since \(X\) is stationary and
\(1\)-dependent under \(P_+\) and \(P_-\),
Proposition~\ref{prop:abstract-kl-iteration} yields
\[
\D\!\left(
P_+^Z
\,\middle\|\,
P_-^Z
\right)
\ \leq \
\beta_\alpha
\sum_{k=0}^{M-1}
\beta_\alpha^k(M-k)
\D\!\left(
P_+^{X_0\mid X_{1:k}}
\,\middle\|\,
P_-^{X_0\mid X_{1:k}}
\,\middle|\,
P_+^{X_{1:k}}
\right).
\]
As in the previous step, the term \(k=0\) vanishes. Hence, whenever
\(\beta_\alpha<1\),
\[
\D\!\left(
P_+^Z
\,\middle\|\,
P_-^Z
\right)
\ \leq \
\frac{\beta_\alpha^2}{1-\beta_\alpha}\;
(M-1)\;
\sup_{k\geq1}
\D\!\left(
P_+^{X_0\mid X_{1:k}}
\,\middle\|\,
P_-^{X_0\mid X_{1:k}}
\,\middle|\,
P_+^{X_{1:k}}
\right).
\]
Combining this with~\eqref{eq:dpi-X-Z_1:N-with-chain-rule}, and with the
convention that
\(\beta_\alpha^2/(1-\beta_\alpha):=+\infty\) when
\(\beta_\alpha=1\), we obtain
\begin{equation}
\label{eq:X-Z_1:N-general-case}
\D\!\left(
P_+^Z
\,\middle\|\,
P_-^Z
\right)
\ \leq \
\left(
1\wedge\frac{\beta_\alpha^2}{1-\beta_\alpha}
\right)
(M-1)
\sup_{k\geq1}
\D\!\left(
P_+^{X_0\mid X_{1:k}}
\,\middle\|\,
P_-^{X_0\mid X_{1:k}}
\,\middle|\,
P_+^{X_{1:k}}
\right).
\end{equation}

%%%%%
\bigskip 
%%%%%%

\noindent\textit{Step 5: Uniform Gaussian bound for the conditional KL divergences.}
We apply Lemma~\ref{lem:lemma-conditional-KL-gaussian-general-sp-densities}
with \(f_\pm=\frac1{2\pi}\pm\frac{\delta}{4}\Delta_1\).
Since \(|\delta|\leq1\), these spectral densities satisfy
\(\frac1{4\pi}\leq f_\pm\leq\frac3{4\pi}\).
Moreover,
\[
\max_{\pm\in\{+,-\}}
\left\|
f_\pm-\frac1{2\pi}
\right\|
=
\frac{|\delta|}{4}\|\Delta_1\|,
\qquad \qquad 
\sup_{\lambda\in[-\pi,\pi]}
|f_+(\lambda)-f_-(\lambda)|
=
\frac{|\delta|}{2\pi}.
\]
Hence, Lemma~\ref{lem:lemma-conditional-KL-gaussian-general-sp-densities}
gives a universal constant \(C>0\) such that
\begin{equation}
\label{eq:X-Z_1:N-general-case-supbound}
\sup_{k\geq1}
\D\!\left(
P_+^{X_0\mid X_{1:k}}
\,\middle\|\,
P_-^{X_0\mid X_{1:k}}
\,\middle|\,
P_+^{X_{1:k}}
\right)
\ \leq \
C\delta^2.
\end{equation}

%%%%
\bigskip
%%%%%

\noindent\textit{Conclusion.}
Recall from Lemma~\ref{lem:oriented-kl-contraction-ldp} that
\(\beta_\alpha=\frac12(e^\alpha-e^{-\alpha})^2\wedge1\).
In particular, \(\beta_\alpha\sim2\alpha^2\) as \(\alpha\downarrow0\),
and therefore
\(\beta_\alpha^2/(1-\beta_\alpha)\sim4\alpha^4\).
Consequently, there exists a universal constant \(C'\) such that, for every
\(\alpha>0\), 
\[
1\wedge\frac{\beta_\alpha^2}{1-\beta_\alpha}
\ \leq \
C'(1 \wedge \alpha^4).
\]
Combining this with~\eqref{eq:X-Z_1:N-general-case} and~\eqref{eq:X-Z_1:N-general-case-supbound} therefore gives 
\[
\D\!\left(
P_+^Z
\,\middle\|\,
P_-^Z
\right)
\ \leq \
\tilde C(\alpha^4\wedge1)(M-1)\delta^2,
\]
for some universal constant \(\tilde C\).
Since \(P_\pm = P_{\frac1{2\pi}\pm\frac{\delta}{4}\Delta_1,K_{1:M}}\)
for an arbitrary \(K_{1:M}\in\mathcal M_\alpha^M\), taking the
supremum over \(K_{1:M}\) proves~\eqref{eq:rmains-to-do-theorem-kl-private-gaussian}, and concludes the proof of Theorem~\ref{thm:KL-bound-gaussian}.

%%%%%
%%%%%

%%%%%%%
%Proof Thm 1
%%%%

\subsection{Proof of Theorem~\ref{thm-borne-inf:cas-non-interactif}}

The proof follows the strategy outlined in Section~\ref{section_minimax_lower_bounds}, organized into three steps: a reduction of the estimation problem to the hypercube \(\{-\varepsilon,\varepsilon\}^H\), an Assouad reduction to neighboring privatized KL divergences, and a control of these divergences using
Theorem~\ref{thm:KL-bound-gaussian}.
We then choose \(\varepsilon\) and \(H\) to conclude.

%%%%%%%%
\bigskip 
%%%%

\noindent\textit{Step 1: Hypercube reduction.}
Since \(C_0\ge 1\), we have \(\mathcal F(1,C_1,s)\subset \mathcal F(C_0,C_1,s)\).
It is therefore enough to prove a lower bound over the subclass \(\mathcal F(1,C_1,s)\), corresponding to spectral densities with unit variance.
We next restrict the problem to estimating a finite number of autocovariance coefficients, all with the same amplitude but with possibly different signs.

Let \(H\ge 1\) and \(\varepsilon>0\), whose values will be chosen later.
For each \(\boldsymbol{\omega}\in\{-1,1\}^H\), define
\begin{equation}
\label{eq:hypercube-spectral-densities}
f_{\boldsymbol{\omega}}(\lambda)
\ = \
\frac{1}{2\pi}
\left(
1+2\varepsilon\sum_{h=1}^H \omega_h\cos(h\lambda)
\right),
\qquad \lambda\in[-\pi,\pi].
\end{equation}
This is precisely the Fourier representation
\eqref{eq:f-fourier-representation-even-cosine} associated with an
autocovariance sequence equal to \(1\) at lag \(0\), to
\(\varepsilon\omega_h\) at lags \(\pm h\), \(1\le h\le H\), and to \(0\)
at all remaining lags \(|h|\ge H+1\).
Thus, the vector \(\varepsilon\boldsymbol{\omega}\) is the vector of the
first \(H\) nonzero autocovariance coefficients beyond the variance term.

On this finite-dimensional subclass, since \(\varepsilon\) is fixed and known, estimating the spectral density is equivalent, up to constants, to estimating the sign vector \(\boldsymbol{\omega}\).
The following lemma makes this reduction precise. 
Its proof is deferred to Appendix~\ref{appendix:lower-bound-thm:proofs-lemmas}. 

\begin{lemma}
\label{lem:hypercube-reduction}
For any \(H\ge1\) and \(\varepsilon>0\) such that
\begin{equation} \label{eq:lemma:condition-class-validity}
    \varepsilon H\le \frac14 \qquad\text{and}\qquad \varepsilon^2(2H)^{2s+1}\le C_1^2,
\end{equation}
we have \(f_{\boldsymbol{\omega}}\in\mathcal F(1,C_1,s)\) for every
\(\boldsymbol{\omega}\in\{-1,1\}^H\). 
Moreover,
\[
\mathcal R_{N,\alpha}(1,C_1,s)
\ \ge \
\frac{\varepsilon^2}{\pi} \ 
\inf_{K_{1:N}\in\mathcal M_\alpha^N} \ 
\inf_{\hat{\boldsymbol{\omega}}} \ 
\sup_{\boldsymbol{\omega}\in\{-1,1\}^H} \ 
\mathbb E_{f_{\boldsymbol{\omega}},K_{1:N}}
\left[
d_H\left(\hat{\boldsymbol{\omega}}(Z),\boldsymbol{\omega}\right)
\right],
\]
where \(d_H\) denotes the Hamming distance on \(\{-1,1\}^H\), 
the first infimum is over all \(\alpha\)-locally differentially private mechanisms on \(N\) coordinates, and, for each \(K_{1:N}\), the second is over all measurable functions from the output space of \(K_{1:N}\) to \(\{-1,1\}^H\).
\end{lemma}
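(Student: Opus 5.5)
The proof has two parts: membership of \(f_{\boldsymbol\omega}\) in \(\mathcal F(1,C_1,s)\), and a reduction of the \(L^2\) risk to Hamming loss via a projection argument.

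\emph{Membership.} Since \(f_{\boldsymbol\omega}\) is an even trigonometric polynomial, it is integrable and even. Under \(\varepsilon H\le 1/4\),
\[
f_{\boldsymbol\omega}(\lambda)\ \ge\ \frac{1}{2\pi}\bigl(1-2\varepsilon H\bigr)\ \ge\ \frac{1}{4\pi}\ >\ 0,
\]
so it is nonnegative. By Fourier inversion~\eqref{eq:gamma-fourier-inversion}, \(\gamma_{f_{\boldsymbol\omega}}(0)=1\in[1,1]\) and \(\rho(h)=\varepsilon\omega_{|h|}\) for \(1\le|h|\le H\), with \(\rho(h)=0\) otherwise. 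The Sobolev sum is then
\[
2\varepsilon^2\sum_{h=1}^H h^{2s}\ \le\ 2\varepsilon^2 H^{2s+1}\ \le\ \varepsilon^2(2H)^{2s+1}\ \le\ C_1^2 .
\]
(Both \(2\le 2^{2s+1}\) and \(2\varepsilon^2H^{2s+1}\le\varepsilon^2(2H)^{2s+1}\) hold because \(s>1/2\).)

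\emph{Risk reduction.} Fix a mechanism \(K_{1:N}\) and an estimator \(\hat f\). Restricting the supremum to the hypercube, we have \(\sup_{\mathcal F(1,C_1,s)}\ge\max_{\boldsymbol\omega}\). Define \(\hat{\boldsymbol\omega}_h:=\operatorname{sgn}\bigl(\int\hat f(\lambda)\cos(h\lambda)\,d\lambda\bigr)\), with the convention \(\operatorname{sgn}(0)=1\); this is a measurable function of \(Z\). Since \(\{\cos(h\cdot)\}_{h\ge1}\) is orthogonal with \(\|\cos(h\cdot)\|^2=\pi\), Bessel's inequality gives
\[
\|\hat f-f_{\boldsymbol\omega}\|^2\ \ge\ \sum_{h=1}^H\frac{1}{\pi}\bigl(\hat c_h-c_h\bigr)^2,
\]
where \(\hat c_h=\int\hat f\cos(h\cdot)\) and \(c_h=\int f_{\boldsymbol\omega}\cos(h\cdot)=\varepsilon\omega_h\), since \(\frac{1}{2\pi}\cdot2\varepsilon\omega_h\cdot\pi=\varepsilon\omega_h\). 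Whenever \(\hat{\boldsymbol\omega}_h\ne\omega_h\), \(\hat c_h\) lies on the opposite side of \(0\) from \(\varepsilon\omega_h\) (or equals zero), so \(|\hat c_h-c_h|\ge\varepsilon\). Hence
\[
\|\hat f-f_{\boldsymbol\omega}\|^2\ \ge\ \frac{\varepsilon^2}{\pi}\,d_H(\hat{\boldsymbol\omega},\boldsymbol\omega).
\]
Taking expectations, then the maximum over \(\boldsymbol\omega\), then the infimum over \(\hat f\) (which is at least the infimum over sign estimators), and finally the infimum over \(K_{1:N}\), yields the claim.

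There is no real obstacle. The only care needed is the constant bookkeeping in the Fourier normalization and the measurability of \(\hat{\boldsymbol\omega}\), which follows from continuity of \(\hat f\mapsto\hat c_h\) on \(L^2\).
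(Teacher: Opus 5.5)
Your proof is correct, and the membership part is the same as the paper's. The risk reduction, however, follows a different route.

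The paper takes \(\hat{\boldsymbol\omega}_{\hat f}(z)\) to be a minimizer of \(\boldsymbol\omega'\mapsto\|\hat f(z)-f_{\boldsymbol\omega'}\|\) over the hypercube. It then combines the triangle inequality \(\|f_{\hat{\boldsymbol\omega}_{\hat f}(z)}-f_{\boldsymbol\omega}\|\le2\|\hat f(z)-f_{\boldsymbol\omega}\|\) with the identity \(\|f_{\boldsymbol\omega'}-f_{\boldsymbol\omega}\|^2=\frac{4\varepsilon^2}{\pi}d_H(\boldsymbol\omega',\boldsymbol\omega)\). You instead threshold each Fourier coefficient of \(\hat f\) at zero and apply Bessel's inequality coordinatewise.

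Both arguments give the same constant \(\varepsilon^2/\pi\). The factor \(4\) lost in the paper's triangle inequality is exactly offset because neighboring vertices are \(2\varepsilon\) apart in the flipped coefficient. Your decision boundary, by contrast, lies at distance \(\varepsilon\) from each vertex.

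Each route has an advantage. Your estimator is explicit, and its measurability is immediate (a continuous linear functional followed by \(\operatorname{sgn}\)). The paper's argmin needs a measurable tie-breaking rule, which it asserts without detail. On the other hand, the paper's minimum-distance argument uses only the metric structure of the hypercube, namely a lower bound of \(\|f_{\boldsymbol\omega'}-f_{\boldsymbol\omega}\|^2\) by a multiple of \(d_H\). It therefore transfers verbatim to perturbation families without an orthogonal coordinate structure.

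(If \(L^2([-\pi,\pi])\) is read as complex-valued, apply \(\operatorname{sgn}\) to \(\operatorname{Re}\hat c_h\); Bessel's inequality still yields your bound because \(f_{\boldsymbol\omega}\) is real.)
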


Thus, under the two conditions in~\eqref{eq:lemma:condition-class-validity}, lower bounding the original
minimax risk reduces to lower bounding the Hamming risk for estimating the sign vector \(\boldsymbol{\omega}\) over the hypercube \(\{-1,1\}^H\).

%%%%%%%%%%%
\bigskip 
%%%%%%

\noindent\textit{Step 2: Assouad reduction to neighboring KL divergences.}
We now recall the following consequence of Assouad's lemma and
Pinsker's inequality; see, for instance, \cite[Theorem~2.12]{Tsybakov2009}.

\begin{lemma}[Assouad's lemma]
\label{assouad-lemma}
Let \(H\ge1\), and let
\(d_H(\boldsymbol{\omega},\boldsymbol{\omega}')
=
\sum_{h=1}^H
\mathbf 1\{\omega_h\ne\omega_h'\}\)
be the Hamming distance on \(\{-1,1\}^H\).
Let
\(\{\mu_{\boldsymbol{\omega}}:\boldsymbol{\omega}\in\{-1,1\}^H\}\)
be a family of probability measures on a common measurable space.
Then
\[
\inf_{\hat{\boldsymbol{\omega}}}
\sup_{\boldsymbol{\omega}\in\{-1,1\}^H}
\mathbb E_{\boldsymbol{\omega}}
\left[
d_H(\hat{\boldsymbol{\omega}},\boldsymbol{\omega})
\right]
\ \ge \ 
\frac{H}{2}
\min_{\substack{
\boldsymbol{\omega},\boldsymbol{\omega}'\in\{-1,1\}^H\\
d_H(\boldsymbol{\omega},\boldsymbol{\omega}')=1
}}
\left(
1-
\sqrt{
\frac12
\D\!\left(
\mu_{\boldsymbol{\omega}}
\,\middle\|\,
\mu_{\boldsymbol{\omega}'}
\right)
}
\right),
\]
where \(\mathbb E_{\boldsymbol{\omega}}\) denotes expectation under \(\mu_{\boldsymbol{\omega}}\), 
and the infimum is over all estimators of \(\boldsymbol{\omega}\), that is, 
all measurable maps from the common measurable space to \(\{-1,1\}^H\).
\end{lemma}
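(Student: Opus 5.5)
The plan is the classical Assouad argument: bound the minimax Hamming risk from below by a Bayes risk, split it into \(H\) binary testing problems, and control each one by a total variation distance, which Pinsker's inequality turns into a KL divergence.

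\emph{Step 1: Bayes reduction and coordinatewise splitting.} Fix an estimator \(\hat{\boldsymbol{\omega}}\). I would bound the supremum over \(\boldsymbol{\omega}\) from below by the average under the uniform prior on \(\{-1,1\}^H\):
\[
\sup_{\boldsymbol{\omega}}\mathbb E_{\boldsymbol{\omega}}\left[d_H(\hat{\boldsymbol{\omega}},\boldsymbol{\omega})\right]
\ \ge\
2^{-H}\sum_{\boldsymbol{\omega}}\sum_{h=1}^H \mu_{\boldsymbol{\omega}}\bigl(\hat\omega_h\neq\omega_h\bigr).
\]
For each \(h\), I would group the vertices into pairs \((\boldsymbol{\omega},\boldsymbol{\omega}^{(h)})\), where \(\boldsymbol{\omega}^{(h)}\) is \(\boldsymbol{\omega}\) with its \(h\)-th sign flipped. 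There are \(2^{H-1}\) such pairs, and each pair is at Hamming distance one.

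\emph{Step 2: Two-point bound.} For a fixed pair, \(\hat\omega_h\) acts as a test between \(\mu_{\boldsymbol{\omega}}\) and \(\mu_{\boldsymbol{\omega}^{(h)}}\). Let \(A=\{\hat\omega_h\neq\omega_h\}\). On the complement of \(A\) we have \(\hat\omega_h=\omega_h\), hence \(\hat\omega_h\neq\omega^{(h)}_h\). Therefore
\[
\mu_{\boldsymbol{\omega}}(A)+\mu_{\boldsymbol{\omega}^{(h)}}(A^c)
\ \ge\
1-\|\mu_{\boldsymbol{\omega}}-\mu_{\boldsymbol{\omega}^{(h)}}\|_{\mathrm{TV}},
\]
using \(\mu(A)+\nu(A^c)=1-(\nu(A)-\mu(A))\ge 1-\|\mu-\nu\|_{\mathrm{TV}}\), with TV taken as the supremum over events.

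\emph{Step 3: Summation.} Summing the two-point bound over the \(2^{H-1}\) pairs for each \(h\), and then over \(h\), gives
\[
2^{-H}\cdot 2^{H-1}\cdot H\cdot\min_{d_H(\boldsymbol{\omega},\boldsymbol{\omega}')=1}\bigl(1-\|\mu_{\boldsymbol{\omega}}-\mu_{\boldsymbol{\omega}'}\|_{\mathrm{TV}}\bigr).
\]
This equals \(\frac H2\min(1-\mathrm{TV})\).

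\emph{Step 4: Pinsker.} Pinsker's inequality gives \(\|\mu_{\boldsymbol{\omega}}-\mu_{\boldsymbol{\omega}'}\|_{\mathrm{TV}}\le\sqrt{\frac12\D(\mu_{\boldsymbol{\omega}}\|\mu_{\boldsymbol{\omega}'})}\). This holds trivially when the divergence is infinite, since the right-hand side is then \(+\infty\). The neighbor condition is symmetric, so the orientation of the KL inside the minimum is irrelevant. Because the bound does not depend on \(\hat{\boldsymbol{\omega}}\), taking the infimum over estimators concludes the proof.

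No step is a real obstacle. The points needing care are the normalization of TV (it must be the supremum over events, so that Pinsker carries the constant \(\frac12\)) and the counting of pairs in Step 3, which produces the factor \(H/2\).
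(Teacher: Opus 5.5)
Your proof is correct and is essentially the standard argument. The paper does not prove this lemma itself but cites Tsybakov's Theorem~2.12, whose proof follows your route: lower bound the maximum by the uniform-prior average, split the Hamming loss into $H$ families of two-point tests between neighbours, bound each pair's summed error by $1-\|\mu_{\boldsymbol{\omega}}-\mu_{\boldsymbol{\omega}'}\|_{\mathrm{TV}}$, and finish with Pinsker's inequality, with the TV normalization and the $2^{-H}\cdot 2^{H-1}\cdot H$ count producing the constants $\tfrac12$ and $\tfrac H2$.
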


Applying Assouad's lemma with
\[
\mu_{\boldsymbol{\omega}}
\ = \
P_{f_{\boldsymbol{\omega}},K_{1:N}}^Z,
\qquad
\boldsymbol{\omega}\in\{-1,1\}^H,
\]
and then using the hypercube reduction from Step~1, we obtain, if
Lemma~\ref{lem:hypercube-reduction} applies,
\begin{equation}
\label{eq:assouad-reduction}
\mathcal R_{N,\alpha}(1,C_1,s)
\ \ge \
\frac{\varepsilon^2 H}{2\pi}
\left(
1-
\sqrt{
\frac{\mathcal V_{N,\alpha,H,\varepsilon}}{2}
}
\right),
\end{equation}
where
\begin{equation}
\label{eq:V-quantity:assouad-reduction}
\mathcal V_{N,\alpha,H,\varepsilon} 
\ := \ 
\sup_{K_{1:N}\in\mathcal M_\alpha^N} \
\max_{\substack{
\boldsymbol{\omega},\boldsymbol{\omega}'\in\{-1,1\}^H\\
d_H(\boldsymbol{\omega},\boldsymbol{\omega}')=1}} \
\D\!\left(
P_{f_{\boldsymbol{\omega}},K_{1:N}}^Z
\,\middle\|\,
P_{f_{\boldsymbol{\omega}'},K_{1:N}}^Z
\right).
\end{equation}

%%%%%%
\bigskip
%%%%%

\noindent\textit{Step 3: Control of the KL divergence.}
We now apply Theorem~\ref{thm:KL-bound-gaussian}.
By~\eqref{eq:hypercube-spectral-densities}, for any
\(\boldsymbol{\omega},\boldsymbol{\omega}'\in\{-1,1\}^H\) such that
\(d_H(\boldsymbol{\omega},\boldsymbol{\omega}')=1\), we have, for every
\(\lambda\in[-\pi,\pi]\),
\[
f_{\boldsymbol{\omega}}(\lambda)
\wedge
f_{\boldsymbol{\omega}'}(\lambda)
\ \geq \
\frac{1}{2\pi}\left(1-2\varepsilon H\right)
\ \geq \
\frac{1}{4\pi},
\]
provided that \(\varepsilon H\leq1/4\).
Moreover, for some \(h\in[H]\),
\[
f_{\boldsymbol{\omega}}(\lambda)
-
f_{\boldsymbol{\omega}'}(\lambda)
\ = \
\frac{2\omega_h\varepsilon}{\pi}\cos(h\lambda).
\]
Thus, the conditions in~\eqref{eq:min-f_pm} hold with
\[
f_+=f_{\boldsymbol{\omega}},
\qquad
f_-=f_{\boldsymbol{\omega}'},
\qquad
a=\frac13,
\qquad
\delta=12\omega_h\varepsilon.
\]
In particular, if \(\varepsilon\leq1/12\), then
\(|\delta|\leq1\).
Therefore, for every \(N\geq2\), \(\alpha>0\),
\(\varepsilon\in(0,1/12]\), and \(H\ge 1\) such that
\(\varepsilon H\leq1/4\), Theorem~\ref{thm:KL-bound-gaussian} gives
\begin{equation}
\label{eq:kl-bound-in-proof}
\mathcal V_{N,\alpha,H,\varepsilon}
\ \leq \
144C(\alpha^4\wedge1)N\varepsilon^2,
\end{equation}
for some universal constant \(C\). 

%%%%%
\bigskip
%%%%%%%

\noindent\textit{Final step: choice of \((\varepsilon, H)\) and conclusion.}
We set
\begin{align*}
\varepsilon_*
&\ := \
\left(
\frac1{12}
\wedge
\left(\frac{C_1^2}{2^{2s+1}}\right)^{1/2}
\wedge
2^{-(2s+1)}
\wedge
\frac1{12\sqrt{2C}}
\right)
\left(
1\wedge\frac1{(\alpha^2\wedge1)\sqrt N}
\right),
\\
H_*
&\ := \
\left\lfloor
\frac12
\left(
\frac{C_1^2\wedge2^{-(2s+1)}}{\varepsilon_*^2}
\right)^{1/(2s+1)}
\right\rfloor.
\end{align*}
The following technical lemma shows that this choice is compatible
with the conditions used in the previous steps.
Its proof is deferred to
Appendix~\ref{appendix:lower-bound-thm:proofs-lemmas}.

\begin{lemma}
\label{lem:choice-parameters-lower-bound-bis}
With \(\varepsilon_*\) and \(H_*\) as above, we have
\[
\varepsilon_*\leq 1/12,
\qquad
H_*\geq1,
\qquad
\varepsilon_*H_*\leq 1/4,
\qquad\text{and}\qquad
\varepsilon_*^2(2H_*)^{2s+1}\leq C_1^2.
\]
Moreover,
\[
144C(\alpha^4\wedge1)N\varepsilon_*^2
\ \leq \
1/2,
\qquad\text{and}\qquad
\frac{\varepsilon_*^2H_*}{4\pi}
\ \geq \
c_{C_1,s}
\left(
1\wedge
\left[(\alpha^4\wedge1)N\right]^{-\frac{2s}{2s+1}}
\right),
\]
where \(c_{C_1,s}>0\) depends only on \(C_1\) and \(s\).
\end{lemma}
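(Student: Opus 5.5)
The plan is to verify each inequality directly from the definitions. Write
\[
\kappa:=\frac1{12}\wedge\Big(\frac{C_1^2}{2^{2s+1}}\Big)^{1/2}\wedge 2^{-(2s+1)}\wedge\frac1{12\sqrt{2C}},
\qquad
t:=1\wedge\frac{1}{(\alpha^2\wedge1)\sqrt N},
\]
so that \(\varepsilon_*=\kappa t\) with \(t\in(0,1]\). Also set \(B:=C_1^2\wedge2^{-(2s+1)}\), so that \(H_*=\lfloor \tfrac12 (B/\varepsilon_*^2)^{1/(2s+1)}\rfloor\).

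\textbf{Elementary constraints.} First, \(\varepsilon_*\le\kappa\le 1/12\) is immediate. For \(H_*\ge1\) we need \((B/\varepsilon_*^2)^{1/(2s+1)}\ge2\), that is, \(\varepsilon_*^2\le B\,2^{-(2s+1)}\).

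\begin{itemize}
\item If \(B=C_1^2\), this follows from \(\varepsilon_*^2\le\kappa^2\le C_1^2/2^{2s+1}\).
\item If \(B=2^{-(2s+1)}\), we need \(\varepsilon_*^2\le 2^{-2(2s+1)}\), which follows from \(\kappa\le2^{-(2s+1)}\).
\end{itemize}

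This is exactly why those two terms appear in \(\kappa\). Next, since \(2H_*\le (B/\varepsilon_*^2)^{1/(2s+1)}\), we get \(\varepsilon_*^2(2H_*)^{2s+1}\le B\le C_1^2\). For \(\varepsilon_*H_*\le1/4\), write \(\varepsilon_*H_*\le\frac12 B^{1/(2s+1)}\varepsilon_*^{1-2/(2s+1)}\). Because \(s>1/2\), the exponent \(1-2/(2s+1)=(2s-1)/(2s+1)\) is positive and \(\varepsilon_*\le1\). Moreover \(B^{1/(2s+1)}\le 1/2\) since \(B\le2^{-(2s+1)}\). Hence \(\varepsilon_*H_*\le 1/4\).

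\textbf{KL condition.} We have
\[
144C(\alpha^4\wedge1)N\varepsilon_*^2\le 144C\kappa^2\,(\alpha^4\wedge1)N\,t^2 .
\]
Since \(t^2\le 1/((\alpha^4\wedge1)N)\), the factor \((\alpha^4\wedge 1)N t^2\) is at most \(1\). Also \(144C\kappa^2\le 144C/(144\cdot2C)=1/2\).

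\textbf{Rate.} Because \(H_*\ge1\) and \(\lfloor x\rfloor\ge x/2\) for \(x\ge1\), we have \(H_*\ge\frac14 (B/\varepsilon_*^2)^{1/(2s+1)}\). Therefore
\[
\frac{\varepsilon_*^2H_*}{4\pi}\ \ge\ \frac{B^{1/(2s+1)}}{16\pi}\,\varepsilon_*^{4s/(2s+1)}
=\frac{B^{1/(2s+1)}\kappa^{4s/(2s+1)}}{16\pi}\, t^{4s/(2s+1)} .
\]
Now \(t^2=1\wedge[(\alpha^4\wedge1)N]^{-1}\), so \(t^{4s/(2s+1)}=1\wedge[(\alpha^4\wedge1)N]^{-2s/(2s+1)}\), which is the claimed rate.

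\textbf{The main subtlety.} The constant \(\kappa\) involves the universal \(C\) from Theorem~\ref{thm:KL-bound-gaussian}. Since \(C\) is universal, the resulting \(c_{C_1,s}\) still depends only on \((C_1,s)\). The only genuinely delicate point is the interplay of the floor and the requirement \(H_*\ge1\), which the case analysis on \(B\) above handles.
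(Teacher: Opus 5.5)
Your proof is correct and follows essentially the same route as the paper: you use the same factor-by-factor bounds on $\varepsilon_*$ to get $H_*\geq1$, $\varepsilon_*H_*\leq 1/4$, the Sobolev condition and the KL condition, and the same inequality $\lfloor x\rfloor\geq x/2$ for the rate. Your case split on $B$ for $H_*\geq 1$ just unpacks the paper's one-line bound $\varepsilon_*^2\leq (C_1^2\wedge 2^{-(2s+1)})/2^{2s+1}$.
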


The first part of Lemma~\ref{lem:choice-parameters-lower-bound-bis} shows that, with
\(\varepsilon=\varepsilon_*\) and \(H=H_*\), the conditions of Lemma~\ref{lem:hypercube-reduction} and of~\eqref{eq:kl-bound-in-proof}
are satisfied.
The first bound in the second part of Lemma~\ref{lem:choice-parameters-lower-bound-bis}, together
with~\eqref{eq:kl-bound-in-proof}, gives
\[
\mathcal V_{N,\alpha,H_*,\varepsilon_*}
\ \leq \
1/2.
\]
Therefore, \eqref{eq:assouad-reduction} yields
\[
\mathcal R_{N,\alpha}(1,C_1,s)
\ \geq \
\varepsilon_*^2H_*/(4\pi).
\]
The remaining bound in
Lemma~\ref{lem:choice-parameters-lower-bound-bis} now gives the claimed
lower bound for \(\mathcal R_{N,\alpha}(1,C_1,s)\).
Since
\(\mathcal F(1,C_1,s)\subset\mathcal F(C_0,C_1,s)\), the same lower
bound holds for \(\mathcal R_{N,\alpha}(C_0,C_1,s)\), which concludes
the proof of Theorem~\ref{thm-borne-inf:cas-non-interactif}.

%%% Local Variables:
%%% mode: latex
%%% TeX-master: "main.tex"
%%% ispell-local-dictionary: "american"
%%% End:

%%%%%%%

\section{Proof of the upper bound}
\label{section:proof-sketch-upper-bound-thm}

We prove Propositions~\ref{lem:variance-estimator} and~\ref{prop:fred-estimator}, and then deduce Theorem~\ref{thm:f-estimator}.
Throughout this section, when \((N, \alpha, f)\) is fixed, we simply write \(\mathbb E\), \(\operatorname{Var}\), and \(\operatorname{Cov}\) for expectation, variance, and covariance under \(P_{f,K_{1:N}^{\mathrm{Lap},\alpha}}\).

%%%%%

The following lemma provides \(\ell^2\)- and \(\ell^1\)-bounds that will be used repeatedly throughout this section.
Its proof follows directly from the Sobolev constraint defining \(\mathcal F(C_0,C_1,s)\), and is deferred to Appendix~\ref{appendix:upper-bound:additional-proofs}.

\begin{lemma}
\label{lem:rho-sum-uniform}
For every \(f\in\mathcal F(C_0,C_1,s)\), we have
\[
\sum_{h\in\mathbb Z\setminus\{0\}}|\rho_f(h)|^2
\ \leq \
C_1^2,
\qquad\text{and}\qquad
\sum_{h\in\mathbb Z\setminus\{0\}}|\rho_f(h)|
\ \leq \
\sqrt{2\zeta(2s)}\,C_1\;,
\]
where \(\zeta\) denotes the Riemann zeta function.
\end{lemma}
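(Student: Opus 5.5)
The first bound follows directly from the Sobolev constraint in \eqref{defi-parameter-class-F}. Since \(|h|^{2s}\geq1\) for every \(h\in\mathbb Z\setminus\{0\}\), we have \(|\rho_f(h)|^2\leq|h|^{2s}|\rho_f(h)|^2\) termwise. Summing over \(h\neq0\) and using the constraint \(\sum_{h\neq0}|h|^{2s}|\rho_f(h)|^2\le C_1^2\) gives \(\sum_{h\neq0}|\rho_f(h)|^2\le C_1^2\). No further property of \(f\) is needed; in particular, the bounds on \(\gamma_f(0)\) play no role, since everything is stated for the normalized sequence \(\rho_f\).

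For the \(\ell^1\)-bound, I will write \(|\rho_f(h)|=|h|^{-s}\cdot|h|^{s}|\rho_f(h)|\) and apply the Cauchy--Schwarz inequality over \(h\in\mathbb Z\setminus\{0\}\):
\[
\sum_{h\neq0}|\rho_f(h)|
\ \le\
\Bigl(\sum_{h\neq0}|h|^{-2s}\Bigr)^{1/2}
\Bigl(\sum_{h\neq0}|h|^{2s}|\rho_f(h)|^2\Bigr)^{1/2}.
\]
The second factor is at most \(C_1\) by the Sobolev constraint. For the first factor, evenness of \(h\mapsto|h|^{-2s}\) gives \(\sum_{h\neq0}|h|^{-2s}=2\sum_{h\ge1}h^{-2s}=2\zeta(2s)\). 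This series is finite precisely because \(s>1/2\), i.e. \(2s>1\), which is the standing assumption on the class. Combining the two factors yields \(\sqrt{2\zeta(2s)}\,C_1\).

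There is no real obstacle. The only point to check is that \(\zeta(2s)<\infty\), which is exactly what the assumption \(s>1/2\) guarantees; this is also why absolute summability of \(\gamma_f\) was noted after \eqref{defi-parameter-class-F}.
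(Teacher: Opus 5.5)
Your proof is correct and matches the paper's argument essentially verbatim: the \(\ell^2\)-bound by termwise comparison using \(|h|^{2s}\geq1\), and the \(\ell^1\)-bound by Cauchy--Schwarz with \(\sum_{h\neq0}|h|^{-2s}=2\zeta(2s)<\infty\) because \(s>1/2\).
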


%%%%%%%%%
%Section 6.1. Proof Variance estimation
%%%%%%%%

\subsection{Proof of Proposition~\ref{lem:variance-estimator}}
\label{proof-section:-performance-estimator-variance}

We will use the following result due to Hans Gebelein (1941).

\begin{lemma}[Gebelein inequality \cite{Gebelein1941}]
\label{lem:gebelein}
Let \((U,V)\) be a \(\mathbb R^2\)-valued centered Gaussian vector
such that \(\mathbb E[U^2]=\mathbb E[V^2]=1\). 
Then
\begin{equation*}
\left|
\mathbb E[g(U)\tilde g(V)]
\right|
\le
|\mathbb E[UV]|
\left(\mathbb E|g(U)|^2\right)^{1/2}
\left(\mathbb E|\tilde g(V)|^2\right)^{1/2}
\end{equation*}
for all measurable functions \(g,\tilde g:\mathbb R\to\mathbb R\) such that
\(g(U),\tilde g(V)\in L^2\) and \(\mathbb E[g(U)]=\mathbb E[\tilde g(V)]=0\).
\end{lemma}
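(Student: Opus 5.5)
The plan is to expand both functions in the Hermite basis of \(L^2(\mathcal N(0,1))\) and use Mehler's identity for the cross moments. Let \(\rho:=\mathbb E[UV]\in[-1,1]\), and let \((H_n)_{n\ge0}\) be the probabilists' Hermite polynomials. The normalized family \(\{H_n/\sqrt{n!}\}_{n\ge0}\) is an orthonormal basis of \(L^2(\mathcal N(0,1))\). Since \(U\) and \(V\) are both standard Gaussian, expand
\[
g=\sum_{n\ge0}a_n\frac{H_n}{\sqrt{n!}},
\qquad
\tilde g=\sum_{n\ge0}b_n\frac{H_n}{\sqrt{n!}},
\]
with convergence in \(L^2(\mathcal N(0,1))\). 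By Parseval, \(\sum_n a_n^2=\mathbb E|g(U)|^2\) and \(\sum_n b_n^2=\mathbb E|\tilde g(V)|^2\). The centering assumptions say exactly that \(a_0=\mathbb E[g(U)]=0\) and \(b_0=\mathbb E[\tilde g(V)]=0\).

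The key step is Mehler's identity
\[
\mathbb E[H_m(U)H_n(V)]=\delta_{mn}\,n!\,\rho^n .
\]
To prove it, use the generating function \(e^{sx-s^2/2}=\sum_{n\ge0}H_n(x)s^n/n!\). A direct Gaussian computation gives
\[
\mathbb E\big[e^{sU-s^2/2}\,e^{tV-t^2/2}\big]=e^{st\rho}.
\]
Expand both sides as power series in \((s,t)\) and identify coefficients. The interchange of expectation and summation is justified by dominated convergence, since \(\mathbb E[e^{|s||U|+|t||V|}]<\infty\). This argument also covers the degenerate cases \(|\rho|=1\).

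Next, pass to the bilinear form. The map \((g,\tilde g)\mapsto\mathbb E[g(U)\tilde g(V)]\) is jointly continuous for the \(L^2\) norms, by Cauchy--Schwarz. Hence it may be applied termwise to the truncated Hermite expansions and the limit taken. Using \(a_0=b_0=0\), this yields
\[
\mathbb E[g(U)\tilde g(V)]=\sum_{n\ge1}a_nb_n\rho^n .
\]
Since \(|\rho|^n\le|\rho|\) for \(n\ge1\), Cauchy--Schwarz on the sequences gives
\[
|\mathbb E[g(U)\tilde g(V)]|\le |\rho|\sum_{n\ge1}|a_n||b_n|\le|\rho|\Big(\sum_n a_n^2\Big)^{1/2}\Big(\sum_n b_n^2\Big)^{1/2},
\]
which is the claimed inequality.

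The only delicate points are Mehler's identity and the justification of the termwise limit; I expect the identity to be the main obstacle, and the generating-function argument above handles it cleanly. As a fallback, there is a semigroup proof. Write \(V=\rho U+\sqrt{1-\rho^2}\,W\) with \(W\) independent of \(U\). If \(\rho<0\), replace \(\tilde g\) by \(\tilde g(-\cdot)\), so one may assume \(\rho=e^{-t}\ge0\). Then \(\mathbb E[\tilde g(V)\mid U]=P_t\tilde g(U)\), where \(P_t\) is the Ornstein--Uhlenbeck semigroup. The spectral gap of \(P_t\) on centered functions gives \(\|P_t\tilde g\|_{L^2}\le e^{-t}\|\tilde g\|_{L^2}\). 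Conditioning on \(U\) and applying Cauchy--Schwarz then concludes.
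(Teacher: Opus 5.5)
The paper gives no proof of this lemma: it is quoted from \cite{Gebelein1941} and used as a black box in Step~4 of the proof of Proposition~\ref{lem:variance-estimator}. Your Hermite--Mehler argument is correct and is the standard self-contained proof.

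The identity $\mathbb E[H_m(U)H_n(V)]=\delta_{mn}\,n!\,\rho^n$ diagonalizes the bounded bilinear form $(g,\tilde g)\mapsto\mathbb E[g(U)\tilde g(V)]$ in the orthonormal Hermite basis. Centering then removes the only term ($n=0$) that carries no factor $|\rho|$. Your computation in fact shows that centering one of the two functions suffices, since $a_0b_0=0$ as soon as $a_0=0$ or $b_0=0$.

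One step should be made explicit: the domination in the generating-function identity. The integrability of $e^{|s||U|+|t||V|}$ is the right input. You also need the majorization $\sum_{n\ge0}|s|^n|H_n(x)|/n!\le e^{|s||x|+s^2/2}$, which follows from the explicit formula for the coefficients of $H_n$. This gives an integrable dominating function for the double series.

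Beyond self-containedness, your route extends directly to the other cited tool. After a canonical-correlation change of variables, the pairs of canonical variables are independent. Tensor products of Hermite polynomials then diagonalize the cross-moment form of two Gaussian vectors in the same way. This yields the Kolmogorov--Rozanov bound of Lemma~\ref{lem:kolmogorov-rozanov}, which the paper also cites without proof for Proposition~\ref{thm:cov-bound-signs}.

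Your Ornstein--Uhlenbeck fallback is also valid. However, the spectral-gap estimate it relies on is usually derived from $P_tH_n=e^{-nt}H_n$ itself, so it is the same proof in different language.
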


%%%%%%%
\medskip

Fix \(N\geq1\), \(\alpha>0\), and \(f\in\mathcal F(C_0,C_1,s)\).
Recall that \(Z^{(1)}:=(Z_{i,1})_{i\in[N]}\) is defined by~\eqref{defi-K_lap}.
Recall also that
\[
p_f
=
2\left(
1-\Phi\!\left(\frac{C_0}{\sqrt{\gamma_f(0)}}\right)
\right)
\in I_{C_0},
\qquad \quad 
I_{C_0}
=
\left[
2\bigl(1-\Phi(C_0^{3/2})\bigr),
2\bigl(1-\Phi(C_0^{1/2})\bigr)
\right],
\]
and that
\[
\hat p_f=\Pi_{I_{C_0}}(\overline Z^{(1)}) \qquad \text{ where } \quad  \overline Z^{(1)}:=\frac1N\sum_{i=1}^N Z_{i,1}\;. 
\]
The proof is organized into five steps. 
The first two reduce the estimation error of \(\hat\gamma(0)\) to \(|\overline Z^{(1)}-p_f|\), while the last three control this error using the Laplace noise variance, Gebelein's inequality, and the uniform summability bound of Lemma~\ref{lem:rho-sum-uniform}.

%%%%%%
\medskip 
%%%%%%

\noindent\textit{Step 1. Lipschitz reduction from \(\hat\gamma(0)\) to \(\hat p_f\).}
Let \(T:I_{C_0}\to[C_0^{-1},C_0]\) denote the inverse of the map
\(u\mapsto2\left(1-\Phi(C_0/\sqrt u)\right)\). Explicitly,
\(T(u)=\left(C_0/\Phi^{-1}(1-u/2)\right)^2\).
Since \(T\) is continuously differentiable on the compact interval
\(I_{C_0}\), it is Lipschitz. Thus, there exists a constant
\(L_{C_0}>0\), depending only on \(C_0\), such that
\[
|T(u)-T(v)|
\ \leq \
L_{C_0}|u-v|,
\qquad
u,v\in I_{C_0}.
\]
By~\eqref{eq:inverse-formula-motivatin-estimator}
and~\eqref{defi-variance-estimator},
\(\gamma_f(0)=T(p_f)\) and
\(\hat\gamma(0)=T(\hat p_f)\), respectively. We therefore obtain
\[
|\hat\gamma(0)-\gamma_f(0)|^2
\ \leq \
L_{C_0}^2|\hat p_f-p_f|^2.
\]

%%%%%%%%
\medskip 
%%%%%%

\noindent\textit{Step 2. Projection reduction from
\(\hat p_f\) to \(\overline Z^{(1)}\).}
By definition, \(\hat p_f\) is the projection of \(\overline Z^{(1)}\) onto \(I_{C_0}\).
Since \(p_f\in I_{C_0}\) and projection onto an interval can only
decrease the distance to points in that interval, we have
\[
|\hat p_f-p_f|
\ \leq \
|\overline Z^{(1)}-p_f|.
\]
Combining this inequality with the bound obtained in Step~1 yields
\begin{equation}
\label{eq:variance-estimator-reduction-to-zbar}
|\hat\gamma(0)-\gamma_f(0)|^2
\ \leq \
L_{C_0}^2|\overline Z^{(1)}-p_f|^2.
\end{equation}

%%%%%%%%
\medskip 
%%%%%%

\noindent\textit{Step 3. Quadratic risk bound for
\(\overline Z^{(1)}\).}
We now bound the second moment of \(\overline Z^{(1)}-p_f\).
By definition,
\[
\overline Z^{(1)}-p_f
\ = \
\frac1N\sum_{i=1}^N
\left(
\mathbf 1_{\{|X_i|>C_0\}}-p_f
\right)
+
\frac{3}{\alpha N}\sum_{i=1}^N W_{i,1}.
\]
Since the two sums are centered and independent, and since a standard Laplace
random variable has variance \(2\), we obtain
\begin{equation}
\label{eq:variance-estimator-zbar}
\mathbb E
\left[
|\overline Z^{(1)}-p_f|^2
\right]
\ = \ 
\mathbb E
\left[
\left|
\frac1N\sum_{i=1}^N
\left(
\mathbf 1_{\{|X_i|>C_0\}}-p_f
\right)
\right|^2
\right]
+
\frac{18}{N\alpha^2}.
\end{equation}

%%%%%%%%
\medskip 
%%%%%%

\noindent\textit{Step 4. Covariance of Gaussian threshold indicators.}
It remains to control the first term in
\eqref{eq:variance-estimator-zbar}. For \(i,j\in[N]\), set
\[
U_i
=
\frac{X_i}{\sqrt{\gamma_f(0)}},
\qquad
U_j
=
\frac{X_j}{\sqrt{\gamma_f(0)}}.
\]
Then \((U_i,U_j)\) is a centered Gaussian vector in \(\mathbb R^2\)
with unit marginal variances. Define
\[
g(u)
\ = \
\mathbf 1_{\{|u|>C_0/\sqrt{\gamma_f(0)}\}}-p_f.
\]
Then
\(\mathbb E[g(U_i)]
=\mathbb E[g(U_j)]=0\).
Applying Lemma~\ref{lem:gebelein} with \(\tilde g=g\), and using
\(|g|\leq1\), gives
\begin{equation}
\label{2nd-eq:proof-variance-estimator}
\left|
\mathbb E
[g(U_i)g(U_j)]
\right|
\ \leq \
\left|
\mathbb E[U_iU_j]
\right|.
\end{equation}
Since
\;\(g(U_i)=\mathbf 1_{\{|X_i|>C_0\}}-p_f\) \;and\;
\(g(U_j)=\mathbf 1_{\{|X_j|>C_0\}}-p_f\)\;, we have
\[
\mathbb E
[g(U_i)g(U_j)]
\ = \
\operatorname{Cov}
\left(
\mathbf 1_{\{|X_i|>C_0\}},
\mathbf 1_{\{|X_j|>C_0\}}
\right).
\]
Moreover,
\[
\mathbb E[U_iU_j]
=
\frac{\gamma_f(|i-j|)}{\gamma_f(0)}
=
\rho_f(|i-j|).
\]
Plugging these two identities into
\eqref{2nd-eq:proof-variance-estimator} yields
\[
\left|
\operatorname{Cov}
\left(
\mathbf 1_{\{|X_i|>C_0\}},
\mathbf 1_{\{|X_j|>C_0\}}
\right)
\right|
\ \leq \
|\rho_f(|i-j|)|.
\]

%%%%%%%%
\medskip 
%%%%%%

\noindent\textit{Step 5. Sobolev summability and conclusion.}
Using the covariance bound above, we obtain
\begin{align*}
\mathbb E
\left[
\left|
\frac1N\sum_{i=1}^N
\left(
\mathbf 1_{\{|X_i|>C_0\}}-p_f
\right)
\right|^2
\right]
\ \leq \
\frac1{N^2}
\sum_{i,j=1}^N
|\rho_f(|i-j|)|
\ \leq \
\frac1N
\sum_{h\in\mathbb Z}
|\rho_f(h)|.
\end{align*}
Since \(\rho_f(0)=1\), Lemma~\ref{lem:rho-sum-uniform} gives
\(\sum_{h\in\mathbb Z}|\rho_f(h)|
\leq1+\sqrt{2\zeta(2s)}\,C_1\).
Inserting this bound into~\eqref{eq:variance-estimator-zbar} yields
\[
\mathbb E
\left[
|\overline Z^{(1)}-p_f|^2
\right]
\ \leq \
\frac1N
\left(
1+\sqrt{2\zeta(2s)}\,C_1+\frac{18}{\alpha^2}
\right).
\]
Then, taking expectations in
\eqref{eq:variance-estimator-reduction-to-zbar} and using
\(1+\alpha^{-2}\leq2(\alpha^2\wedge1)^{-1}\), we obtain
\begin{equation}
\label{eq-profo-final-ineq-prop1}
\mathbb E
\left[
|\hat\gamma(0)-\gamma_f(0)|^2
\right]
\ \leq \
\frac{C_{C_0,C_1,s}}
{(\alpha^2\wedge1)N}.
\end{equation}

Since \(p_f,\hat p_f\in I_{C_0}\) and
\(\gamma_f(0)=T(p_f)\), \(\hat\gamma(0)=T(\hat p_f)\),  we have
\(\gamma_f(0),\hat\gamma(0)\in T(I_{C_0})=[C_0^{-1},C_0]\).
Hence,
\[
|\hat\gamma(0)-\gamma_f(0)|^2
\ \leq \
C_0^2.
\]
Taking the minimum of this bound and \eqref{eq-profo-final-ineq-prop1}, and then the supremum over \(f\in\mathcal F(C_0,C_1,s)\), concludes the proof of
Proposition~\ref{lem:variance-estimator}.

%%%%%%
%%%%%%%

%%%%%%%%%%%
% 6.2. Reduced spectral density 
%%%

\subsection{Proof of Proposition~\ref{prop:fred-estimator}}
\label{sec:proof-prop-fred-est}

The following lemma shows that the nonzero-lag autocorrelations are
uniformly bounded away from \(1\) in absolute value. 
Its proof is deferred to Appendix~\ref{appendix:upper-bound:additional-proofs}.

\begin{lemma}
\label{lem:rho-away-from-one}
There exists a constant
\(\varepsilon_{C_1,s}\in(0,1)\), depending only on \((C_1,s)\),
such that, for every \(f\in\mathcal F(C_0,C_1,s)\) and every
\(h\in\mathbb Z\setminus\{0\}\),
we have\; \(|\rho_f(h)| \leq 
1-\varepsilon_{C_1,s}\).
\end{lemma}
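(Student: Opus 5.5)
The plan is to use the Sobolev constraint in $\mathcal F(C_0,C_1,s)$: if a single nonzero-lag autocorrelation were close to $\pm1$, then a long run of autocorrelations at the multiples of that lag would also be close to $\pm1$ in absolute value. Their weighted sum would then exceed $C_1^2$. By symmetry $\rho_f(-h)=\rho_f(h)$, so it suffices to treat $h\geq1$. Write $\sigma\in\{-1,+1\}$ for the sign of $\rho_f(h)$, and $\eta:=1-|\rho_f(h)|\in[0,1]$.

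\textbf{Step 1 (propagation along multiples of $h$).} Normalize $U_t:=X_t/\sqrt{\gamma_f(0)}$ and work in $L^2(\mathbb P)$. By stationarity,
\[
\|U_0-\sigma U_h\|_{L^2}^2=2(1-|\rho_f(h)|)=2\eta .
\]
For $k\geq1$, telescope
\[
U_0-\sigma^kU_{kh}=\sum_{j=0}^{k-1}\sigma^j\bigl(U_{jh}-\sigma U_{(j+1)h}\bigr),
\]
and apply the triangle inequality together with stationarity. This gives $\|U_0-\sigma^kU_{kh}\|_{L^2}\le k\sqrt{2\eta}$. Since $\|U_0-\sigma^kU_{kh}\|_{L^2}^2=2(1-\sigma^k\rho_f(kh))$, we get
\[
|\rho_f(kh)|\ \ge\ \sigma^k\rho_f(kh)\ \ge\ 1-k^2\eta .
\]
Using the alternating sign $\sigma^k$ handles the case $\rho_f(h)\approx-1$ without a separate argument.

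\textbf{Step 2 (contradiction with the Sobolev bound).} Suppose $\eta>0$; the case $\eta=0$ is covered by the same argument with $K$ arbitrarily large. Set $K:=\lfloor(2\eta)^{-1/2}\rfloor$. For $1\le k\le K$ we have $k^2\eta\le1/2$, hence $|\rho_f(kh)|\ge1/2$. Restrict the Sobolev sum to the lags $\pm kh$ and use $h\ge1$:
\[
C_1^2\ \ge\ 2\sum_{k=1}^{K}(kh)^{2s}|\rho_f(kh)|^2\ \ge\ \frac12\sum_{k=1}^{K}k^{2s}\ \ge\ c_s\,K^{2s+1}
\]
whenever $K\ge1$, with $c_s>0$ depending only on $s$. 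Therefore $K\le (C_1^2/c_s)^{1/(2s+1)}=:K_0$. On the other hand $K+1>(2\eta)^{-1/2}$, so
\[
\eta\ \ge\ \frac{1}{2(K_0+1)^2}.
\]
If $K=0$, then already $\eta>1/2$. In either case we may take
\[
\varepsilon_{C_1,s}:=\frac{1}{2(K_0+1)^2}\wedge\frac12\ \in(0,1),
\]
which depends only on $(C_1,s)$ and is uniform in $f$ and $h$.

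\textbf{Main obstacle.} The only real idea is Step 1: converting closeness of one autocorrelation to $\pm1$ into closeness of many autocorrelations via the $L^2$ triangle inequality. It is essential that the loss grows only like $k^2\eta$, so that about $\eta^{-1/2}$ lags remain large and the polynomial weights $k^{2s}$ make their sum explode. Step 2 is routine bookkeeping of constants.
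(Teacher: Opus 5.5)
Your proof is correct, and it takes a genuinely different route from the paper.

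\textbf{The paper's route.} It first reduces to \(\mathcal F(1,C_1,s)\). It disposes of the lags \(|h|\ge(2C_1)^{1/s}\) directly via \(|\rho_f(h)|\le C_1|h|^{-s}\le 1/2\). For the finitely many remaining lags it uses a soft compactness argument:
\begin{itemize}
\item \(f\mapsto\gamma_f(h)\) is \(L^2\)-continuous;
\item \(1\pm\gamma_f(h)=\int f(\lambda)\bigl(1\pm\cos(h\lambda)\bigr)\,d\lambda>0\) on the class, which uses \(f\ge0\);
\item \(\mathcal F(1,C_1,s)\) is compact in \(L^2\), as a closed subset of a Sobolev ball.
\end{itemize}
Hence \(1\pm\gamma_f(h)\) attains a positive minimum for each such \(h\). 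This gives a constant with no explicit value.

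\textbf{Your route.} You propagate along the multiples \(kh\) using the \(L^2\) triangle inequality, which gives \(\sigma^k\rho_f(kh)\ge1-k^2\eta\). You then feed the roughly \(\eta^{-1/2}\) lags with \(|\rho_f(kh)|\ge1/2\) into the Sobolev sum. This differs from the paper in three ways:
\begin{itemize}
\item It treats all nonzero lags uniformly, with no split into large and small lags.
\item It uses only the second-order (Hilbert-space) structure and the Sobolev constraint. Even the strict inequality \(|\rho_f(h)|<1\) comes out of the Sobolev bound rather than from positivity of \(f\).
\item It yields an explicit constant, namely \(\varepsilon_{C_1,s}=1/\bigl(2(K_0+1)^2\bigr)\) with \(K_0=\bigl(2(2s+1)C_1^2\bigr)^{1/(2s+1)}\). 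For large \(C_1\) this is of order \(C_1^{-4/(2s+1)}\).
\end{itemize}

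\textbf{What each buys.} The quantity \(\varepsilon_{C_1,s}^{-1}\) enters the covariance bound of Proposition~\ref{thm:cov-bound-signs}, and through it the constant in Proposition~\ref{prop:fred-estimator}. Your version therefore makes those constants explicit. The paper's compactness argument is shorter and relies only on standard functional-analytic facts, but it is purely qualitative.
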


%%%%%%%
\medskip

Fix \(N\geq2\), \(\alpha>0\), and \(f\in\mathcal F(C_0,C_1,s)\).
Recall that \(Z^{(2)}:=(Z_{i,2})_{i\in[N]}\) is defined by~\eqref{defi-K_lap}.
For \(h\in[N-1]\), define the empirical lag-\(h\) autocovariance of
\(Z^{(2)}\) by
\begin{equation}
\label{eq:check_gamma}
\widehat\gamma^{(2)}(h)
\ := \
\frac1N\sum_{j=1}^{N-h}Z_{j,2}Z_{j+h,2}.
\end{equation}
The proof is organized into three steps.
We first control the mean squared error of \(\widehat\gamma^{(2)}(h)\), then transfer this bound to \(\hat\rho(h)\), and finally control the risk of \(\hat f_{\mathrm{red},H}\).

%%%%%
\medskip 
%%%%%%%

\noindent\textit{Step 1. Mean squared error of \(\widehat\gamma^{(2)}(h)\).}
Let \(h\in[N-1]\).
By Lemma~\ref{lem:sign-gaussian}, and the fact that
the variables \((W_{j,2})_{j\in[N]}\) are i.i.d., centered, and
independent of \(X\), we have
\[
\mathbb E\!\left[\widehat\gamma^{(2)}(h)\right]
=
\frac{N-h}{N}\,
\frac{2}{\pi}\arcsin\!\left(\rho_f(h)\right).
\]
Consequently, when \(\widehat\gamma^{(2)}(h)\) is used to estimate
\(\frac{2}{\pi}\arcsin(\rho_f(h))\), its squared bias satisfies
\begin{equation}
\label{eq:bias-check_gamma}
\operatorname{Bias}\!\left(\widehat\gamma^{(2)}(h)\right)^2
\ = \ 
\frac{h^2}{N^2}
\left(
\frac{2}{\pi}\arcsin\!\left(\rho_f(h)\right)
\right)^2
\ \leq \
\frac{h^2}{N^2},
\end{equation}
where the last inequality follows from
\(\left|\frac{2}{\pi}\arcsin(\rho_f(h))\right|\leq1\).

We now control the variance. Set
\[
S_j:=\operatorname{sgn}(X_j),
\qquad
\sigma_\alpha^2:=
\operatorname{Var}\!\left(\frac3\alpha W_{j,2}\right) = \frac{18}{\alpha^2}.
\]
Lemma~\ref{lem:variance-check-gamma} controls the contribution of the Laplace noise to the variance. 
Its proof is a direct variance expansion using the independence and centering of the Laplace noises, and is deferred to Appendix~\ref{appendix:upper-bound:additional-proofs}.

\begin{lemma}
\label{lem:variance-check-gamma}
For every \(h\in[N-1]\), we have
\[
\operatorname{Var}\!\left(\widehat\gamma^{(2)}(h)\right)
\ \leq \
\frac4{N^2}
\sum_{s,u=1}^{N-h}
\left|
\operatorname{Cov}\!\left(
S_sS_{s+h},
S_uS_{u+h}
\right)
\right|
+
\frac{4(N-h)}{N^2}
\left(
2\sigma_\alpha^2+\sigma_\alpha^4
\right).
\]
\end{lemma}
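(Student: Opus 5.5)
We will write \(Z_{j,2}=S_j+\eta_j\), where \(\eta_j:=\frac3\alpha W_{j,2}\) are i.i.d., centered, have variance \(\sigma_\alpha^2\), and are independent of \(S=(S_j)\). For each \(j\in[N-h]\) this gives the expansion
\[
Z_{j,2}Z_{j+h,2}
= S_jS_{j+h} + A_j,
\qquad
A_j:=S_j\eta_{j+h}+\eta_jS_{j+h}+\eta_j\eta_{j+h}.
\]
Hence \(N\widehat\gamma^{(2)}(h)=\sum_{j}S_jS_{j+h}+\sum_j A_j\). By \((x+y)^2\le 2x^2+2y^2\) applied to the centered sums,
\[
\operatorname{Var}\bigl(\widehat\gamma^{(2)}(h)\bigr)\le \frac{2}{N^2}\operatorname{Var}\Bigl(\sum_j S_jS_{j+h}\Bigr)+\frac{2}{N^2}\operatorname{Var}\Bigl(\sum_j A_j\Bigr).
\]
The first term is bounded by \(\frac{2}{N^2}\sum_{s,u}|\operatorname{Cov}(S_sS_{s+h},S_uS_{u+h})|\), which fits within the stated constant \(4/N^2\).

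For the noise part, note that each \(A_j\) is centered, because \(\eta\) is centered and independent of \(S\), and \(h\ge1\). We will compute \(\mathbb E[A_jA_{j'}]\) term by term. A product of two monomials has nonzero expectation only if every \(\eta\) index appears an even number of times. Since \(h\ge1\), this leaves the following cases.

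\begin{itemize}
\item The diagonal terms \(j=j'\). They give \(\mathbb E[\eta_{j+h}^2]+\mathbb E[\eta_j^2]+\mathbb E[\eta_j^2\eta_{j+h}^2]=2\sigma_\alpha^2+\sigma_\alpha^4\), using \(S^2=1\). The cross terms vanish: for example, \(\mathbb E[S_j\eta_{j+h}\eta_jS_{j+h}]=0\) since \(\eta_j\) and \(\eta_{j+h}\) are distinct independent centered variables.
\item The off-diagonal pairs with \(j'=j\pm h\). Here a single matching \(\eta\) can occur, for instance \(\mathbb E[S_j\eta_{j+h}\cdot\eta_{j+h}S_{j+2h}]=\sigma_\alpha^2\,\mathbb E[S_jS_{j+2h}]\), which has absolute value at most \(\sigma_\alpha^2\).
\item All terms containing \(\eta_j\eta_{j+h}\) paired with a different pair. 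These vanish.
\end{itemize}

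Summing over \(j,j'\), the diagonal contributes \((N-h)(2\sigma_\alpha^2+\sigma_\alpha^4)\) and the shifted pairs contribute at most \(2(N-h)\sigma_\alpha^2\). This gives
\[
\operatorname{Var}\Bigl(\sum_j A_j\Bigr)\le (N-h)(4\sigma_\alpha^2+\sigma_\alpha^4)\le 2(N-h)(2\sigma_\alpha^2+\sigma_\alpha^4),
\]
so the \(2/N^2\) prefactor yields the second term of the lemma with constant 4.

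We also need the covariance between \(\sum_j S_jS_{j+h}\) and \(\sum_j A_j\). It is actually zero, since each \(A_j\) term contains an unmatched centered \(\eta\). So we could use the exact sum of the two variances instead of the factor 2, which gives the same bound.

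The main obstacle is the careful bookkeeping of the index coincidences \(j'=j\pm h\) in the noise expansion; everything else is routine.
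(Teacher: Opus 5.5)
Your argument is correct and is essentially the paper's proof: a direct second-moment expansion that uses the fact that the Laplace noises \(\eta_j\) are i.i.d., centered and independent of \(S\). The only difference is bookkeeping. The paper splits the noise part further into \(B_h=\frac1N\sum_j S_j\eta_{j+h}\), \(C_h=\frac1N\sum_j\eta_jS_{j+h}\) and \(D_h=\frac1N\sum_j\eta_j\eta_{j+h}\), and uses \((a+b+c+d)^2\le4(a^2+b^2+c^2+d^2)\). Each of these pieces then has only diagonal contributions, so the \(j'=j\pm h\) coincidences you flag as the main obstacle never arise. Your coarser grouping, combined with the vanishing cross-covariance, instead yields the sharper bound \(\frac1{N^2}\sum_{s,u}|\operatorname{Cov}(S_sS_{s+h},S_uS_{u+h})|+\frac{N-h}{N^2}(4\sigma_\alpha^2+\sigma_\alpha^4)\).
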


By Lemma~\ref{lem:rho-away-from-one}, the assumption of Proposition~\ref{thm:cov-bound-signs} holds with \(\epsilon=\varepsilon_{C_1,s}\). 
Applying this proposition with
\(g_1(x,y)=g_2(x,y):=\operatorname{sgn}(x)\operatorname{sgn}(y)\),
we obtain, for all \(j,k\in\mathbb Z\),
\[
\left|
\operatorname{Cov}\!\left(
S_jS_{j+h},
S_kS_{k+h}
\right)
\right|
\ \leq \ 
\frac{
|\rho_f(j-k)|
+
|\rho_f(j-k-h)|\vee|\rho_f(j-k+h)|
}{
\varepsilon_{C_1,s}
}.
\]
Here \(\varepsilon_{C_1,s}\in(0,1)\) depends only on \((C_1,s)\).
Combining with Lemma~\ref{lem:variance-check-gamma}, we obtain
\[
\begin{aligned}
\operatorname{Var}\!\left(\widehat\gamma^{(2)}(h)\right)
\ \leq & \ 
\frac{4}{\varepsilon_{C_1,s}N^2} 
\sum_{j,k=1}^{N-h}
\left(
|\rho_f(j-k)|
+
|\rho_f(j-k-h)|\vee|\rho_f(j-k+h)|
\right)
\\
&\quad+
\frac{4(N-h)}{N^2}
\left(
\frac{36}{\alpha^2}
+
\frac{18^2}{\alpha^4}
\right).
\end{aligned}
\]

For every nonnegative function \(\ell:\mathbb Z\to\mathbb R_+\), we have
\(\sum_{j,k=1}^{N-h}\ell(j-k)\leq(N-h)\sum_{r\in\mathbb Z}\ell(r)\).
Using this inequality, and Lemma~\ref{lem:rho-sum-uniform}, we obtain
\begin{align*}
\sum_{j,k=1}^{N-h}
\left(
|\rho_f(j-k)|
+
|\rho_f(j-k-h)|\vee|\rho_f(j-k+h)|
\right)
\ &\leq \
3(N-h)\sum_{r\in\mathbb Z}|\rho_f(r)|
\\
\ &\leq \
3(N-h)
\left(
1+\sqrt{2\zeta(2s)}\,C_1
\right).
\end{align*}
Consequently,
\begin{equation}
\label{eq:variance-check-gamma}
\operatorname{Var}\!\left(\widehat\gamma^{(2)}(h)\right)
\ \leq \
\frac{
12(N-h)\left(1+\sqrt{2\zeta(2s)}\,C_1\right)
}{
\varepsilon_{C_1,s}N^2
}
+
\frac{4(N-h)}{N^2}
\left(
\frac{36}{\alpha^2}
+
\frac{18^2}{\alpha^4}
\right).
\end{equation}
Combining this bound with~\eqref{eq:bias-check_gamma}, we obtain
\begin{equation}
\label{eq:mse-check-gamma}
\begin{aligned}
\mathbb E\!\left[
\left(
\widehat\gamma^{(2)}(h)
-
\frac2\pi\arcsin\!\left(\rho_f(h)\right)
\right)^2
\right]
\ \leq \ &
\frac{
12(N-h)\left(1+\sqrt{2\zeta(2s)}\,C_1\right)
}{
\varepsilon_{C_1,s}N^2
}
\\
& \qquad \quad +
\frac{4(N-h)}{N^2}
\left(
\frac{36}{\alpha^2}
+
\frac{18^2}{\alpha^4}
\right)
+
\frac{h^2}{N^2}.
\end{aligned}
\end{equation}

%%%%%
\medskip 
%%%%%

\noindent\textit{Step 2. Lipschitz reduction from
\(\widehat\gamma^{(2)}(h)\) to \(\hat\rho(h)\).}
Let \(h\in[N-1]\). 
The map \(x\mapsto\sin(\pi x/2)\) is \(\pi/2\)-Lipschitz. 
Therefore, by the definition of \(\hat\rho(h)\)
in~\eqref{eq:def-rho-hat},
\begin{equation}
\label{eq:check-gamma_vs_hat-gamma}
\left|\hat\rho(h)-\rho_f(h)\right|
\ \leq \
\frac{\pi}{2}
\left|
\widehat\gamma^{(2)}(h)
-
\frac2\pi\arcsin\!\left(\rho_f(h)\right)
\right|.
\end{equation}

%%%%%
\medskip 
%%%%%

\noindent\textit{Step 3. $L^2$ risk of \(\hat f_{\mathrm{red},H}\).}
Let \(H\in\{0,\ldots,N-1\}\). By the definitions of
\(f_{\mathrm{red}}\) and \(\hat f_{\mathrm{red},H}\)
in~\eqref{eq:fred-fourier-representation} and~\eqref{eq:def-fred-hat},
respectively,
\[
\hat f_{\mathrm{red},H}(\lambda)-f_{\mathrm{red}}(\lambda)
\ = \
\frac1\pi\sum_{h=1}^{H}
\bigl(\hat\rho(h)-\rho_f(h)\bigr)\cos(h\lambda)
-
\frac1\pi\sum_{h=H+1}^{\infty}
\rho_f(h)\cos(h\lambda).
\]
Since the family \((\cos(h\cdot))_{h\geq1}\) is orthogonal in \(L^2([-\pi,\pi])\), and each of its elements has squared norm \(\pi\),
it follows, after taking expectations, that
\begin{align}
\label{eq:fred-risk-decomposition}
\mathbb E\!\left[
\|\hat f_{\mathrm{red},H}-f_{\mathrm{red}}\|^2
\right]
\ = \ 
\frac1\pi
\sum_{h=1}^{H}
\mathbb E\!\left[
|\hat\rho(h)-\rho_f(h)|^2
\right]
+
\frac1\pi
\sum_{h=H+1}^{\infty}|\rho_f(h)|^2.
\end{align}

By~\eqref{eq:mse-check-gamma} and~\eqref{eq:check-gamma_vs_hat-gamma}, using
\(\sum_{h=1}^{H}(N-h)/N^2\leq H/N\), and
\(\sum_{h=1}^{H}h^2\leq H^3\), 
there exists a constant \(C_{C_1,s}\) such that
\begin{align*}
\sum_{h=1}^{H}
\mathbb E\!\left[
|\hat\rho(h)-\rho_f(h)|^2
\right]
\ &\leq \ 
C_{C_1,s}
\left(
\frac{H}{N}
+
\frac{H}{\alpha^2N}
+
\frac{H}{\alpha^4N}
+
\frac{H^3}{N^2}
\right)
\\
\ &\leq \
C_{C_1,s}
\left(
\frac{H}{(1\wedge\alpha^4)N}
+
\frac{H^3}{N^2}
\right),
\end{align*}
where the last inequality uses
\(1+\alpha^{-2}+\alpha^{-4}\leq3/(1\wedge\alpha^4)\).

For the truncation bias, since \(f\in\mathcal F(C_0,C_1,s)\), we have
\begin{align*}
\sum_{h=H+1}^{\infty}|\rho_f(h)|^2
\ \leq \
(1+H)^{-2s}
\sum_{h=H+1}^{\infty}h^{2s}|\rho_f(h)|^2
\ \leq \
C_1^2(1+H)^{-2s}.
\end{align*}
Combining the last two bounds with~\eqref{eq:fred-risk-decomposition}
yields, for some constant \(C_{C_1,s}\),
\begin{equation}
\label{eq:fred-risk-intermediate}
\mathbb E\!\left[
\|\hat f_{\mathrm{red},H}-f_{\mathrm{red}}\|^2
\right]
\ \leq \
C_{C_1,s}
\left(
\frac{H}{(1\wedge\alpha^4)N}
+
\frac{H^3}{N^2}
+
(1+H)^{-2s}
\right).
\end{equation}

We now choose
\(H^*:=\left\lfloor((1\wedge\alpha^4)N)^{1/(2s+1)}\right\rfloor\).
The definition of \(H^*\) yields
\[
\frac{H^*}{(1\wedge\alpha^4)N}
\leq
\bigl((1\wedge\alpha^4)N\bigr)^{-2s/(2s+1)},
\qquad
(1+H^*)^{-2s}
\leq
\bigl((1\wedge\alpha^4)N\bigr)^{-2s/(2s+1)}.
\]
Moreover, since \(s>1/2\) and \((1\wedge\alpha^4)N\leq N\), we have
\(H^*\leq N^{1/2}\), and therefore
\((H^*)^3/N^2\leq H^*/N\leq
H^*/((1\wedge\alpha^4)N)\).

Plugging the inequalities above into~\eqref{eq:fred-risk-intermediate}
gives 
\[
\mathbb E\!\left[
\|\hat f_{\mathrm{red},H^*}-f_{\mathrm{red}}\|^2
\right]
\ \leq \ C_{C_1,s}
\bigl((1\wedge\alpha^4)N\bigr)^{-2s/(2s+1)}.
\]
On the other hand, if \((1\wedge\alpha^4)N<1\), then \(H^*=0\), and
\eqref{eq:fred-risk-intermediate} shows that this risk is bounded by a constant
\(C_{C_1,s}\). Combining these estimates, and taking the
supremum over \(f\in\mathcal F(C_0,C_1,s)\), yields the bound stated in
Proposition~\ref{prop:fred-estimator}.

%%%%%%
%%%%%

%%%%%%
% Proof Upper bound THM
%%%%%

\subsection{Proof of Theorem~\ref{thm:f-estimator}} 

Fix \(N\geq2\), \(\alpha>0\), and
\(f\in\mathcal F(C_0,C_1,s)\).
Since \(f=\gamma_f(0)f_{\mathrm{red}}\) and
\(\hat f_{H^*}=\hat\gamma(0)\hat f_{\mathrm{red},H^*}\), we have
\[
\hat f_{H^*}-f
\ = \
\hat\gamma(0)
\bigl(\hat f_{\mathrm{red},H^*}-f_{\mathrm{red}}\bigr)
+
\bigl(\hat\gamma(0)-\gamma_f(0)\bigr)f_{\mathrm{red}}.
\]
Hence,
\[
\|\hat f_{H^*}-f\|^2
\ \leq \
2\hat\gamma(0)^2
\|\hat f_{\mathrm{red},H^*}-f_{\mathrm{red}}\|^2
+
2|\hat\gamma(0)-\gamma_f(0)|^2
\|f_{\mathrm{red}}\|^2.
\]
By construction of the variance estimator,
\(\hat\gamma(0)\in[C_0^{-1},C_0]\). Moreover, by
Lemma~\ref{lem:rho-sum-uniform},
\[
\|f_{\mathrm{red}}\|^2
=
\frac1{2\pi}
\sum_{h\in\mathbb Z}|\rho_f(h)|^2
\leq
\frac{1+C_1^2}{2\pi}.
\]
Taking expectations, we obtain
\[
\mathbb E
\left[
\|\hat f_{H^*}-f\|^2
\right]
\ \leq \
2C_0^2
\mathbb E
\left[
\|\hat f_{\mathrm{red},H^*}-f_{\mathrm{red}}\|^2
\right]
+
\frac{1+C_1^2}{\pi}
\mathbb E
\left[
|\hat\gamma(0)-\gamma_f(0)|^2
\right].
\]

The first term is controlled by
Proposition~\ref{prop:fred-estimator}, while the second term is
controlled by Proposition~\ref{lem:variance-estimator}. Furthermore,
since \((\alpha^2\wedge1)N\geq(\alpha^4\wedge1)N\) and
\(2s/(2s+1)<1\), we have
\[
1 \wedge \bigl((\alpha^2\wedge1)N\bigr)^{-1}
\ \leq \
1 \wedge \bigl((\alpha^4\wedge1)N\bigr)^{-1}
\ \leq \
1 \wedge \bigl((\alpha^4\wedge1)N\bigr)^{-2s/(2s+1)}.
\]
Combining these bounds and taking the supremum over \(f\in\mathcal F(C_0,C_1,s)\) yields the bound stated in Theorem~\ref{thm:f-estimator}.
 
%%% Local Variables:
%%% mode: latex
%%% TeX-master: "main.tex"
%%% ispell-local-dictionary: "american"
%%% End:

%%%%%%%%%%%%%%%%%

{\footnotesize
\bibliography{biblio}
\bibliographystyle{alpha}
}

%%%%%%%%%%%%

\appendix

%%%%%%

\section{Standard facts on conditional distributions and KL divergence}
\label{appendix:KL-properties}

We collect standard notation and identities for kernels and KL divergence used throughout the proofs.

\subsection{Random-variable formulation}
\label{appendix-notations:random-variables}

The notation and properties recalled below follow~\cite{polyanskiy2025information}.

\textit{Disintegration.}
Let \(P^{X,Y}\) be a probability measure on
\(\mathcal X\times\mathcal Y\).
We denote its marginal distributions on \(\mathcal X\) and
\(\mathcal Y\) by \(P^X\) and \(P^Y\), respectively; that is,
\(P^X=P^{X,Y}(\cdot\times\mathcal Y)\) and
\(P^Y=P^{X,Y}(\mathcal X\times\cdot)\).
We further denote by \(P^{Y\mid X}\) a Markov kernel from
\(\mathcal X\) to \(\mathcal Y\) representing the conditional
distribution of \(Y\) given \(X\), meaning that
\[
P^{X,Y}(A\times B)
=
\int_A P^{Y\mid X}(B\mid x)\,P^X(dx)
\]
for all measurable sets \(A\subseteq\mathcal X\) and
\(B\subseteq\mathcal Y\).
We write this identity more succinctly as
\(P^{X,Y}=P^X\otimes P^{Y\mid X}\).
When \(\mathcal Y\) is standard Borel, such a Markov kernel exists and
is unique \(P^X\)-almost surely.

\smallskip 
%%%%%%%%%%

\textit{Standard properties of the KL divergence.}
Recall that the conditional KL divergence was defined in Section~\ref{subsection:condtional-KL-contraction}.

\begin{lemma}
\label{lem:standard-properties-kl}
The statements below hold true, assuming that the conditional KL
divergence is well defined for the chain rule.
\begin{enumerate}[topsep=1pt,itemsep=1pt,label=(\roman*)]
    \item\label{item:chain-rule-va} \emph{Chain rule:}
    \[
    \D(P^{X,Y} \,\|\, Q^{X,Y})
    \ = \
    \D\!\left(P^{Y\mid X} \,\|\, Q^{Y\mid X} \mid P^X\right)
    +
    \D(P^X \,\|\, Q^X).
    \]

    \item\label{item:tensor-kl-va} \emph{Tensorization:}
    \[
    \D\!\left(
    \bigotimes_{j=1}^n P^{X_j}
    \,\middle\|\,
    \bigotimes_{j=1}^n Q^{X_j}
    \right)
    \ = \ 
    \sum_{j=1}^n
    \D(P^{X_j} \,\|\, Q^{X_j}).
    \]
\end{enumerate}
\end{lemma}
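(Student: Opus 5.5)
We prove the chain rule~\ref{item:chain-rule-va} first by a direct Radon--Nikodym computation, and then obtain tensorization~\ref{item:tensor-kl-va} by iterating it. Throughout, the conditional KL divergence is assumed well defined, so \(P^X\ll Q^X\) and the conditional laws are regular versions on a standard Borel space.

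\emph{Absolutely continuous case.} Introduce the intermediate measure \(R:=P^X\otimes Q^{Y\mid X}\) on \(\mathcal X\times\mathcal Y\). Since \(P^X\ll Q^X\), we have \(R\ll Q^{X,Y}\), with density \(p(x):=\frac{dP^X}{dQ^X}(x)\); this is checked on rectangles and extended by the disintegration identity.

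Assume in addition that \(P^{Y\mid X=x}\ll Q^{Y\mid X=x}\) for \(P^X\)-a.e.\ \(x\). Then \(P^{X,Y}\ll R\), again by checking on null sets through the disintegration. The Radon--Nikodym theorem on the product space gives a jointly measurable density \(q(x,y):=\frac{dP^{X,Y}}{dR}(x,y)\). Integrating against rectangles \(A\times B\) and using uniqueness of disintegrations on standard Borel spaces shows that, for \(P^X\)-a.e.\ \(x\), the section \(q(x,\cdot)\) is a version of \(dP^{Y\mid X=x}/dQ^{Y\mid X=x}\). This construction is how I plan to avoid any assumption of conditional densities; it is the main technical obstacle.

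It follows that \(dP^{X,Y}/dQ^{X,Y}=p(x)\,q(x,y)\), so \(\log\frac{dP^{X,Y}}{dQ^{X,Y}}=\log p(x)+\log q(x,y)\). Integrate this against \(P^{X,Y}=P^X\otimes P^{Y\mid X}\) and use Tonelli separately on the positive and negative parts. For the negative part, note that \(\int(\log q)_-\,dP^{Y\mid X=x}\le e^{-1}\) because \(t\log t\ge -e^{-1}\) under \(Q^{Y\mid X=x}\); similarly for \(\log p\). This justifies splitting the integral even when terms are \(+\infty\). The inner integral in \(y\) equals \(\D(P^{Y\mid X=x}\|Q^{Y\mid X=x})\), and the outer integral yields the conditional KL term plus \(\D(P^X\|Q^X)\).

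\emph{Remaining case.} Otherwise, the set \(A:=\{x:\ P^{Y\mid X=x}\not\ll Q^{Y\mid X=x}\}\) has positive \(P^X\)-measure. Its measurability follows from the Lebesgue decomposition, taken jointly with respect to \(R\). On \(A\) the integrand of the conditional KL is \(+\infty\), so the right-hand side equals \(+\infty\).

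It remains to show \(P^{X,Y}\not\ll Q^{X,Y}\) in this case. Take the singular part of \(P^{X,Y}\) relative to \(R\): it charges a set \(S\) with \(R(S)=0\) and \(P^{X,Y}(S)>0\). Then \(Q^{X,Y}(S)\) vanishes on \(\{p>0\}\) as well, while \(P^X(\{p=0\})=0\). Hence \(S\cap(\{p>0\}\times\mathcal Y)\) is a \(Q^{X,Y}\)-null set that still has positive \(P^{X,Y}\)-measure. Thus both sides are \(+\infty\).

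\emph{Tensorization.} Write \(P=\bigotimes_j P^{X_j}\) and \(Q=\bigotimes_j Q^{X_j}\), and apply the chain rule inductively with \(X=X_{1:j-1}\) and \(Y=X_j\). By independence under both \(P\) and \(Q\), the conditional law of \(X_j\) given \(X_{1:j-1}\) is the constant kernel \(P^{X_j}\) (respectively \(Q^{X_j}\)). Hence the conditional KL term equals \(\D(P^{X_j}\|Q^{X_j})\), and summing over \(j\) gives~\ref{item:tensor-kl-va}.

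There is one gap to handle: if some \(P^{X_{1:j-1}}\not\ll Q^{X_{1:j-1}}\), the chain rule is not directly available. In that case, since absolute continuity of products reduces to that of the factors, some factor satisfies \(P^{X_i}\not\ll Q^{X_i}\). Both sides are then \(+\infty\), and the identity holds trivially.
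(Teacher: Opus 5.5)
Your argument is correct. The paper does not prove this lemma: it is recorded in the appendix as a standard fact, with a pointer to Polyanskiy--Wu. Your proposal is therefore a self-contained replacement for a citation rather than an alternative to an argument in the text.

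Your route is the usual textbook one. You factor $dP^{X,Y}/dQ^{X,Y}=p(x)\,q(x,y)$ through the intermediate measure $R=P^X\otimes Q^{Y\mid X}$. You identify the sections $q(x,\cdot)$ as conditional densities via $P^X$-a.e.\ uniqueness of disintegrations. You then use $-t\log t\le e^{-1}$ to make the negative parts integrable, so the integral can be split even when some terms are $+\infty$. What this buys over the bare citation is that the standard Borel hypothesis visibly enters only in the uniqueness step, and the $+\infty$ cases are genuinely covered.

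Two minor remarks. First, in the non-absolutely-continuous case, state explicitly why the singular part of $P^{X,Y}$ relative to $R$ is nonzero. It follows from your Lebesgue-decomposition description of $A$: writing $S_x$ for the section of $S$ at $x$, up to a $P^X$-null set $A=\{x: P^{Y\mid X=x}(S_x)>0\}$, hence $P^{X,Y}(S)=\int P^{Y\mid X=x}(S_x)\,P^X(dx)>0$.

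Second, deriving tensorization from the chain rule imports the standard Borel assumption, which tensorization does not need. When every $P^{X_j}\ll Q^{X_j}$, the product density is $\prod_j dP^{X_j}/dQ^{X_j}$ on arbitrary measurable spaces. This costs nothing for the paper: its only use of tensorization, for the independent privatized blocks in Step~2 of the proof of Theorem~\ref{thm:KL-bound-gaussian}, involves standard Borel output spaces.
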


%%%%%%%%%%%
%%%%%%%%%%%

\subsection{Measure-kernel formulation}
\label{appendix-notations:measure-kernel}

\textit{Product of measures and kernels.}
If \(\mathcal X\) and \(\mathcal Z\) are measurable spaces, \(K\) is a Markov kernel from \(\mathcal X\) to \(\mathcal Z\), and \(P\) is a probability measure on \(\mathcal X\), we write \(P\otimes K\) for the probability measure on \(\mathcal X\times\mathcal Z\) defined by
\[
(P\otimes K)(C)
=
\int_{\mathcal X}\int_{\mathcal Z}
\mathbf 1_C(x,z)\,K(dz\mid x)\,P(dx),
\]
for every measurable set \(C\subseteq\mathcal X\times\mathcal Z\).
For products of measures and kernels, see, for instance,~\cite{kallenberg2021foundations}.

\smallskip 

\textit{Conditional KL divergences via kernels.} In
Section~\ref{subsection:condtional-KL-contraction}, we defined the
conditional KL through the disintegrations of two joint
distributions. However, given two kernels $K_1$ and $K_2$ from
a measurable space $\mathcal X$ to a standard Borel space
$\mathcal Z$, and a probability measure $P$ defined on $\mathcal X$,
we can directly define the conditional KL divergence between $K_1$ and
$K_2$ with respect to $P$ as
\[
\D\!\left(
K_1
\,\middle\|\,
K_2
\,\middle|\,
P
\right)
\ := \
\int_{\mathcal X}
\D\!\left(
K_1(\cdot\mid x)
\,\middle\|\,
K_2(\cdot\mid x)
\right)
P(dx).
\]
Then the chain rule property~\ref{item:chain-rule-va} in Lemma~\ref{lem:standard-properties-kl} takes the following form.

\begin{lemma}[Chain rule via kernels]
\label{lem:standard-properties-kl-kernels}
Let \(P_1\) and \(P_2\) be probability measures on a measurable space
\(\mathcal X\), and let \(K_1\) and \(K_2\) be Markov kernels from
\(\mathcal X\) to a standard Borel space \(\mathcal Z\).
Then we have
    \[
    \D(P_1\otimes K_1\|P_2\otimes K_2)
    \ = \
    \D(P_1\|P_2)+\D(K_1\|K_2\mid P_1).
    \]
\end{lemma}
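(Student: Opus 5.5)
We need to prove the chain rule via kernels, Lemma~\ref{lem:standard-properties-kl-kernels}: \(\D(P_1\otimes K_1\|P_2\otimes K_2)=\D(P_1\|P_2)+\D(K_1\|K_2\mid P_1)\).

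The plan is to reduce to the random-variable chain rule, Lemma~\ref{lem:standard-properties-kl}\ref{item:chain-rule-va}. On \(\mathcal X\times\mathcal Z\), let \(X,Z\) be the coordinate projections and set \(P:=P_1\otimes K_1\) and \(Q:=P_2\otimes K_2\). By definition of the product, \(P^X=P_1\) and \(Q^X=P_2\). Moreover, \(K_1\) and \(K_2\) are regular versions of \(P^{Z\mid X}\) and \(Q^{Z\mid X}\), since \(P(A\times B)=\int_A K_1(B\mid x)P_1(dx)\) and similarly for \(Q\). Since \(\mathcal Z\) is standard Borel, these versions are unique \(P^X\)-a.s. and \(Q^X\)-a.s. respectively. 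Hence, whenever \(P_1\ll P_2\), the conditional divergence \(\D(P^{Z\mid X}\|Q^{Z\mid X}\mid P^X)\) is well defined and equals \(\D(K_1\|K_2\mid P_1)\). Note that \(Q^X\)-a.s. uniqueness of \(K_2\) transfers to \(P_1\)-a.s. uniqueness precisely because \(P_1\ll P_2\). The chain rule then gives the identity.

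It remains to handle the case \(P_1\not\ll P_2\). In this case, pick a measurable \(A\) with \(P_2(A)=0<P_1(A)\). Then \(Q(A\times\mathcal Z)=0<P(A\times\mathcal Z)\), so \(P\not\ll Q\). Both sides of the identity therefore equal \(+\infty\), since \(\D(P_1\|P_2)=+\infty\) and the conditional term is nonnegative.

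If one prefers a self-contained proof rather than relying on the random-variable chain rule, the direct argument runs as follows. Take a common dominating kernel, for instance \(R(\cdot\mid x)=\tfrac12(K_1+K_2)(\cdot\mid x)\). Using the standard-Borel assumption, choose jointly measurable densities \(k_j(x,z)=dK_j(\cdot\mid x)/dR(\cdot\mid x)\); this is a Doob-type measurable Radon–Nikodym theorem and is the main technical obstacle. One then checks that \(dP/dQ(x,z)=\frac{dP_1}{dP_2}(x)\,\frac{k_1(x,z)}{k_2(x,z)}\) on the appropriate support. This step requires handling the case where \(K_1(\cdot\mid x)\not\ll K_2(\cdot\mid x)\) on a set of positive \(P_1\)-measure, in which case both sides are \(+\infty\). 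Taking the logarithm and integrating against \(P\) via Tonelli/Fubini then splits the divergence into the two terms. The split requires care with the signs of the log-terms: integrate the negative parts first, since \(x\log x\ge -1/e\) gives integrability of the negative parts.
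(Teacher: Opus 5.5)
Your proof is correct and follows the paper's own (implicit) route: the paper gives no separate argument and presents this lemma as the kernel form of the random-variable chain rule in Lemma~\ref{lem:standard-properties-kl}\ref{item:chain-rule-va}. It uses exactly your identification \(P^X=P_1\), \(P^{Z\mid X}=K_1\) (and likewise for \(Q\)), with uniqueness of regular versions on the standard Borel space \(\mathcal Z\). Your separate handling of the case \(P_1\not\ll P_2\), where the kernel-form conditional term is still defined but the random-variable one is not, fills in a detail the paper leaves unstated, and you handle it correctly.
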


%%% Local Variables:
%%% mode: latex
%%% TeX-master: "main.tex"
%%% ispell-local-dictionary: "american"
%%% End:

%%%%%%%

\section{Proof of Proposition~\ref{prop:shared-latent-reduction}}
\label{sec:proof-reduction-general}

We use throughout this appendix the measure-kernel notation introduced in Appendix~\ref{appendix-notations:measure-kernel}.

%%%%%%
%%%%%%%%

\subsection{Auxiliary lemmas}

The proof of Proposition~\ref{prop:shared-latent-reduction} relies on the following two elementary lemmas.
The first shows that averaging over the same input distribution cannot increase the KL divergence.

\begin{lemma}
\label{newlem-conditioning-shared-latent}
Let \(\nu\) be a probability measure on a measurable space \(\mathcal V\), and let
\(M_1\) and \(M_2\) be two Markov kernels from \(\mathcal V\) to a
standard Borel space \(\mathcal Z\). Then
\[
\D\!\left(
M_1\circ\nu
\,\middle\|\,
M_2\circ\nu
\right)
\ \leq \
\D\!\left(
M_1
\,\middle\|\,
M_2
\,\middle|\,
\nu
\right).
\]
\end{lemma}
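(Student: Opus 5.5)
The natural route is to write the two averaged laws as marginals of joint laws on \(\mathcal V\times\mathcal Z\) and then discard the \(\mathcal V\)-coordinate. Concretely, consider the two probability measures \(\nu\otimes M_1\) and \(\nu\otimes M_2\) on \(\mathcal V\times\mathcal Z\), in the measure-kernel notation of Appendix~\ref{appendix-notations:measure-kernel}. Their second marginals are exactly \(M_1\circ\nu\) and \(M_2\circ\nu\). This is immediate from the definitions, since \((\nu\otimes M_j)(\mathcal V\times A)=\int M_j(A\mid v)\,\nu(dv)=(M_j\circ\nu)(A)\).

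The first step is to apply the data processing inequality (Lemma~\ref{lem:contraction-key}) to the deterministic projection kernel \((v,z)\mapsto\delta_z\) from \(\mathcal V\times\mathcal Z\) to \(\mathcal Z\). This gives
\[
\D(M_1\circ\nu\,\|\,M_2\circ\nu)\ \le\ \D(\nu\otimes M_1\,\|\,\nu\otimes M_2).
\]
The second step is to evaluate the right-hand side with the chain rule via kernels (Lemma~\ref{lem:standard-properties-kl-kernels}), taking \(P_1=P_2=\nu\) and \(\mathcal Z\) standard Borel. This yields
\[
\D(\nu\otimes M_1\,\|\,\nu\otimes M_2)=\D(\nu\|\nu)+\D(M_1\|M_2\mid\nu)=\D(M_1\|M_2\mid\nu),
\]
which is the claimed bound. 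If \(\D(M_1\|M_2\mid\nu)=+\infty\), there is nothing to prove.

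The one point to check is that data processing applies in this generality. The projection is a Markov kernel between measurable spaces, and data processing for KL holds for arbitrary measurable spaces, via the Donsker–Varadhan variational representation or via the chain rule applied to the joint law of input and output. The only place the standard Borel assumption is needed is Lemma~\ref{lem:standard-properties-kl-kernels}, where it makes the conditional KL well defined.

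If one preferred to avoid data processing, an alternative is joint convexity of KL. One would approximate \(\nu\) by discrete measures and pass to the limit using lower semicontinuity, but this is more cumbersome. The projection argument above is therefore the plan, and no step is a real obstacle.
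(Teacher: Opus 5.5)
Your proposal is correct and is essentially the paper's own proof: the paper also realizes \(M_j\circ\nu\) as the \(\mathcal Z\)-marginal of \(\nu\otimes M_j\), applies data processing (Lemma~\ref{lem:contraction-key}) to the projection onto \(\mathcal Z\), and evaluates the joint divergence with the chain rule of Lemma~\ref{lem:standard-properties-kl-kernels} using \(\D(\nu\|\nu)=0\). The only difference is ordering: the paper first treats the infinite case, then uses the chain rule to ensure \(\D(\nu\otimes M_1\|\nu\otimes M_2)<\infty\) before invoking data processing, since Lemma~\ref{lem:contraction-key} is proved under the standing assumption of finite KL; your argument covers the same cases.
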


\begin{proof}[Proof of Lemma~\ref{newlem-conditioning-shared-latent}]
If \(\D(M_1\|M_2\mid\nu)=+\infty\), the result is immediate.
We may therefore assume that
\(\D(M_1\|M_2\mid\nu)<+\infty\).
By
Lemma~\ref{lem:standard-properties-kl-kernels}
and since $\D(\nu\|\nu)=0$, this implies that
\(\D(\nu\otimes M_1\|\nu\otimes M_2)<+\infty\).

For \(i\in[2]\), the marginal on \(\mathcal Z\) of \(\nu\otimes M_i\) is \(M_i\circ\nu\).
Therefore,
\[
\D\!\left(
M_1\circ\nu
\,\middle\|\,
M_2\circ\nu
\right)
\ \leq \
\D\!\left(
\nu\otimes M_1
\,\middle\|\,
\nu\otimes M_2
\right)
\ = \
\D(\nu\|\nu)
+
\D\!\left(
M_1
\,\middle\|\,
M_2
\,\middle|\,
\nu
\right)
\ = \
\D\!\left(
M_1
\,\middle\|\,
M_2
\,\middle|\,
\nu
\right).
\]
The inequality follows from the data processing inequality in
Assertion~\ref{item:lem:contraction-key3} of
Lemma~\ref{lem:contraction-key}, applied to the deterministic
projection kernel from \(\mathcal V\times\mathcal Z\) onto
\(\mathcal Z\). 
The first equality follows from the chain rule in
Lemma~\ref{lem:standard-properties-kl-kernels}.
\end{proof}

%%%%%%%%%
\medskip 
%%%%%

The second lemma shows that composing the input of an \(\alpha\)-LDP
kernel with a measurable function preserves the \(\alpha\)-LDP
property.

\begin{lemma}
\label{newlem-induced-private-mechanism}
Let \(\mathcal U,\mathcal V,\mathcal X,\mathcal Z\) be measurable
spaces, let \(g:\mathcal U\times\mathcal V\to\mathcal X\) be
a measurable function, and let \(K\) be a Markov kernel from \(\mathcal X\) to
\(\mathcal Z\). For every \(v\in\mathcal V\), define $K_g^{(v)}$ by
\begin{equation}
\label{def-kernel_f}
K_g^{(v)}(A\mid u)\ :=\ K(A\mid g(u,v)),
\qquad \qquad 
u\in\mathcal U,\ A\subseteq\mathcal Z \text{ measurable}.
\end{equation}
If \(K\in\mathcal M_\alpha(\mathcal X)\), then
\[
K_g^{(v)}
\in
\mathcal M_\alpha(\mathcal U)
\quad\text{for all} \quad 
v\in\mathcal V.
\]

More generally, assume that \(\mathcal U=\mathcal U_1\times\cdots\times\mathcal U_N\),
\(\mathcal X=\mathcal X_1\times\cdots\times\mathcal X_N\),
\(\mathcal Z=\mathcal Z_1\times\cdots\times\mathcal Z_N\),
and
\[
g(u,v)
=
\bigl(
g_1(u_1,v),\ldots,g_N(u_N,v)
\bigr),
\qquad \qquad 
u=(u_1,\ldots,u_N)\in\mathcal U,\ v\in\mathcal V,
\]
where each
\(g_i:\mathcal U_i\times\mathcal V\to\mathcal X_i\) is measurable.
Let \(K_{1:N}=(K_1,\ldots,K_N)\), where each \(K_i\) is a Markov
kernel from \(\mathcal X_i\) to \(\mathcal Z_i\).
For every \(v\in\mathcal V\) and \(i\in[N]\), define \(K_{g,i}^{(v)}\) by
\[
K_{g,i}^{(v)}(A_i\mid u_i)
\ := \
K_i(A_i\mid g_i(u_i,v)),
\qquad \qquad
u_i\in\mathcal U_i,\
A_i\subseteq\mathcal Z_i \text{ measurable},
\]
and \(K_{g,1:N}^{(v)}
 := 
\bigl(
K_{g,1}^{(v)},\ldots,K_{g,N}^{(v)}
\bigr)\).
If
\(K_{1:N}\in \mathcal M_\alpha^N(\mathcal X)\),
then 
\[
K_{g,1:N}^{(v)}
\in
\mathcal M_\alpha^N(\mathcal U)
\quad\text{for all} \quad 
v\in\mathcal V.
\]
\end{lemma}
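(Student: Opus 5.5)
The claim is checked directly from the definitions, first for a single coordinate and then coordinatewise. Fix \(v\in\mathcal V\). The first task is to confirm that \(K_g^{(v)}\) is a Markov kernel from \(\mathcal U\) to \(\mathcal Z\).

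For fixed \(u\), the map \(A\mapsto K(A\mid g(u,v))\) is the probability measure \(K(\cdot\mid x)\) evaluated at \(x=g(u,v)\). For fixed measurable \(A\), the map \(u\mapsto K(A\mid g(u,v))\) is the composition of \(x\mapsto K(A\mid x)\), which is measurable, with the section \(u\mapsto g(u,v)\). This section is measurable because sections of a jointly measurable map on a product \(\sigma\)-algebra are measurable: for measurable \(B\subseteq\mathcal X\), the set \(\{u:g(u,v)\in B\}\) is the \(v\)-section of \(g^{-1}(B)\). The output space of \(K_g^{(v)}\) is the same standard Borel space as that of \(K\), so it is admissible in the definition of \(\mathcal M_\alpha(\mathcal U)\).

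The privacy inequality then follows immediately. For any \(u,u'\in\mathcal U\) and measurable \(S\), set \(x=g(u,v)\) and \(x'=g(u',v)\). Condition~\eqref{eq:alpha-ldp-cond} for \(K\) at \(x,x'\) gives
\[
K_g^{(v)}(S\mid u)=K(S\mid x)\le e^\alpha K(S\mid x')=e^\alpha K_g^{(v)}(S\mid u').
\]
Hence \(K_g^{(v)}\in\mathcal M_\alpha(\mathcal U)\).

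The \(N\)-coordinate statement follows by applying the single-coordinate argument to each \(i\in[N]\) separately. Take \(\mathcal U_i,\mathcal V,\mathcal X_i,\mathcal Z_i\), the map \(g_i\), and the kernel \(K_i\in\mathcal M_\alpha(\mathcal X_i)\). This gives \(K_{g,i}^{(v)}\in\mathcal M_\alpha(\mathcal U_i)\) for every \(i\), and by the definition of \(\mathcal M_\alpha^N(\mathcal U)\) this is exactly the claim \(K_{g,1:N}^{(v)}\in\mathcal M_\alpha^N(\mathcal U)\).

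There is no real obstacle. The only point that needs care is the measurability of the section \(u\mapsto g(u,v)\), which is standard.
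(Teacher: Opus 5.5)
Your proposal is correct and follows essentially the same route as the paper: check that $K_g^{(v)}$ is a Markov kernel via measurability of $u\mapsto g(u,v)$, transfer the $\alpha$-LDP inequality through $x=g(u,v)$, $x'=g(u',v)$, and then apply the argument coordinatewise. Your extra remarks on measurability of sections of $g$ and on the output space remaining standard Borel fill in details the paper states without justification.
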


\begin{proof}[Proof of Lemma~\ref{newlem-induced-private-mechanism}]
Fix \(v\in\mathcal V\).
Since \(u\mapsto g(u,v)\) is measurable,
\(K_g^{(v)}\) is a Markov kernel from
\(\mathcal U\) to \(\mathcal Z\).
Moreover, for all \(u,\tilde u\in\mathcal U\) and all measurable
\(A\subseteq\mathcal Z\),
\[
K_g^{(v)}(A\mid u)
\ = \
K(A\mid g(u,v))
\ \leq \
e^\alpha K(A\mid g(\tilde u,v))
\ = \
e^\alpha K_g^{(v)}(A\mid\tilde u),
\]
because \(K\) is \(\alpha\)-locally differentially private.
Hence
\(K_g^{(v)}\in\mathcal M_\alpha(\mathcal U)\).

For the \(N\)-coordinate case, applying the preceding argument to
\(K_i\) and \(g_i\), for each \(i\in[N]\), yields
\(K_{g,i}^{(v)}\in\mathcal M_\alpha(\mathcal U_i)\). Therefore,
\(K_{g,1:N}^{(v)}\in\mathcal M_\alpha^N(\mathcal U)\).
\end{proof}

%%%%%%%%
%\medskip
%%%%%%%%

\subsection{Main proof}

We now combine the two lemmas above.

\begin{proof}[Proof of Proposition~\ref{prop:shared-latent-reduction}]
Fix \(K\in\mathcal M_\alpha(\mathcal X)\), and let
\(K_g^{(v)}\) be defined by~\eqref{def-kernel_f}.
Define \(M_P\) and \(M_Q\) from \(\mathcal V\) to \(\mathcal Z\) by
\[
M_P(\cdot\mid v)
\ := \
K_g^{(v)}\circ P^U,
\qquad
M_Q(\cdot\mid v)
\ := \
K_g^{(v)}\circ Q^U.
\]
These are Markov kernels.
Indeed, for every measurable \(A\subseteq\mathcal Z\),
\[
M_P(A\mid v)
\ = \
\int_{\mathcal U}
K(A\mid g(u,v))\,P^U(du).
\]
The map \((u,v)\mapsto K(A\mid g(u,v))\) is nonnegative and
measurable, so Tonelli's theorem shows that
\(v\mapsto M_P(A\mid v)\) is measurable.
The same argument applies to \(M_Q\).

For every measurable \(A\subseteq\mathcal Z\), the identity
\(P^{U,V}=P^U\otimes P^V\) gives
\[
\begin{aligned}
\left(
K\circ P^{g(U,V)}
\right)(A)
\ = \
\int_{\mathcal U\times\mathcal V}
K(A\mid g(u,v))\,
P^U(du)\,P^V(dv)
&\ = \
\int_{\mathcal V}
M_P(A\mid v)\,P^V(dv)
\\
&\ = \
\left(
M_P\circ P^V
\right)(A).
\end{aligned}
\]
Hence
\(K\circ P^{g(U,V)} = M_P\circ P^V\).
Similarly, we obtain
\(K\circ Q^{g(U,V)} = M_Q\circ P^V\).

Lemma~\ref{newlem-conditioning-shared-latent}, applied with
\(\nu=P^V\), gives
\[
\begin{aligned}
\D\!\left(
K\circ P^{g(U,V)}
\,\middle\|\,
K\circ Q^{g(U,V)}
\right)
&\ \leq \
\D\!\left(
M_P
\,\middle\|\,
M_Q
\,\middle|\,
P^V
\right)
\\
&\ = \
\int_{\mathcal V}
\D\!\left(
K_g^{(v)}\circ P^U
\,\middle\|\,
K_g^{(v)}\circ Q^U
\right)
P^V(dv).
\end{aligned}
\]
By Lemma~\ref{newlem-induced-private-mechanism},
\(K_g^{(v)}\in\mathcal M_\alpha(\mathcal U)\) for every
\(v\in\mathcal V\). Therefore,
\[
\D\!\left(
K\circ P^{g(U,V)}
\,\middle\|\,
K\circ Q^{g(U,V)}
\right)
\ \leq \
\sup_{\widetilde K\in\mathcal M_\alpha(\mathcal U)}
\D\!\left(
\widetilde K\circ P^U
\,\middle\|\,
\widetilde K\circ Q^U
\right).
\]
Since \(K\in\mathcal M_\alpha(\mathcal X)\) is arbitrary, taking the
supremum over \(K\) yields the first assertion of
Proposition~\ref{prop:shared-latent-reduction}.

The proof of the \(N\)-coordinate assertion is identical, replacing \(K\) and \(K_g^{(v)}\) by \(K_{1:N}\) and \(K_{g,1:N}^{(v)}\), respectively.
This completes the proof of Proposition~\ref{prop:shared-latent-reduction}.
\end{proof}

%%% Local Variables:
%%% mode: latex
%%% TeX-master: "main.tex"
%%% ispell-local-dictionary: "american"
%%% End:

%%%%%%%%

\section{Proof of Lemma~\ref{lem:oriented-kl-contraction-ldp}}
\label{appendix:proof-kl-contraction-ldp}

Unlike the proof in~\cite{duchi2018minimax}, our argument does not rely
on conditional densities.
We use the notation \(P\otimes K\) for products of probability measures
\(P\) and Markov kernels \(K\), as introduced in
Appendix~\ref{appendix-notations:measure-kernel}.

%%%%%%%

\paragraph{Setting.}
Let \(P\) and \(Q\) be two arbitrary probability measures on the same measurable space \(\mathcal X\). If \(\D(P\|Q)=+\infty\), the contraction inequality of
Lemma~\ref{lem:oriented-kl-contraction-ldp} is immediate. 
We may therefore assume that \(\D(P\|Q)<+\infty\), which in particular implies that \(P\ll Q\).
Let \(K\) be a Markov kernel from \(\mathcal X\) to another measurable space \(\mathcal Z\).
Let
\(X:\mathcal X\times\mathcal Z\to\mathcal X\) and
\(Z:\mathcal X\times\mathcal Z\to\mathcal Z\) denote the canonical
coordinate maps, defined by
\[
X(x,z)=x
\qquad\text{and}\qquad
Z(x,z)=z,
\qquad (x,z)\in\mathcal X\times\mathcal Z.
\]
We denote by \(\mathbb Q\) the probability measure \(Q\otimes K\) on \(\mathcal X\times\mathcal Z\), and by \(\mathbb E_{\mathbb Q}\) the corresponding expectation.
Finally, define \(\Psi:[0,\infty)\to\mathbb R\) by \(\Psi(x)=x\log x\), with \(\Psi(0)=0\). 
Since \(X\) has distribution \(Q\) under \(\mathbb Q\), we have
\begin{equation}
\label{eq:kullback-with-phi}
\begin{aligned}
\D(P\|Q)
\ = \
\int_{\mathcal X}
\log\left(\frac{dP}{dQ}(x)\right)\,P(dx)
\ = \
\int_{\mathcal X}
\Psi\left(\frac{dP}{dQ}(x)\right)\,Q(dx)
\ = \
\mathbb E_{\mathbb Q}\left[
\Psi\left(\frac{dP}{dQ}(X)\right)
\right].
\end{aligned}
\end{equation}

%%%%
%\medskip
%%%%

\subsection{Auxiliary lemmas}

We first present two auxiliary lemmas used in the proof of Lemma~\ref{lem:oriented-kl-contraction-ldp}.
The following lemma establishes two likelihood-ratio identities and the data processing inequality for a Markov kernel defined between general
measurable spaces.

\begin{lemma}
\label{lem:contraction-key}
The following statements hold.
\begin{enumerate}[label=(\roman*), series=kl-contraction-ass]
    
\item\label{item:lem:contraction-key1}
We have \(P\otimes K\ll Q\otimes K\), and the Radon--Nikodym
derivative satisfies
\[
\frac{d(P\otimes K)}{d(Q\otimes K)}(X,Z)
\ = \
\frac{dP}{dQ}(X),
\qquad \mathbb Q\text{-a.s.}
\]

\item\label{item:lem:contraction-key2}
We have \(K\circ P\ll K\circ Q\), and the Radon--Nikodym derivative
satisfies
\[
\frac{d(K\circ P)}{d(K\circ Q)}(Z)
\ = \
\mathbb E_{\mathbb Q}\left[
\frac{dP}{dQ}(X)\,\middle|\,Z
\right],
\qquad \mathbb Q\text{-a.s.}
\]

\item\label{item:lem:contraction-key3}
We have
\(\D(K\circ P\|K\circ Q)
\ \le \
\D(P\|Q)\).

\end{enumerate}
\end{lemma}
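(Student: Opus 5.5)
We prove the three assertions in order, each building on the previous one, working on the product space \(\mathcal X\times\mathcal Z\) with \(\mathbb Q=Q\otimes K\) and without conditional densities.

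\emph{Assertion~\ref{item:lem:contraction-key1}.} We set \(\ell:=dP/dQ\), which exists since \(P\ll Q\). For a measurable rectangle \(A\times B\), Fubini and the definition of \(P\otimes K\) give
\[
(P\otimes K)(A\times B)=\int_A K(B\mid x)\,\ell(x)\,Q(dx)=\int_{\mathcal X\times\mathcal Z}\mathbf 1_{A\times B}(x,z)\,\ell(x)\,(Q\otimes K)(dx,dz).
\]
The two finite measures \(P\otimes K\) and \(\ell(X)\cdot\mathbb Q\) therefore agree on the \(\pi\)-system of rectangles. Both have total mass \(\int\ell\,dQ=1\), so by the \(\pi\)-\(\lambda\) theorem they agree on the product \(\sigma\)-algebra. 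This yields both the absolute continuity and the density formula.

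\emph{Assertion~\ref{item:lem:contraction-key2}.} The measure \(K\circ P\) is the \(Z\)-marginal of \(P\otimes K\), and \(K\circ Q\) is the \(Z\)-marginal (law of \(Z\)) under \(\mathbb Q\). For measurable \(B\subseteq\mathcal Z\), assertion~\ref{item:lem:contraction-key1} gives
\[
(K\circ P)(B)=\mathbb E_{\mathbb Q}[\ell(X)\mathbf 1_B(Z)]=\mathbb E_{\mathbb Q}\bigl[\mathbb E_{\mathbb Q}[\ell(X)\mid Z]\mathbf 1_B(Z)\bigr].
\]
The conditional expectation is well defined because \(\ell(X)\ge0\) and \(\ell(X)\) is integrable. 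By Doob--Dynkin it equals \(\varphi(Z)\) for some measurable \(\varphi\ge0\). Since \(Z\) has law \(K\circ Q\) under \(\mathbb Q\), the display reads \((K\circ P)(B)=\int_B\varphi\,d(K\circ Q)\). This shows \(K\circ P\ll K\circ Q\) with density \(\varphi\), which is the claim. No Borel assumption on \(\mathcal Z\) is needed here, since only conditional expectations, and not regular conditional laws, are used.

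\emph{Assertion~\ref{item:lem:contraction-key3}.} If \(\D(P\|Q)=\infty\), there is nothing to prove, so we assume it is finite. As in~\eqref{eq:kullback-with-phi}, applied to \(K\circ P,K\circ Q\) together with assertion~\ref{item:lem:contraction-key2},
\[
\D(K\circ P\|K\circ Q)=\mathbb E_{\mathbb Q}\bigl[\Psi\bigl(\mathbb E_{\mathbb Q}[\ell(X)\mid Z]\bigr)\bigr].
\]
The function \(\Psi\) is convex and bounded below by \(-1/e\), so conditional Jensen applies and gives \(\Psi(\mathbb E_{\mathbb Q}[\ell(X)\mid Z])\le\mathbb E_{\mathbb Q}[\Psi(\ell(X))\mid Z]\) almost surely. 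Taking expectations and using~\eqref{eq:kullback-with-phi} yields \(\D(K\circ P\|K\circ Q)\le\D(P\|Q)\).

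\emph{Main obstacle.} The delicate point is justifying conditional Jensen for \(\Psi\), which is not Lipschitz and changes sign. Here \(\Psi(\ell(X))\) is quasi-integrable: its negative part is bounded by \(1/e\), and its expectation is finite because \(\D(P\|Q)<\infty\). We will therefore invoke Jensen in its version for convex functions bounded below, which only requires \(\ell(X)\in L^1\).
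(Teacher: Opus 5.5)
Your proof is correct and follows the paper's argument step for step: you identify the density of \(P\otimes K\) with respect to \(\mathbb Q\) on rectangles, then condition on \(Z\) under \(\mathbb Q\) to obtain the density of \(K\circ P\), then apply conditional Jensen to \(\Psi\). Your additional justifications (the \(\pi\)-\(\lambda\) uniqueness argument, Doob--Dynkin, and Jensen for a convex function bounded below) make explicit details that the paper leaves implicit.
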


\begin{proof}
The three assertions above are proved sequentially:
Assertion~\ref{item:lem:contraction-key1} is used to prove Assertion~\ref{item:lem:contraction-key2}, which in turn yields
Assertion~\ref{item:lem:contraction-key3} through conditional Jensen's inequality.

Let \(A\subseteq\mathcal X\) and \(B\subseteq\mathcal Z\) be measurable.  
Then
\begin{align*}
(P\otimes K)(A\times B)
=
\int_{\mathcal X}
\mathbf 1_A(x)K(B\mid x)\,P(dx)
&=
\int_{\mathcal X}
\mathbf 1_A(x)K(B\mid x)
\frac{dP}{dQ}(x)\,Q(dx)
\\
&=
\int_{\mathcal X\times\mathcal Z}
\mathbf 1_{A\times B}(x,z)
\frac{dP}{dQ}(x)\,
(Q\otimes K)(dx,dz).
\end{align*}
Since probability measures on \(\mathcal X\times\mathcal Z\) are
uniquely determined by their values on products of measurable sets,
Assertion~\ref{item:lem:contraction-key1} follows.

Taking \(A=\mathcal X\) in the previous display, and recalling that
\(\mathbb Q:=Q\otimes K\), we have
\[
\begin{aligned}
(K\circ P)(B)
&\ = \
\mathbb E_{\mathbb Q}\left[
\mathbf 1_B(Z)\frac{dP}{dQ}(X)
\right] \\
&\ = \ \mathbb E_{\mathbb Q}\left[
\mathbf 1_B(Z)
\mathbb E_{\mathbb Q}\left[
\frac{dP}{dQ}(X)\,\middle|\,Z
\right]
\right]
\end{aligned}
\]
where the second equality follows by conditioning on \(Z\).
Since \(Z\) has distribution \(K\circ Q\) under \(\mathbb Q\),
this proves Assertion~\ref{item:lem:contraction-key2}.

To prove Assertion~\ref{item:lem:contraction-key3}, we first condition the
expectation in~\eqref{eq:kullback-with-phi} on \(Z\):
\[
\begin{aligned}
\D(P\|Q)
&\ = \
\mathbb E_{\mathbb Q}\left[
\mathbb E_{\mathbb Q}\left[
\Psi\left(\frac{dP}{dQ}(X)\right)
\,\middle|\,Z
\right]
\right]\\
&\ \ge \
\mathbb E_{\mathbb Q}\left[
\Psi\left(
\mathbb E_{\mathbb Q}\left[
\frac{dP}{dQ}(X)
\,\middle|\,Z
\right]
\right)
\right],
\end{aligned}
\]
where the inequality follows from conditional Jensen's inequality, since
\(\Psi\) is convex.
Assertion~\ref{item:lem:contraction-key2} then yields
\[
\begin{aligned}
\D(P\|Q)
\ \ge \
\mathbb E_{\mathbb Q}\left[
\Psi\left(
\frac{d(K\circ P)}{d(K\circ Q)}(Z)
\right)
\right].
\end{aligned}
\]
Using again that  \(Z\) has distribution \(K\circ Q\) under \(\mathbb
Q\), the right-hand side in this inequality is $\D(K\circ P\|K\circ
Q)$, which gives us
Assertion~\ref{item:lem:contraction-key3},
and completes the proof of Lemma~\ref{lem:contraction-key}.
\end{proof}

%%%%%%%
\bigskip 
%%%%%%%%%%%

The following lemma bounds the deviation from \(1\) of the likelihood
ratio \(d(K\circ P)/d(K\circ Q)\), in terms of the total variation
distance
\begin{equation}
    \label{defi-TV}
\|P-Q\|_{\mathrm{TV}}
\ := \
\sup_{\substack{A\subseteq\mathcal X\\ A\text{ measurable}}}
|P(A)-Q(A)|.
\end{equation}

\begin{lemma}
\label{lem:contraction-key-duchi-ingredient}
Suppose that there exists \(\alpha>0\) such that, for all
\(x,x'\in\mathcal X\) and all measurable \(A\subseteq\mathcal Z\),
\begin{equation}
  \label{eq:ldp-cond}
K(A\mid x)
\ \le \
e^\alpha \, K(A\mid x').  
\end{equation}
Then,
\[
\left|
\frac{d(K\circ P)}{d(K\circ Q)}(Z)-1
\right|
\ \le \
\left(e^\alpha-e^{-\alpha}\right)
\|P-Q\|_{\mathrm{TV}},
\qquad
\mathbb Q\text{-a.s.}
\]
\end{lemma}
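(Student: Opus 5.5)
The plan is to use Assertion~\ref{item:lem:contraction-key2} of Lemma~\ref{lem:contraction-key} to write the likelihood ratio explicitly as a conditional expectation. Then I will bound the numerator and denominator of that ratio using the LDP condition~\eqref{eq:ldp-cond}, avoiding any appeal to conditional densities.

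\emph{Representation.} For each measurable \(B\subseteq\mathcal Z\), the proof of Lemma~\ref{lem:contraction-key} gives
\[
(K\circ P)(B)-(K\circ Q)(B)=\int_{\mathcal X}K(B\mid x)\,(P-Q)(dx).
\]
Let \(\mu:=P-Q\), a signed measure of total mass zero. Its Hahn decomposition gives \(\mu=\mu^+-\mu^-\) with \(\mu^+(\mathcal X)=\mu^-(\mathcal X)=\|P-Q\|_{\mathrm{TV}}=:t\).

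If \(t=0\), then \(P=Q\) and the claim is trivial. Otherwise, set \(m_B:=\inf_x K(B\mid x)\) and \(M_B:=\sup_x K(B\mid x)\). Then
\[
\bigl|(K\circ P)(B)-(K\circ Q)(B)\bigr| \le t\,(M_B-m_B),
\]
since the integral equals \(\int K(B\mid x)\,\mu^+(dx)-\int K(B\mid x)\,\mu^-(dx)\), with each term lying in \([t\,m_B,\,t\,M_B]\). By~\eqref{eq:ldp-cond}, \(M_B\le e^\alpha m_B\) and \(m_B\ge e^{-\alpha}M_B\). Moreover, \((K\circ Q)(B)\) lies in \([m_B,M_B]\). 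Combining these,
\[
M_B-m_B\le (1-e^{-2\alpha})M_B\le (1-e^{-2\alpha})\,e^{\alpha}(K\circ Q)(B)=(e^\alpha-e^{-\alpha})\,(K\circ Q)(B).
\]
Thus, for every measurable \(B\),
\[
\bigl|(K\circ P)(B)-(K\circ Q)(B)\bigr|\le c\,(K\circ Q)(B),
\qquad c:=(e^\alpha-e^{-\alpha})\,t .
\]

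\emph{From set inequality to a.s.\ density bound.} Let \(\ell:=d(K\circ P)/d(K\circ Q)\), which exists by Assertion~\ref{item:lem:contraction-key2}. The inequality above reads
\[
\left|\int_B(\ell-1)\,d(K\circ Q)\right|\le c\,(K\circ Q)(B)\qquad\text{for all measurable } B.
\]
Take \(B=\{\ell-1>c\}\). Then \(\int_B(\ell-1-c)\,d(K\circ Q)\le0\) while the integrand is strictly positive on \(B\), so \((K\circ Q)(B)=0\). The symmetric choice \(B=\{\ell-1<-c\}\) gives the lower bound. Hence \(|\ell-1|\le c\) holds \((K\circ Q)\)-a.s. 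Since \(Z\) has law \(K\circ Q\) under \(\mathbb Q\), this bound holds \(\mathbb Q\)-a.s.

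\emph{Main obstacle.} The only delicate step is the comparison \(M_B-m_B\le(e^\alpha-e^{-\alpha})(K\circ Q)(B)\). It requires using both the ratio bound \(M_B\le e^{\alpha}m_B\) and the fact that \((K\circ Q)(B)\ge m_B\ge e^{-\alpha}M_B\), which is an average of \(K(B\mid\cdot)\) over \(Q\). Everything else is measure-theoretic bookkeeping.
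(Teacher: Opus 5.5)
Your proof is correct, but it takes a genuinely different route from the paper's.

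\textbf{The paper's route.} It integrates \eqref{eq:ldp-cond} to get $e^{-\alpha}(K\circ Q)(A)\le K(A\mid x)\le e^{\alpha}(K\circ Q)(A)$. It turns this into the sandwich $e^{-\alpha}\mathbb E_{\mathbb Q}[g(X)]\le \mathbb E_{\mathbb Q}[g(X)\mid Z]\le e^{\alpha}\mathbb E_{\mathbb Q}[g(X)]$ for nonnegative $g$. It then applies the sandwich separately to the positive and negative parts of $dP/dQ-1$, inside the representation $\ell(Z)=\mathbb E_{\mathbb Q}[\frac{dP}{dQ}(X)\mid Z]$ from Assertion~\ref{item:lem:contraction-key2}, using that both parts have $\mathbb Q$-mean $\|P-Q\|_{\mathrm{TV}}$.

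\textbf{Your route.} You bound the signed measure $K\circ P-K\circ Q$ set by set, via the Jordan decomposition of $P-Q$ and the oscillation $M_B-m_B$ of $K(B\mid\cdot)$. You then pass to an a.s.\ bound on the density by the level-set argument.

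\textbf{What yours buys.} It is more elementary: it uses no conditional expectations. The set inequality itself yields $K\circ P\ll K\circ Q$, so you need neither $P\ll Q$ nor Assertion~\ref{item:lem:contraction-key2}; you invoke the latter only for the existence of $\ell$. It is also sharper, because you integrate the same function $K(B\mid\cdot)$ against $\mu^+$ and $\mu^-$, rather than decoupling the two parts as the paper does.

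In fact your factor $1-e^{-2\alpha}$ is loose. The bound $M_B\le e^{\alpha}m_B$ gives directly $M_B-m_B\le (e^\alpha-1)m_B\le (e^\alpha-1)(K\circ Q)(B)$, hence $|\ell-1|\le (e^\alpha-1)\|P-Q\|_{\mathrm{TV}}$. Binary randomized response shows this constant cannot be improved. It would also give $\beta_\alpha=\frac12(e^\alpha-1)^2\wedge1$ in Lemma~\ref{lem:oriented-kl-contraction-ldp}.

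\textbf{What the paper's buys.} It produces the conditional-expectation sandwich itself, which fits the $\mathbb Q=Q\otimes K$ framework used throughout that appendix.
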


\begin{proof}
Recall that, for every measurable \(A\subseteq\mathcal Z\),
\[
(K\circ Q)(A)
\ = \
\int_{\mathcal X}K(A\mid x')\,Q(dx').
\]
Integrating~\eqref{eq:ldp-cond} with respect to \(Q(dx')\) for fixed
\(x\), and with respect to \(Q(dx)\) for fixed \(x'\), yields, for all
\(x\in\mathcal X\) and measurable \(A\subseteq\mathcal Z\),
\begin{equation}
\label{eq:first-obs-key-duchi}
e^{-\alpha}(K\circ Q)(A)
\ \le \
K(A\mid x)
\ \le \
e^\alpha(K\circ Q)(A).
\end{equation}

Let \(g:\mathcal X\to\mathbb R_+\) be a measurable \(Q\)-integrable function, and let \(A\subseteq\mathcal Z\) be measurable. 
Using the definition of \(\mathbb Q=Q\otimes K\) and~\eqref{eq:first-obs-key-duchi}, we obtain
\[
\begin{aligned}
e^{-\alpha}(K\circ Q)(A)
\int_{\mathcal X}g(x)\,Q(dx)
&\ \le \
\mathbb E_{\mathbb Q}\left[g(X)\mathbf 1_A(Z)\right]
&\ \le \
e^\alpha(K\circ Q)(A)
\int_{\mathcal X}g(x)\,Q(dx).
\end{aligned}
\]
Since \(X\) has distribution \(Q\), and \(Z\) has distribution \(K\circ Q\) under \(\mathbb Q\), this can be rewritten as
\[
\begin{aligned}
e^{-\alpha}
\mathbb E_{\mathbb Q}[g(X)]
\mathbb E_{\mathbb Q}[\mathbf 1_A(Z)]
&\ \le \
\mathbb E_{\mathbb Q}[g(X)\mathbf 1_A(Z)]
&\ \le \
e^\alpha
\mathbb E_{\mathbb Q}[g(X)]
\mathbb E_{\mathbb Q}[\mathbf 1_A(Z)].
\end{aligned}
\]
Since this holds for every measurable \(A\subseteq\mathcal Z\), the characterization of conditional expectation yields
\begin{equation}
\label{eq:conditional-expectation-ldp-bound}
e^{-\alpha}\mathbb E_{\mathbb Q}[g(X)]
\ \le \
\mathbb E_{\mathbb Q}[g(X)\mid Z]
\ \le \
e^\alpha\mathbb E_{\mathbb Q}[g(X)],
\qquad
\mathbb Q\text{-a.s.}
\end{equation}
Denote by $g_+$ and $g_-$ the positive and negative parts of $dP/dQ-1$
so that by Assertion~\ref{item:lem:contraction-key2} in Lemma~\ref{lem:contraction-key}, we have, \(\mathbb Q\)-almost surely,
\[
\begin{aligned}
\frac{d(K\circ P)}{d(K\circ Q)}(Z)-1
&\ = \
\mathbb E_{\mathbb Q}\left[
\frac{dP}{dQ}(X)-1
\,\middle|\,Z
\right]
\\
&\ = \
\mathbb E_{\mathbb Q}[g_+(X)\mid Z]
-
\mathbb E_{\mathbb Q}[g_-(X)\mid Z].
\end{aligned}
\]
Applying~\eqref{eq:conditional-expectation-ldp-bound} to \(g_+\) and
\(g_-\), we obtain, \(\mathbb Q\)-almost surely,
\[
e^{-\alpha}\mathbb E_{\mathbb Q}[g_+(X)]
-
e^\alpha\mathbb E_{\mathbb Q}[g_-(X)]
\ \le \
\frac{d(K\circ P)}{d(K\circ Q)}(Z)-1
\leq
e^\alpha\mathbb E_{\mathbb Q}[g_+(X)]
-
e^{-\alpha}\mathbb E_{\mathbb Q}[g_-(X)].
\]
Since \(X\) has distribution \(Q\) under \(\mathbb Q\), we have $\mathbb E_{\mathbb Q}[g_+(X)]
-\mathbb E_{\mathbb Q}[g_-(X)]
=\mathbb E_{\mathbb Q}\left[\frac{dP}{dQ}(X)-1\right] = 0$
and thus
\[
\mathbb E_{\mathbb Q}[g_+(X)]
\ = \
\mathbb E_{\mathbb Q}[g_-(X)]
\ = \frac12\mathbb E_{\mathbb Q}\left[\left|\frac{dP}{dQ}(X)-1\right|\right] =
\|P-Q\|_{\mathrm{TV}}.
\]
Inserting this in the previous display, we get the claimed inequality,
which concludes the proof of Lemma~\ref{lem:contraction-key-duchi-ingredient}.
\end{proof}

%%%%%
%\bigskip
%%%%%%%

\subsection{Main proof}

We now combine the two lemmas above.

\begin{proof}[Proof of Lemma~\ref{lem:oriented-kl-contraction-ldp}]
First observe that, by definition of $\Psi$ and since $\log(x)\leq x-1$, we have for all $x\in\mathbb R_+$,
\begin{equation}
  \label{eq:Phi-bound-trivial}
\Psi(x) 
\ \leq \ 
x(x-1)
\ = \ 
x-1+(x-1)^2.  
\end{equation}
Now, by Assertion~\ref{item:lem:contraction-key2} in
Lemma~\ref{lem:contraction-key}, we have \(K\circ P\ll K\circ Q\) so
that the likelihood ratio
\[
L
\ := \
\frac{d(K\circ P)}{d(K\circ Q)}(Z)
\]
is well defined.
Since \(Z\) has distribution \(K\circ Q\) under \(\mathbb Q\), the same calculation as in~\eqref{eq:kullback-with-phi} gives
\(\D(K\circ P\|K\circ Q)
=
\mathbb E_{\mathbb Q}[\Psi(L)]\).
Using~\eqref{eq:Phi-bound-trivial} and
\(\mathbb E_{\mathbb Q}[L]=1\), we conclude that
\[
\D(K\circ P\|K\circ Q)
\ \leq \ 
\mathbb E_{\mathbb Q}\left[(L-1)^2\right]\;.
\]
Lemma~\ref{lem:contraction-key-duchi-ingredient} therefore yields
\[
\begin{aligned}
\D(K\circ P\|K\circ Q)
&\ \le \
\left(e^\alpha-e^{-\alpha}\right)^2
\|P-Q\|_{\mathrm{TV}}^2
\\
&\ \le \
\frac12\left(e^\alpha-e^{-\alpha}\right)^2
\D(P\|Q),
\end{aligned}
\]
where the second inequality follows from Pinsker's inequality.
Combining this bound with Assertion~\ref{item:lem:contraction-key3}, we obtain
\[
\D(K\circ P\|K\circ Q)
\ \le \
\left(
\frac12\left(e^\alpha-e^{-\alpha}\right)^2\wedge1
\right)
\D(P\|Q).
\]
This concludes the proof of Lemma~\ref{lem:oriented-kl-contraction-ldp}.
\end{proof}

%%% Local Variables:
%%% mode: latex
%%% TeX-master: "main.tex"
%%% ispell-local-dictionary: "american"
%%% End:

%%%%%%%%%%%%%%%%

\section{Proofs of Lemma~\ref{lem:general-contraction-with-contiionning} and Proposition~\ref{prop:abstract-kl-iteration}}
\label{appendix:proof-kl-iteration-1-dependent-process}

%%%%%
%Appendix C.1
%%%%%%%

\subsection{Proof of Lemma~\ref{lem:general-contraction-with-contiionning}}

By the disintegration result recalled in Appendix~\ref{appendix-notations:random-variables}, since \(\mathcal X\) and \(\mathcal Z\) are standard Borel spaces, there
exist regular versions (\textit{i.e.} Markov kernels)
\(P_\pm^{X\mid Y}\) and \(P_\pm^{Z\mid Y}\) of the conditional
distributions of \(X\) given \(Y\) and of \(Z\) given \(Y\) under
\(P_\pm\), respectively; these kernels are unique \(P_\pm^Y\)-almost surely.  
Writing \(\mathbb E_\pm\) for expectation under \(P_\pm\),
the disintegration formula gives, for every nonnegative measurable
function \(\varphi:\mathcal X\times\mathcal Y\to[0,\infty]\),
\begin{equation}
\label{desintegration}
\mathbb E_\pm\!\left[\varphi(X,Y)\right]
\ = \
\int_{\mathcal Y}\pa{\int_{\mathcal X}
\varphi(x,y)\,
P_\pm^{X\mid Y=y}(dx)}\,P_\pm^Y(dy).
\end{equation}

Let \(A\subseteq\mathcal Z\) and \(B\subseteq\mathcal Y\) be measurable.
We have
\begin{align*}
\int_B
\left(K\circ P_\pm^{X\mid Y=y}\right)(A)\,P_\pm^Y(dy)
&\ = \
\int_B \left(\int_{\mathcal X}
K(A\mid x)\,P_\pm^{X\mid Y=y}(dx)\right)P_\pm^Y(dy) \\
&\ = \
\mathbb E_\pm\!\left[
\mathbf 1_{\{Y\in B\}}K(A\mid X)
\right] \\
&\ = \
\mathbb E_\pm\!\left[
\mathbf 1_{\{Y\in B\}}
P_\pm(Z\in A\mid X,Y)
\right] \\
&\ = \
P_\pm(Z\in A,Y\in B),
\end{align*}
where the first equality follows from the definition of kernel composition, the second from~\eqref{desintegration} applied to \(\varphi(x,y)=\mathbf 1_B(y)K(A\mid x)\), 
the third from the assumption on the conditional distribution of \(Z\) given \((X,Y)\), and the last from the tower property.
The obtained equality shows that \(y\mapsto K\circ P_\pm^{X\mid Y=y}\) satisfies the disintegration identity for the conditional distribution of \(Z\) given \(Y\) under
\(P_\pm\); hence this kernel is a version of that conditional distribution.
By uniqueness of this representation,
\begin{equation}
\label{eq:condtional-ditrib:Z-Y}
P_\pm^{Z\mid Y=y}
\ = \
K\circ P_\pm^{X\mid Y=y}
\quad\text{for}\quad
P_\pm^Y\text{-almost every }y.
\end{equation}

For each \(y\in\mathcal Y\) satisfying the equalities
in~\eqref{eq:condtional-ditrib:Z-Y}, setting \(\mu_\pm^y :=
P_\pm^{X\mid Y=y}\), and using the \(\beta\)-contraction property of
\(K\) yields
\[
\D\!\left(
P_+^{Z\mid Y=y}
\,\middle\|\,
P_-^{Z\mid Y=y}
\right)
\ = \
\D\!\left(
K\circ\mu_+^y
\,\middle\|\,
K\circ\mu_-^y
\right)
\ \le \
\beta\,
\D\!\left(
\mu_+^y
\,\middle\|\,
\mu_-^y
\right)
\ = \
\beta\,
\D\!\left(
P_+^{X\mid Y=y}
\,\middle\|\,
P_-^{X\mid Y=y}
\right).
\]
Since \(P_+^Y\ll P_-^Y\), the identity
in~\eqref{eq:condtional-ditrib:Z-Y} for \(P_-^{Z\mid Y=y}\), which
holds \(P_-^Y\)-almost surely, also holds \(P_+^Y\)-almost surely.
Consequently, the identities for \(P_+^{Z\mid Y=y}\) and
\(P_-^{Z\mid Y=y}\) in~\eqref{eq:condtional-ditrib:Z-Y} hold
simultaneously for \(P_+^Y\)-almost every \(y\).
Hence, we
can integrate the previous inequality with respect to \(P_+^Y\), which
gives
\[
\D\!\left(
P_+^{Z\mid Y}
\,\middle\|\,
P_-^{Z\mid Y}
\,\middle|\,
P_+^Y
\right)
\ \le \
\beta\,
\D\!\left(
P_+^{X\mid Y}
\,\middle\|\,
P_-^{X\mid Y}
\,\middle|\,
P_+^Y
\right).
\]
This concludes the proof of Lemma~\ref{lem:general-contraction-with-contiionning}.

%%%%%%
%Appendix C.2
%%%%%%%

\subsection{Proof of Proposition~\ref{prop:abstract-kl-iteration}}

\noindent\textit{Shorthand notation.}
For the KL and conditional KL divergences defined at the beginning of Section~\ref{section:element_proofs} and in Section~\ref{subsection:condtional-KL-contraction}, respectively, we use the following shorthand: for generic random variables \(U\) and \(V\) defined under both \(P_+\) and
\(P_-\),
\begin{equation*}
\D(U)
\ := \
\D\!\left(
P_+^U
\,\middle\|\,
P_-^U
\right),
\qquad
\D(U\mid V)
\ := \
\D\!\left(
P_+^{U\mid V}
\,\middle\|\,
P_-^{U\mid V}
\,\middle|\,
P_+^V
\right),
\end{equation*}
where the conditional KL divergence is defined only when \(P_+^V\ll P_-^V\).
For \(i\le j\), we write
\(X_{i:j}:=(X_i,\ldots,X_j)\) and
\(Z_{i:j}:=(Z_i,\ldots,Z_j)\), with the convention that these blocks
are empty when \(i>j\), and that conditioning on an empty block amounts
to no conditioning.

%%%%%%
\medskip 

If the right-hand side of the inequality in
Proposition~\ref{prop:abstract-kl-iteration} is infinite, there is
nothing to prove. We therefore assume that it is finite.

%%%%%%%

\subsubsection{Auxiliary lemmas}

The proof of Proposition~\ref{prop:abstract-kl-iteration} relies on the
following three auxiliary lemmas.
The first ensures that the conditional KL divergences used below are
well defined.

\begin{lemma}
\label{lem:conditioning-domination}
For every \(1\le j\le t\le N\),
\[
P_+^{(Z_{1:(j-1)},X_{(j+1):t})}
\ll
P_-^{(Z_{1:(j-1)},X_{(j+1):t})}.
\]
\end{lemma}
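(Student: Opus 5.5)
The plan is to reduce the claim to domination of the joint law of \((Z_{1:(j-1)},X_{(j+1):t})\) under \(P_+\) by that under \(P_-\). We factor this law as a law of \(X\)-blocks composed with the privatization kernels. Since \(Z_{1:(j-1)}\) is generated coordinatewise from \(X_{1:(j-1)}\) through \(K_1,\dots,K_{j-1}\), independently given \(X\), the joint law under \(P_\pm\) is
\[
P_\pm^{(Z_{1:(j-1)},X_{(j+1):t})}
=
\widetilde K\circ P_\pm^{(X_{1:(j-1)},X_{(j+1):t})}.
\]
Here \(\widetilde K\) is the Markov kernel that applies \(K_1,\dots,K_{j-1}\) to the first \(j-1\) coordinates and the identity (Dirac) kernel to the remaining ones. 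The key point is that \(\widetilde K\) is the same kernel under \(P_+\) and \(P_-\).

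We then use the fact that absolute continuity is preserved by composition with a common kernel. If \(\mu\ll\nu\) and \(\nu\otimes\widetilde K(B)=0\), then \(\widetilde K(B\mid\cdot)=0\) holds \(\nu\)-a.s., hence \(\mu\)-a.s., so \((\widetilde K\circ\mu)(B)=0\). Equivalently, we may invoke Assertion~\ref{item:lem:contraction-key2} of Lemma~\ref{lem:contraction-key}, which gives \(K\circ P\ll K\circ Q\) whenever \(P\ll Q\). It therefore suffices to prove
\[
P_+^{(X_{1:(j-1)},X_{(j+1):t})}\ll P_-^{(X_{1:(j-1)},X_{(j+1):t})}.
\]

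This last domination is the main obstacle, because the proposition only assumes that \(P_+^{X_{1:k}}\ll P_-^{X_{1:k}}\) for contiguous blocks, and that the right-hand side is finite. We handle the gap in the block \(\{1,\dots,j-1\}\cup\{j+1,\dots,t\}\) as follows. The vector in question is the image of \(X_{1:t}\) under the coordinate projection that drops index \(j\), and projection preserves domination, again by data processing or by the same null-set argument. So it suffices to have \(P_+^{X_{1:t}}\ll P_-^{X_{1:t}}\). By stationarity this law equals the law of \(X_{1:k}\) with \(k=t\), and the assumption of Proposition~\ref{prop:abstract-kl-iteration} covers all \(k\in[N-1]\). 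For \(t=N\) we would instead use the finiteness of the right-hand side together with the chain rule. Finiteness of the terms \(\D(X_0\mid X_{1:k})\) for \(k\le N-1\), combined with the chain rule (Lemma~\ref{lem:standard-properties-kl}) and \(\D(X_0)<\infty\), shows that \(\D(P_+^{X_{1:N}}\|P_-^{X_{1:N}})\), which by stationarity equals \(\sum_{k=0}^{N-1}\D(X_0\mid X_{1:k})\), is finite. This yields domination at \(t=N\) as well.

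The case \(j=1\) (no \(Z\)'s) and the case \(j=t\) (no \(X\)'s after index \(j\)) are covered by the same argument with empty blocks.
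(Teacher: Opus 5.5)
Your proof is correct and takes essentially the paper's route: you write the law of \((Z_{1:(j-1)},X_{(j+1):t})\) as the image of an \(X\)-block law under a kernel common to \(P_+\) and \(P_-\), and you transfer domination via Assertion~\ref{item:lem:contraction-key2} of Lemma~\ref{lem:contraction-key}. The one difference is that the paper first derives \(P_+^{X_{1:N}}\ll P_-^{X_{1:N}}\) from the finiteness of the right-hand side and the chain rule, then uses a kernel \(L_{j,t}\) acting on \(X_{1:N}\) for every \((j,t)\), which avoids your case split between \(t\le N-1\) and \(t=N\); also, in your null-set argument, \(\nu\otimes\widetilde K(B)\) should read \((\widetilde K\circ\nu)(B)\).
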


\begin{proof}
  Since the right-hand side of the inequality in
  Proposition~\ref{prop:abstract-kl-iteration} is finite, and since
  conditional KL divergences are nonnegative, we have
  \(\D(X_0\mid X_{1:k})<\infty\) for every \(k=0,\ldots,N-1\).  By the
  chain rule property~\ref{item:chain-rule-va} in Lemma~\ref{lem:standard-properties-kl} and
  stationarity,
\[
\D(X_{1:N})
=
\sum_{j=1}^N
\D(X_j\mid X_{j+1:N})
=
\sum_{j=1}^N
\D(X_0\mid X_{1:(N-j)})
<\infty.
\]
Hence
\(P_+^{X_{1:N}}\ll P_-^{X_{1:N}}\).

Fix \(1\le j\le t\le N\).
In the given setting, there exists a Markov kernel \(L_{j,t}\) such that
\[
P_\pm^{(Z_{1:(j-1)},X_{(j+1):t})}
=
L_{j,t}\circ P_\pm^{X_{1:N}}.
\]
Therefore, since \(P_+^{X_{1:N}}\ll P_-^{X_{1:N}}\), Lemma~\ref{lem:contraction-key}~\ref{item:lem:contraction-key2} yields the desired domination.
\end{proof}

%%%%%%
\bigskip 

The second is an application of
Lemma~\ref{lem:general-contraction-with-contiionning} and does not use
the stationarity or \(1\)-dependence of \(X\).
By Lemma~\ref{lem:conditioning-domination}, all the conditional KL divergences considered below are well defined.

\begin{lemma}
\label{lem:conditional-contraction}
For any
\(1\le j\le t\le N\),
\[
\D\!\left(
Z_j\mid Z_{1:(j-1)},X_{(j+1):t}
\right)
\ \le \
\beta\,
\D\!\left(
X_j\mid Z_{1:(j-1)},X_{(j+1):t}
\right).
\]
\end{lemma}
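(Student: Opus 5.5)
The plan is to apply Lemma~\ref{lem:general-contraction-with-contiionning} with the choices
\[
X\leftarrow X_j,
\qquad
Y\leftarrow (Z_{1:(j-1)},X_{(j+1):t}),
\qquad
Z\leftarrow Z_j,
\qquad
K\leftarrow K_j .
\]
The kernel \(K_j\) is \(\beta\)-contracting by assumption. The spaces \(\mathcal X\) (the state space of \(X_j\)) and the output space of \(K_j\) are standard Borel. The domination \(P_+^Y\ll P_-^Y\) is exactly Lemma~\ref{lem:conditioning-domination}. The conclusion of Lemma~\ref{lem:general-contraction-with-contiionning} is then literally the claimed inequality, written in the shorthand \(\D(\cdot\mid\cdot)\). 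So the only thing left to verify is the conditional-law hypothesis:
\[
P_\pm^{Z_j\mid (X_j,Y)=(x,y)}=K_j(\cdot\mid x)
\quad\text{for } P_\pm^{(X_j,Y)}\text{-a.e. }(x,y).
\]

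To verify this hypothesis, I would use the structure of the observation model. Conditionally on the whole process \(X\), the variables \(Z_1,\ldots,Z_N\) are independent, and \(Z_i\sim K_i(\cdot\mid X_i)\). Fix measurable sets \(A\) in the output space of \(K_j\), \(B\) in the state space of \(X_j\), and \(C\) in the space of \(Y\). I would compute \(P_\pm(Z_j\in A, X_j\in B, Y\in C)\) by first conditioning on \(X\).

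Since \(C\) can be taken to be a rectangle \(C_1\times C_2\), with \(C_1\) a set for \(Z_{1:(j-1)}\) and \(C_2\) a set for \(X_{(j+1):t}\), conditional independence gives
\[
\mathbb E_\pm\!\left[\mathbf 1_B(X_j)\mathbf 1_{C_2}(X_{(j+1):t})\,K_j(A\mid X_j)\,\prod_{i<j}\text{(kernel terms for }Z_i\text{)}\right].
\]
Undoing the conditioning on the factors other than \(K_j(A\mid X_j)\) turns this into
\[
\mathbb E_\pm\!\left[\mathbf 1_B(X_j)\,\mathbf 1_C(Y)\,K_j(A\mid X_j)\right].
\]
This is the disintegration identity showing that \((x,y)\mapsto K_j(A\mid x)\) is a version of \(P_\pm(Z_j\in A\mid X_j,Y)\). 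The identity extends from rectangles to all measurable sets of the product space by a \(\pi\)–\(\lambda\) argument. Uniqueness of regular versions, valid because the output space is standard Borel, then gives the almost-sure equality.

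The main obstacle is precisely this conditional-law identification. In particular, one must justify that \(Z_j\) is conditionally independent of \(Z_{1:(j-1)}\) and of \(X_{(j+1):t}\) given \(X_j\). This follows from the product form of \(K_{1:N}(\cdot\mid X_{1:N})\) as the regular conditional law of \(Z_{1:N}\) given \(X\), so it should be a matter of careful measure-theoretic bookkeeping rather than any real difficulty. Stationarity and \(1\)-dependence are not needed for this step.
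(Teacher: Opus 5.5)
Your proposal is correct and follows the paper's proof exactly. The paper also applies Lemma~\ref{lem:general-contraction-with-contiionning} with \((X,Y,Z,K)=(X_j,(Z_{1:(j-1)},X_{(j+1):t}),Z_j,K_j)\) and takes the domination from Lemma~\ref{lem:conditioning-domination}; the only difference is that the paper simply asserts the hypothesis \(P_\pm^{Z_j\mid (X_j,Y)=(x,y)}=K_j(\cdot\mid x)\), whereas your rectangle-plus-\(\pi\)--\(\lambda\) argument, based on the product form of \(K_{1:N}(\cdot\mid X_{1:N})\), spells that step out.
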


\begin{proof}
Fix \(1\le j\le t\le N\), and set
\(W:=\left(Z_{1:(j-1)},X_{(j+1):t}\right)\).
Since
\(P_+^W\ll P_-^W\) by Lemma~\ref{lem:conditioning-domination}, and
\[
P_\pm^{Z_j\mid (X_j,W)=(x,w)}
=
K_j(\cdot\mid x),
\qquad
\textup{for }P_\pm^{(X_j,W)}\text{-almost every }(x,w),
\]
and \(K_j\) is \(\beta\)-contracting,
Lemma~\ref{lem:general-contraction-with-contiionning} applies with
\((X,Y,Z,K)=(X_j,W,Z_j,K_j)\), and gives
\[
\D(Z_j\mid W)
\le
\beta\,\D(X_j\mid W).
\]
By the definition of \(W\), this is precisely the inequality of
Lemma~\ref{lem:conditional-contraction}.
\end{proof}

%%%%%%
\bigskip 

The third result is a one-step backward inequality.
Its proof combines the chain rule for conditional KL divergence with Lemma~\ref{lem:conditional-contraction}, together with the \(1\)-dependence assumption on \(X\).

\begin{lemma}
\label{lem:one-step-iteration}
For any
\(2\le j\le t\le N\),
\[
\D\!\left(
X_j\mid Z_{1:(j-1)},X_{(j+1):t}
\right)
\ \le \
\beta\,
\D\!\left(
X_{j-1}\mid Z_{1:(j-2)},X_{j:t}
\right)
\ + \
\D\!\left(
X_j\mid X_{(j+1):t}
\right).
\]
\end{lemma}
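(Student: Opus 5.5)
We prove Lemma~\ref{lem:one-step-iteration} by introducing the auxiliary variable \(Z_{j-1}\) through the chain rule, then using \(1\)-dependence to discard superfluous conditioning, and finally applying Lemma~\ref{lem:conditional-contraction} to the \(Z_{j-1}\)-term. Throughout, set \(W:=(Z_{1:(j-2)},X_{(j+1):t})\). By Lemma~\ref{lem:conditioning-domination} and data processing, all conditional KL divergences below are well defined.

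\emph{Step 1: chain rule for the pair \((Z_{j-1},X_j)\) given \(W\).} We expand \(\D(Z_{j-1},X_j\mid W)\) in the two possible orders, using the conditional form of Lemma~\ref{lem:standard-properties-kl}\ref{item:chain-rule-va}:
\[
\D(X_j\mid Z_{j-1},W)+\D(Z_{j-1}\mid W)
\ = \
\D(Z_{j-1}\mid X_j,W)+\D(X_j\mid W).
\]
Since \((Z_{j-1},W)=(Z_{1:(j-1)},X_{(j+1):t})\), the first term on the left is the quantity we want to bound. The term \(\D(Z_{j-1}\mid W)\) is nonnegative and can be dropped, which gives
\[
\D(X_j\mid Z_{1:(j-1)},X_{(j+1):t})\ \le\ \D(Z_{j-1}\mid Z_{1:(j-2)},X_{j:t})+\D(X_j\mid Z_{1:(j-2)},X_{(j+1):t}).
\]

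\emph{Step 2: contract the first term.} The conditioning variable \((Z_{1:(j-2)},X_{j:t})\) is exactly the one appearing in Lemma~\ref{lem:conditional-contraction} with index \(j-1\) in place of \(j\) (valid since \(1\le j-1\le t\)). That lemma therefore bounds the first term by
\[
\beta\,\D(X_{j-1}\mid Z_{1:(j-2)},X_{j:t}),
\]
which is the desired first summand.

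\emph{Step 3: remove \(Z_{1:(j-2)}\) from the second term; this is the main obstacle.} We must show that
\[
\D(X_j\mid Z_{1:(j-2)},X_{(j+1):t})\ =\ \D(X_j\mid X_{(j+1):t})
\]
(an inequality \(\le\) suffices). Under both \(P_\pm\), the vector \(Z_{1:(j-2)}\) is obtained by applying kernels to \(X_{1:(j-2)}\), using independent randomization. By \(1\)-dependence, \(X_{1:(j-2)}\) is independent of \(X_{j:t}\), and hence \(Z_{1:(j-2)}\) is independent of \((X_j,X_{(j+1):t})\). Consequently the conditional law of \(X_j\) given \((Z_{1:(j-2)},X_{(j+1):t})\) coincides, for \(P_\pm\)-almost every value, with the conditional law of \(X_j\) given \(X_{(j+1):t}\). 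The joint law \(P_+\) of the conditioning variables is a product, so integrating the pointwise KL against it yields the equality.

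The care required here is measure-theoretic. First, one must verify the factorization of regular conditional distributions: check the disintegration identity on product sets and invoke a.s.\ uniqueness, as in the proof of Lemma~\ref{lem:general-contraction-with-contiionning}. Second, one must handle the fact that the \(P_-\)-version is only determined \(P_-\)-a.s.; this is resolved using the domination from Lemma~\ref{lem:conditioning-domination}. Combining Steps~1--3 yields the lemma.
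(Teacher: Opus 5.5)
Your proof is correct and follows the paper's argument essentially step for step. Both use the two-order chain rule conditionally on $(Z_{1:(j-2)},X_{(j+1):t})$, drop the nonnegative term $\D(Z_{j-1}\mid Z_{1:(j-2)},X_{(j+1):t})$, apply Lemma~\ref{lem:conditional-contraction} at index $j-1$, and use $1$-dependence to remove $Z_{1:(j-2)}$ from the last term. Your remarks on the factorization of the regular conditional distributions and on the domination needed for the $P_-$-version only spell out what the paper asserts in one line.
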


\begin{proof}
  Fix \(2\le j\le t\le N\).  Applying the chain rule in
  Lemma~\ref{lem:standard-properties-kl}\ref{item:chain-rule-va} in
  two different orders, conditionally on
  \(\bigl(Z_{1:(j-2)},X_{(j+1):t}\bigr)\), gives
\[
\begin{aligned}
\D\!\left(
X_j,Z_{j-1}\mid Z_{1:(j-2)},X_{(j+1):t}
\right)
\ &= \
\D\!\left(
X_j\mid Z_{1:(j-1)},X_{(j+1):t}
\right)
+
\D\!\left(
Z_{j-1}\mid Z_{1:(j-2)},X_{(j+1):t}
\right)
\\
\ &= \
\D\!\left(
Z_{j-1}\mid Z_{1:(j-2)},X_{j:t}
\right)
+
\D\!\left(
X_j\mid Z_{1:(j-2)},X_{(j+1):t}
\right).
\end{aligned}
\]
Using the non-negativity of conditional KL divergence, we obtain
\[
\D\!\left(
X_j\mid Z_{1:(j-1)},X_{(j+1):t}
\right)
\ \le \
\D\!\left(
Z_{j-1}\mid Z_{1:(j-2)},X_{j:t}
\right)
+
\D\!\left(
X_j\mid Z_{1:(j-2)},X_{(j+1):t}
\right).
\]

Lemma~\ref{lem:conditional-contraction}, applied with \(j-1\) in place
of \(j\), gives
\[
\D\!\left(
Z_{j-1}\mid Z_{1:(j-2)},X_{j:t}
\right)
\ \le \
\beta\,
\D\!\left(
X_{j-1}\mid Z_{1:(j-2)},X_{j:t}
\right).
\]

Since \(X\) is \(1\)-dependent and \(Z_{1:(j-2)}\) is generated
coordinatewise from \(X_{1:(j-2)}\), the variables \(X_{j:t}\) and
\(Z_{1:(j-2)}\) are independent under both \(P_+\) and \(P_-\).
Therefore,
\[
\D\!\left(
X_j\mid Z_{1:(j-2)},X_{(j+1):t}
\right)
\ = \
\D\!\left(
X_j\mid X_{(j+1):t}
\right).
\]

Substituting the last two displays into the preceding bound proves Lemma~\ref{lem:one-step-iteration}.
\end{proof}

%%%%%

\subsubsection{Main proof}

We now combine the last two lemmas above.

\begin{proof}[Proof of Proposition~\ref{prop:abstract-kl-iteration}]
We first prove that, for every \(t\in[N]\),
\begin{equation}
\label{lem:backward-bound}
\D\!\left(
X_t\mid Z_{1:(t-1)}
\right)
\ \le \
\sum_{k=0}^{t-1}
\beta^k
\D\!\left(
X_0\mid X_{1:k}
\right).
\end{equation}
For \(t=1\), inequality~\eqref{lem:backward-bound} becomes \(\D(X_1)\le\D(X_0)\), which holds since stationarity gives \(\D(X_1)=\D(X_0)\).
Let \(t\ge2\).
Applying Lemma~\ref{lem:one-step-iteration} successively with
\(j=t,t-1,\ldots,2\) gives
\[
\begin{aligned}
\D\!\left(
X_t\mid Z_{1:(t-1)}
\right)
&\ \le \
\D(X_t)
+
\beta\,
\D\!\left(
X_{t-1}\mid Z_{1:(t-2)},X_t
\right)
\\
&\ \le \
\D(X_t)
+
\beta\,
\D\!\left(
X_{t-1}\mid X_t
\right)
+
\beta^2
\D\!\left(
X_{t-2}\mid Z_{1:(t-3)},X_{(t-1):t}
\right)
\\
&\ \le \
\sum_{k=0}^{t-1}
\beta^k
\D\!\left(
X_{t-k}\mid X_{(t-k+1):t}
\right).
\end{aligned}
\]
By stationarity,
\(\D\!\left(
X_{t-k}\mid X_{(t-k+1):t}
\right)
 = 
\D\!\left(
X_0\mid X_{1:k}
\right)
\) for every \(k\in\{0,\ldots,t-1\}\).
Substituting these identities into the preceding bound
proves~\eqref{lem:backward-bound}.

By repeated application of the chain rule in
Lemma~\ref{lem:standard-properties-kl}\ref{item:chain-rule-va},
Lemma~\ref{lem:conditional-contraction} applied with \(j=t\), and
\eqref{lem:backward-bound}, we obtain
\[
\begin{aligned}
\D\!\left(
Z_{1:N}
\right)
\ = \
\sum_{t=1}^N
\D\!\left(
Z_t\mid Z_{1:(t-1)}
\right)
&\ \le \
\beta
\sum_{t=1}^N
\D\!\left(
X_t\mid Z_{1:(t-1)}
\right)
\\
&\ \le \
\beta
\sum_{t=1}^N
\sum_{k=0}^{t-1}
\beta^k
\D\!\left(
X_0\mid X_{1:k}
\right)
\\
&\ = \
\beta
\sum_{k=0}^{N-1}
\beta^k(N-k)
\D\!\left(
X_0\mid X_{1:k}
\right),
\end{aligned}
\]
where the last equality follows by exchanging the order of summation.
This proves Proposition~\ref{prop:abstract-kl-iteration}.
\end{proof}

%%% Local Variables:
%%% mode: latex
%%% TeX-master: "main.tex"
%%% ispell-local-dictionary: "american"
%%% End:

%%%%%%%%%

\section{Proof of Lemma~\ref{lem:lemma-conditional-KL-gaussian-general-sp-densities}}
\label{appendix:gaussian-process-bounds}

%%%%%%%%%
%Appendix D
%%%%%%%%%%

For each sign \(\pm\), let \(\gamma_\pm\) denote the autocovariance function associated with \(f_\pm\), so that \(\gamma_\pm(h)=\int_{-\pi}^{\pi}e^{ih\lambda}f_\pm(\lambda)\,d\lambda\), for every \(h\in\mathbb Z\).
Since the processes have unit variance, \(\gamma_\pm(0)=1\).

Fix \(k\geq1\). Set
\begin{align*}
\boldsymbol\gamma_\pm
\ := \begin{pmatrix}
\gamma_\pm(1)&\ldots&\gamma_\pm(k)
     \end{pmatrix}
  ^\top
\quad \qquad  \textup{ and } \qquad \quad 
\Gamma_\pm
\ := \
\bigl[
\gamma_\pm(i-j)
\bigr]_{1\leq i,j\leq k}.
\end{align*}
Thus, \(\Gamma_\pm \in \mathbb{R}^{k \times k}\) is the covariance matrix of \(X_{1:k}\) under
\(P_\pm\), while \(\boldsymbol\gamma_\pm\) is the vector of covariances
between \(X_0\) and \(X_{1:k}\). 
Consequently, the covariance matrix of \((X_0,X_{1:k})\), denoted by
\(\widetilde\Gamma_\pm\), is
\[
\widetilde\Gamma_\pm
\ := \
\begin{pmatrix}
1 & \boldsymbol\gamma_\pm^\top\\
\boldsymbol\gamma_\pm & \Gamma_\pm
\end{pmatrix}.
\]
Note that $\widetilde\Gamma_\pm$ corresponds to $\Gamma_\pm$ but with
$k$ replaced by $k+1$.

%%%%
\smallskip 

\textit{Some notation for vectors and matrices.} We denote by \(|\cdot|\) the Euclidean norm of vectors, by
\(\|\cdot\|_{\mathrm{op}}\) the corresponding operator
norm on square matrices and by $\mathrm{Sp}(\cdot)$ the set of their eigenvalues.
Recall that for any real symmetric matrix \(A\), we have
\begin{align}
  \label{eq:opertor-quadratic}
  \begin{split}
  \min\pa{\mathrm{Sp}(A)}=\inf_{|\boldsymbol y|=1}\boldsymbol y^T A\boldsymbol y\;,
    \qquad  
    \max\pa{\mathrm{Sp}(A)}=\sup_{|\boldsymbol y|=1}\boldsymbol y^T A\boldsymbol y\;,\\
 \|A\|_{\mathrm{op}}=\max_{\lambda\in\mathrm{Sp}(A)}|\lambda|\;,
  \qquad  \|A^{-1}\|_{\mathrm{op}}=\pa{\min_{\lambda\in\mathrm{Sp}(A)}|\lambda|}^{-1}\;,    
  \end{split}
\end{align}
where the last identity applies when \(A\) is invertible. 

%%%%%%

\subsection{Auxiliary lemmas}

The proof of Lemma~\ref{lem:lemma-conditional-KL-gaussian-general-sp-densities}
relies on the following four auxiliary lemmas.
Introduce the notation
\begin{align*}
d_\infty := 
\sup_{\lambda\in[-\pi,\pi]}
\Big{|}f_+(\lambda)-f_-(\lambda)\Big{|}.
\end{align*}
The first lemma provides spectral bounds, uniform in \(k\), for the
covariance matrices \(\Gamma_\pm\) and \(\widetilde\Gamma_\pm\), and
the difference matrix \(\Gamma_+-\Gamma_-\).
It follows from the assumption \(a\leq f_\pm\leq b\) and the definition
of \(d_\infty\) above.
This is an elementary result for Toeplitz matrices; for completeness,
a proof is given in Appendix~\ref{technical-lemma-eigenvalues}.

\begin{lemma}
\label{lem:gaussian-covariance-matrix-bounds}
The matrices \(\Gamma_\pm\) and  \(\widetilde \Gamma_\pm\) have all
their eigenvalues in $[2\pi a,2\pi b]$, while all eigenvalues of the matrix
$\Gamma_+-\Gamma_-$ lie in $[-2\pi d_\infty,2\pi d_\infty]$.
\end{lemma}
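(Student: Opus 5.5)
The plan is to use the quadratic form characterization \eqref{eq:opertor-quadratic} together with the spectral representation of Toeplitz matrices. For any \(k\ge1\) and any \(\boldsymbol y=(y_1,\ldots,y_k)^\top\in\mathbb R^k\), the Fourier inversion formula \eqref{eq:gamma-fourier-inversion} gives
\[
\boldsymbol y^\top\Gamma_\pm\boldsymbol y
=\sum_{i,j=1}^k y_iy_j\int_{-\pi}^{\pi}f_\pm(\lambda)e^{i(i-j)\lambda}\,d\lambda
=\int_{-\pi}^{\pi}f_\pm(\lambda)\Bigl|\sum_{j=1}^k y_je^{ij\lambda}\Bigr|^2\,d\lambda .
\]
By Parseval's identity for the orthogonal system \((e^{ij\cdot})_j\) in \(L^2([-\pi,\pi])\), we have \(\int_{-\pi}^{\pi}|\sum_j y_je^{ij\lambda}|^2\,d\lambda=2\pi|\boldsymbol y|^2\).

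From the pointwise bounds \(a\le f_\pm\le b\), the Rayleigh quotient therefore lies in \([2\pi a,2\pi b]\) for every unit vector \(\boldsymbol y\). Since \(\Gamma_\pm\) is real symmetric (because \(\gamma_\pm\) is even), \eqref{eq:opertor-quadratic} places all its eigenvalues in \([2\pi a,2\pi b]\). The same computation with \(k+1\) in place of \(k\) handles \(\widetilde\Gamma_\pm\). This is legitimate because \(\widetilde\Gamma_\pm\) is the Toeplitz matrix \([\gamma_\pm(i-j)]_{0\le i,j\le k}\), and its entries are exactly \(1=\gamma_\pm(0)\), \(\boldsymbol\gamma_\pm\) and \(\Gamma_\pm\), as noted after its definition.

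For the difference, \(\Gamma_+-\Gamma_-\) is the symmetric Toeplitz matrix built from \(f_+-f_-\). The same identity gives
\[
\boldsymbol y^\top(\Gamma_+-\Gamma_-)\boldsymbol y=\int_{-\pi}^{\pi}(f_+-f_-)(\lambda)\Bigl|\sum_j y_je^{ij\lambda}\Bigr|^2d\lambda .
\]
Its absolute value is at most \(d_\infty\cdot2\pi|\boldsymbol y|^2\). Applying the min/max characterization in \eqref{eq:opertor-quadratic} then places the spectrum in \([-2\pi d_\infty,2\pi d_\infty]\).

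The only point needing care, and the main potential obstacle, is justifying the exchange of sum and integral and the realness of the quadratic form. Both are immediate here: the sum is finite, \(f_\pm\) is integrable and bounded, and the integrand \(f_\pm|\cdot|^2\) is real because \(|\sum_j y_je^{ij\lambda}|^2\) is real. No summability of \(\gamma_\pm\) is needed, since only the finitely many coefficients \(\gamma_\pm(h)\), \(|h|\le k\), enter.
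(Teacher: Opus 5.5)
Your proposal is correct and follows essentially the same route as the paper. Both write $\boldsymbol y^\top\Gamma_\pm\boldsymbol y=\int_{-\pi}^{\pi}\bigl|\sum_j y_je^{ij\lambda}\bigr|^2 f_\pm(\lambda)\,d\lambda$, use Parseval to get $2\pi|\boldsymbol y|^2$, apply $a\le f_\pm\le b$ (respectively $|f_+-f_-|\le d_\infty$ for the difference), and treat $\widetilde\Gamma_\pm$ as the same Toeplitz matrix of size $k+1$; as a cosmetic point, avoid using $i$ both as a summation index and as the imaginary unit.
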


By Lemma~\ref{lem:gaussian-covariance-matrix-bounds}, since $a>0$, we may define
\[
\boldsymbol\phi_\pm
\ := \
\Gamma_\pm^{-1}\boldsymbol\gamma_\pm,
\qquad \qquad 
\sigma_\pm^2
\ := \
1-
\boldsymbol\phi_\pm^\top\boldsymbol\gamma_\pm.
\]
The second lemma identifies the conditional distributions of \(X_0\) given \(X_{1:k}\), under \(P_+\) and \(P_-\). 
It also gives a uniform lower bound on their variances, ensuring that these distributions are nondegenerate.

\begin{lemma}
\label{lem:gaussian-conditional-laws}
For every \(\boldsymbol x\in\mathbb R^k\), we have
\(P_\pm^{X_0\mid X_{1:k}=\boldsymbol x}
 = 
\mathcal N\!\left(
\boldsymbol\phi_\pm^\top\boldsymbol x,
\sigma_\pm^2
\right)
\).
Moreover, \(\sigma_\pm^2 \ge 2\pi a\).
\end{lemma}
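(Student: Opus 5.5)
The statement to prove is Lemma~\ref{lem:gaussian-conditional-laws}. My plan is to use the classical Gaussian conditioning formula for the jointly Gaussian vector \((X_0,X_{1:k})\), whose covariance under \(P_\pm\) is \(\widetilde\Gamma_\pm\). By Lemma~\ref{lem:gaussian-covariance-matrix-bounds}, the block \(\Gamma_\pm\) has eigenvalues in \([2\pi a,2\pi b]\) with \(a>0\). Hence \(\Gamma_\pm\) is invertible and \(X_{1:k}\) has a nondegenerate law, so \(\boldsymbol\phi_\pm\) is well defined.

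\emph{Identifying the conditional law.} I will set \(\varepsilon_\pm:=X_0-\boldsymbol\phi_\pm^\top X_{1:k}\). Then \(\operatorname{Cov}(\varepsilon_\pm,X_{1:k})=\boldsymbol\gamma_\pm^\top-\boldsymbol\phi_\pm^\top\Gamma_\pm=0\). Since \((\varepsilon_\pm,X_{1:k})\) is jointly Gaussian, \(\varepsilon_\pm\) is independent of \(X_{1:k}\). Its variance is
\[
1-2\boldsymbol\phi_\pm^\top\boldsymbol\gamma_\pm+\boldsymbol\phi_\pm^\top\Gamma_\pm\boldsymbol\phi_\pm
=1-\boldsymbol\phi_\pm^\top\boldsymbol\gamma_\pm=\sigma_\pm^2 .
\]
It follows that the kernel \(\boldsymbol x\mapsto\mathcal N(\boldsymbol\phi_\pm^\top\boldsymbol x,\sigma_\pm^2)\) satisfies the disintegration identity of Appendix~\ref{appendix-notations:random-variables}. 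Indeed, for measurable \(A,B\), Fubini applied to the independent pair \((\varepsilon_\pm,X_{1:k})\) gives
\[
P_\pm(X_0\in A,\,X_{1:k}\in B)=\int_B\mathcal N(\boldsymbol\phi_\pm^\top\boldsymbol x,\sigma_\pm^2)(A)\,P_\pm^{X_{1:k}}(d\boldsymbol x).
\]
This kernel therefore serves as the regular version of the conditional distribution. I will take this explicit, everywhere-defined kernel as the version, which is why the claim holds for every \(\boldsymbol x\). If \(\sigma_\pm^2=0\), the Gaussian would be understood as a Dirac mass, but the next step excludes this.

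\emph{Variance lower bound.} This is the only step that needs care, and it reduces to a Schur complement. The scalar \(\sigma_\pm^2=1-\boldsymbol\gamma_\pm^\top\Gamma_\pm^{-1}\boldsymbol\gamma_\pm\) is the Schur complement of \(\Gamma_\pm\) in \(\widetilde\Gamma_\pm\), so \(1/\sigma_\pm^2=(\widetilde\Gamma_\pm^{-1})_{11}\) by block inversion. I will check invertibility and positivity of \(\sigma_\pm^2\) directly: the vector \(\boldsymbol y=(1,-\boldsymbol\phi_\pm^\top)^\top\) satisfies
\[
\boldsymbol y^\top\widetilde\Gamma_\pm\boldsymbol y=\sigma_\pm^2 .
\]
Lemma~\ref{lem:gaussian-covariance-matrix-bounds} and \eqref{eq:opertor-quadratic} then give
\[
\sigma_\pm^2\ge 2\pi a\,|\boldsymbol y|^2=2\pi a\,(1+|\boldsymbol\phi_\pm|^2)\ge 2\pi a .
\]
This yields the claimed bound, and in particular the conditional laws are nondegenerate.
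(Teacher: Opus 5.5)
Your proof is correct and follows the paper's argument. The paper simply cites the Gaussian conditioning formula, which you rederive from the uncorrelated, hence independent, residual $X_0-\boldsymbol\phi_\pm^\top X_{1:k}$; it then obtains $\sigma_\pm^2=\boldsymbol y_\pm^\top\widetilde\Gamma_\pm\boldsymbol y_\pm\ge 2\pi a|\boldsymbol y_\pm|^2\ge 2\pi a$ with exactly your test vector $\boldsymbol y_\pm=(1,-\boldsymbol\phi_\pm^\top)^\top$.
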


\begin{proof}
Under \(P_\pm\), the vector \((X_0,X_{1:k})\) is centered Gaussian with
covariance matrix \(\widetilde\Gamma_\pm\).
Since \(\Gamma_\pm\) is invertible, the Gaussian conditioning formula
(see, for instance, \cite[Chapter~5]{brockwell1991time}) gives
\[
P_\pm^{X_0\mid X_{1:k}=\boldsymbol x}
\ = \
\mathcal N\!\left(\,
\boldsymbol\gamma_\pm^\top
\Gamma_\pm^{-1}\boldsymbol x,
\, \, 1-
\boldsymbol\gamma_\pm^\top
\Gamma_\pm^{-1}\boldsymbol\gamma_\pm \, 
\right).
\]
Recalling the definitions
\[
\boldsymbol\phi_\pm
\ := \
\Gamma_\pm^{-1}\boldsymbol\gamma_\pm,
\qquad \qquad
\sigma_\pm^2
\ := \
1-
\boldsymbol\phi_\pm^\top\boldsymbol\gamma_\pm,
\]
this is precisely
\(P_\pm^{X_0\mid X_{1:k}=\boldsymbol x}
=
\mathcal N\!\left(
\boldsymbol\phi_\pm^\top\boldsymbol x,\,
\sigma_\pm^2
\right)\). 
Moreover, by straightforward algebra, setting
\(\boldsymbol y_\pm
\ := \
\begin{pmatrix}
1\\
-\boldsymbol\phi_\pm
\end{pmatrix}\),
we have
\[
\widetilde\Gamma_\pm\boldsymbol y_\pm
\ = \
\begin{pmatrix}
1-\boldsymbol\gamma_\pm^\top\boldsymbol\phi_\pm\\
\boldsymbol\gamma_\pm-\Gamma_\pm\boldsymbol\phi_\pm
\end{pmatrix}.
\]
Consequently, \(\boldsymbol y_\pm^\top\widetilde\Gamma_\pm \boldsymbol y_\pm
 = \sigma_\pm^2\). 
Since Lemma~\ref{lem:gaussian-covariance-matrix-bounds} and~(\ref{eq:opertor-quadratic}) give
\(\boldsymbol y_\pm^\top\widetilde\Gamma_\pm\boldsymbol y_\pm
\ge 2\pi a|\boldsymbol y_\pm|^2\), and
since \(|\boldsymbol y_\pm|\ge1\), it follows that
\(\sigma_\pm^2\ge2\pi a\).
\end{proof}

%%%%%
\bigskip
%%%%%%

Since \(a>0\), Lemma~\ref{lem:gaussian-covariance-matrix-bounds}
implies that \(\Gamma_+\) and \(\Gamma_-\) are positive definite, so both Gaussian laws
$P_+^{X_{1:k}}$ and $P_-^{X_{1:k}}$ admit strictly positive densities
with respect to Lebesgue measure. In particular we have 
\(P_+^{X_{1:k}}\ll P_-^{X_{1:k}}\) and the conditional KL divergence $\D\!\left(
P_+^{X_0\mid X_{1:k}}
\,\middle\|\,
P_-^{X_0\mid X_{1:k}}
\,\middle|\,
P_+^{X_{1:k}}
\right)
$ is well defined. Using Lemma~\ref{lem:gaussian-conditional-laws},
the definition of conditional KL divergence, and the usual formula for
the KL divergence between two univariate Gaussian distributions, we
further obtain
\[
\D\!\left(
P_+^{X_0\mid X_{1:k}}
\,\middle\|\,
P_-^{X_0\mid X_{1:k}}
\,\middle|\,
P_+^{X_{1:k}}
\right)
\ = \
\frac12
\left[
\log\left(
\frac{\sigma_-^2}{\sigma_+^2}
\right)
+
\frac{\sigma_+^2}{\sigma_-^2}
-1
+
\frac{
\mathbb E_+\left[
\left(
(\boldsymbol\phi_+-\boldsymbol\phi_-)^\top X_{1:k}
\right)^2
\right]
}{\sigma_-^2}
\right],
\]
where \(\mathbb E_+\) denotes expectation under \(P_+\).

The third lemma bounds the above conditional KL divergence using the
quantities we introduced.

\begin{lemma}
\label{lem:gaussian-conditional-kl-parameters}
We have
\[
\begin{aligned}
\D\!\left(
P_+^{X_0\mid X_{1:k}}
\,\middle\|\,
P_-^{X_0\mid X_{1:k}}
\,\middle|\,
P_+^{X_{1:k}}
\right)
\ &\leq \
\frac{1}{2\pi a}
\bigg(
\left|
\boldsymbol\phi_+
-
\boldsymbol\phi_-
\right|
\left|
\boldsymbol\gamma_+
\right|
+
\left|
\boldsymbol\gamma_+
-
\boldsymbol\gamma_-
\right|
\left|
\boldsymbol\phi_-
\right|
+
\pi b
\left|
\boldsymbol\phi_+
-
\boldsymbol\phi_-
\right|^2
\bigg).
\end{aligned}
\]
\end{lemma}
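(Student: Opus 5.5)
The plan is to start from the exact expression for the conditional KL divergence displayed just before the statement and bound its two parts separately: the variance part \(\frac12[\log(\sigma_-^2/\sigma_+^2)+\sigma_+^2/\sigma_-^2-1]\), and the mean part \(\mathbb E_+[((\boldsymbol\phi_+-\boldsymbol\phi_-)^\top X_{1:k})^2]/(2\sigma_-^2)\).

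For the variance part, set \(x=\sigma_+^2/\sigma_-^2\) and use \(-\log x+x-1\le (x-1)^2/x\) for \(x>0\). This gives the bound
\[
\frac12\,\frac{(\sigma_+^2-\sigma_-^2)^2}{\sigma_+^2\sigma_-^2}.
\]
A cruder bound \(|x-1|\) times a constant is also possible. I would rather aim for a linear bound \(|\sigma_+^2-\sigma_-^2|/\sigma_-^2\), because the target contains terms that are linear in differences. Using \(\log x\ge 1-1/x\), we get \(-\log x+x-1\le x-2+1/x=(x-1)^2/x\). Since \(x\) is bounded above and below (\(\sigma_\pm^2\in[2\pi a,1]\) by Lemma~\ref{lem:gaussian-conditional-laws}), this is comparable to \(|x-1|\) times a constant depending on \(a\). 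To match the stated constants exactly, I would instead use \(\log(1/x)\le 1/x-1\), so the bracket is at most \(1/x+x-2\). Alternatively I would keep \(\frac12(x-1-\log x)\le \frac12|x-1|\cdot\) something.

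Next, write the variance difference in terms of the predictors:
\[
\sigma_-^2-\sigma_+^2=\boldsymbol\phi_+^\top\boldsymbol\gamma_+-\boldsymbol\phi_-^\top\boldsymbol\gamma_-=(\boldsymbol\phi_+-\boldsymbol\phi_-)^\top\boldsymbol\gamma_++\boldsymbol\phi_-^\top(\boldsymbol\gamma_+-\boldsymbol\gamma_-).
\]
By Cauchy--Schwarz its absolute value is at most \(|\boldsymbol\phi_+-\boldsymbol\phi_-||\boldsymbol\gamma_+|+|\boldsymbol\gamma_+-\boldsymbol\gamma_-||\boldsymbol\phi_-|\). These are exactly the first two terms in the target.

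For the variance part, the bracket \(\log(\sigma_-^2/\sigma_+^2)+\sigma_+^2/\sigma_-^2-1\) is at most \((\sigma_-^2-\sigma_+^2)/\sigma_+^2+(\sigma_+^2-\sigma_-^2)/\sigma_-^2\), by \(\log y\le y-1\). This simplifies to \((\sigma_-^2-\sigma_+^2)^2/(\sigma_+^2\sigma_-^2)\). Then bound one factor \(|\sigma_-^2-\sigma_+^2|\) by \(\sigma_+^2+\sigma_-^2\le 2\). Using \(\sigma_\pm^2\ge2\pi a\), what remains is \(\frac12\cdot|\sigma_-^2-\sigma_+^2|\,(1/\sigma_+^2+1/\sigma_-^2)\)-type bounds of order \(|\sigma_-^2-\sigma_+^2|/(2\pi a)\).

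The main obstacle is matching the constant \(1/(2\pi a)\) exactly, and I would adjust the elementary inequalities to achieve it. More cleanly, \(\log(1/x)+x-1\le x-1+1/x-1\). With \(\sigma^2\le1\), one of the terms is handled by \(|\Delta|/\sigma_-^2\) or \(|\Delta|/\sigma_+^2\), each bounded by \(|\Delta|/(2\pi a)\).

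For the mean part, \(\mathbb E_+[((\boldsymbol\phi_+-\boldsymbol\phi_-)^\top X_{1:k})^2]=(\boldsymbol\phi_+-\boldsymbol\phi_-)^\top\Gamma_+(\boldsymbol\phi_+-\boldsymbol\phi_-)\). By Lemma~\ref{lem:gaussian-covariance-matrix-bounds} and \eqref{eq:opertor-quadratic} this is at most \(2\pi b|\boldsymbol\phi_+-\boldsymbol\phi_-|^2\). Dividing by \(2\sigma_-^2\ge 4\pi a\) gives \(\pi b|\boldsymbol\phi_+-\boldsymbol\phi_-|^2/(2\pi a)\), which is the third term.

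Summing the variance and mean contributions yields the claimed bound.
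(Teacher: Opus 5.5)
Your proposal is correct and follows the paper's proof. The variance bracket is bounded via $\log y\le y-1$ by $|\sigma_+^2-\sigma_-^2|\,(1/\sigma_+^2+1/\sigma_-^2)\le |\sigma_+^2-\sigma_-^2|/(\pi a)$. The difference $\sigma_-^2-\sigma_+^2=(\boldsymbol\phi_+-\boldsymbol\phi_-)^\top\boldsymbol\gamma_++\boldsymbol\phi_-^\top(\boldsymbol\gamma_+-\boldsymbol\gamma_-)$ is then handled by Cauchy--Schwarz, and the mean term by the eigenvalue bound $2\pi b$ on $\Gamma_+$ together with $\sigma_-^2\ge 2\pi a$. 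In a clean write-up, drop the exploratory detours, and bound the extra factor $|\sigma_-^2-\sigma_+^2|$ by $\sigma_+^2+\sigma_-^2$ (which cancels against $\sigma_+^2\sigma_-^2$), not by the constant $2$; the latter would only give the weaker constant $1/(2\pi a)^2$.
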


\begin{proof}%[Proof of Lemma~\ref{lem:gaussian-conditional-kl-parameters}]
Since \(X_{1:k}\) is centered with covariance matrix \(\Gamma_+\)
under \(P_+\), we have, by Lemma~\ref{lem:gaussian-covariance-matrix-bounds} and~(\ref{eq:opertor-quadratic}),
\[
\begin{aligned}
\mathbb E_+\left[
\left(
(\boldsymbol\phi_+-\boldsymbol\phi_-)^\top X_{1:k}
\right)^2
\right]
&\ = \
(\boldsymbol\phi_+-\boldsymbol\phi_-)^\top
\Gamma_+
(\boldsymbol\phi_+-\boldsymbol\phi_-)
\\
&\ \leq \
2\pi b
\left|
\boldsymbol\phi_+-\boldsymbol\phi_-
\right|^2.
\end{aligned}
\]
Moreover, using \(\log x\leq x-1\), we obtain
\[
\begin{aligned}
\log\left(
\frac{\sigma_-^2}{\sigma_+^2}
\right)
+
\frac{\sigma_+^2}{\sigma_-^2}
-1
&\ \leq \
\frac{\sigma_-^2-\sigma_+^2}{\sigma_+^2}
+
\frac{\sigma_+^2-\sigma_-^2}{\sigma_-^2}
\\
&\ \leq \
\left|
\sigma_+^2-\sigma_-^2
\right|
\left(
\frac{1}{\sigma_+^2}
+
\frac{1}{\sigma_-^2}
\right).
\end{aligned}
\]
Lemma~\ref{lem:gaussian-conditional-laws} gives \(\sigma_\pm^2\geq2\pi a\). Combining the preceding bounds yields
\[
\begin{aligned}
\D\!\left(
P_+^{X_0\mid X_{1:k}}
\,\middle\|\,
P_-^{X_0\mid X_{1:k}}
\,\middle|\,
P_+^{X_{1:k}}
\right)
\ \leq \
\frac{1}{4\pi a}
\left(
2\left|
\sigma_+^2-\sigma_-^2
\right|
+
2\pi b
\left|
\boldsymbol\phi_+-\boldsymbol\phi_-
\right|^2
\right).
\end{aligned}
\]
Finally, the definitions \(\sigma_\pm^2 := 1-\boldsymbol\phi_\pm^\top\boldsymbol\gamma_\pm\) give
\[
\begin{aligned}
\left|
\sigma_+^2-\sigma_-^2
\right|
&\ = \
\left|
-
(\boldsymbol\phi_+-\boldsymbol\phi_-)^\top
\boldsymbol\gamma_+
-
\boldsymbol\phi_-^\top
(\boldsymbol\gamma_+-\boldsymbol\gamma_-)
\right|
\\
&\ \leq \
\left|
\boldsymbol\phi_+-\boldsymbol\phi_-
\right|
\left|
\boldsymbol\gamma_+
\right|
+
\left|
\boldsymbol\gamma_+-\boldsymbol\gamma_-
\right|
\left|
\boldsymbol\phi_-
\right|.
\end{aligned}
\]
Substituting this estimate into the preceding bound proves Lemma~\ref{lem:gaussian-conditional-kl-parameters}.
\end{proof}

%%%%%
\bigskip
%%%%%%

The last lemma controls the parameters appearing in the preceding bound
in terms of the spectral densities.

\begin{lemma}
\label{lem:gaussian-conditional-parameter-bounds}
We have 
\begin{align*}
&\left|
\boldsymbol\gamma_\pm
\right|
\ \le \
\sqrt{\pi}\,
\left\|
f_\pm-\frac{1}{2\pi}
\right\|,
\qquad 
\left|
\boldsymbol\gamma_+
-
\boldsymbol\gamma_-
\right|
\ \le \
\sqrt{\pi}\,\left\|
f_+-f_-
\right\|
,
\qquad
\left|
\boldsymbol\phi_\pm
\right|
\ \le \
\frac{1}{2\sqrt{\pi}a}
\left\|
f_\pm-\frac{1}{2\pi}
\right\|,\\
& \qquad \qquad \quad  \text{ and } \quad
\left|
\boldsymbol\phi_+
-
\boldsymbol\phi_-
\right|
\ \le \frac1{2\sqrt{\pi}a}
\left(\left\|
f_+-f_-
\right\|
+\frac{d_\infty}{a}\,
\left\|
f_+-\frac{1}{2\pi}
\right\|
\right).
\end{align*}
\end{lemma}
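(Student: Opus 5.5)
The plan is to derive all four bounds from two ingredients: Parseval's identity in \(L^2([-\pi,\pi])\), and the spectral bounds on Toeplitz matrices from Lemma~\ref{lem:gaussian-covariance-matrix-bounds} combined with~\eqref{eq:opertor-quadratic}. No Gaussian structure beyond these matrix facts should be needed.

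\emph{Bounds on \(\boldsymbol\gamma_\pm\) and \(\boldsymbol\gamma_+-\boldsymbol\gamma_-\).} Since \(\gamma_\pm(0)=1\), the function \(f_\pm-\frac1{2\pi}\) has Fourier representation \(\frac1{2\pi}\sum_{h\neq0}\gamma_\pm(h)e^{-ih\lambda}\). By orthogonality of \((e^{ih\cdot})_h\), each of squared norm \(2\pi\), Parseval gives
\[
\Bigl\|f_\pm-\tfrac1{2\pi}\Bigr\|^2=\frac1{2\pi}\sum_{h\neq0}\gamma_\pm(h)^2=\frac1\pi\sum_{h\geq1}\gamma_\pm(h)^2 ,
\]
where the last equality uses evenness of \(\gamma_\pm\). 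Truncating the sum to \(1\le h\le k\) yields \(|\boldsymbol\gamma_\pm|^2\le\pi\|f_\pm-\frac1{2\pi}\|^2\). The same computation applied to \(f_+-f_-\) gives \(|\boldsymbol\gamma_+-\boldsymbol\gamma_-|\le\sqrt\pi\|f_+-f_-\|\); here the lag-\(0\) coefficient of \(f_+-f_-\) is \(\gamma_+(0)-\gamma_-(0)=0\).

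\emph{Bound on \(\boldsymbol\phi_\pm\).} By Lemma~\ref{lem:gaussian-covariance-matrix-bounds}, all eigenvalues of \(\Gamma_\pm\) are at least \(2\pi a\), so \eqref{eq:opertor-quadratic} gives \(\|\Gamma_\pm^{-1}\|_{\mathrm{op}}\le(2\pi a)^{-1}\). Hence
\[
|\boldsymbol\phi_\pm|\le(2\pi a)^{-1}|\boldsymbol\gamma_\pm|\le\frac{\sqrt\pi}{2\pi a}\Bigl\|f_\pm-\tfrac1{2\pi}\Bigr\|=\frac{1}{2\sqrt\pi a}\Bigl\|f_\pm-\tfrac1{2\pi}\Bigr\| .
\]

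\emph{Bound on \(\boldsymbol\phi_+-\boldsymbol\phi_-\).} This is the only step requiring some care, and the key is to split the difference so that \(\boldsymbol\gamma_+\) (not \(\boldsymbol\gamma_-\)) appears in the second term, matching the stated form. I will write
\[
\boldsymbol\phi_+-\boldsymbol\phi_-=\Gamma_-^{-1}(\boldsymbol\gamma_+-\boldsymbol\gamma_-)+(\Gamma_+^{-1}-\Gamma_-^{-1})\boldsymbol\gamma_+ ,
\]
and use the resolvent identity \(\Gamma_+^{-1}-\Gamma_-^{-1}=\Gamma_-^{-1}(\Gamma_--\Gamma_+)\Gamma_+^{-1}\). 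Lemma~\ref{lem:gaussian-covariance-matrix-bounds} gives \(\|\Gamma_+-\Gamma_-\|_{\mathrm{op}}\le2\pi d_\infty\). The first term is then bounded by \((2\pi a)^{-1}\sqrt\pi\|f_+-f_-\|\). The second is bounded by \((2\pi a)^{-2}\,2\pi d_\infty\,\sqrt\pi\|f_+-\frac1{2\pi}\|=\frac{1}{2\sqrt\pi a}\cdot\frac{d_\infty}{a}\|f_+-\frac1{2\pi}\|\). Summing the two terms gives exactly the claimed inequality.
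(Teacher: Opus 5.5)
Your proof is correct and follows the paper's argument. Parseval gives the first two bounds, and $\|\Gamma_\pm^{-1}\|_{\mathrm{op}}\le(2\pi a)^{-1}$ gives the third. For the fourth, your resolvent-identity split is algebraically identical to the paper's decomposition $\boldsymbol\phi_+-\boldsymbol\phi_-=\Gamma_-^{-1}\bigl[(\boldsymbol\gamma_+-\boldsymbol\gamma_-)+(\Gamma_--\Gamma_+)\boldsymbol\phi_+\bigr]$, since $\Gamma_+^{-1}\boldsymbol\gamma_+=\boldsymbol\phi_+$, and it yields the same constants.
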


\begin{proof}%[Proof of Lemma~\ref{lem:gaussian-conditional-parameter-bounds}]
Since \(\gamma_\pm(0)=1\) and
\(\gamma_\pm(-h)=\gamma_\pm(h)\), Parseval's identity gives
\[
\sum_{h=1}^{\infty}
|\gamma_\pm(h)|^2
\ = \
\pi
\left\|
f_\pm-\frac{1}{2\pi}
\right\|^2,
\qquad
\sum_{h=1}^{\infty}
|\gamma_+(h)-\gamma_-(h)|^2
\ = \
\pi
\left\|
f_+-f_-
\right\|^2\;.
\]
The first two bounds of the lemma follow. For the third one, by
definition of
$\boldsymbol\phi_\pm$,  Lemma~\ref{lem:gaussian-covariance-matrix-bounds} and~(\ref{eq:opertor-quadratic}), we obtain
$
\left|
\boldsymbol\phi_\pm
\right|
\ \leq \
\|\Gamma_\pm^{-1}\|_{\mathrm{op}}
\left|
\boldsymbol\gamma_\pm
\right|
\ \leq \
\frac{1}{2\sqrt{\pi}a}
\left\|
f_\pm-\frac{1}{2\pi}
\right\|$.

It remains to control
\(\left|\boldsymbol\phi_+-\boldsymbol\phi_-\right|\).
We have
\[
\begin{aligned}
\boldsymbol\phi_+-\boldsymbol\phi_-
\ = \
\Gamma_-^{-1}
\left[
\Gamma_-\boldsymbol\phi_+
-
\Gamma_-\boldsymbol\phi_-
\right]
&\ = \
\Gamma_-^{-1}
\left[
(\Gamma_--\Gamma_+)\boldsymbol\phi_+
+
\Gamma_+\boldsymbol\phi_+
-
\Gamma_-\boldsymbol\phi_-
\right]
\\
&\ = \
\Gamma_-^{-1}
\left[
(\Gamma_--\Gamma_+)\boldsymbol\phi_+
+
(\boldsymbol\gamma_+-\boldsymbol\gamma_-)
\right],
\end{aligned}
\]
where the last equality follows from
\(\Gamma_\pm\boldsymbol\phi_\pm=\boldsymbol\gamma_\pm\).
Therefore,
\begin{equation}\label{proof-small-lem-parameter-bounds:eq-phi}
\begin{aligned}
\left|
\boldsymbol\phi_+-\boldsymbol\phi_-
\right|
&\ \leq \
\|\Gamma_-^{-1}\|_{\mathrm{op}}
\Big{(}
\left|
\boldsymbol\gamma_+-\boldsymbol\gamma_-
\right|
+
\|\Gamma_--\Gamma_+\|_{\mathrm{op}}
\left|
\boldsymbol\phi_+
\right|
\Big{)}
\\
&\ \leq \frac1{2\pi a}
\Big{(}
\left|
\boldsymbol\gamma_+-\boldsymbol\gamma_-
\right|
+2\pi\,d_\infty\,
\left|
\boldsymbol\phi_+
\right|
\Big{)}
  \;,
\end{aligned}
\end{equation}
where we again used Lemma~\ref{lem:gaussian-covariance-matrix-bounds}
and~(\ref{eq:opertor-quadratic}).
Plugging the second and third bounds of Lemma~\ref{lem:gaussian-conditional-parameter-bounds} (already proved
above) into~\eqref{proof-small-lem-parameter-bounds:eq-phi} gives the
last bound.
\end{proof}

%%%%%%

\subsection{Main proof}

We now combine the last two auxiliary lemmas.
For convenience, introduce the notation
\begin{align*}
 r:= 
\max_{\pm\in\{+,-\}}
\left\|
f_\pm-\frac{1}{2\pi}
\right\|
\quad  \text{ and } \quad 
d_2 := 
\|f_+-f_-\|\;.
\end{align*}

\begin{proof}[Proof of Lemma~\ref{lem:lemma-conditional-KL-gaussian-general-sp-densities}]
Throughout the proof, \(C\), \(C_a\), \(C_b\) and \(C_{a,b}\) denote positive
constants that are universal, depend only on \(a\), depend
only on \(b\), and depend only on \(a\) and \(b\), respectively; their values may change from line to
line.
With this convention, the bounds in Lemma~\ref{lem:gaussian-conditional-parameter-bounds} can be written as
\[
\left|\boldsymbol\gamma_+\right|
 \le 
C\,r,
\qquad
\left|\boldsymbol\gamma_+-\boldsymbol\gamma_-\right| \le 
C\,d_2,
\qquad
\left|\boldsymbol\phi_-\right|
 \le 
C_a r,
\qquad 
\left|\boldsymbol\phi_+-\boldsymbol\phi_-\right|
 \le 
C_a(d_2+r d_\infty).
\]
Substituting these bounds into
Lemma~\ref{lem:gaussian-conditional-kl-parameters} further yields that 
\begin{align*}
\D\!\left(
P_+^{X_0\mid X_{1:k}}
\,\middle\|\,
P_-^{X_0\mid X_{1:k}}
\,\middle|\,
P_+^{X_{1:k}}
\right)
&\le C_a\left[
(d_2+r d_\infty)r
+d_2 r
+ b (d_2+r d_\infty)^2
\right].  
\end{align*}
Using
\((u+v)^2\leq 2u^2+2v^2\), this yields
\begin{equation}
    \label{eq:proof:kl-conditional-gaussian:bound-inter}
\D\!\left(
P_+^{X_0\mid X_{1:k}}
\,\middle\|\,
P_-^{X_0\mid X_{1:k}}
\,\middle|\,
P_+^{X_{1:k}}
\right)
\ \le \
C_{a,b}
\left(
r^2d_\infty
+
r d_2
+
d_2^2
+
r^2d_\infty^2
\right).
\end{equation}
Since \([-\pi,\pi]\) has length \(2\pi\), and by the triangle inequality,
\[
d_2 \le \sqrt{2\pi}\,d_\infty , \qquad  \quad 
d_2 
 \le 
\left\|
f_+-\frac{1}{2\pi}
\right\|
+
\left\|
f_--\frac{1}{2\pi}
\right\|
\le 
2r.
\]
Moreover, since \(0\leq f_\pm\leq b\), we have that \(r\) and \(d_\infty\) are bounded by constants depending only on \(b\). 
Combining these bounds, we obtain
\begin{equation}
\label{intermediates-estimates:proof:cond-gauss}
\begin{aligned}
r^2d_\infty
&\ \le \
C_b\,r d_\infty,
&\qquad
rd_2
&\ \le \
C\,r d_\infty,
\\
r^2d_\infty^2
&\ \le \
C_b\,r d_\infty,
&\qquad
d_2^2
&\ \le \
C\,r d_\infty.
\end{aligned}
\end{equation}

Plugging~\eqref{intermediates-estimates:proof:cond-gauss} into~\eqref{eq:proof:kl-conditional-gaussian:bound-inter} yields 
\[
\D\!\left(
P_+^{X_0\mid X_{1:k}}
\,\middle\|\,
P_-^{X_0\mid X_{1:k}}
\,\middle|\,
P_+^{X_{1:k}}
\right)
\ \le \
C_{a,b}\,r d_\infty.
\]
By the definitions of \(r\) and \(d_\infty\), this is precisely the inequality of
Lemma~\ref{lem:lemma-conditional-KL-gaussian-general-sp-densities}.
\end{proof}

%%%%%%

\subsection{Technical details}
\label{technical-lemma-eigenvalues}

\begin{proof}[Proof of Lemma~\ref{lem:gaussian-covariance-matrix-bounds}]
For every \(\boldsymbol x=(x_1,\ldots,x_k)^\top\in\mathbb R^k\),
\[
\begin{aligned}
\boldsymbol x^\top\Gamma_\pm\boldsymbol x
\ = \ 
\sum_{r,s=1}^k
x_rx_s\gamma_\pm(r-s)
\ &= \
\int_{-\pi}^{\pi}
\left(
\sum_{r,s=1}^k
x_rx_s e^{i(r-s)\lambda}
\right)
f_\pm(\lambda)\,d\lambda
\\
\ &= \
\int_{-\pi}^{\pi}
\left|
\sum_{r=1}^k
x_r e^{ir\lambda}
\right|^2
f_\pm(\lambda)\,d\lambda.
\end{aligned}
\]
By Parseval's identity, \(\int_{-\pi}^{\pi}
\left|
\sum_{r=1}^k
x_r e^{ir\lambda}
\right|^2
d\lambda
=
2\pi|\boldsymbol x|^2\).
Since \(a\leq f_\pm\leq b\), we obtain
\[
2\pi a|\boldsymbol x|^2
\ \leq \ 
\boldsymbol x^\top\Gamma_\pm\boldsymbol x
\ \leq \
2\pi b|\boldsymbol x|^2.
\]
Hence all eigenvalues of \(\Gamma_\pm\)  belong to \([2\pi a,2\pi b]\).
For every \(\boldsymbol y=(y_0,\ldots,y_k)^\top\in\mathbb R^{k+1}\), the same
argument gives
\[
2\pi a|\boldsymbol y|^2
\ \le \
\boldsymbol y^\top\widetilde\Gamma_\pm\boldsymbol y
\ \le \
2\pi b|\boldsymbol y|^2,
\]
and therefore all eigenvalues of \(\widetilde\Gamma_\pm\) belong to \([2\pi a,2\pi b]\).
Finally, for every \(\boldsymbol x\in\mathbb R^k\),
\[
\begin{aligned}
\left|
\boldsymbol x^\top(\Gamma_+-\Gamma_-)\boldsymbol x
\right|
&\ = \
\left|
\int_{-\pi}^{\pi}
\left|
\sum_{r=1}^k x_r e^{ir\lambda}
\right|^2
\bigl(f_+(\lambda)-f_-(\lambda)\bigr)\,d\lambda
\right|
\\
&\ \le \
d_\infty
\int_{-\pi}^{\pi}
\left|
\sum_{r=1}^k x_r e^{ir\lambda}
\right|^2\,d\lambda
\ = \
2\pi d_\infty|\boldsymbol x|^2.
\end{aligned}
\]
Hence all eigenvalues of \(\Gamma_+-\Gamma_-\) belong to
\([-2\pi d_\infty,2\pi d_\infty]\).
\end{proof}

%%% Local Variables:
%%% mode: latex
%%% TeX-master: "main.tex"
%%% ispell-local-dictionary: "american"
%%% End:

%%%%%%

\section{Proof of Proposition~\ref{thm:cov-bound-signs}}
\label{proof-kolmogorov}

We first recall the following consequence of~\cite[Theorem~1]{KolmogorovRozanov1960}.

\begin{lemma}[Kolmogorov--Rozanov]
\label{lem:kolmogorov-rozanov}
Let \((U_1,U_2,V_1,V_2)\) be a centered Gaussian vector with positive marginal variances.
Define
\[
\varrho\bigl((U_1,U_2),(V_1,V_2)\bigr)
\ := \
\sup_{\substack{
Z_1\in\operatorname{span}(U_1,U_2),\;
Z_2\in\operatorname{span}(V_1,V_2)
\\
\mathbb E[Z_1^2]=\mathbb E[Z_2^2]=1
}}
\left|
\mathbb E[Z_1Z_2]
\right|.
\]
Then, for all measurable functions
\(q_1,q_2:\mathbb R^2\to\mathbb R\) such that
\(q_1(U_1,U_2),q_2(V_1,V_2)\in L^2\), we have
\[
\left|
\operatorname{Cov}\left(
q_1(U_1,U_2),
q_2(V_1,V_2)
\right)
\right|
\ \leq \
\varrho\bigl((U_1,U_2),(V_1,V_2)\bigr)
\sqrt{
\operatorname{Var}\left(q_1(U_1,U_2)\right)
\operatorname{Var}\left(q_2(V_1,V_2)\right)
}.
\]
\end{lemma}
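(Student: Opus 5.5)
The plan is to prove the statement directly rather than cite \cite{KolmogorovRozanov1960}, through canonical correlations and a Hermite (Wiener chaos) expansion.

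\emph{Reduction to canonical coordinates.} Let \(\mathcal U:=\operatorname{span}(U_1,U_2)\) and \(\mathcal V:=\operatorname{span}(V_1,V_2)\), viewed as subspaces of \(L^2\). Their dimensions \(d_U,d_V\) lie in \(\{1,2\}\), since the variances are positive. Choose orthonormal bases (in \(L^2\)) \((A_i)_{i\le d_U}\) of \(\mathcal U\) and \((B_j)_{j\le d_V}\) of \(\mathcal V\). Apply a singular value decomposition to the cross-covariance matrix \(\bigl(\mathbb E[A_iB_j]\bigr)_{i,j}\) and rotate both bases accordingly. After this rotation,
\[
\mathbb E[A_iB_j]=r_i\mathbf 1_{\{i=j\}},\qquad r_1\ge r_2\ge0 ,
\]
with the convention that \(r_i=0\) when an index is missing. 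A standard variational argument on the SVD gives \(r_1=\varrho\bigl((U_1,U_2),(V_1,V_2)\bigr)\).

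Because everything is jointly Gaussian, the blocks \((A_1,B_1)\) and \((A_2,B_2)\) are independent. Each block is a standard bivariate Gaussian pair with correlation \(r_i\). The vector \((U_1,U_2)\) is a linear function of \(A\), so \(q_1(U_1,U_2)=\tilde q_1(A)\) almost surely for some measurable \(\tilde q_1\). Likewise \(q_2(V_1,V_2)=\tilde q_2(B)\) almost surely. The degenerate case \(d_U=1\) or \(d_V=1\) is absorbed here: the corresponding function simply depends on one coordinate.

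\emph{Hermite expansion.} Let \((h_m)_{m\ge0}\) be the normalized Hermite polynomials. The products \(h_m(A_1)h_n(A_2)\) form an orthonormal basis of \(L^2(\sigma(A))\), and similarly for \(B\). This completeness is the point that needs a justification by citation: polynomials are dense in \(L^2\) of a Gaussian measure. Expand
\[
\tilde q_1(A)=\sum_{m,n} c_{mn}\,h_m(A_1)h_n(A_2),\qquad \tilde q_2(B)=\sum_{m,n} d_{mn}\,h_m(B_1)h_n(B_2).
\]
The key identity comes from Mehler's formula and independence of the two blocks:
\[
\mathbb E\bigl[h_m(A_1)h_n(A_2)\,h_{m'}(B_1)h_{n'}(B_2)\bigr]=\mathbf 1_{\{m=m',\,n=n'\}}\,r_1^{m}r_2^{n}.
\]
Subtracting the means removes the \((0,0)\) term, and \(L^2\) convergence justifies exchanging the sum with the expectation. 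This gives
\[
\operatorname{Cov}\bigl(q_1,q_2\bigr)=\sum_{(m,n)\ne(0,0)} c_{mn}d_{mn}\,r_1^{m}r_2^{n}.
\]

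\emph{Conclusion.} For \((m,n)\ne(0,0)\) we have \(0\le r_1^{m}r_2^{n}\le r_1\), because \(r_2\le r_1\le1\) and \(m+n\ge1\). Cauchy--Schwarz then gives
\[
|\operatorname{Cov}(q_1,q_2)|\le r_1\Bigl(\sum_{(m,n)\ne(0,0)}c_{mn}^2\Bigr)^{1/2}\Bigl(\sum_{(m,n)\ne(0,0)}d_{mn}^2\Bigr)^{1/2}.
\]
By Parseval, the two sums are \(\operatorname{Var}(q_1)\) and \(\operatorname{Var}(q_2)\), which is the claimed bound with \(r_1=\varrho\).

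\emph{Main obstacle.} I expect the delicate part to be measure-theoretic rather than computational. Two things must be checked carefully: the a.s. representation \(q_1(U_1,U_2)=\tilde q_1(A)\) when \((U_1,U_2)\) is degenerate, and the identification \(r_1=\varrho\), including the case where the two spans overlap so that \(r_1=1\) and the bound is trivial. Mehler's identity itself is routine to check via generating functions.
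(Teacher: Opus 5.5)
Your proof is correct, but it is not the paper's route. The paper gives no proof of this lemma: it simply invokes \cite{KolmogorovRozanov1960} (Theorem~1). That theorem states, for general jointly Gaussian families, that the maximal correlation between the generated $\sigma$-algebras coincides with the maximal correlation between their (closed) linear spans.

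You instead prove the finite-dimensional case directly, in three steps:
\begin{enumerate}
\item The SVD of the cross-covariance of orthonormal bases of $\operatorname{span}(U_1,U_2)$ and $\operatorname{span}(V_1,V_2)$ puts the pair in canonical form, with independent bivariate blocks of correlations $r_1\ge r_2\ge 0$.
\item You identify $r_1=\varrho$, and $\varrho\le 1$ follows from Cauchy--Schwarz (worth stating, since you use it in the final step).
\item Mehler's formula diagonalizes the covariance on the tensor Hermite basis, with weights $r_1^m r_2^n\le r_1$ for $(m,n)\neq(0,0)$.
\end{enumerate}

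The citation buys brevity and generality (infinite-dimensional spans). Your argument buys self-containedness and makes explicit that $\varrho$ is the top canonical correlation. It also recovers Gebelein's inequality (Lemma~\ref{lem:gebelein}) as the case $d_U=d_V=1$, so both covariance tools cited in the paper come out of one computation.

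It even gives slightly more than stated. If one of the two functionals has no first-chaos component, only terms with $m+n\ge 2$ survive, and their weights are at most $r_1^2$. So the factor $\varrho$ improves to $\varrho^2$. This applies, for instance, to $\operatorname{sgn}(x)\operatorname{sgn}(y)$, which is invariant under $(x,y)\mapsto(-x,-y)$ and therefore has only even-degree Hermite terms. The paper does not need this, since summability of $|\rho|$ suffices there.

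The two points you flag as delicate are in fact immediate:
\begin{itemize}
\item Since $(U_1,U_2)=LA$ holds in $L^2$, it holds almost surely, so $q_1(U_1,U_2)=(q_1\circ L)(A)$ a.s.
\item The case $r_1=1$ needs no separate treatment, because Mehler's identity $\mathbb E[h_m(X)h_n(Y)]=\mathbf 1_{\{m=n\}}r^n$ remains valid for a degenerate pair with correlation $r=1$.
\end{itemize}
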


%%%%%%%%%
\bigskip 
%%%%%%%%

\begin{proof}[Proof of Proposition~\ref{thm:cov-bound-signs}]
We first reduce the proof to the case where \(X\) is centered and has unit variance.
Let \(\mu:=\mathbb E[X_0]\),
\(\sigma^2:=\operatorname{Var}(X_0)\). 
Since $\sigma^2>0$, we may define
\[
\widetilde X_t
\ := \
\frac{X_t-\mu}{\sigma},
\qquad
\widetilde g_i(x,y)
\ := \
g_i(\mu+\sigma x,\mu+\sigma y),
\qquad i\in\{1,2\}.
\]
The process \(\widetilde X\) is centered, stationary, Gaussian, has unit variance, and has the same autocorrelation function \(\rho\) as \(X\).
Moreover,
\(g_i(X_s,X_t) =  \widetilde g_i(\widetilde X_s,\widetilde X_t)\) for \(i\in\{1,2\}\) and \(s,t\in \mathbb{Z}\),
and the functions \(\widetilde g_i\) still take values in \([-1,1]\).
We therefore assume without loss of generality that \(X\) is centered and has unit variance.

Lemma~\ref{lem:kolmogorov-rozanov}, applied to the Gaussian vector \((X_s,X_t,X_u,X_v)\), gives
\begin{equation}
    \label{eq:applciation-kolmogorov-rozanov}
\left|
\operatorname{Cov}\left(
g_1(X_s,X_t),
g_2(X_u,X_v)
\right)
\right|
\ \leq \
\varrho\left(
(X_s,X_t),(X_u,X_v)
\right),
\end{equation}
because the two variances appearing in the upper bound of Lemma~\ref{lem:kolmogorov-rozanov} are at most \(1\).

It remains to bound the maximal correlation \(\varrho\left((X_s,X_t),(X_u,X_v)\right)\).
Consider variables
\[
Z_1=aX_s+bX_t,
\qquad
Z_2=cX_u+dX_v,
\]
such that \(\mathbb E[Z_1^2]=\mathbb E[Z_2^2]=1\).
If \(s=t\), we choose the representation of \(Z_1\) with \(b=0\);
similarly, if \(u=v\), we choose the representation of \(Z_2\) with
\(d=0\).

We first show that
\begin{equation}\label{eq:a2+b2:epsilon}
a^2+b^2\leq\frac{1}{\epsilon},
\qquad
c^2+d^2\leq\frac{1}{\epsilon}.
\end{equation}
For the first inequality, we distinguish two cases.
\begin{itemize}
\item If \(s\neq t\), then
\[
1
\ = \
a^2+b^2+2ab\,\rho(s-t)
\ \geq \
(a^2+b^2)\bigl(1-|\rho(s-t)|\bigr)
\ \geq \
\epsilon(a^2+b^2).
\]
Since \(s-t\neq0\), the last inequality follows from the assumption
\(|\rho(h)|\leq1-\epsilon\) for every
\(h\in\mathbb Z\setminus\{0\}\).

\item If \(s=t\), our choice \(b=0\), together with
\(\mathbb E[Z_1^2]=1\) and \(\mathbb E[X_s^2]=1\), gives
\(a^2+b^2=a^2=1\).
Since \(|\rho(1)|\leq1-\epsilon\) by assumption, we have \(\epsilon\leq1\), and hence \(a^2+b^2\leq1/\epsilon\).
\end{itemize}
The same argument gives \(c^2+d^2\leq1/\epsilon\). Therefore, \eqref{eq:a2+b2:epsilon} is proved.

Recalling that \(Z_1=aX_s+bX_t\) and \(Z_2=cX_u+dX_v\), and setting
\[
M_1
\ := \
|\rho(s-u)|\vee|\rho(t-v)|,
\qquad
M_2
\ := \
|\rho(s-v)|\vee|\rho(t-u)|,
\] 
we obtain
\begin{equation*}
\begin{aligned}
\left|\mathbb E[Z_1Z_2]\right|
&\ = \
\left|ac\,\rho(s-u)
+
bd\,\rho(t-v)
+
ad\,\rho(s-v)
+
bc\,\rho(t-u)\right|
\\
&\ \leq \
(|ac|+|bd|)M_1
+
(|ad|+|bc|)M_2
\\
&\ \leq \
\frac12
(a^2+b^2+c^2+d^2)(M_1+M_2)
\ \leq \
\frac{M_1+M_2}{\epsilon},
\end{aligned}
\end{equation*}
where the last inequality follows from \eqref{eq:a2+b2:epsilon}. 
Since this bound holds for arbitrary 
\(Z_1\in\operatorname{span}(X_s,X_t)\) and
\(Z_2\in\operatorname{span}(X_u,X_v)\) satisfying
\(\mathbb E[Z_1^2]=\mathbb E[Z_2^2]=1\), we obtain
\[
\varrho\left(
(X_s,X_t),(X_u,X_v)
\right)
\ \leq \
\frac{
|\rho(s-u)|\vee|\rho(t-v)|
+
|\rho(s-v)|\vee|\rho(t-u)|
}{\epsilon}.
\]
Plugging this bound into~\eqref{eq:applciation-kolmogorov-rozanov} proves Proposition~\ref{thm:cov-bound-signs}.
\end{proof}

%%% Local Variables:
%%% mode: latex
%%% TeX-master: "main.tex"
%%% ispell-local-dictionary: "american"
%%% End:

%%%%%%%%%%%

\section{Proofs of the results in Section~\ref{sec:related-problems}}
\label{appendix:proof_related_pbms}

\subsection{Lag-\(h\) autocovariance estimation}
\label{appendix:lag-h-autocovariance}

In this appendix, we prove
Propositions~\ref{prop:lag-h-autocovariance-upper-bound}
and~\ref{prop:lag-h-autocovariance-lower-bound}.

%%%%%

\begin{proof}[Proof of Proposition~\ref{prop:lag-h-autocovariance-upper-bound}]
Fix \(N\geq2\), \(\alpha>0\), \(h\in[N-1]\), and
\(f\in\mathcal F(C_0,C_1,s)\).
We use elements from the proof of
Proposition~\ref{prop:fred-estimator}, given in
Section~\ref{sec:proof-prop-fred-est}.

Recall that \(\widetilde\gamma(h)\) is based on the unbiased empirical
lag-\(h\) autocovariance of
\(Z^{(2)}:=(Z_{i,2})_{i\in[N]}\), which we denote by
\begin{equation}
\label{eq:check_gamma_unbiased}
\widetilde\gamma^{(2)}(h)
\ := \
\frac1{N-h}
\sum_{j=1}^{N-h}Z_{j,2}Z_{j+h,2}.
\end{equation}
Since
\(\widetilde\gamma^{(2)}(h)
=(N/(N-h))\widehat\gamma^{(2)}(h)\),
where \(\widehat\gamma^{(2)}(h)\) is defined
in~\eqref{eq:check_gamma}, and since
\(\widetilde\gamma^{(2)}(h)\) is an unbiased estimator of
\((2/\pi)\arcsin(\rho_f(h))\), the variance
bound~\eqref{eq:variance-check-gamma} gives
\begin{equation}
\label{eq:MSE-unbiased-gamma}
\mathbb E_{f,K_{1:N}^{\mathrm{Lap},\alpha}}
\left[
\left(
\widetilde\gamma^{(2)}(h)
-
\frac2\pi\arcsin\!\left(\rho_f(h)\right)
\right)^2
\right]
\ \leq \
\frac{C_{C_1,s}}{N-h}
\left(
1+\frac1{\alpha^2}+\frac1{\alpha^4}
\right).
\end{equation}
Here, we used the fact that \(\varepsilon_{C_1,s}\) appearing
in~\eqref{eq:variance-check-gamma} depends only on \((C_1,s)\), and
can therefore be absorbed into \(C_{C_1,s}\).

Using
\(\widetilde\gamma(h)
=\hat\gamma(0)\sin(\pi\widetilde\gamma^{(2)}(h)/2)\), and
\(\gamma_f(h)=\gamma_f(0)\rho_f(h)\), we write
\[
\widetilde\gamma(h)-\gamma_f(h)
\ = \
\sin\!\left(\frac\pi2\widetilde\gamma^{(2)}(h)\right)
\bigl(\hat\gamma(0)-\gamma_f(0)\bigr)
+
\gamma_f(0)
\left(
\sin\!\left(\frac\pi2\widetilde\gamma^{(2)}(h)\right)
-\rho_f(h)
\right).
\]
By definition of \(\mathcal F(C_0,C_1,s)\), we have
\(\gamma_f(0)\leq C_0\). Moreover, \(|\sin(x)|\leq1\), and
\(\rho_f(h)\in[-1,1]\), and the map
\(x\mapsto\sin(\pi x/2)\) is \(\pi/2\)-Lipschitz. Combining these
facts, we obtain
\[
|\widetilde\gamma(h)-\gamma_f(h)|
\ \leq\
|\hat\gamma(0)-\gamma_f(0)|
+
C_0
\left(
2\wedge
\frac\pi2
\left|
\widetilde\gamma^{(2)}(h)
-
\frac2\pi\arcsin\!\left(\rho_f(h)\right)
\right|
\right).
\]

Squaring the preceding inequality, using
\((a+b)^2\leq2a^2+2b^2\), and then taking expectations, we apply
Proposition~\ref{lem:variance-estimator}
and~\eqref{eq:MSE-unbiased-gamma}.
Using also
\(1+\alpha^{-2}+\alpha^{-4}
\leq3/(\alpha^4\wedge1)\),
we obtain, for some constant \(C_{C_0,C_1,s}\), depending only on
\((C_0,C_1,s)\),
\[
\mathbb E_{f,K_{1:N}^{\mathrm{Lap},\alpha}}
\left[
|\widetilde\gamma(h)-\gamma_f(h)|^2
\right]
\ \leq\
C_{C_0,C_1,s}
\left[
\left(
1\wedge\frac{1}{(\alpha^2\wedge1)N}
\right)
+
\left(
1\wedge\frac{1}{(\alpha^4\wedge1)(N-h)}
\right)
\right].
\]
Since the second term in brackets is always at least as large as the
first, this concludes the proof.
\end{proof}

%%%%%%

\medskip 

%%%%%%%%%%%%%%%

\begin{proof}[Proof of Proposition~\ref{prop:lag-h-autocovariance-lower-bound}]
We derive Proposition~\ref{prop:lag-h-autocovariance-lower-bound}
from Proposition~\ref{prop:local-lag-h-autocovariance-lower-bound-test}.

Fix a lag \(h\geq1\), a sample size \(N\geq h+1\), and \(\alpha>0\).
Let \(f(\lambda):=1/(2\pi)\) for all \(\lambda\), and, for a value
\(\varepsilon>0\) to be chosen below, define \(f_\pm\)
by~\eqref{eq:local-lag-h-epsilon-sp-density-choice}.
Then
\[
f_\pm(\lambda)
\ \geq \
\frac{1-2\varepsilon}{2\pi},
\qquad \lambda\in[-\pi,\pi].
\]
Moreover, \(\gamma_{f_\pm}(0)=1\),
\(\rho_{f_\pm}(h)=\pm\varepsilon\), and
\(\rho_{f_\pm}(k)=0\) for every positive lag \(k\neq h\).
Consequently, for every
\(\varepsilon\in\left(0,\frac12\wedge
\frac{C_1}{\sqrt2\,h^s}\right]\), the functions \(f_\pm\) are spectral
densities and satisfy
\[
\sum_{k\in\mathbb Z\setminus\{0\}}
|k|^{2s}|\rho_{f_\pm}(k)|^2
\ = \ 
2h^{2s}\varepsilon^2
\ \leq \ C_1^2.
\]
It follows that
\begin{equation}
\label{eq:fpm_in_F}
f_\pm\in\mathcal F(1,C_1,s)
\subseteq\mathcal F(C_0,C_1,s),
\qquad \quad 
\text{whenever}\quad
\varepsilon\in
\left(
0,
\frac12\wedge\frac{C_1}{\sqrt2\,h^s}
\right],
\end{equation}
where the inclusion follows directly from \(C_0\geq1\).

Let now \(K_{1:N}\in\mathcal M_\alpha^N\), and let
\(\hat g\in\mathcal E_{\mathbb R}(K_{1:N})\).
As in~\eqref{eq:estimator_to_test}, define
\(\hat\phi\in\mathcal E_{[0,1]}(K_{1:N})\) by
\(\hat\phi(Z)=1\) if \(\hat g(Z)\geq0\), and
\(\hat\phi(Z)=0\) otherwise.
Since \(\gamma_{f_\pm}(h)=\pm\varepsilon\), we have
\begin{equation}
\label{eq:L2_lb_test}
\max_{\pm\in\ac{-,+}}
\mathbb E_{f_\pm,K_{1:N}}
\left[
\left|\hat g(Z)-\gamma_{f_\pm}(h)\right|^2
\right]
\ \geq \
\varepsilon^2\,
\mathcal T_{N,\alpha}
\bigl(\hat\phi,f,\varepsilon,K_{1:N}\bigr).
\end{equation}

Set
\[
f_{\min,h}
:=
\frac1{2\pi}
\wedge
\frac{C_1}{\sqrt2\,\pi h^s}.
\]
The constant spectral density \(f=1/(2\pi)\) satisfies
\(f\geq f_{\min,h}\). 
Therefore, Proposition~\ref{prop:local-lag-h-autocovariance-lower-bound-test},
applied with \(r=1/4\) and \(f_{\min}=f_{\min,h}\), shows that, for
\begin{equation}
\label{eq:epsilon-star-lag-lower-bound}
\varepsilon_{\mathrm{low}}
:=
c_{1/4,f_{\min,h}}
\left(
1\wedge
\left[(\alpha^4\wedge1)(N-h)\right]^{-1/2}
\right),
\end{equation}
with a constant \(c_{1/4,f_{\min,h}}\in(0,\pi f_{\min,h}]\),
we have
\begin{equation}
\label{eq:testing-risk-epsilon-star}
\mathcal T_{N,\alpha}
\bigl(\hat\phi,f,\varepsilon_{\mathrm{low}},K_{1:N}\bigr)
\ \geq \
\frac14.
\end{equation}
 
Since
\[
\varepsilon_{\mathrm{low}}
\leq
c_{1/4,f_{\min,h}}
\leq
\pi f_{\min,h}
\leq
\frac12\wedge\frac{C_1}{\sqrt2\,h^s},
\]
we obtain from~\eqref{eq:fpm_in_F} that \(f_\pm\in\mathcal F(C_0,C_1,s)\) for \(\varepsilon=\varepsilon_{\mathrm{low}}\). 
Combining this with~\eqref{eq:L2_lb_test}, \eqref{eq:epsilon-star-lag-lower-bound} and~\eqref{eq:testing-risk-epsilon-star}, we obtain
\[
\sup_{\tilde f\in\mathcal F(C_0,C_1,s)}
\mathbb E_{\tilde f,K_{1:N}}
\left[
\left|\hat g(Z)-\gamma_{\tilde f}(h)\right|^2
\right]
\ \geq \
\frac{c_{1/4,f_{\min,h}}^2}{4}
\left(
1\wedge
\left[(\alpha^4\wedge1)(N-h)\right]^{-1}
\right).
\]
Since \(K_{1:N}\) and \(\hat g\) were arbitrary, taking the two
infima concludes the proof with
\(c_{C_1,s,h}:=c_{1/4,f_{\min,h}}^2/4\).
\end{proof}

%%%%%%

%%%%%%%%%

%%%%%%%%

\subsection{The local testing problem}
\label{appendix:local-testing}

In this appendix, we prove the bound
in~\eqref{eq:local-testing-upper-bound}, and
Proposition~\ref{prop:local-lag-h-autocovariance-lower-bound-test}.

%%%%%%%%%%%%%%%

\begin{proof}[Proof of~\eqref{eq:local-testing-upper-bound}]
Fix \(N\geq2\), \(\alpha>0\), \(h\in[N-1]\), \(r\in(0,1/2)\), and
\(f\in\mathcal F(C_0,C_1,s)\) satisfying the Sobolev constraint
strictly and such that \(f\geq f_{\min}\) for some \(f_{\min}>0\).
Let \(\varepsilon_{\mathrm{up}}\) be defined
by~\eqref{eq:local-testing-separation-rate}, and let \(f_+\) and \(f_-\)
denote the alternatives
in~\eqref{eq:local-lag-h-epsilon-sp-density-choice} corresponding to
\(\varepsilon=\varepsilon_{\mathrm{up}}\).

By construction, \(f_+\) and \(f_-\) are even and integrable, and
\[
f_\pm(\lambda)
\ \geq \
f_{\min}-\frac{\varepsilon_{\mathrm{up}}}{\pi},
\qquad \lambda\in[-\pi,\pi].
\]
Thus, when \(\varepsilon_{\mathrm{up}}\leq\pi f_{\min}\), the functions \(f_+\)
and \(f_-\) are nonnegative and therefore define spectral densities.
Moreover, since \(f\) satisfies the Sobolev constraint strictly, these
spectral densities belong to \(\mathcal F(C_0,C_1,s)\) when
\(\varepsilon_{\mathrm{up}}\) is sufficiently small.

Since \(\gamma_{f_\pm}(h)=\gamma_f(h)\pm\varepsilon_{\mathrm{up}}\), the definition
of \(\phi_f\) in~\eqref{eq:estimator_to_test} gives
\[
\begin{aligned}
\mathcal T_{N,\alpha}
\bigl(
\phi_f,f,\varepsilon_{\mathrm{up}},
K_{1:N}^{\mathrm{Lap},\alpha}
\bigr)
\ &= \
\max\pa{
\mathbb E_{f_-,K_{1:N}^{\mathrm{Lap},\alpha}}
\cro{\phi_f(Z)},
\mathbb E_{f_+,K_{1:N}^{\mathrm{Lap},\alpha}}
\cro{1-\phi_f(Z)}
}
\\
&\leq \
\max_{\pm\in\ac{-,+}}
P_{f_\pm,K_{1:N}^{\mathrm{Lap},\alpha}}
\left(
\left|
\widetilde\gamma(h)-\gamma_{f_\pm}(h)
\right|
\geq
\varepsilon_{\mathrm{up}}
\right).
\end{aligned}
\]
Then Markov's inequality yields
\[
\mathcal T_{N,\alpha}
\bigl(
\phi_f,f,\varepsilon_{\mathrm{up}},
K_{1:N}^{\mathrm{Lap},\alpha}
\bigr)
\ \leq \
\frac1{\varepsilon_{\mathrm{up}}^2}
\max_{\pm\in\ac{-,+}}
\mathbb E_{f_\pm,K_{1:N}^{\mathrm{Lap},\alpha}}
\left[
\left|
\widetilde\gamma(h)-\gamma_{f_\pm}(h)
\right|^2
\right].
\]
Since \(f_+\) and \(f_-\) belong to \(\mathcal F(C_0,C_1,s)\),
Proposition~\ref{prop:lag-h-autocovariance-upper-bound} then gives
\[
\mathcal T_{N,\alpha}
\bigl(
\phi_f,f,\varepsilon_{\mathrm{up}},
K_{1:N}^{\mathrm{Lap},\alpha}
\bigr)
\ \leq \
\frac{C_{C_0,C_1,s}}{\varepsilon_{\mathrm{up}}^2}
\left(
1\wedge
\left[(\alpha^4\wedge1)(N-h)\right]^{-1}
\right).
\]
Since
\(
\varepsilon_{\mathrm{up}}^2
=
C_{C_0,C_1,s,r}^2
\left(
1\wedge
\left[(\alpha^4\wedge1)(N-h)\right]^{-1}
\right)\),
we obtain
\[
\mathcal T_{N,\alpha}
\bigl(
\phi_f,f,\varepsilon_{\mathrm{up}},
K_{1:N}^{\mathrm{Lap},\alpha}
\bigr)
\ \leq \
\frac{C_{C_0,C_1,s}}{C_{C_0,C_1,s,r}^2}
\ \leq \
r,
\]
where the last inequality follows by choosing the constant
\(C_{C_0,C_1,s,r}\) sufficiently large.
This completes the proof of~\eqref{eq:local-testing-upper-bound}.
\end{proof}

%%%%%%%

\medskip

%%%%%%%%

\begin{proof}[Proof of Proposition~\ref{prop:local-lag-h-autocovariance-lower-bound-test}]
Fix \(r\in(0,1/2)\), \(h\geq1\), \(f_{\min}>0\), and a spectral
density \(f\) satisfying \(f\geq f_{\min}\).
For every \(\varepsilon\in(0,\pi f_{\min}]\), we have
\[
f_\pm(\lambda)
\ \geq \
f_{\min}-\frac{\varepsilon}{\pi}
\ \geq \
0,
\qquad \lambda\in[-\pi,\pi].
\]
Since \(f_\pm\) are also even and integrable, they are spectral
densities.

Fix \(N\geq h+1\), and \(\alpha>0\), and
\(K_{1:N}\in\mathcal M_\alpha^N\), and
\(\phi\in\mathcal E_{[0,1]}(K_{1:N})\).
By lower bounding the maximal risk by the Bayes risk with equal prior
probabilities, we obtain
\begin{align*}
\mathcal T_{N,\alpha}
\bigl(\phi,f,\varepsilon,K_{1:N}\bigr)
\ &\geq \
\frac12
\left(\mathbb E_{f_-,K_{1:N}}\cro{\phi(Z)}
+\mathbb E_{f_+,K_{1:N}}\cro{1-\phi(Z)}
\right)
\\
\ &= \
\frac12
\left(1-\int \phi(z) \; \pa{P_{f_+,K_{1:N}}^Z-P_{f_-,K_{1:N}}^Z}(d z)
\right).
\end{align*}
By the Hahn-Jordan decomposition, and since \(\phi\) takes values in
\([0,1]\), the integral above satisfies
\[
\int \phi(z)\,
\pa{P_{f_+,K_{1:N}}^Z-P_{f_-,K_{1:N}}^Z}(dz)
\ \leq \
\left\|
P_{f_+,K_{1:N}}^Z-P_{f_-,K_{1:N}}^Z
\right\|_{\mathrm{TV}},
\]
where the total variation distance is defined in~(\ref{defi-TV}).
Combining the last two displays and Pinsker's inequality, we obtain
\[
\mathcal T_{N,\alpha}
\bigl(\phi,f,\varepsilon,K_{1:N}\bigr)
\ \geq \
\frac12
\left(
1-
\frac1{\sqrt2}
\sqrt{
\D\!\left(
P_{f_+,K_{1:N}}^Z
\,\middle\|\,
P_{f_-,K_{1:N}}^Z
\right)
}
\right).
\]

We apply Theorem~\ref{thm:KL-bound-gaussian} with
\(a=\pi f_{\min}\) and
\(\delta=4\varepsilon/(\pi f_{\min})\).
For every \(\varepsilon\in(0,\pi f_{\min}/4]\), we have
\(\delta\in(0,1]\), and the conditions in~\eqref{eq:min-f_pm} hold.
Consequently,
\begin{equation}
\label{eq:local-testing-risk-lower-bound}
\mathcal T_{N,\alpha}
\bigl(\phi,f,\varepsilon,K_{1:N}\bigr)
\ \geq \
\frac12
\left(
1-
\frac{4\sqrt C\,\varepsilon}{\sqrt2\,\pi f_{\min}}
\sqrt{(\alpha^4\wedge1)(N-h)}
\right),
\end{equation}
under the condition that \(\varepsilon\in(0,\pi f_{\min}/4]\), 
where \(C\) is the universal constant in Theorem~\ref{thm:KL-bound-gaussian}.

Set
\[
c_{r,f_{\min}}
:=
\frac{\pi f_{\min}}4
\wedge
\frac{\sqrt2\,\pi f_{\min}(1-2r)}{4\sqrt C}.
\]
For 
\(\varepsilon_{\mathrm{low}} :=c_{r,f_{\min}} \bigl(1\wedge[(\alpha^4\wedge1)(N-h)]^{-1/2}\bigr)\),
we have
\(\varepsilon_{\mathrm{low}}\leq c_{r,f_{\min}}\leq\pi f_{\min}/4\), 
so the bound~\eqref{eq:local-testing-risk-lower-bound} applies.
Moreover,
\(\varepsilon_{\mathrm{low}}\sqrt{(\alpha^4\wedge1)(N-h)} \leq c_{r,f_{\min}}\), so the definition of \(c_{r,f_{\min}}\) and~\eqref{eq:local-testing-risk-lower-bound} yield
\[
\mathcal T_{N,\alpha}
\bigl(\phi,f,\varepsilon_{\mathrm{low}},K_{1:N}\bigr)
\ \geq \
r.
\]
Since \(K_{1:N}\) and \(\phi\) were arbitrary, taking the two infima
concludes the proof of Proposition~\ref{prop:local-lag-h-autocovariance-lower-bound-test}.
\end{proof}

%%%%%%%%%%%%%%%

%%%%%%%%%%%%%%

%%%%%%%%%%%%%%%%%%

\subsection{Asymptotic equivalence under local differential privacy}
\label{appendix:asymptotic-equivalence-ldp}

In this appendix, we first establish inclusions relating the parameter classes
\(\widetilde{\mathcal F}(M,s)\) to \(\mathcal F(C_0,C_1,s)\). 
We then prove Proposition~\ref{prop:golubev_etal-upm}, together with an auxiliary
lemma used in its proof, and finally Lemma~\ref{lem:TV-compatible}.

\begin{proof}[Comparison of parameter classes]
We show that \(\widetilde{\mathcal F}(M,s)\) is comparable, up to
changes in the constants, to uniformly lower-bounded subclasses of
\(\mathcal F(C_0,C_1,s)\) considered in the present paper. 
More precisely, for any \(C_0\geq1\) and \(C_1>0\) satisfying
\(C_0^2(1+C_1^2)\leq M\), the following inclusions hold:
\begin{equation}
\label{eq:different-classes}
\mathcal F(C_0,C_1,s)
\cap
\left\{f:f\geq\frac1M\right\}
\subset
\widetilde{\mathcal F}(M,s)
\subset
\mathcal F\left(
\frac{M}{2\pi}\vee\sqrt M,
\frac{M^{3/2}}{2\pi},
s
\right).
\end{equation}
Indeed, the first inclusion follows directly from
\[
\gamma_f^2(0)
\left(
1+\sum_{h\neq0}|h|^{2s}|\rho_f(h)|^2
\right)
\leq
C_0^2(1+C_1^2)
\leq M.
\]
Conversely, if \(f\in\widetilde{\mathcal F}(M,s)\), then
\(\frac{2\pi}{M}\leq\gamma_f(0)\leq\sqrt M\),
and
\[
\sum_{h\neq0}|h|^{2s}|\rho_f(h)|^2
\leq
\frac{M}{\gamma_f^2(0)}
\leq
\frac{M^3}{(2\pi)^2}.
\]
This proves the second inclusion in
\eqref{eq:different-classes}.
\end{proof}

%%%%%%
\medskip
%%%%%

%%%%%%%
%proof of main prop
%%%%%

\begin{proof}[Proof of Proposition~\ref{prop:golubev_etal-upm}]
Fix \(M>(2\pi)^{2/3}\) and \(s>1/2\). 
Choose
\[
c_M\in\left(\frac{1}{M},\frac{\sqrt M}{2\pi}\right),
\quad \text{ and } \quad
f(\lambda)=c_M,
\quad \lambda\in[-\pi,\pi].
\]
We consider the local testing construction
from~\eqref{eq:local-lag-h-epsilon-sp-density-choice} with lag \(h=1\), that is,
the two alternatives 
\[
f_\pm(\lambda) 
= 
f(\lambda)\pm\frac{\varepsilon}{\pi}\cos(\lambda)
=
c_M\pm\frac{\varepsilon}{\pi}\cos(\lambda).
\]
Let
\[\varepsilon_0:=
\pi\bigl(c_M-\frac{1}{M}\bigr)
\wedge
\sqrt{\frac{M-(2\pi c_M)^2}{2}}.\]
Then, for every \(\varepsilon\in(0,\varepsilon_0]\), the spectral
densities \(f_+\) and \(f_-\) belong to
\(\widetilde{\mathcal F}(M,s)\).
Indeed,
\[
f_\pm\geq c_M-\frac{\varepsilon}{\pi}\geq\frac1M,
\]
and
\[
\gamma_{f_\pm}^2(0)
\left(
1+\sum_{k\neq0}|k|^{2s}|\rho_{f_\pm}(k)|^2
\right)
=(2\pi c_M)^2+2\varepsilon^2
\leq M.
\]

The proof compares the difficulty of testing \(f_-\) against \(f_+\)
under LDP in the time series experiment \(\mathcal P_N\) and in the
independent experiment \(\mathcal Q_N\). We choose the separation so
that this testing problem remains difficult in \(\mathcal P_N\),
whereas the following lemma shows that it can be solved consistently
in \(\mathcal Q_N\). We will then translate this difference into a lower
bound on the deficiency.

\begin{lemma}
\label{lem:private-test-independent-model}
There exist constants \(C'_f>0\) and \(N_0\geq1\) such that, for every
\(N\geq N_0\), every \(\alpha>0\), and every
\(\varepsilon\in(0,\varepsilon_0]\), there exist
\(K_{1:N}\in\mathcal M_\alpha^N\) and
\(\phi_N\in\mathcal E_{[0,1]}(K_{1:N})\) satisfying
\[
\max\left\{
\int\phi_N\,d(K_{1:N}\circ Q_{f_-}^N),
\int(1-\phi_N)\,d(K_{1:N}\circ Q_{f_+}^N)
\right\}
\leq
\frac{C'_f}{\varepsilon^2(\alpha^2\wedge1)N}.
\]
\end{lemma}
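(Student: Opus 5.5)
We build an explicit privatized threshold test. The idea is that, in the independent model, the alternatives \(f_\pm\) differ through the \emph{marginal} variances of the \(Y_j\). A single privacy contraction therefore suffices, which yields an \(\alpha^2\)-type rate.

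\textbf{Step 1: the variances.} For \(f_\pm=c_M\pm\frac{\varepsilon}{\pi}\cos\), a direct computation gives
\[
J_{j,N}(f_\pm)=c_M\pm\frac{\varepsilon}{\pi}m_j,
\qquad
m_j:=\frac{N}{2\pi}\int_{2(j-1)\pi/N}^{2j\pi/N}\cos(\lambda)\,d\lambda\in[-1,1].
\]
Since \(\varepsilon\le\varepsilon_0\le\pi(c_M-1/M)\), all these variances lie in the fixed compact interval \(V:=[1/M,\,2c_M]\subset(0,\infty)\).

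\textbf{Step 2: mechanism and statistic.} Fix the threshold \(\tau:=\sqrt{c_M}\) and privatize coordinatewise by
\[
Z_j=\mathbf 1_{\{|Y_j|>\tau\}}+\tfrac1\alpha W_j,
\]
with \(W_j\) i.i.d.\ standard Laplace. The indicator has sensitivity \(1\), so each kernel lies in \(\mathcal M_\alpha\), exactly as for \(K^{\mathrm{Lap},\alpha}\). Let
\[
p(v):=2\bigl(1-\Phi(\tau/\sqrt v)\bigr),
\]
so that \(\mathbb E[Z_j]=p(\mathrm{Var}(Y_j))\). The map \(p\) is smooth and strictly increasing on \(V\), with \(p'\ge\kappa>0\) on \(V\), where \(\kappa\) depends only on \(c_M\) and \(M\). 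Define
\[
T:=\frac1N\sum_{j=1}^N m_jZ_j .
\]
Both \(f_\pm\) are known, so the test \(\phi_N:=\mathbf 1\{T\ge(\mathbb E_+T+\mathbb E_-T)/2\}\) is a legitimate test.

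\textbf{Step 3: separation of the means (main step).} We need to show that \(\Delta:=\mathbb E_+T-\mathbb E_-T\ge c\,\varepsilon\), uniformly in \(\varepsilon\in(0,\varepsilon_0]\). By monotonicity of \(p\) and the mean value theorem, each summand satisfies
\[
m_j\Bigl(p\bigl(c_M+\tfrac{\varepsilon}{\pi}m_j\bigr)-p\bigl(c_M-\tfrac{\varepsilon}{\pi}m_j\bigr)\Bigr)\ \ge\ \frac{2\kappa\varepsilon}{\pi}\,m_j^2,
\]
because the sign of \(m_j\) matches the sign of the difference. Summing gives
\[
\Delta\ \ge\ \frac{2\kappa\varepsilon}{\pi}\cdot\frac1N\sum_{j=1}^N m_j^2 .
\]
Since \(m_j\) is the bin average of \(\cos\), the quantity \(\frac1N\sum_j m_j^2\) is a Riemann-type sum converging to \(\frac1{2\pi}\int_{-\pi}^{\pi}\cos^2=\frac12\). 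Hence it exceeds \(1/4\) for all \(N\ge N_0\), and therefore \(\Delta\ge\kappa\varepsilon/(2\pi)\) for \(N\ge N_0\).

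\textbf{Step 4: variance and Chebyshev.} Under either hypothesis the \(Z_j\) are independent, and
\[
\mathrm{Var}(Z_j)\le\tfrac14+\tfrac{2}{\alpha^2}\le \frac{3}{\alpha^2\wedge1}.
\]
Since \(|m_j|\le1\), this gives \(\mathrm{Var}_\pm(T)\le 3/\bigl((\alpha^2\wedge1)N\bigr)\). The test errs only if \(|T-\mathbb E_\pm T|\ge\Delta/2\). By Chebyshev, both error probabilities are at most
\[
\frac{4\,\mathrm{Var}_\pm(T)}{\Delta^2}\ \le\ \frac{C'_f}{\varepsilon^2(\alpha^2\wedge1)N},
\]
with \(C'_f\) depending only on \(c_M\) and \(M\), that is, only on \(f\).

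The only delicate points are the uniform lower bound \(\kappa\) on \(p'\), which holds because \(V\) is compact and \(p'>0\) there, and the Riemann-sum limit fixing \(N_0\). Neither depends on \(\alpha\) or \(\varepsilon\).
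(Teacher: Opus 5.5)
Your proposal is correct and follows essentially the same route as the paper: a Laplace-privatized threshold indicator $\mathbf 1_{\{|Y_j|>\tau\}}$, a linear statistic weighted by the bin averages of $\cos$, a mean-value-theorem lower bound on the mean separation using $p'\ge\kappa>0$ on a compact range of variances, the Riemann-sum limit of $\frac1N\sum_j m_j^2$, and Chebyshev's inequality. The differences are only cosmetic and do not affect the argument: you use threshold $\sqrt{c_M}$ instead of $C_f$, Laplace scale $1/\alpha$ instead of $3/\alpha$, and an $\varepsilon$-dependent midpoint cutoff, whereas the paper centers the indicator at its mean under the reference density $f$ and thresholds at $0$.
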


The proof of Lemma~\ref{lem:private-test-independent-model} is given
after the present proof.

We now choose the separation so that the testing problem remains
difficult in the time series model. 
Fix \(r\in(0,1/2)\). Applying
Proposition~\ref{prop:local-lag-h-autocovariance-lower-bound-test}
with \(h=1\) and \(f_{\min}=1/M\), we may choose a constant
\(c_{r,M}\in(0,\varepsilon_0]\) such that, setting
\begin{equation}
\label{eq:epsilon-N-asymptotic-equivalence}
\varepsilon_N
:=
c_{r,M}
\left(
1\wedge(\alpha_N^4N)^{-1/2}
\right),
\end{equation}
we have, for all sufficiently large \(N\),
\begin{equation}
\label{eq:prelim-lem-golub-2}
\inf_{K'_{1:N}\in\mathcal M_{\alpha_N}^N}
\inf_{\widetilde\phi\in\mathcal E_{[0,1]}(K'_{1:N})}
\max\left\{
\int\widetilde\phi\,d(K'_{1:N}\circ P_{f_-}^N),
\int(1-\widetilde\phi)\,d(K'_{1:N}\circ P_{f_+}^N)
\right\}
\geq r,
\end{equation}
where \(f_-\) and \(f_+\) are taken with
\(\varepsilon=\varepsilon_N\).
Here, we used that \(\alpha_N<1\) for all sufficiently large \(N\)
and that \(N-1\geq N/2\), decreasing \(c_{r,M}\) if necessary.

For the same separation \(\varepsilon_N\) as in \eqref{eq:epsilon-N-asymptotic-equivalence}, we now consider the
independent model.
Applying Lemma~\ref{lem:private-test-independent-model} with
\(\alpha=\alpha_N\) and \(\varepsilon=\varepsilon_N\), we obtain that, for all
sufficiently large \(N\), there exist
\(K_{1:N}\in\mathcal M_{\alpha_N}^N\) and
\(\phi_N\in\mathcal E_{[0,1]}(K_{1:N})\) such that
\begin{equation*}
\max\left\{
\int\phi_N\,d(K_{1:N}\circ Q_{f_-}^N),
\int(1-\phi_N)\,d(K_{1:N}\circ Q_{f_+}^N)
\right\}
\leq
\frac{C'_f}
{\varepsilon_N^2\alpha_N^2N}.
\end{equation*}
Here again, we used that
\(\alpha_N<1\) for all sufficiently large \(N\).

We now compare the two experiments by transporting the test
\(\phi_N\) from the independent model to an arbitrary privatized
version of the time series model.
Fix \(K'_{1:N}\in\mathcal M_{\alpha_N}^N\), and let \(L'\) be any
Markov kernel from the output space of \(K'_{1:N}\) to that of
\(K_{1:N}\). We define
\[
\widetilde\phi_N(z')
:=
\int\phi_N(z)\,L'(dz\mid z').
\]
Since \(\phi_N\) takes values in \([0,1]\),
\(\widetilde\phi_N\in\mathcal E_{[0,1]}(K'_{1:N})\). Moreover, by the
definition of the composition of Markov kernels,
\[
\int\widetilde\phi_N(z')\,
(K'_{1:N}\circ P_{f_-}^N)(dz')
=
\int\phi_N(z)\,
(L'\circ K'_{1:N}\circ P_{f_-}^N)(dz),
\]
and the analogous identity holds with
\(\widetilde\phi_N\), \(f_-\), and \(\phi_N\) replaced by
\(1-\widetilde\phi_N\), \(f_+\), and \(1-\phi_N\), respectively.
Therefore, \eqref{eq:prelim-lem-golub-2} gives, for all sufficiently
large \(N\),
\[
\inf_{K'_{1:N}\in\mathcal M_{\alpha_N}^N}
\inf_{L'}\
\max\left\{
\int\phi_N(z)\,
(L'\circ K'_{1:N}\circ P_{f_-}^N)(dz),
\int(1-\phi_N(z))\,
(L'\circ K'_{1:N}\circ P_{f_+}^N)(dz)
\right\}
\geq r,
\]
where the second infimum is over all Markov kernels \(L'\) between the
corresponding output spaces.

Combining the last two bounds, and using
\(\max\{a_- - b_-,a_+ - b_+\}
\geq \max\{a_-,a_+\}-\max\{b_-,b_+\}\)
we obtain, for all sufficiently large \(N\),
\begin{align}
\label{eq:deficiency-testing-lower-bound}
&\inf_{K'_{1:N}\in\mathcal M_{\alpha_N}^N}
\inf_{L'}
\max\Bigg\{
\int \phi_N(z)\,
\bigl[
L'\circ K'_{1:N}\circ P_{f_-}^N
-
K_{1:N}\circ Q_{f_-}^N
\bigr](dz),
\nonumber\\
&\hspace{45mm}
\int \phi_N(z)\,
\bigl[
K_{1:N}\circ Q_{f_+}^N
-
L'\circ K'_{1:N}\circ P_{f_+}^N
\bigr](dz)
\Bigg\}
\nonumber\\
&\hspace{25mm}\geq
r-
\frac{C'_f}{\varepsilon_N^2\alpha_N^2N}
=
r-
\frac{C'_f}{c_{r,M}^2}
\left(
\frac{1}{\alpha_N^2N}\vee\alpha_N^2
\right),
\end{align}
where \(L'\) ranges over all Markov kernels from the output space of
\(K'_{1:N}\) to that of \(K_{1:N}\).

The right-hand side of \eqref{eq:deficiency-testing-lower-bound} converges to \(r\) as \(N\to\infty\) by the
assumptions on \((\alpha_N)_{N\geq1}\). On the other hand, since
\(\phi_N\) takes values in \([0,1]\), each integral on the left-hand
side is bounded above by
\(\|L'\circ K'_{1:N}\circ P_{f_\pm}^N-
K_{1:N}\circ Q_{f_\pm}^N\|_{\mathrm{TV}}\).
Hence, the left-hand side is bounded above by
\(\mathcal D_{\mathrm{LC},\alpha_N}
(\mathcal P_N\|\mathcal Q_N)\). Since \(r\) is arbitrary in
\((0,1/2)\), this proves Proposition~\ref{prop:golubev_etal-upm}.
\end{proof}

\medskip 

%%%%%

\begin{proof}[Proof of Lemma~\ref{lem:private-test-independent-model}]
We construct the mechanism \(K_{1:N}\) and the test
\(\phi_N\) explicitly. Set
\[
C_f
:=
M\vee\left(c_M+\frac{\varepsilon_0}{\pi}\right).
\]
For every \(\varepsilon\in(0,\varepsilon_0]\),
\[
C_f^{-1}
\leq
f(\lambda),f_-(\lambda),f_+(\lambda)
\leq
C_f,
\qquad \lambda\in[-\pi,\pi].
\]
Since \(J_{j,N}(f)\) and \(J_{j,N}(f_\pm)\) are local averages of
\(f\) and \(f_\pm\), respectively, it follows that, for
every \(N\geq1\) and \(j\in[N]\),
\[
C_f^{-1}
\leq
J_{j,N}(f),J_{j,N}(f_-),J_{j,N}(f_+)
\leq
C_f.
\]

Let \(\mathbb Q_\pm\) denote probability measures under which
\(Y_{1:N}\sim Q_{f_\pm}^N\) and \(W_1,\ldots,W_N\) are i.i.d.\
standard Laplace random variables independent of \(Y_{1:N}\).
We denote the corresponding expectation and variance by
\(\mathbb E_\pm\) and \(\operatorname{Var}_\pm\), respectively.

We define the privatized observations \(Z_1,\ldots,Z_N\) by
\begin{equation}
\label{defi-K_lap_asymp_equiv}
Z_j
=
\mathbf 1_{\{|Y_j|>C_f\}}
-
Q_f^N\bigl(|Y_j|>C_f\bigr)
+
\frac{3}{\alpha}W_j,
\qquad j\in[N],
\end{equation}
where \(\mathbf 1_A\) denotes the indicator function of an event
\(A\).
(Here, the non-optimal factor \(3\) is retained for consistency with the main
privacy mechanism introduced in Section~\ref{section-estimators-upper-bounds}.)
Using these privatized observations, we compute 
\[
S_N:=\sum_{j=1}^N w_{j,N}Z_j \;, \quad \text{ with } \quad 
w_{j,N}
:=
\frac{N}{2\pi^2}
\int_{2(j-1)\pi/N}^{2j\pi/N}
\cos(\lambda)\,d\lambda\;,
\qquad j\in[N],
\]
and define the test function
\begin{equation}
\label{eq:independent-model-test}
\phi_N(Z_{1:N})
:=
\mathbf 1_{\{S_N>0\}}.
\end{equation}

Let \(K_{1:N}\) denote the mechanism defined by
\eqref{defi-K_lap_asymp_equiv}. Since each \(Z_j\) depends only on
\(Y_j\), and the indicator is perturbed by Laplace noise with scale
\(3/\alpha\), we have
\(K_{1:N}\in\mathcal M_\alpha^N\). Moreover,
\begin{equation}
\label{eq:law-private-independent-model}
\mathbb Q_\pm^{Z_{1:N}}
=
K_{1:N}\circ Q_{f_\pm}^N.
\end{equation}
By construction,
\(\phi_N\in\mathcal E_{[0,1]}(K_{1:N})\).

For \(j\in[N]\), define
\[
q_{j,N}(f_\pm)
:=
Q_{f_\pm}^N\bigl(|Y_j|>C_f\bigr)
-
Q_f^N\bigl(|Y_j|>C_f\bigr).
\]
Using the definition \eqref{defi-K_lap_asymp_equiv} of \(Z_j\), the fact that a standard Laplace
random variable is centered with variance \(2\), and the independence
of \(Z_1,\ldots,Z_N\), we obtain
\begin{equation}
\label{eq:SN-esp_and_var}
\mathbb E_\pm[S_N]
=
\sum_{j=1}^N q_{j,N}(f_\pm)w_{j,N},
\qquad
\operatorname{Var}_\pm(S_N)
=
\sum_{j=1}^N w_{j,N}^2
\left[
\operatorname{Var}_\pm
\left(\mathbf 1_{\{|Y_j|>C_f\}}\right)
+
\frac{18}{\alpha^2}
\right].
\end{equation}

To control the expectation of \(S_N\), first observe that
\[
q_{j,N}(f_\pm)
=
2\left[
\Phi\left(\frac{C_f}{\sqrt{J_{j,N}(f)}}\right)
-
\Phi\left(\frac{C_f}{\sqrt{J_{j,N}(f_\pm)}}\right)
\right].
\]
Then, since the function
\(u\mapsto\Phi(C_f/\sqrt u)\) is strictly decreasing on
\((0,\infty)\), the quantity \(q_{j,N}(f_\pm)\) has the same sign as \(J_{j,N}(f_\pm)-J_{j,N}(f)\). By the linearity of
\(J_{j,N}\),
\[
J_{j,N}(f_\pm)-J_{j,N}(f)
=
\pm\varepsilon w_{j,N},
\]
and consequently,
\begin{equation}
\label{eq:q-sign-independent-model}
\operatorname{sgn}\bigl(q_{j,N}(f_\pm)\bigr)
=
\pm\operatorname{sgn}(w_{j,N}).
\end{equation}

Moreover, the derivative of
\(u\mapsto\Phi(C_f/\sqrt u)\) is continuous and strictly negative on
the compact interval \([C_f^{-1},C_f]\). Hence, by the mean value
theorem, there exists a constant \(c_f>0\) such that
\[
|q_{j,N}(f_\pm)|
\geq
c_f\left|J_{j,N}(f_\pm)-J_{j,N}(f)\right|
=
c_f\varepsilon|w_{j,N}|.
\]
Combining this bound with \eqref{eq:q-sign-independent-model} and
\eqref{eq:SN-esp_and_var}, we obtain
\begin{equation}
\label{eq:SN-expectation-separation}
\mathbb E_+[S_N]
\geq
c_f\varepsilon\sum_{j=1}^Nw_{j,N}^2,
\qquad
\mathbb E_-[S_N]
\leq
-c_f\varepsilon\sum_{j=1}^Nw_{j,N}^2.
\end{equation}

Define
\begin{equation}
\label{eq:CN-independent-model}
C_N
:=
\frac{2\pi}{N}\sum_{j=1}^Nw_{j,N}^2.
\end{equation}
The bounds \eqref{eq:SN-expectation-separation} on the expectations can then be
written as
\[
\mathbb E_+[S_N]
\geq
c_f\varepsilon\frac{NC_N}{2\pi},
\qquad
\mathbb E_-[S_N]
\leq
-c_f\varepsilon\frac{NC_N}{2\pi}.
\]
Moreover, since Bernoulli random variables have variance at most \(1/4\), the variance in
\eqref{eq:SN-esp_and_var} satisfies
\[
\operatorname{Var}_\pm(S_N)
\leq
\left(\frac14+\frac{18}{\alpha^2}\right)
\sum_{j=1}^Nw_{j,N}^2
=
\frac{NC_N}{2\pi}
\left(\frac14+\frac{18}{\alpha^2}\right).
\]
Therefore, since \(\mathbb E_+[S_N]>0\) for \(N\geq3\), Chebyshev's inequality and the definition \eqref{eq:independent-model-test} of \(\phi_N(Z_{1:N})\) give
\begin{align}
\label{eq:independent-model-type-II-error}
\mathbb Q_+\bigl(\phi_N(Z_{1:N})=0\bigr)
=
\mathbb Q_+(S_N\leq0)
&\leq
\mathbb Q_+\left(
\left|S_N-\mathbb E_+[S_N]\right|
\geq
\mathbb E_+[S_N]
\right)\nonumber\\
&\leq
\frac{\operatorname{Var}_+(S_N)}
     {\mathbb E_+[S_N]^2}\nonumber\\
&\leq
\frac{2\pi}{c_f^2C_N}
\frac{1}{\varepsilon^2N}
\left(\frac14+\frac{18}{\alpha^2}\right).
\end{align}
The same argument, using \(\mathbb E_-[S_N]<0\), gives
\begin{equation}
\label{eq:independent-model-type-I-error}
\mathbb Q_-\bigl(\phi_N(Z_{1:N})=1\bigr)
=
\mathbb Q_-(S_N>0)
\leq
\frac{2\pi}{c_f^2C_N}
\frac{1}{\varepsilon^2N}
\left(\frac14+\frac{18}{\alpha^2}\right).
\end{equation}

Finally, since \(w_{j,N}\) is the average of
\(\lambda\mapsto\cos(\lambda)/\pi\) over
\([2(j-1)\pi/N,2j\pi/N]\), there exists a universal constant \(C>0\)
such that
\[
\left|
w_{j,N}
-
\frac{\cos(2j\pi/N)}{\pi}
\right|
\leq
\frac{C}{N}.
\]
Since both terms in the difference are uniformly bounded,
the inequality \(|x^2-y^2|\leq|x-y|(|x|+|y|)\) implies that \(C_N\),
defined in \eqref{eq:CN-independent-model}, satisfies
\[
\left|
C_N-
\frac{2\pi}{N}\sum_{j=1}^N
\left(\frac{\cos(2j\pi/N)}{\pi}\right)^2
\right|
\leq
\frac{2\pi}{N}\sum_{j=1}^N
\left|
w_{j,N}^2-
\left(\frac{\cos(2j\pi/N)}{\pi}\right)^2
\right|
\leq\frac{C}{N}.
\]
Then
\[
\left|
C_N
-
\int_0^{2\pi}
\left(\frac{\cos(u)}{\pi}\right)^2du
\right|
\leq
\frac{C}{N},
\]
for some absolute constant \(C\).
Since the integral is equal to \(1/\pi\), we have
\(C_N\to1/\pi\) as \(N\to \infty\). In particular, \(C_N\geq1/(2\pi)\) for all
sufficiently large \(N\).
Combining this fact with \eqref{eq:independent-model-type-I-error} and \eqref{eq:independent-model-type-II-error}, and using
\[
\frac14+\frac{18}{\alpha^2}
\leq
\frac{C'}{\alpha^2\wedge1}
\]
for some absolute constant \(C'\),
we conclude that there exists a constant \(C'_f>0\) such that, for all
sufficiently large \(N\),
\[
\max\left\{
\mathbb Q_-\bigl(\phi_N(Z_{1:N})=1\bigr),
\mathbb Q_+\bigl(\phi_N(Z_{1:N})=0\bigr)
\right\}
\leq
\frac{C'_f}{\varepsilon^2(\alpha^2\wedge1)N}.
\]
Finally, recalling from \eqref{eq:law-private-independent-model} that
\(\mathbb Q_\pm^{Z_{1:N}}=K_{1:N}\circ Q_{f_\pm}^N\), the left-hand
side is equal to
\[
\max\left\{
\int\phi_N\,d(K_{1:N}\circ Q_{f_-}^N),
\int(1-\phi_N)\,d(K_{1:N}\circ Q_{f_+}^N)
\right\}.
\]
This proves Lemma~\ref{lem:private-test-independent-model}.
\end{proof}

\medskip 

%%%%%

\begin{proof}[Proof of Lemma~\ref{lem:TV-compatible}]
By the definition of
\(\mathcal D_{\mathrm{LC},\alpha}\), it is enough to show that, for
every \(K_{1:N}\in\mathcal M_\alpha^N\) and every coordinatewise
Markov kernel \(L_{1:N}\) on \(\mathbb R^N\), there exist
\(K'_{1:N}\in\mathcal M_\alpha^N\) and a Markov kernel \(L'\) from the
output space of \(K'_{1:N}\) to that of \(K_{1:N}\) such that
\[
\sup_{\theta\in\Theta}
\left\|
L'\circ K'_{1:N}\circ P_\theta^N
-
K_{1:N}\circ Q_\theta^N
\right\|_{\mathrm{TV}}
\leq
\sup_{\theta\in\Theta}
\left\|
L_{1:N}\circ P_\theta^N-Q_\theta^N
\right\|_{\mathrm{TV}}.
\]
Fix such \(K_{1:N}\) and \(L_{1:N}\). We then take
\(K'_{1:N}:=K_{1:N}\circ L_{1:N}\) and let \(L'\) be the identity
kernel on the output space of \(K_{1:N}\). It remains to verify that
\(K'_{1:N}\in\mathcal M_\alpha^N\) and to apply the contraction of
total variation under Markov kernels.

Write \(K_{1:N}=(K_1,\ldots,K_N)\). Since both \(K_{1:N}\) and
\(L_{1:N}\) act coordinatewise, we have
\[
K'_{1:N}
=
K_{1:N}\circ L_{1:N}
=
(K_1\circ L_1,\ldots,K_N\circ L_N).
\]
Fix \(i\in[N]\), \(x,x'\in\mathbb R\), and a measurable set \(A\) in
the output space of \(K_i\). Since \(K_i\) is \(\alpha\)-LDP,
\(K_i(A\mid y)\leq e^\alpha K_i(A\mid y')\) for every
\(y,y'\in\mathbb R\). Integrating this inequality with respect to
\(L_i(dy\mid x)\) and \(L_i(dy'\mid x')\), we obtain
\begin{align*}
(K_i\circ L_i)(A\mid x)
&=
\int K_i(A\mid y)\,L_i(dy\mid x)\\
&\leq
e^\alpha
\int K_i(A\mid y')\,L_i(dy'\mid x')\\
&=
e^\alpha(K_i\circ L_i)(A\mid x').
\end{align*}
Thus, each \(K_i\circ L_i\) is \(\alpha\)-LDP, and hence
\(K'_{1:N}\in\mathcal M_\alpha^N\).

We now prove that total variation contracts under Markov kernels.
For any two probability measures \(\mu\) and \(\nu\), any
Markov kernel \(K\), and any measurable set \(A\) in the output space
of \(K\), 
\[
(K\circ\mu)(A)-(K\circ\nu)(A)
=
\int K(A\mid x)\,(\mu-\nu)(dx).
\]
Since the function \(x\mapsto K(A\mid x)\) takes values in
\([0,1]\), we have
\[
\left\|K\circ\mu-K\circ\nu\right\|_{\mathrm{TV}}
\leq
\left\|\mu-\nu\right\|_{\mathrm{TV}}.
\]
Applying this inequality with
\(\mu=L_{1:N}\circ P_\theta^N\), and \(\nu=Q_\theta^N\), and
\(K=K_{1:N}\), we obtain, for every \(\theta\in\Theta\),
\[
\left\|
L'\circ K'_{1:N}\circ P_\theta^N
-
K_{1:N}\circ Q_\theta^N
\right\|_{\mathrm{TV}}
=
\left\|
K_{1:N}\circ L_{1:N}\circ P_\theta^N
-
K_{1:N}\circ Q_\theta^N
\right\|_{\mathrm{TV}}
\leq
\left\|
L_{1:N}\circ P_\theta^N-Q_\theta^N
\right\|_{\mathrm{TV}}.
\]
Taking the supremum over \(\theta\in\Theta\) and then the infimum over
all coordinatewise Markov kernels \(L_{1:N}\) proves Lemma~\ref{lem:TV-compatible}.    
\end{proof}

\section{Additional proofs for Theorem~\ref{thm-borne-inf:cas-non-interactif}}
\label{appendix:lower-bound-thm:proofs-lemmas}

This appendix contains the arguments deferred from Section~\ref{subsection:proof-lower-bound-thm}.
We first prove Lemma~\ref{lem:hypercube-reduction} and then Lemma~\ref{lem:choice-parameters-lower-bound-bis}.

%%%%%%%%

\begin{proof}[Proof of Lemma~\ref{lem:hypercube-reduction}]
We first verify that the functions \(f_{\boldsymbol{\omega}}\) belong to \(\mathcal F(1,C_1,s)\).
For every \(\lambda\in[-\pi,\pi]\),
\[
f_{\boldsymbol{\omega}}(\lambda)
\ \ge\
\frac{1}{2\pi}
\left(
1-2\varepsilon H
\right)
\ \ge\
\frac{1}{4\pi},
\]
where the last inequality follows from \(\varepsilon H\le1/4\).
Thus, \(f_{\boldsymbol{\omega}}\) is nonnegative.
It is also an even function, since it is a finite linear combination of cosine functions.
Moreover, the cosine terms integrate to zero, so
\[
\int_{-\pi}^{\pi}
f_{\boldsymbol{\omega}}(\lambda)\,d\lambda
\ =  \ 1.
\]
Hence, \(f_{\boldsymbol{\omega}}\) is a spectral density with variance \(\gamma_{f_{\boldsymbol{\omega}}}(0)=1\).

The autocovariance coefficients of \(f_{\boldsymbol{\omega}}\) are
\[
\gamma_{f_{\boldsymbol{\omega}}}(\pm h)
\ = \
\varepsilon\omega_h,
\qquad 1\le h\le H,
\]
and vanish at all lags \(|h|\ge H+1\).
Since the variance is \(1\), these coefficients are also the autocorrelation coefficients \(\rho_{f_{\boldsymbol{\omega}}}(h)\). 
Therefore,
\[
\sum_{h\in\mathbb Z}
|h|^{2s}
\left|
\rho_{f_{\boldsymbol{\omega}}}(h)
\right|^2
\ = \
2\varepsilon^2
\sum_{h=1}^H h^{2s}
\ \le\
2\varepsilon^2H^{2s+1}
\ \le\
\varepsilon^2(2H)^{2s+1}
\ \le\
C_1^2.
\]
This proves that \(f_{\boldsymbol{\omega}}\in\mathcal F(1,C_1,s)\).

We now prove the reduction to Hamming risk.
Since
\(\left\{
f_{\boldsymbol{\omega}}
:
\boldsymbol{\omega}\in\{-1,1\}^H
\right\}
\subseteq
\mathcal F(1,C_1,s)\),
the definition of the minimax risk gives
\[\mathcal R_{N,\alpha}(1,C_1,s)
\ \ge\
\inf_{K_{1:N}\in\mathcal M_\alpha^N}
\inf_{\hat f\in\mathcal E_{L^2}(K_{1:N})}
\sup_{\boldsymbol{\omega}\in\{-1,1\}^H}
\mathbb E_{f_{\boldsymbol{\omega}},K_{1:N}}
\left[
\left\|
\hat f(Z)-f_{\boldsymbol{\omega}}
\right\|^2
\right].\]

Fix \(K_{1:N}\in\mathcal M_\alpha^N\) and \(\hat f\in\mathcal E_{L^2}(K_{1:N})\).
For every \(z\) in the output space of \(K_{1:N}\), let \(\hat{\boldsymbol{\omega}}_{\hat f}(z)\) be a minimizer of
\[\boldsymbol{\omega}'
\longmapsto
\left\|
\hat f(z)-f_{\boldsymbol{\omega}'}
\right\|\]
over \(\boldsymbol{\omega}'\in\{-1,1\}^H\).
Note that \(\hat{\boldsymbol{\omega}}_{\hat f}\) is a measurable function from the output space of \(K_{1:N}\) to \(\{-1,1\}^H\).

For every \(\boldsymbol{\omega}\in\{-1,1\}^H\), the definition of \(\hat{\boldsymbol{\omega}}_{\hat f}(z)\) gives \(\|\hat f(z)-f_{\hat{\boldsymbol{\omega}}_{\hat f}(z)}\|
\le\|\hat f(z)-f_{\boldsymbol{\omega}}\|\).
Therefore, by the triangle inequality,
\[
\left\|
f_{\hat{\boldsymbol{\omega}}_{\hat f}(z)}
-f_{\boldsymbol{\omega}}
\right\|
\ \le \
\left\|
f_{\hat{\boldsymbol{\omega}}_{\hat f}(z)}
-\hat f(z)
\right\|
+
\left\|
\hat f(z)-f_{\boldsymbol{\omega}}
\right\|
\ \le \
2
\left\|
\hat f(z)-f_{\boldsymbol{\omega}}
\right\|.
\]
On the other hand, the orthogonality of the cosine functions yields
\[
\left\|
f_{\hat{\boldsymbol{\omega}}_{\hat f}(z)}
-f_{\boldsymbol{\omega}}
\right\|^2
\ = \
\frac{\varepsilon^2}{\pi}
\sum_{h=1}^H
\left|
\hat\omega_{\hat f,h}(z)-\omega_h
\right|^2
\ = \
\frac{4\varepsilon^2}{\pi}
d_H\!\left(
\hat{\boldsymbol{\omega}}_{\hat f}(z),
\boldsymbol{\omega}
\right).
\]
Consequently,
\[
\left\|
\hat f(z)-f_{\boldsymbol{\omega}}
\right\|^2
\ \ge\
\frac{\varepsilon^2}{\pi}
d_H\!\left(
\hat{\boldsymbol{\omega}}_{\hat f}(z),
\boldsymbol{\omega}
\right).
\]

Taking expectations and then the corresponding supremum and infima, we obtain
\[
\begin{aligned}
\mathcal R_{N,\alpha}(1,C_1,s)
\ &\ge \
\frac{\varepsilon^2}{\pi}
\inf_{K_{1:N}\in\mathcal M_\alpha^N}
\inf_{\hat f\in\mathcal E_{L^2}(K_{1:N})}
\sup_{\boldsymbol{\omega}\in\{-1,1\}^H}
\mathbb E_{f_{\boldsymbol{\omega}},K_{1:N}}
\left[
d_H\!\left(
\hat{\boldsymbol{\omega}}_{\hat f}(Z),
\boldsymbol{\omega}
\right)
\right]
\\
\ &\ge \
\frac{\varepsilon^2}{\pi}
\inf_{K_{1:N}\in\mathcal M_\alpha^N}
\inf_{\hat{\boldsymbol{\omega}}}
\sup_{\boldsymbol{\omega}\in\{-1,1\}^H}
\mathbb E_{f_{\boldsymbol{\omega}},K_{1:N}}
\left[
d_H\!\left(
\hat{\boldsymbol{\omega}}(Z),
\boldsymbol{\omega}
\right)
\right],
\end{aligned}
\]
where the second infimum in the last line is over all measurable functions from the output space of \(K_{1:N}\) to \(\{-1,1\}^H\).
Indeed, these estimators form a larger class than the estimators \(\hat{\boldsymbol{\omega}}_{\hat f}\) constructed from \(\hat f\), so the corresponding infimum can only be smaller.
This proves the lower bound of Lemma~\ref{lem:hypercube-reduction}.
\end{proof}

%%%%%
\bigskip
%%%%%%

\begin{proof}[Proof of Lemma~\ref{lem:choice-parameters-lower-bound-bis}]
We prove the claimed bounds in the following order:
\begin{align}
\varepsilon_*
&\leq \frac1{12},
\label{eq:choice-parameters-lower-bound-bis1}
\\
H_*
&\geq 1,
\label{eq:choice-parameters-lower-bound-bis2}
\\
\varepsilon_*H_*
&\leq\frac14,
\label{eq:choice-parameters-lower-bound-bis3}
\\
\varepsilon_*^2(2H_*)^{2s+1}
&\leq C_1^2,
\label{eq:choice-parameters-lower-bound-bis4}
\\
144C(\alpha^4\wedge1)N\varepsilon_*^2
&\leq\frac12,
\label{eq:choice-parameters-lower-bound-bis5}
\\
\frac{\varepsilon_*^2H_*}{4\pi}
&\geq
c_{C_1,s}
\left(
1\wedge
\left[(\alpha^4\wedge1)N\right]^{-\frac{2s}{2s+1}}
\right).
\label{eq:choice-parameters-lower-bound-bis6}
\end{align}
The bound~\eqref{eq:choice-parameters-lower-bound-bis1} is immediate
from the definition of \(\varepsilon_*\): its first factor is at most
\(1/12\), while its second factor is at most \(1\).

Similarly, from the definition of \(\varepsilon_*\), we have
\[
\varepsilon_*^2
\ \leq \
\left(\frac{C_1^2}{2^{2s+1}}\right)
\wedge
2^{-2(2s+1)}
\ = \
\frac{C_1^2\wedge2^{-(2s+1)}}{2^{2s+1}}.
\]
Applying this bound in the definition of \(H_*\) yields~(\ref{eq:choice-parameters-lower-bound-bis2}).

From the definition of \(H_*\), we have
\[
H_*
\ \leq \
\frac12
\left(
\frac{2^{-(2s+1)}}{\varepsilon_*^2}
\right)^{1/(2s+1)}
\ = \
\frac14\,
\varepsilon_*^{-2/(2s+1)}.
\]
Therefore,
\[
\varepsilon_*H_*
\ \leq \
\frac14\,
\varepsilon_*^{1-\frac{2}{2s+1}}
\ \leq \
\frac14,
\]
where the last inequality follows from
\(\varepsilon_*\leq1\) and \(s>1/2\).
This proves~\eqref{eq:choice-parameters-lower-bound-bis3}.

From the definition of \(H_*\), we have
\[
2H_*
\ \leq \
\left(
\frac{C_1^2}{\varepsilon_*^2}
\right)^{1/(2s+1)}.
\]
Raising both sides to the power \(2s+1\) and multiplying by
\(\varepsilon_*^2\), we obtain~\eqref{eq:choice-parameters-lower-bound-bis4}.

Bounding the first factor in the definition of \(\varepsilon_*\) by
\(1/(12\sqrt{2C})\) and the second one by
\(1/((\alpha^2\wedge1)\sqrt N)\), we obtain
\[
\varepsilon_*^2
\ \leq \
\frac{1}{288C(\alpha^4\wedge1)N},
\]
which proves~\eqref{eq:choice-parameters-lower-bound-bis5}.

We finally prove~\eqref{eq:choice-parameters-lower-bound-bis6}.
Throughout the remainder of the proof, \(c_{C_1,s}>0\) denotes a
constant depending only on \(C_1\) and \(s\), whose value may change
from line to line. Recall that \(C\) is universal and can therefore be
absorbed into this constant.
Set
\[
r_{\alpha,N}
\ := \
1\wedge\frac{1}{(\alpha^4\wedge1)N}.
\]
By the definition of \(\varepsilon_*\), we have
\(\varepsilon_*^2
 =
c_{C_1,s}\,r_{\alpha,N}.
\)
Moreover, since \(H_*\geq1\) and
\(\lfloor x\rfloor\geq x/2\) for every \(x\geq1\), the definition of
\(H_*\) gives
\(H_*
 \geq 
c_{C_1,s}\,
\varepsilon_*^{-2/(2s+1)}
\).
Consequently,
\[
\frac{\varepsilon_*^2H_*}{4\pi}
\ \geq \
c_{C_1,s}\,
\varepsilon_*^{\frac{4s}{2s+1}}
\ = \
c_{C_1,s}\,
r_{\alpha,N}^{\frac{2s}{2s+1}}.
\]
By the definition of \(r_{\alpha,N}\),
this proves~\eqref{eq:choice-parameters-lower-bound-bis6} and concludes the proof of Lemma~\ref{lem:choice-parameters-lower-bound-bis}.
\end{proof}

%%% Local Variables:
%%% mode: latex
%%% TeX-master: "main.tex"
%%% ispell-local-dictionary: "american"
%%% End:

%%%%%%%

\section{Additional proofs for Theorem~\ref{thm:f-estimator}}
\label{appendix:upper-bound:additional-proofs}

This appendix contains the proofs of Lemmas~\ref{lem:rho-sum-uniform}, \ref{lem:rho-away-from-one}, and~\ref{lem:variance-check-gamma}, which are used in
Section~\ref{section:proof-sketch-upper-bound-thm}.

\begin{proof}[Proof of Lemma~\ref{lem:rho-sum-uniform}]
The first inequality follows directly from the definition of
\(\mathcal F(C_0,C_1,s)\), since \(|h|^{2s}\geq1\) for every
\(h\in\mathbb Z\setminus\{0\}\).
For the second inequality, the Cauchy--Schwarz inequality gives
\[
\sum_{h\in\mathbb Z\setminus\{0\}}|\rho_f(h)|
\ \leq \
\left(
2\sum_{h=1}^{\infty}h^{-2s}
\right)^{1/2}
\left(
\sum_{h\in\mathbb Z\setminus\{0\}}
|h|^{2s}|\rho_f(h)|^2
\right)^{1/2}.
\]
Since \(s>1/2\), the first factor equals \(\sqrt{2\zeta(2s)}\), while
the second is at most \(C_1\). This proves the second inequality of Lemma~\ref{lem:rho-sum-uniform}.
\end{proof}

%%%%%
\medskip
%%%%%

\begin{proof}[Proof of Lemma~\ref{lem:rho-away-from-one}]
For \(f\in\mathcal F(C_0,C_1,s)\), set
\(\overline f := f / \gamma_f(0)\).
Then \(\overline f\in\mathcal F(1,C_1,s)\) and
\(\rho_{\overline f}=\rho_f\). It therefore suffices to prove the result
over \(\mathcal F(1,C_1,s)\).

For every \(f\in\mathcal F(1,C_1,s)\) and every
\(h\in\mathbb Z\setminus\{0\}\), the definition of the parameter class
gives
\[
|\rho_f(h)|
\ \leq \
C_1|h|^{-s}.
\]
Consequently, if \(|h|\geq(2C_1)^{1/s}\), then
\(|\rho_f(h)|\leq1/2\).

It remains to consider the finitely many nonzero lags satisfying
\(|h|<(2C_1)^{1/s}\). For each such fixed \(h\), we will show that
\begin{equation}\label{rho-smaller-than-1}
\sup_{f\in\mathcal F(1,C_1,s)}
|\rho_f(h)|
\ < \
1.
\end{equation}
The bound~\eqref{rho-smaller-than-1} is a consequence of the following three facts:
\begin{enumerate}[label=(\roman*)]
\item\label{item:rho-away-from-one1}
The map \(f\mapsto\gamma_f(h)\) is continuous on
\(L^2([-\pi,\pi])\).
\item\label{item:rho-away-from-one2}
The maps \(f\mapsto1-\gamma_f(h)\) and
\(f\mapsto1+\gamma_f(h)\) do not vanish on
\(\mathcal F(1,C_1,s)\).
\item\label{item:rho-away-from-one3}
The class \(\mathcal F(1,C_1,s)\) is a compact subset of
\(L^2([-\pi,\pi])\).
\end{enumerate}
Indeed, these three facts imply that the two maps
\(f\mapsto1-\gamma_f(h)\) and \(f\mapsto1+\gamma_f(h)\) attain strictly
positive minima on \(\mathcal F(1,C_1,s)\). 
Since \(\rho_f(h)=\gamma_f(h)\) for all \(f\in\mathcal F(1,C_1,s)\), this proves
\eqref{rho-smaller-than-1}.

To conclude the proof, we verify (i), (ii) and (iii).
For Assertion~\ref{item:rho-away-from-one1}, recall that
\begin{equation}
\label{reminder-gamma_f}
    \gamma_f(h) \ = \ \int_{-\pi}^{\pi}f(\lambda)e^{ih\lambda}\,d\lambda\;.
\end{equation}
Consequently, the map \(f\mapsto\gamma_f(h)\) is continuous, as the
Cauchy--Schwarz inequality yields, for all
\(f,g\in L^2([-\pi,\pi])\),
\[
|\gamma_f(h)-\gamma_g(h)|
\ \leq \
\sqrt{2\pi}\,\|f-g\|.
\]

For Assertion~\ref{item:rho-away-from-one2}, the relation~\eqref{reminder-gamma_f} and the evenness of \(f\) give
\[
\gamma_f(0)\pm\gamma_f(h)
\ = \
\int_{-\pi}^{\pi}
f(\lambda)\bigl(1\pm\cos(h\lambda)\bigr)\,d\lambda.
\]
For \(h\neq0\), we have \(1\pm\cos(h\lambda)>0\) for almost every
\(\lambda\). Since \(f\) is nonnegative, each of the two integrals can vanish
only if \(f=0\) almost everywhere, which would contradict
\(\gamma_f(0)=1\). Hence \(1\pm\gamma_f(h)>0\) on
\(\mathcal F(1,C_1,s)\).

For Assertion~\ref{item:rho-away-from-one3}, the constraint
\(\sum_{h\in\mathbb Z\setminus\{0\}}|h|^{2s}|\gamma_f(h)|^2\leq C_1^2\)
places \(\mathcal F(1,C_1,s)\) inside a Sobolev ball. Since \(s>0\), such balls are compact in \(L^2([-\pi,\pi])\).
Moreover, \(\mathcal F(1,C_1,s)\) is closed in
\(L^2([-\pi,\pi])\), since its defining conditions are preserved under
\(L^2\)-limits. Hence, \(\mathcal F(1,C_1,s)\) is compact.
\end{proof}

%%%%%
\medskip
%%%%%

\begin{proof}[Proof of Lemma~\ref{lem:variance-check-gamma}]
Fix \(h\in[N-1]\) and set
\(\eta_j:=(3/\alpha)W_{j,2}\) for \(j\in[N]\).
Then \(Z_{j,2}=S_j+\eta_j\), where
\(S_j=\operatorname{sgn}(X_j)\), and
\[
\mathbb E[\eta_j]=0,
\qquad
\mathbb E[\eta_j^2]=\sigma_\alpha^2.
\]
Define
\begin{align*}
A_h&:=\frac1N\sum_{j=1}^{N-h}S_jS_{j+h},
&
B_h&:=\frac1N\sum_{j=1}^{N-h}S_j\eta_{j+h},\\
C_h&:=\frac1N\sum_{j=1}^{N-h}\eta_jS_{j+h},
&
D_h&:=\frac1N\sum_{j=1}^{N-h}\eta_j\eta_{j+h}.
\end{align*}
We then have
\[
\widehat\gamma^{(2)}(h) \ := \
\frac1N\sum_{j=1}^{N-h}Z_{j,2}Z_{j+h,2} =A_h+B_h+C_h+D_h.
\]
Since the variables \(\eta_j\) are centered, i.i.d., and independent
of \((S_j)_{j\in[N]}\), the random variables \(B_h\), \(C_h\), and
\(D_h\) are centered. Therefore,
\[
\widehat\gamma^{(2)}(h)
-
\mathbb E\!\left[\widehat\gamma^{(2)}(h)\right]
\ = \
A_h-\mathbb E[A_h]+B_h+C_h+D_h.
\]
Using \((a+b+c+d)^2\leq4(a^2+b^2+c^2+d^2)\), we obtain
\begin{align}
\label{eq:variance-decomposition-check-gamma}
\operatorname{Var}\!\left(\widehat\gamma^{(2)}(h)\right)
\ &\leq \
4\operatorname{Var}(A_h)
+
4\mathbb E[B_h^2]
+
4\mathbb E[C_h^2]
+
4\mathbb E[D_h^2].
\end{align}

First,
\[
\operatorname{Var}(A_h)
\ = \
\frac1{N^2}
\sum_{s,u=1}^{N-h}
\operatorname{Cov}\!\left(
S_sS_{s+h},
S_uS_{u+h}
\right)
\ \leq \
\frac1{N^2}
\sum_{s,u=1}^{N-h}
\left|
\operatorname{Cov}\!\left(
S_sS_{s+h},
S_uS_{u+h}
\right)
\right|.
\]
Using the independence of \((S_j)_{j\in[N]}\) and
\((\eta_j)_{j\in[N]}\), we obtain
\begin{align*}
\mathbb E[B_h^2]
\ = \ 
\frac1{N^2}
\sum_{s,u=1}^{N-h}
\mathbb E[S_sS_u]\,
\mathbb E[\eta_{s+h}\eta_{u+h}]
\ = \ 
\frac{(N-h)\sigma_\alpha^2}{N^2},
\end{align*}
where the last equality follows because the variables \(\eta_j\) are
independent and centered, and because \(S_s^2=1\). The same argument
gives
\[
\mathbb E[C_h^2]
\ = \
\frac{(N-h)\sigma_\alpha^2}{N^2}.
\]
Finally, by the definition of \(D_h\),
\[
\mathbb E[D_h^2]
\ = \
\frac1{N^2}
\sum_{s,u=1}^{N-h}
\mathbb E[\eta_s\eta_{s+h}\eta_u\eta_{u+h}]
\ = \
\frac{(N-h)\sigma_\alpha^4}{N^2} .
\]
Indeed, when \(s\neq u\), at least one of the independent centered variables
in the product above appears exactly once, so its expectation
vanishes. Therefore, only the terms with \(s=u\) remain.

Plugging these bounds into
\eqref{eq:variance-decomposition-check-gamma} yields
\[
\operatorname{Var}\!\left(\widehat\gamma^{(2)}(h)\right)
\ \leq \ 
\frac4{N^2}
\sum_{s,u=1}^{N-h}
\left|
\operatorname{Cov}\!\left(
S_sS_{s+h},
S_uS_{u+h}
\right)
\right|
+
\frac{4(N-h)}{N^2}
\left(
2\sigma_\alpha^2+\sigma_\alpha^4
\right),
\]
which proves the lemma.
\end{proof}

%%% Local Variables:
%%% mode: latex
%%% TeX-master: "main.tex"
%%% ispell-local-dictionary: "american"
%%% End:

%%%%%

\end{document}